\title{\textbf{Vanishing theorems and Brauer-Hasse-Noether exact sequences for the cohomology of higher-dimensional fields}}
\author{Diego Izquierdo\\
\small Département de Mathématiques et Applications - École Normale Supérieure\\
\small CNRS, PSL Research University - 45, Rue d'Ulm - 75005 Paris - France\\
\small \url{diego.izquierdo@ens.fr}}
\date{June 2018}
\titleformat{\section}[hang]{\center\Large\bf}{\thesection.}{0.5cm}{}
\DeclareSymbolFont{cyrletters}{OT2}{wncyr}{m}{n}
\DeclareMathSymbol{\Sha}{\mathalpha}{cyrletters}{"58}
\DeclareMathSymbol{\Brusse}{\mathalpha}{cyrletters}{"42}
\theoremstyle{plain}
\newtheorem{theorem}{Theorem}[section]
\newtheorem{lemma}[theorem]{Lemma}
\newtheorem{proposition}[theorem]{Proposition}
\newtheorem{corollary}[theorem]{Corollary}
\newtheorem{definition}[theorem]{Definition}
\newtheorem{thmx}{Theorem}
\theoremstyle{definition}
\newtheorem{remarque}[theorem]{Remark}
\newtheorem{notation}[theorem]{Notation}
\newtheoremstyle{hypo}  
  {\topsep}   
  {\topsep}   
  {\itshape}  
  {1.5ex}       
  {\bfseries} 
  {)}         
  {8pt plus 1pt minus 1pt}  
  {}          
\theoremstyle{hypo}
\newtheoremstyle{hypol}  
  {\topsep}   
  {\topsep}   
  {\itshape}  
  {1.5ex}       
  {\bfseries} 
  {)$_{\ell}$}         
  {8pt plus 1pt minus 1pt}  
  {}          
\theoremstyle{hypol}
\newtheoremstyle{DP}  
  {\topsep}   
  {\topsep}   
  {\itshape}  
  {}       
  {\bfseries} 
  {.}         
  {8pt plus 1pt minus 1pt}  
  {}          
\theoremstyle{DP}
  \def\commutatif{\ar@{}[rd]|{(\star)}}
\DeclareMathSymbol{\ch}{\mathalpha}{cyrletters}{"51}
\begin{document}

\maketitle

\small

\textbf{Abstract.} Let $k$ be a finite field, a $p$-adic field or a number field. Let $K$ be a finite extension of the Laurent series field in $m$ variables $k((x_1,...,x_m))$. When $r$ is an integer and $\ell$ is a prime number, we consider the Galois module $\mathbb{Q}_{\ell}/\mathbb{Z}_{\ell}(r)$ over $K$ and we prove several vanishing theorems for its cohomology. In the particular case when $K$ is a finite extension of the Laurent series field in two variables $k((x_1,x_2))$, we also prove exact sequences that play the role of the Brauer-Hasse-Noether exact sequence for the field $K$ and that involve some of the cohomology groups of $\mathbb{Q}_{\ell}/\mathbb{Z}_{\ell}(r)$ which do not vanish.\\

\textbf{MSC classes:} 11R34, 11G25, 11G35, 11S25, 12G05, 14G15, 14G20, 14G25, 14G27, 14J20, 14B05, 14J17. \\

\textbf{Keywords:} Galois cohomology, Bloch-Kato conjecture, Laurent series fields in two or more variables, function fields in two or more variables, singularities, finite base fields, $p$-adic base fields, global base fields, Hasse principle, Brauer group, Brauer-Hasse-Noether exact sequence.

\normalsize

\section{Introduction}

\hspace{3ex} Let $K$ be a field and let $\ell$ be a prime number different from the characteristic of $K$. For each integer $r$, one can define the Galois module $\mathbb{Q}_{\ell}/\mathbb{Z}_{\ell}(r)$ as:
$$\mathbb{Q}_{\ell}/\mathbb{Z}_{\ell}(r):= \varinjlim_n \mathbb{Z}/\ell^n\mathbb{Z}(r),$$
where:
$$\mathbb{Z}/\ell^n\mathbb{Z}(r):= \begin{cases} \mu_{\ell^n}^{\otimes r}, & \mbox{ if } r\geq 0 \\ \text{Hom}(\mu_{\ell^n}^{\otimes (-r)},\mathbb{Z}/\ell^n\mathbb{Z}), & \mbox{otherwise. }  \end{cases}$$
 The Galois modules $\mathbb{Q}_{\ell}/\mathbb{Z}_{\ell}(r)$ and their cohomology play an important role when one wants to study the arithmetic of the field $K$.\\
 
 \hspace{3ex} The starting point of the present paper is the article \cite{jannsen2}, in which Jannsen deals with the case when $K$ is the function field $k(X)$ of a curve $X$ defined over a $p$-adic field or a number field $k$. When $k$ is $p$-adic, he proves that the group $H^3(K,\mathbb{Q}_{\ell}/\mathbb{Z}_{\ell}(r))$ vanishes provided that $r\neq 2$, and when $k$ is a number field, he proves that:
$$ H^3(K,\mathbb{Q}_{\ell}/\mathbb{Z}_{\ell}(r)) \cong \bigoplus_{\pi \in \Omega_k^{\infty}} H^3(K \cdot k_{\pi}^h,\mathbb{Q}_{\ell}/\mathbb{Z}_{\ell}(r))$$
 for $r\neq 2$ and that there is an exact sequence:
\begin{equation} \label{sejan}
 0 \rightarrow H^3(K,\mathbb{Q}_{\ell}/\mathbb{Z}_{\ell}(2)) \rightarrow \bigoplus_{\pi \in \Omega_k} H^3(K \cdot k_{\pi}^h,\mathbb{Q}_{\ell}/\mathbb{Z}_{\ell}(2)) \rightarrow \bigoplus_{ v\in X^{(1)}} \mathbb{Q}_{\ell}/\mathbb{Z}_{\ell} \rightarrow \mathbb{Q}_{\ell}/\mathbb{Z}_{\ell} \rightarrow 0.
 \end{equation}
 Here, $\Omega_k$ (resp. $\Omega_k^{\infty}$) stands for the set of places of $k$ (resp. infinite places of $k$), $k_{\pi}^h$ denotes the henselization of $k$ at $\pi$ for $\pi\in \Omega_k$, and $X^{(1)}$ is the set of codimension 1 points of $X$. 
 \\
 
 \hspace{3ex} Since Jannsen's work, the cohomology of the Galois module $\mathbb{Q}_{\ell}/\mathbb{Z}_{\ell}(r)$ over fields of arithmetico-geometrical nature has been studied by several authors. Indeed, when $K$ is the function field of a smooth variety of any dimension defined over a finite field, a $p$-adic field or a number field, Kahn, Pirutka, Saito and Sato have proved several vanishing theorems for the cohomology of $\mathbb{Q}_{\ell}/\mathbb{Z}_{\ell}(r)$ (see \cite{kahn}, \cite{pirutka}, \cite{saitosato}). Moreover, when $K$ is the function field of an $n$-dimensional smooth variety defined over a number field, Jannsen has also generalized the injectivity of the first map in exact sequence (\ref{sejan}) by proving that there is a local-glocal principle for the group $H^{n+2}\left(K,\mathbb{Q}_{\ell}/\mathbb{Z}_{\ell}(n+1)\right)$ (see \cite{jannsen}). Note that all these results have found various arithmetical applications: for instance, they have been used to study some properties of quadratic forms (\cite{jannsen}), of Bloch-Ogus complexes (\cite{kahn}), of Chow groups (\cite{saitosato}) and of some birational invariants (\cite{pirutka}).  \\
 
 \hspace{3ex}  Given a field $K$ and a prime number $\ell$ different from the characteristic of $K$, the main goal of the present article consists in studying the groups $H^d(K,\mathbb{Q}_{\ell}/\mathbb{Z}_{\ell}(r))$ when $d$ is the cohomological dimension of the field $K$. The main case we are interested in is the case when $K$ is a finite extension of a Laurent series field in any number of variables with coefficients in a finite field, a $p$-adic field or a totally imaginary number field. Note that such fields naturally arise as completions of varieties at closed points. Geometrically speaking, this means that, contrary to the cases previously studied by Jannsen, Kahn, Pirutka, Saito and Sato, we are here interested in a field $K$ that arises as the function field of a \textit{singular} scheme. \\

\hspace{3ex} It turns out that our results also apply when $K$ is a finite extension of a field of the form $k((x_1,...,x_n))(y_1,...,y_m)$ with $k$ a finite field, a $p$-adic field or a totally imaginary number field. They therefore vastly generalize the previous results of Jannsen, Kahn, Pirutka, Saito and Sato for finite extensions of $k(y_1,...,y_m)$. The methods we use are radically different from theirs and they provide a unified framework to study the Galois cohomology of the modules $\mathbb{Q}_{\ell}/\mathbb{Z}_{\ell}(r)$.

\subsection*{Organization of the article and main results}

\hspace{3ex} One of the main ingredients in this article is the Bloch-Kato conjecture, which states that, for each field $k$, each prime $\ell$ different from the characteristic of $k$ and each positive integer $m$, the morphism induced by the cup-product:
$$H^1(k,\mathbb{Z}/\ell^m\mathbb{Z}(1))^{\otimes n} \rightarrow H^n(k,\mathbb{Z}/\ell^m\mathbb{Z}(n))$$
is surjective. Note that the Bloch-Kato conjecture deals with each power of $\ell$ separately, while we need to work with all the powers of $\ell$ simultaneously since we are interested in the cohomology of $\mathbb{Q}_{\ell}/\mathbb{Z}_{\ell}(r)$. The main goal of section \ref{secbk} consists in revisiting the Bloch-Kato conjecture in order to prove a statement taking into account all powers of $\ell$ at the same time (theorem \ref{surj}). This statement is of independent interest.\\

\hspace{3ex} Now let $n$ and $d$ be non-negative integers and let $k$ be a field of cohomological dimension $d$. In section \ref{sec1}, we are interested in the cohomology of the Galois module $\mathbb{Q}_{\ell}/\mathbb{Z}_{\ell}(r)$ when $K$ is a finite extension of the Laurent series field in $n$ variables over $k$. The main theorem is an abstract result that, given an integer $r$, states the vanishing of the group $H^{n+d}(K,\mathbb{Q}_{\ell}/\mathbb{Z}_{\ell}(r))$ provided that some conditions on the Galois cohomology of the base field $k$ are fulfilled (theorem \ref{abs1}). This theorem applies whatever the field $k$ is and one of the main ingredients of its proof is the variant of the Bloch-Kato conjecture proved in section \ref{secbk}.\\

\hspace{3ex} In section \ref{app1}, we apply the previous abstract theorem to the specific situations in which $k$ is a finite field, a $p$-adic field or a number field. For example, we prove the following theorem:

\begin{thmx}\label{A'} \emph{(Theorems \ref{thpadique1} and \ref{thcdn1})}\\
Let $\ell$ be a prime number and let $n$ be a non-negative integer. Let $K$ be a finite extension of the Laurent series field $k((x_1,...,x_n))$ over a base field $k$. If $k$ is a $p$-adic field or a totally imaginary number field or if $k$ is any number field and $\ell \neq 2$, then $H^{n+2}(K,\mathbb{Q}_{\ell}/\mathbb{Z}_{\ell}(r))$ vanishes for all $r\not\in \left[ 1,n+1 \right] $. 

\end{thmx}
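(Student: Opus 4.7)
The plan is to apply the abstract vanishing result (theorem~\ref{abs1}) with $d=2$. Indeed, each of the listed base fields $k$ has $\ell$-cohomological dimension equal to $2$: this is classical for $p$-adic fields, and it holds for any number field provided that either $k$ is totally imaginary or $\ell$ is odd, the obstruction at the prime $2$ coming precisely from real places. This trichotomy matches the hypothesis of the theorem exactly.

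Having fixed $d=2$, the first step is to verify the hypotheses of theorem~\ref{abs1} in each of the three settings. These hypotheses should amount to appropriate vanishing (or control) of the degree-$2$ cohomology $H^{2}(k', \mathbb{Q}_{\ell}/\mathbb{Z}_{\ell}(s))$ over finite extensions $k'$ of $k$, for each intermediate twist $s=r,r-1,\ldots,r-n$ that arises from the iterated residue/Hochschild--Serre computation inside the proof of theorem~\ref{abs1}. The only ``bad'' twist is $s=1$, since $H^2(k',\mathbb{Q}_{\ell}/\mathbb{Z}_{\ell}(1))$ is the $\ell$-primary Brauer group of $k'$ and is nontrivial; and the condition $1\notin\{r,r-1,\ldots,r-n\}$ is precisely the requirement $r\notin[1,n+1]$ appearing in the statement.

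The vanishing at twists $s\neq 1$ will then come from duality. For $p$-adic $k$, Tate local duality identifies $H^2(k',\mathbb{Q}_{\ell}/\mathbb{Z}_{\ell}(s))$ with the Pontryagin dual of $H^0(k',\mathbb{Z}_{\ell}(1-s))$, and the latter vanishes after applying $\mathrm{Hom}(-,\mathbb{Q}_{\ell}/\mathbb{Z}_{\ell})$ whenever $1-s\neq 0$, since $H^0(k',\mathbb{Z}_{\ell}(1-s))$ is then a bounded torsion module. For number fields, I would reduce to the $p$-adic case at each finite place via Poitou--Tate duality, using that the contributions at archimedean places are killed by the hypotheses (total imaginarity, or $\ell$ odd).

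The main obstacle will be the number field case. Unlike in the local setting, pointwise vanishing $H^2(k,\mathbb{Q}_{\ell}/\mathbb{Z}_{\ell}(s))=0$ for $s\neq 1$ is false in general: by Quillen--Lichtenbaum these groups are related to higher $K$-theory of rings of integers and are typically nonzero for $s\geq 2$. One must therefore establish a Hasse-principle-type statement, controlling the global group by its completions rather than by direct vanishing. Reconciling this global-to-local reduction with the machinery behind theorem~\ref{abs1} --- itself built on the all-powers-of-$\ell$ variant of Bloch--Kato proved in section~\ref{secbk} --- is where the most delicate arguments will be needed.
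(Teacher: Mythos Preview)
Your plan misidentifies what hypothesis (H2) in theorem~\ref{abs1} (and corollary~\ref{abs1bis}) actually asks for. It is \emph{not} a vanishing statement for $H^{2}(k',\mathbb{Q}_{\ell}/\mathbb{Z}_{\ell}(s))$ at various Tate twists $s$; the modules that appear are $G(k^s)\{\ell\}^{\boxtimes j}(r-n)$ for semi-abelian varieties $G$ (or, in corollary~\ref{abs1bis}, $A(k^s)\{\ell\}^{\boxtimes j_1}(r-n+j_2)$ for abelian varieties $A$). These arise because the proof of theorem~\ref{abs1} feeds the Bloch--Kato surjection of section~\ref{secbk} with $H^1(k^s(\mathcal{Y}^{sh}),\mathbb{Q}_{\ell}/\mathbb{Z}_{\ell}(1))$, whose d\'evissage (lemmas~\ref{semiab} and~\ref{suites0}) forces the $\ell$-primary torsion of the Picard variety of the special fibre into the picture. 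Your ``iterated residue/Hochschild--Serre'' description would be adequate for something like $k((x_1))\cdots((x_n))$, but not for a general finite extension of $k((x_1,\ldots,x_n))$, where the geometry of the resolution is genuinely present.

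Consequently the arithmetic input needed is considerably heavier than a Tate-duality computation of $H^2(k',\mathbb{Q}_{\ell}/\mathbb{Z}_{\ell}(s))$. In the paper the required vanishing $H^2(k',A\{\ell\}^{\boxtimes j}(s))=0$ for $s\notin[1-j,1]$ is obtained as follows: for $p$-adic $k$ and $\ell\neq p$ one dualises and then invokes Raynaud's 1-motive attached to $A^t$ together with the Weil conjectures to control Frobenius eigenvalues (lemmas~\ref{motif} and~\ref{abpadicell}); for $\ell=p$ one uses the Hodge--Tate decomposition (lemma~\ref{abpadicp}); for number fields one combines the Weil conjectures with Jannsen's Hasse principle (theorem~1.5(a) of \cite{jannsen}) to reduce to the local case (lemma~\ref{abcdn}). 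Your argument only covers the case $j_1=0$ of (H2) in corollary~\ref{abs1bis}, and the numerology $r\notin[1,n+1]$ happens to come out right because the abelian-variety contributions have half-integral weights that, once one runs through all $(j_1,j_2)$ with $j_1+j_2\le n$, reproduce exactly the same excluded interval; but establishing this is the whole content of lemmas~\ref{abpadicell}--\ref{abcdn}, not a formality. Your final paragraph also overstates the difficulty in the number-field case: once one has the abelian-variety formulation, Jannsen's injectivity theorem immediately reduces it to the already-proved local statements, and no further reconciliation with section~\ref{secbk} is needed.
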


The proof is based on the abstract theorem \ref{abs1} and it involves the Weil conjectures, 1-motives, $p$-adic Hodge theory and a local-global principle due to Jannsen. The results of section \ref{sec1} are actually far more general than theorem \ref{A'}, since they also apply when $k$ is finite or when $K$ is a finite extension of $k((x_1,...,x_n))(y_1,...,y_m)$. In this sense, they generalize and unify previous results for finite extensions of $k(y_1,...,y_m)$. Moreover, our results still hold when one replaces the Laurent series field $k((x_1,...,x_n))$ by the fraction field of any henselian, normal, excellent, local ring with finite residue field. Such a local ring may have mixed characteristic. \\ 

\hspace{3ex} Section \ref{(())} is devoted to the refinement of the results of section \ref{sec1} in the particular case when $K$ is a finite extension of the Laurent series field in two variables $k((x,y))$ over some base field $k$ of cohomological dimension $d$. The main theorem of paragraph \ref{ppp} is an abstract result that, given an integer $r$, states the vanishing of the group $H^{d+2}(K,\mathbb{Q}_{\ell}/\mathbb{Z}_{\ell}(r))$ provided that some conditions on the Galois cohomology of the base field $k$ are fulfilled (theorem \ref{abs}). Its proof is based on a careful study of the Brauer-Hasse-Noether exact sequence for fields like $\mathbb{C}((x,y))$ that has been proved in \cite{diego2}. \\

\hspace{3ex} In paragraph \ref{app2}, we apply the previous abstract theorem to the specific situations in which $k$ is a finite field, a $p$-adic field or a totally imaginary number field. In particular, we prove the following theorem, which improves theorem \ref{A'} in the 2-dimensional case:

\begin{thmx}\label{B} \emph{(Theorem \ref{thdim2})}\\
Let $k$ be a field and let $\ell$ be a prime number. Let $K$ be a finite extension of the Laurent series field in two variables over $k$.
\begin{itemize}
\item[(i)] If $k$ is finite and if $\ell$ is different from the characteristic of $k$, then $H^3(K,\mathbb{Q}_{\ell}/\mathbb{Z}_{\ell}(r))$ vanishes for all $r\neq 2$.
\item[(ii)] If $k$ is a $p$-adic field or a totally imaginary number field or if $k$ is any number field and $\ell \neq 2$, then $H^4(K,\mathbb{Q}_{\ell}/\mathbb{Z}_{\ell}(r))$ vanishes for all $r\neq 3$.
\end{itemize}
\end{thmx}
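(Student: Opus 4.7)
The strategy is to derive Theorem~\ref{B} from the abstract vanishing theorem~\ref{abs} of paragraph~\ref{ppp} by verifying its Galois-cohomological hypotheses on the base field $k$ separately in each of the three cases. The critical observation guiding the case analysis is that the Brauer--Hasse--Noether-type exact sequence underlying \ref{abs} compares $H^{d+2}(K,\mathbb{Q}_{\ell}/\mathbb{Z}_{\ell}(r))$ to a cohomology group essentially of the form $H^{d}(k,\mathbb{Q}_{\ell}/\mathbb{Z}_{\ell}(r-2))$ (via a double-residue process along the two variables of $k((x,y))$); the excluded twist $r = d+1$ of the theorem (namely $r=2$ for $d=1$ and $r=3$ for $d=2$) is exactly the one for which this target group is expected not to vanish.

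For part~(i), $k$ is a finite field and $d = \mathrm{cd}_{\ell}(k) = 1$. The Frobenius acts on $\mathbb{Q}_{\ell}/\mathbb{Z}_{\ell}(s)$ by multiplication by $q^s$, so $H^1(k,\mathbb{Q}_{\ell}/\mathbb{Z}_{\ell}(s))$ is finite and behaves controllably for $s \neq 0$. The same computation carried out for each of the finite extensions of $k$ that arise as residue fields of codimension-one points of a regular model of $K$ then furnishes the hypothesis of \ref{abs} for every $r \neq 2$, giving the vanishing of $H^3(K,\mathbb{Q}_{\ell}/\mathbb{Z}_{\ell}(r))$.

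For part~(ii), $d = 2$. When $k$ is $p$-adic and $\ell$ differs from the residue characteristic, local Tate duality identifies $H^2(k,\mathbb{Q}_{\ell}/\mathbb{Z}_{\ell}(s))$ with the Pontryagin dual of $H^0(k,\mathbb{Z}_{\ell}(1-s))$, vanishing as soon as $s \neq 1$; when $\ell$ equals the residue characteristic, the analogous vanishing is supplied by $p$-adic Hodge theory, exactly as already invoked for Theorem~\ref{A'} in the introduction. When $k$ is a totally imaginary number field, the plan is to reduce to the previous local situation at each place by applying the local--global principle of Jannsen recalled in the introduction: the totally imaginary condition forces every archimedean completion to be $\mathbb{C}$, which has cohomological dimension $0$ and contributes nothing, and the same reduction works verbatim for a general number field when $\ell \neq 2$ because real-place obstructions are $2$-primary. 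In both situations the vanishing of the relevant $H^2(k,\mathbb{Q}_{\ell}/\mathbb{Z}_{\ell}(s))$ for $s = r - 2 \neq 1$ is what feeds into \ref{abs}.

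The main obstacle I anticipate is the number field sub-case of (ii): the precise hypotheses of \ref{abs} will need to be unpacked carefully and combined with Jannsen's local--global principle, and one must verify that the contributions of the various places assemble compatibly with the Brauer--Hasse--Noether-type sequence produced by \ref{abs}. The totally imaginary (respectively, $\ell \neq 2$) assumption is expected to be precisely what guarantees that there is no residual contribution from infinity disturbing this assembly; confirming this cleanly is the delicate point of the proof.
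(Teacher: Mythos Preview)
Your overall plan is correct: the theorem is deduced from the abstract vanishing theorem~\ref{abs} by verifying its hypotheses case by case, and the paper does exactly this. However, your proposal has a genuine gap: you address only hypotheses (H1) and (H2) of theorem~\ref{abs} and entirely omit hypothesis (H3), which requires that $H^d(k',A(k^s)\{\ell\}(r-2))=0$ for every finite extension $k'/k$ and every \emph{abelian variety} $A$ over $k'$.

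This omission is not a technicality. The Brauer--Hasse--Noether-type description of $\text{Br}(\underline{K})\{\ell\}$ underlying theorem~\ref{abs} (see the d\'evissage (\ref{dev1})--(\ref{dev2})) involves not only the constant pieces $\mathbb{Q}_\ell/\mathbb{Z}_\ell(r-2)$ that you identified via the ``double residue'', but also the term $\bigoplus_{\underline{v}} I_{F_{\underline{v}}/k}\bigl(J_{\underline{v}}(k^s)\{\ell\}(r-2)\bigr)$ coming from the Jacobians $J_{\underline{v}}$ of the exceptional curves in a resolution of singularities of $\text{Spec}\,R$. Controlling this term is precisely (H3), and it is where the deep arithmetic input lies. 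In the paper, (H2) and (H3) are handled together by lemmas~\ref{abfin}, \ref{abpadicell}, \ref{abpadicp}, and~\ref{abcdn}, which compute $H^d(k',A\{\ell\}^{\boxtimes j}(s))$ for abelian varieties: the finite-field case uses the Weil conjectures; the $p$-adic case with $\ell\neq p$ uses Raynaud's $1$-motive description of $T_\ell A$ to reduce to Frobenius weights; the case $\ell=p$ uses the Hodge--Tate decomposition; and the number-field case feeds these local results into Jannsen's local-global principle. You invoke exactly these tools, but apply them only to $\mathbb{Q}_\ell/\mathbb{Z}_\ell(s)$ rather than to $A\{\ell\}(s)$, which is the substantive case. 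Once (H3) is verified in this way, your anticipated ``assembly'' difficulty in the number-field case disappears: theorem~\ref{abs} already packages the Brauer--Hasse--Noether sequence, so no further compatibility check between places is needed.
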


The results of section \ref{app2} are in fact more general than theorem \ref{B}, since they apply to the fraction field of a geometrically integral, normal, henselian, excellent, 2-dimensional $k$-algebra with residue field $k$ whenever $k$ is a finite field, a $p$-adic field or a totally imaginary number field.\\

\hspace{3ex} Finally, paragraph \ref{parbk} is devoted to the study of those cohomology groups of $\mathbb{Q}_{\ell}/\mathbb{Z}_{\ell}(r)$ which do not always vanish in the case when $K$ is a finite extension of the Laurent series field in two variables $k((x,y))$ over some base field $k$. The main result is corollary \ref{corobhn}: it settles several exact sequences which involve those groups and which should be understood as Brauer-Hasse-Noether exact sequences for the field $K$. In the case when $k$ is finite, this allows us to recover a result of Saito (theorem 5.2 of \cite{saito}). When $k$ is a $p$-adic field or a number field, we obtain several new results:

\begin{thmx}\label{C} \emph{(Corollaries \ref{padicbhn} and \ref{bhncdn} and \ref{bhncdnbis})}\\
Let $k$ be a field and let $\ell$ be a prime number. Let $K$ be a finite extension of the Laurent series field in two variables $k((x,y))$ over $k$ and assume that $k$ is algebraically closed in $K$. Denote by $X$ the spectrum of the integral closure of the formal power series ring $k[[x,y]]$ in $K$.
\begin{itemize}
\item[(i)] Assume that $k$ is a $p$-adic field. Then we have an exact sequence:
$$0\rightarrow (\mathbb{Q}_{\ell}/\mathbb{Z}_{\ell})^{\rho} \rightarrow H^{4}(K,\mathbb{Q}_{\ell}/\mathbb{Z}_{\ell}(3)) \rightarrow \bigoplus_{v\in X^{(1)}} H^{4}(K_v,\mathbb{Q}_{\ell}/\mathbb{Z}_{\ell}(3)) \rightarrow (\text{Br}\; k) \{\ell\} \rightarrow 0$$
for some $\rho\geq 0$ which can be bounded by some geometrical invariants associated to $K$.
\item[(ii)] Assume that $k$ is a totally imaginary number field or that $k$ is any number field and $\ell \neq 2$. We have an exact sequence: 
$$0 \rightarrow D \rightarrow H^{4}(K,\mathbb{Q}_{\ell}/\mathbb{Z}_{\ell}(3)) \rightarrow \bigoplus_{v\in X^{(1)}} H^{4}(K_v,\mathbb{Q}_{\ell}/\mathbb{Z}_{\ell}(3)) \rightarrow (\text{Br}\; k) \{\ell\} \rightarrow 0$$
for some divisible group $D$. 
\item[(iii)] Assume that $k$ is any number field. For $\pi \in \Omega_k$, denote by $k_{\pi}^h$ the henselization of $k$ at $\pi$ and by $K_{\pi}^h$ the field $K\otimes_k k_{\pi}^h$. Then there exists a natural complex $\mathcal{C}_K$:
\small
\begin{equation}
\xymatrix@R=2mm@C=5mm{
& \text{(degree 1)} & \text{(degree 2)} & \text{(degree 3)} & \text{(degree 4)} & \\
0 \ar[r] &  H^4(K,\mathbb{Q}_{\ell}/\mathbb{Z}_{\ell}(3)) \ar[r] & \bigoplus\limits_{\pi\in \Omega_k} H^4(K_{\pi}^h,\mathbb{Q}_{\ell}/\mathbb{Z}_{\ell}(3)) \ar[r] & \bigoplus\limits_{v\in X^{(1)}} \mathbb{Q}_{\ell}/\mathbb{Z}_{\ell} \ar[r] & \mathbb{Q}_{\ell}/\mathbb{Z}_{\ell} \ar[r] & 0
}
\end{equation}
\normalsize
whose homology satisfies the following properties:
\begin{enumerate}
\item[(a)] the groups $H^3(\mathcal{C}_K)$ and $ H^4(\mathcal{C}_K)$ are trivial;
\item[(b)] the groups $H^1(\mathcal{C}_K)$ and $H^2(\mathcal{C}_K)$ can be precisely controlled thanks to some geometrical invariants related to the combinatorics of the singularities attached to the field $K$;
\item[(c)] the group $H^2(\mathcal{C}_K)$ is isomorphic to $(\mathbb{Q}_{\ell}/\mathbb{Z}_{\ell})^{\rho}$ for some $\rho \geq 0$;
\item[(d)] the group $H^1(\mathcal{C}_K)$ has finite exponent.
\end{enumerate}
\end{itemize}
\end{thmx}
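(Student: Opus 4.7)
The plan is to combine three inputs: the vanishing Theorem \ref{thdim2} in top degree, a Gersten-type residue analysis on a desingularization of $X$, and classical arithmetic duality for the base field $k$ (plus Jannsen's local-global principle, in part (iii)).

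First I would choose a desingularization $\pi\colon \widetilde{X}\to X$ (Lipman's resolution of excellent surfaces) with exceptional reduced divisor $E = \bigcup_i E_i$ of simple normal crossings, and consider the degree-$4$ residue complex with coefficients $\mathbb{Q}_{\ell}/\mathbb{Z}_{\ell}(3)$:
$$H^4(K,\mathbb{Q}_{\ell}/\mathbb{Z}_{\ell}(3)) \to \bigoplus_{v\in X^{(1)}} H^3(\kappa(v),\mathbb{Q}_{\ell}/\mathbb{Z}_{\ell}(2)) \to \bigoplus_{x\in \widetilde{X}^{(2)}} H^2(\kappa(x),\mathbb{Q}_{\ell}/\mathbb{Z}_{\ell}(1)).$$
Theorem \ref{thdim2}, applied at the generic point and over each complete local ring at a codimension-$1$ point, gives the vanishing of $H^4(K_v,\mathbb{Q}_{\ell}/\mathbb{Z}_{\ell}(r))$ for $r\neq 3$, and hence identifies each middle term with $\bigoplus_{v}H^4(K_v,\mathbb{Q}_{\ell}/\mathbb{Z}_{\ell}(3))$ via the standard residue exact sequence for a complete discrete valuation field.

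For (i), where $k$ is $p$-adic, local duality at each closed point $x\in \widetilde{X}^{(2)}$ (whose residue field is a finite extension of $k$) identifies the cokernel of the residue map with $(\text{Br}\,k)\{\ell\}$, via corestriction. The kernel $(\mathbb{Q}_{\ell}/\mathbb{Z}_{\ell})^{\rho}$ should come from classes on $\widetilde{X}$ supported on $E$ that die in every local factor: concretely $\rho$ ought to be bounded by the corank of the intersection pairing on $\bigoplus_i \mathbb{Z}\,E_i$, and hence by the combinatorics of the exceptional components of the resolution. Part (ii) proceeds analogously over a totally imaginary number field (or in the odd-torsion setting over a general number field), with the discrete $(\mathbb{Q}_{\ell}/\mathbb{Z}_{\ell})^{\rho}$ thickened to a divisible group $D$ coming from the divisible part of the base field's Brauer group.

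For (iii), the complex $\mathcal{C}_K$ is assembled by splicing, at each codimension-$1$ point $v$, the Jannsen local-global complex \cite{jannsen} for the residue field $\kappa(v)$ into the residue analysis from (i). Exactness in degrees $3$ and $4$ reduces to the standard surjection $\bigoplus_v \mathbb{Q}_{\ell}/\mathbb{Z}_{\ell}\to \mathbb{Q}_{\ell}/\mathbb{Z}_{\ell}$ (degree map / idèle reciprocity) and the classical computation of its kernel. The main obstacle will be the precise bound on $\rho$ in (i) and the control of $H^1(\mathcal{C}_K)$ and $H^2(\mathcal{C}_K)$ in (iii): both are governed by the dual graph of $E$ on a resolution and by the interaction of cycle classes of the $E_i$'s with the global-to-local Hasse map for $k$. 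The finite-exponent property of $H^1(\mathcal{C}_K)$ should follow from the finiteness of a Tate-Shafarevich-type group attached to this combinatorial data, while $H^2(\mathcal{C}_K)\cong (\mathbb{Q}_{\ell}/\mathbb{Z}_{\ell})^{\rho}$ should capture precisely the divisible contribution that survives the global-to-local comparison.
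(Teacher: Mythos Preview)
Your plan diverges substantially from the paper's argument, and several steps are either misattributed or genuinely incomplete.

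The paper does \emph{not} run a Gersten residue complex directly on $\widetilde X$ over $k$. Instead it reduces everything, via Hochschild--Serre, to a computation over $k$: since $k$ has $\ell$-cohomological dimension $d$ (here $d=2$), one has $H^{d+2}(K,\mathbb{Q}_\ell/\mathbb{Z}_\ell(r)) \cong H^d(k,\mathrm{Br}(\underline{K})\{\ell\}(r-1))$, where $\underline{K}=K\otimes_k k^s$. The heart of the argument is then a precise d\'evissage of the Galois module $\mathrm{Br}(\underline{K})$, imported from \cite{diego2}: it sits in exact sequences built out of (a) the Jacobians $J_{\underline v}$ of the exceptional curves, (b) the first homology $H_1(\underline\Gamma,\mathbb{Z})$ of the dual graph of the resolution, and (c) a large induced term $\bigoplus_{\underline v\in\underline X^{(1)}}\mathbb{Q}_\ell/\mathbb{Z}_\ell(-1)$. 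Taking $H^d(k,-)$ of these sequences, together with a corestriction-surjectivity hypothesis (automatic for $r=d+1$), yields the Brauer--Hasse--Noether sequence and the description of its kernel. The bound on $\rho$ in (i) comes from computing $H^2(k,J_{\underline v}\{\ell\}(1))$ via Tate duality and the structure of the N\'eron model (Lemma~\ref{qsd}), not from the intersection pairing on $\bigoplus_i\mathbb{Z}E_i$ as you suggest.

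Concretely, three gaps in your sketch stand out. First, invoking Theorem~\ref{thdim2} ``at each complete local ring at a codimension-$1$ point'' is a category error: $K_v$ is a complete discretely valued field with residue field a global or local function field, and the identification $H^4(K_v,\mathbb{Q}_\ell/\mathbb{Z}_\ell(3))\cong H^3(\kappa(v),\mathbb{Q}_\ell/\mathbb{Z}_\ell(2))$ is just the residue isomorphism and has nothing to do with Theorem~\ref{thdim2}. Second, your description of the kernel in (i) and of the divisible group $D$ in (ii) is wrong: $D$ is not fed by the divisible part of $\mathrm{Br}\,k$, but by the Jacobian and graph-homology pieces of $\mathrm{Br}(\underline K)$ (Corollary~\ref{corobhn}); proving $D$ is divisible, or that it equals $(\mathbb{Q}_\ell/\mathbb{Z}_\ell)^\rho$ in the $p$-adic case, requires the full d\'evissage plus Lemma~\ref{qsd}. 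Third, for (iii) the complex $\mathcal C_K$ and the control of $H^1,H^2$ are obtained by comparing $H^2(k,-)$ and $\bigoplus_\pi H^2(k_\pi,-)$ for the d\'evissage of $\mathrm{Br}(\underline K)(2)$, using Jannsen's theorem on the Jacobian piece and Poitou--Tate on the rest; the finite-exponent property of $H^1(\mathcal C_K)$ is proved by restriction--corestriction to a finite extension trivializing the Galois action on $\underline\Gamma$, not by an abstract finiteness of a Shafarevich--Tate group. Without the Hochschild--Serre reduction and the $\mathrm{Br}(\underline K)$ d\'evissage, your residue-complex approach does not give access to these structures.
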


Part (iii) of the previous theorem should be seen as a generalization of Jannsen's exact sequence (\ref{sejan}) to the field $K$.

\subsection*{Some general notations}

\begin{itemize}
\item[$\bullet$] When $k$ is a field, $k^s$ stands for a separable closure of $k$. When $k'/k$ is a finite field extension and $M$ is a Galois module over $k'$, the notation $I_{k'/k}(M)$ stands for the corresponding induced module.
\item[$\bullet$] When $A$ is an abelian group, $n$ is a positive integer and $\ell$ is a prime number, ${_n}A$ stands for the $n$-torsion subgroup of $A$, $A\{\ell\}$ is the $\ell$-primary torsion subgroup of $A$ and $T_{\ell}A$ is the projective limit $\varprojlim_n {_{\ell^n}}A$.
\item[$\bullet$] When $A$ is a topological group, $A^D$ stands for the group of continuous homomorphisms from $A$ to $\mathbb{Q}/\mathbb{Z}$.
\item[$\bullet$] A topological $\mathbb{Z}_{\ell}$-module $A$ is said to be pseudo compact if it is Hausdorff and complete and it has a basis of neighborhoods of 0 comprised of submodules $A'$ of $A$ such that $A/A'$ has finite length. When $A$ and $B$ are pseudocompact $\mathbb{Z}_{\ell}$-modules, $A\hat{\otimes}_{\mathbb{Z}_{\ell}} B$ denotes the completed tensor product of $A$ and $B$. For more details on this notion, see chapter $\mathrm{VII_B}$ of \cite{sga3}.
\end{itemize}

\section{Passing to the limit in the Bloch-Kato conjecture}\label{secbk}

\subsection{A modified tensor product for torsion groups}\label{par1}

\hspace{3ex} This section is devoted to the definition of a modified tensor product for torsion groups.  

\begin{definition}
Let $\ell$ be a prime number and $r$ a positive integer. Let $A$ and $B$ be two $\ell$-primary torsion abelian groups. Define:
\begin{gather*}
A \boxtimes B := \left( A^D \hat{\otimes}_{\mathbb{Z}_{\ell}} B^D \right)^D,\\
A^{\boxtimes r} := A \boxtimes ... \boxtimes A \;\;\;\; (r \; \mathrm{times).}
\end{gather*}
By convention, we let $A^{\boxtimes 0} :=\mathbb{Q}_{\ell}/\mathbb{Z}_{\ell}$.
\end{definition}

Observe that, in the previous definition, $A$ and $B$ are endowed with the discrete topology. The groups $A^D$, $B^D$ and $A^D \hat{\otimes}_{\mathbb{Z}_{\ell}} B^D$ are therefore profinite, and the group $A \boxtimes B$ is discrete.\\

\hspace{3ex} The following lemma summarizes some formal properties of the operation $\boxtimes$:

\begin{lemma}\label{formal}
Let $\ell$ be a prime number.
\begin{itemize}
\item[(i)] Let $A$ be an $\ell$-primary torsion abelian group. There is a canonical isomorphism:
\begin{equation}\label{w1}
A \boxtimes \mathbb{Q}_{\ell}/\mathbb{Z}_{\ell} \cong A.
\end{equation}
\item[(ii)] Let $(A_i)$ and $(B_j)$ be filtrant direct systems of $\ell$-primary torsion abelian groups. There is a canonical isomorphism:
\begin{equation}\label{w2}
(\varinjlim_i A_i) \boxtimes (\varinjlim_j B_j) \cong \varinjlim_{i,j} (A_i \boxtimes B_j).
\end{equation}
\item[(iii)] Let $A$, $B$ and $C$ be three $\ell$-primary torsion abelian groups. There is a canonical isomorphism:
\begin{equation}\label{w3}
(A \boxtimes B)  \boxtimes C  \cong \left( A^D \hat{\otimes}_{\mathbb{Z}_{\ell}} B^D \hat{\otimes}_{\mathbb{Z}_{\ell}} C^D \right)^D .
\end{equation}
In particular:
$$(A \boxtimes B)  \boxtimes C \cong A \boxtimes (B  \boxtimes C).$$
\item[(iv)] Let $\mathcal{A}_{\ell}$ be the category of $\ell$-primary torsion abelian groups, and let $A$ be an object of $\mathcal{A}_{\ell}$. The category $\mathcal{A}_{\ell}$ has enough injectives and the functor $F_A := - \boxtimes A$ is a covariant left-exact functor from $\mathcal{A}_{\ell}$ into itself. Hence one may define the derived functors $\text{Tor}_{\ell}^i(-,A):=R^iF_A$.
\end{itemize}

\begin{remarque}~
\begin{itemize}
\item[(i)] In the sequel, we will use the notation $A \boxtimes B \boxtimes C$.
\item[(ii)] If $A$ is an $\ell$-primary torsion abelian group, the functor $- \boxtimes A$ may not be right-exact. For instance, multiplication by $\ell$ on $\mathbb{Q}_{\ell}/\mathbb{Z}_{\ell}$ is surjective but multiplication by $\ell$ on $\mathbb{Z}/ \ell\mathbb{Z} \cong \mathbb{Q}_{\ell}/\mathbb{Z}_{\ell}\boxtimes  \mathbb{Z}/ \ell\mathbb{Z}$ is not.
\item[(iii)] Similarly to the case of the usual tensor product, one can prove the following properties:
\begin{itemize}
\item[a)] if $A$ and $B$ are two $\ell$-primary torsion abelian groups, then $\text{Tor}_{\ell}^i(A,B)=0$ for $i\geq 2$;
\item[b)] if $A$ and $B$ are two $\ell$-primary torsion abelian groups, then there is a natural isomorphism $\text{Tor}_{\ell}^1(A,B)\cong \text{Tor}_{\ell}^1(B,A)$;
\item[c)] $\text{Tor}^1_{\ell}(\mathbb{Z}/\ell^n\mathbb{Z},\mathbb{Z}/\ell^m\mathbb{Z})=\mathbb{Z}/\ell^{\min \{m,n\}}\mathbb{Z}$.
\end{itemize}
\end{itemize}
\end{remarque}

\begin{proof}[Proof]
\begin{itemize}
\item[(i)] We have canonical isomorphisms:
$$A \boxtimes \mathbb{Q}_{\ell}/\mathbb{Z}_{\ell} \cong \left( A^D \hat{\otimes}_{\mathbb{Z}_{\ell}} \mathbb{Z}_{\ell} \right)^D \cong (A^D)^D \cong A.$$
\item[(ii)] Since $\hat{\otimes}_{\mathbb{Z}_{\ell}}$ commutes with inverse limits, we have canonical isomorphisms:
\begin{align*}
(\varinjlim_i A_i) \boxtimes (\varinjlim_j B_j) & \cong \left( \varprojlim_i (A_i^D) \hat{\otimes}_{\mathbb{Z}_{\ell}} \varprojlim_j (B_j^D) \right)^D\\
& \cong \left( \varprojlim_{i,j} (A_i^D \hat{\otimes}_{\mathbb{Z}_{\ell}} B_j^D) \right)^D\\
& \cong \varinjlim_{i,j} (A_i \boxtimes B_j).
\end{align*}
\item[(iii)] We have canonical isomorphisms:
$$(A \boxtimes B)  \boxtimes C  \cong  \left( \left(\left( A^D \hat{\otimes}_{\mathbb{Z}_{\ell}} B^D \right)^D\right)^D \hat{\otimes}_{\mathbb{Z}_{\ell}} C^D \right)^D \cong \left( A^D \hat{\otimes}_{\mathbb{Z}_{\ell}} B^D \hat{\otimes}_{\mathbb{Z}_{\ell}} C^D \right)^D.$$

\item[(iv)] The category $\mathcal{A}_{\ell}$ has enough injectives since every $\ell$-primary torsion abelian group embeds in a divisible $\ell$-primary torsion abelian group. Let's now prove that the functor $F_A$ is left-exact. To do so, consider an exact sequence of $\ell$-primary torsion abelian groups:
$$0\rightarrow B_1\rightarrow B_2 \rightarrow B_3 \rightarrow 0.$$
Since $B_1$, $B_2$ and $B_3$ are discrete torsion groups, we get a dual exact sequence:
$$0\rightarrow B_3^D\rightarrow B_2^D \rightarrow B_1^D \rightarrow 0.$$
The functor $- \hat{\otimes}_{\mathbb{Z}_{\ell}} A^D$ is right-exact, so that we obtain an exact sequence:
$$B_3^D \hat{\otimes}_{\mathbb{Z}_{\ell}} A^D\rightarrow B_2^D \hat{\otimes}_{\mathbb{Z}_{\ell}} A^D \rightarrow B_1^D \hat{\otimes}_{\mathbb{Z}_{\ell}} A^D \rightarrow 0.$$
Since all the groups in the previous exact sequence are profinite, we get a dual exact sequence:
$$0\rightarrow B_1\boxtimes A \rightarrow B_2\boxtimes A \rightarrow B_3 \boxtimes A.$$
\end{itemize}
\end{proof}

\end{lemma}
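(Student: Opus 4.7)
The whole lemma should follow from exploiting Pontryagin duality between $\ell$-primary torsion discrete abelian groups and pseudocompact $\mathbb{Z}_{\ell}$-modules, together with formal properties of the completed tensor product $\hat{\otimes}_{\mathbb{Z}_{\ell}}$ (as recalled in SGA~3, VII$_{\mathrm{B}}$). The key inputs I plan to use are: (a) biduality $(A^D)^D \cong A$ and its analogue for pseudocompact modules; (b) the fact that duality turns filtered colimits into cofiltered limits; (c) associativity and unit properties of $\hat{\otimes}_{\mathbb{Z}_{\ell}}$; (d) right-exactness of $- \hat{\otimes}_{\mathbb{Z}_{\ell}} M$ on pseudocompact modules; and (e) exactness of the Pontryagin duality functor on both sides.

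For part (i), I would observe that $(\mathbb{Q}_{\ell}/\mathbb{Z}_{\ell})^D \cong \mathbb{Z}_{\ell}$ is the unit of $\hat{\otimes}_{\mathbb{Z}_{\ell}}$, so $A^D \hat{\otimes}_{\mathbb{Z}_{\ell}} \mathbb{Z}_{\ell} \cong A^D$, and applying duality once more recovers $A$. For part (ii), the plan is to dualize each colimit to an inverse limit of pseudocompact modules, then use that $\hat{\otimes}_{\mathbb{Z}_{\ell}}$ commutes with cofiltered limits in both variables on pseudocompact modules, and finally re-dualize (turning the inverse limit of profinite objects back into a filtered colimit of torsion groups). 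For part (iii), I would simply expand the definition of $(A \boxtimes B) \boxtimes C$: since $A \boxtimes B$ is discrete torsion, biduality identifies $(A \boxtimes B)^D$ with $A^D \hat{\otimes}_{\mathbb{Z}_{\ell}} B^D$, and then associativity of $\hat{\otimes}_{\mathbb{Z}_{\ell}}$ gives the triple-tensor formula; the symmetric version for $A \boxtimes (B \boxtimes C)$ is identical, so associativity of $\boxtimes$ follows.

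For part (iv), enough injectives in $\mathcal{A}_{\ell}$ comes from the fact that every $\ell$-primary torsion abelian group embeds into a divisible $\ell$-primary torsion abelian group, and divisible torsion groups are injective. To check left-exactness of $F_A$, I would start from a short exact sequence $0 \to B_1 \to B_2 \to B_3 \to 0$, dualize it to a short exact sequence of pseudocompact $\mathbb{Z}_{\ell}$-modules (using exactness of Pontryagin duality), apply the right-exact functor $- \hat{\otimes}_{\mathbb{Z}_{\ell}} A^D$, and dualize once more to obtain the desired left-exact sequence $0 \to B_1 \boxtimes A \to B_2 \boxtimes A \to B_3 \boxtimes A$. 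The main delicate point I anticipate is exactly ensuring that one stays inside the pseudocompact setting at each step, so that the formal properties of $\hat{\otimes}_{\mathbb{Z}_{\ell}}$ and of duality can be applied; this is where one must verify that $A^D$, $B_i^D$, and their completed tensor products are all genuinely pseudocompact, and I expect this bookkeeping (rather than any deep argument) to be the only real obstacle.
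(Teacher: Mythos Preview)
Your proposal is correct and follows essentially the same approach as the paper: each part is proved by unwinding the definition of $\boxtimes$, applying Pontryagin biduality, and invoking the standard formal properties of $\hat{\otimes}_{\mathbb{Z}_{\ell}}$ (unit, commutation with inverse limits, associativity, right-exactness). The paper's proof is slightly terser but the logical skeleton is identical, including the left-exactness argument in (iv) via dualize/tensor/dualize.
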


In the sequel, we will often consider the group $A\boxtimes B$ when $A$ and $B$ are $\ell$-primary torsion Galois modules over a field $k$. In that case, note that the $\ell$-primary torsion group  $A\boxtimes B$ is naturally endowed with the structure of a Galois module over $k$. Moreover, for each integer $i$, isomorphism (\ref{w1}) induces an isomorphism of Galois modules:
$$A \boxtimes \mathbb{Q}_{\ell}/\mathbb{Z}_{\ell}(i) \cong A(i),$$
and isomorphisms (\ref{w2}) and (\ref{w3}) are isomorphisms of Galois modules.\\

\hspace{3ex} We finish this paragraph with two technical lemmas which will be useful in section \ref{sec1}:

\begin{lemma}\label{abstractdiv}
Let $\ell$ be a prime number and let $k$ be a field with characteristic different from $\ell$. Let $A,B,C,D$ be four $\ell$-primary torsion Galois modules over $k$. Let $N$ be a positive integer. We make the following assumptions:
\begin{itemize}
\item[(1)] the Galois module $A$ is divisible;
\item[(2)] we have an exact sequence of Galois modules:
\begin{equation}\label{abstractnonsense}
0 \rightarrow A \rightarrow B \rightarrow C \rightarrow 0;
\end{equation}
\item[(3)] for each $j\in \{0,...,N\}$, the abelian group $H^d(k,A^{\boxtimes j} \boxtimes C^{\boxtimes N-j}\boxtimes D)$ has finite exponent.
\end{itemize} 
Then the group $H^d(k,A^{\boxtimes j_1} \boxtimes B^{\boxtimes j_2}\boxtimes C^{\boxtimes j_3}\boxtimes D)$ has finite exponent for all triples $(j_1,j_2,j_3)$ such that $j_1+j_2+j_3=N$.
\end{lemma}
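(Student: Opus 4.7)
The plan is to proceed by induction on $j_2$. The base case $j_2 = 0$ is exactly hypothesis (3) (with $j = j_1$ and $N-j = j_3$), so nothing needs to be done there. For the inductive step, with $j_2 \geq 1$, I aim to relate the triple $(j_1, j_2, j_3)$ to the triples $(j_1+1, j_2-1, j_3)$ and $(j_1, j_2-1, j_3+1)$, both of which have strictly smaller $j_2$ and are therefore covered by the induction hypothesis.

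The bridge is the observation that, since $A$ is divisible, it is injective as an ordinary abelian group, so the short exact sequence (\ref{abstractnonsense}) splits at the level of abelian groups (the splitting need not be Galois-equivariant). Now $-\boxtimes M$ is left-exact by Lemma \ref{formal}(iv), and a left-exact functor between abelian categories automatically preserves finite biproducts, hence also split short exact sequences. Combining this with the functoriality of $\boxtimes$ in Galois modules, I obtain, for every $\ell$-primary torsion Galois module $M$, a short exact sequence of Galois modules
$$0 \to A \boxtimes M \to B \boxtimes M \to C \boxtimes M \to 0.$$

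Specialising to $M := A^{\boxtimes j_1} \boxtimes B^{\boxtimes j_2-1} \boxtimes C^{\boxtimes j_3} \boxtimes D$ and using the commutativity and associativity of $\boxtimes$ from Lemma \ref{formal}(iii), one rewrites the three terms and obtains the short exact sequence
\begin{multline*}
0 \to A^{\boxtimes j_1+1} \boxtimes B^{\boxtimes j_2-1} \boxtimes C^{\boxtimes j_3} \boxtimes D \to A^{\boxtimes j_1} \boxtimes B^{\boxtimes j_2} \boxtimes C^{\boxtimes j_3} \boxtimes D \\ \to A^{\boxtimes j_1} \boxtimes B^{\boxtimes j_2-1} \boxtimes C^{\boxtimes j_3+1} \boxtimes D \to 0.
\end{multline*}
The induction hypothesis applies to the two outer terms, so their $H^d$ have finite exponent. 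The long exact sequence in Galois cohomology then places $H^d\bigl(k, A^{\boxtimes j_1} \boxtimes B^{\boxtimes j_2} \boxtimes C^{\boxtimes j_3} \boxtimes D\bigr)$ as an extension of a subgroup of a finite-exponent group by a quotient of another finite-exponent group, and such an extension itself has finite exponent, which closes the induction.

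The main obstacle is establishing the short exactness of the displayed three-term sequence: left-exactness is free from Lemma \ref{formal}(iv), but surjectivity of $B \boxtimes M \to C \boxtimes M$ is the point where the divisibility of $A$ must genuinely be used, via the splitting of (\ref{abstractnonsense}) as abelian groups. Everything else in the argument is essentially book-keeping on the triples $(j_1, j_2, j_3)$ together with a standard invocation of the long exact sequence in Galois cohomology.
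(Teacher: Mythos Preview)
Your proof is correct and follows essentially the same approach as the paper: induction on $j_2$, using the divisibility of $A$ to split the sequence (\ref{abstractnonsense}) as abelian groups so that $-\boxtimes M$ preserves its exactness, and then concluding via the long exact sequence in Galois cohomology. Your justification that a left-exact additive functor preserves split short exact sequences is slightly more explicit than the paper's appeal to compatibility of $\boxtimes$ with direct sums, but the argument is otherwise the same.
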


\begin{proof}[Proof]
We proceed by induction on $j_2$:
\begin{itemize}
\item[$\bullet$] The case $j_2=0$ is assumption (3).
\item[$\bullet$] Assume that we have proved the lemma for some $j_2$. Let $j_1$ and $j_3$ be non-negative integers such that $j_1+(j_2+1)+j_3=N$. Since $A$ is divisible, exact sequence (\ref{abstractnonsense}) is split as a sequence of abelian groups. But $\boxtimes$ is compatible with direct sums, so we can apply the functor $- \boxtimes \left(A^{\boxtimes j_1} \boxtimes B^{\boxtimes j_2} \boxtimes C^{\boxtimes j_3} \boxtimes D\right)$ to exact sequence (\ref{abstractnonsense}) and we get the following exact sequence of Galois modules:
\begin{equation} \label{suitform}
0\rightarrow A^{\boxtimes j_1+1} \boxtimes B^{\boxtimes j_2} \boxtimes C^{\boxtimes j_3} \boxtimes D \rightarrow A^{\boxtimes j_1} \boxtimes B^{\boxtimes j_2+1} \boxtimes C^{\boxtimes j_3} \boxtimes D \rightarrow A^{\boxtimes j_1} \boxtimes B^{\boxtimes j_2} \boxtimes C^{\boxtimes j_3+1} \boxtimes D \rightarrow 0.
\end{equation}
By assumption, the groups $$H^d(k,A^{\boxtimes j_1+1} \boxtimes B^{\boxtimes j_2} \boxtimes C^{\boxtimes j_3}\boxtimes D) \;\;\; \mathrm{and} \;\;\; H^d(k,A^{\boxtimes j_1} \boxtimes B^{\boxtimes j_2} \boxtimes C^{\boxtimes j_3+1}\boxtimes D)$$ have finite exponent. Hence it follows from exact sequence (\ref{suitform}) that the abelian group $H^d(k,A^{\boxtimes j_1} \boxtimes B^{\boxtimes j_2+1}\boxtimes C^{\boxtimes j_3}\boxtimes D)$ has finite exponent.
 \end{itemize}
\end{proof}

\begin{lemma}\label{abstractfini}
Let $\ell$ be a prime number and let $k$ be a field with characteristic different from $\ell$. Let $A,B,C,D$ be four $\ell$-primary torsion Galois modules over $k$. Let $N$ be a positive integer. We make the following assumptions:
\begin{itemize}
\item[(1)] the Galois module $A$ has finite exponent;
\item[(2)] we have an exact sequence of Galois modules:
\begin{equation}\label{abstractnonsense2}
0 \rightarrow A \rightarrow B \rightarrow C \rightarrow 0;
\end{equation}
\item[(3)] the abelian group $H^d(k,C^{\boxtimes N}\boxtimes D)$ has finite exponent.
\end{itemize} 
Then the group $H^d(k, B^{\boxtimes j_2}\boxtimes C^{\boxtimes j_3}\boxtimes D)$ has finite exponent for all pairs $(j_2,j_3)$ such that $j_2+j_3=N$.
\end{lemma}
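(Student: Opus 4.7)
The plan is to mirror the structure of the proof of Lemma~\ref{abstractdiv}, proceeding by induction on $j_2$, with the base case $j_2=0$ corresponding exactly to assumption~(3). The essential difference is that divisibility of $A$ was used in Lemma~\ref{abstractdiv} to split sequence~(\ref{abstractnonsense}) as abelian groups, so that applying $-\boxtimes Y$ remained exact for any Galois module $Y$. That splitting is unavailable here, and I will replace it by a controlled use of the derived functor $\text{Tor}^1_\ell$ from Lemma~\ref{formal}(iv), together with the observation that the finite-exponent hypothesis on $A$ propagates through both $\boxtimes$ and $\text{Tor}^1_\ell$.

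Before running the induction, I would record the following key observation: if $M$ is an $\ell$-primary torsion Galois module killed by $\ell^m$, then for any $\ell$-primary torsion Galois module $X$, both $M \boxtimes X$ and $\text{Tor}^1_\ell(M,X)$ are killed by $\ell^m$. For $M \boxtimes X$, the group $M^D$ is killed by $\ell^m$, hence so is $M^D \hat{\otimes}_{\mathbb{Z}_{\ell}} X^D$, hence so is its Pontryagin dual. For $\text{Tor}^1_\ell(M,X)$, the endomorphism $\ell^m$ of $M$ is the zero morphism and, by functoriality, induces simultaneously the zero endomorphism and multiplication by $\ell^m$ on the Tor group. Fix now $m \geq 1$ such that $\ell^m A = 0$.

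For the inductive step, suppose the conclusion holds for all smaller values of $j_2$, and fix a pair $(j_2+1, j_3)$ with $(j_2+1)+j_3 = N$. Setting $Y := B^{\boxtimes j_2} \boxtimes C^{\boxtimes j_3} \boxtimes D$, I would apply the derived functors of $-\boxtimes Y$ to sequence~(\ref{abstractnonsense2}); since $\text{Tor}^i_\ell$ vanishes for $i\geq 2$, this produces a six-term exact sequence from which we extract the two short exact sequences
\[
0 \to A\boxtimes Y \to B\boxtimes Y \to I \to 0, \qquad 0 \to I \to C \boxtimes Y \to Q \to 0,
\]
where $I$ denotes the image of $B\boxtimes Y$ in $C\boxtimes Y$ and $Q$ is a Galois submodule of $\text{Tor}^1_\ell(A,Y)$. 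By the preliminary observation, both $H^d(k, A\boxtimes Y)$ and $H^{d-1}(k,Q)$ are killed by $\ell^m$; by the induction hypothesis applied to $(j_2, j_3+1)$, the group $H^d(k, C\boxtimes Y) = H^d(k, B^{\boxtimes j_2} \boxtimes C^{\boxtimes j_3+1} \boxtimes D)$ has finite exponent. Chasing both long exact sequences in Galois cohomology then forces $H^d(k, B^{\boxtimes j_2+1} \boxtimes C^{\boxtimes j_3} \boxtimes D)$ to have finite exponent, completing the induction.

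The main obstacle is the failure of right-exactness of $\boxtimes$: without a splitting of~(\ref{abstractnonsense2}), a $\text{Tor}^1_\ell$ term appears after applying $-\boxtimes Y$, and one must absorb both this term and the $A\boxtimes Y$ term separately in the cohomology chase. The finite-exponent hypothesis on $A$ is precisely what keeps this obstruction under control, since it forces every relevant Tor group, and therefore the discrepancy one must propagate at each stage of the induction, to be killed by $\ell^m$.
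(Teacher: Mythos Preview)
Your proof is correct and follows essentially the same route as the paper's: induction on $j_2$, applying the left-exact functor $-\boxtimes Y$ with $Y = B^{\boxtimes j_2}\boxtimes C^{\boxtimes j_3}\boxtimes D$ to the short exact sequence~(\ref{abstractnonsense2}), and splitting the resulting four-term exact sequence into two short exact sequences with an intermediate module $I$ and a cokernel $Q$ (called $J$ in the paper) sitting inside $\text{Tor}^1_\ell(A,Y)$. Your explicit preliminary observation that $\ell^m A=0$ forces $\ell^m(M\boxtimes X)=0$ and $\ell^m\text{Tor}^1_\ell(M,X)=0$ is a clean way to justify what the paper asserts in one line.
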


\begin{proof}[Proof]
We proceed by induction on $j_2$:
\begin{itemize}
\item[$\bullet$] The case $j_2=0$ is assumption (3).
\item[$\bullet$] Assume that we have proved the lemma for some $j_2$. Let $j_3$ be a non-negative integer such that $(j_2+1)+j_3=N$. Since the functor $- \boxtimes \left(B^{\boxtimes j_2} \boxtimes C^{\boxtimes j_3} \boxtimes D\right)$ is left-exact, we get an exact sequence:
\begin{equation*} 
0\rightarrow A \boxtimes B^{\boxtimes j_2} \boxtimes C^{\boxtimes j_3} \boxtimes D \rightarrow  B^{\boxtimes j_2+1} \boxtimes C^{\boxtimes j_3} \boxtimes D \rightarrow  B^{\boxtimes j_2} \boxtimes C^{\boxtimes j_3+1} \boxtimes D  \rightarrow \text{Tor}_{\ell}^1(A,B^{\boxtimes j_2} \boxtimes C^{\boxtimes j_3} \boxtimes D).
\end{equation*}
Since the group $\text{Tor}_{\ell}^1(A,B^{\boxtimes j_2} \boxtimes C^{\boxtimes j_3} \boxtimes D)$ has finite exponent, we deduce that there are two Galois modules $I$ and $J$ and two exact sequences of Galois modules:
\begin{gather} \label{es1}
0\rightarrow A \boxtimes B^{\boxtimes j_2} \boxtimes C^{\boxtimes j_3} \boxtimes D \rightarrow  B^{\boxtimes j_2+1} \boxtimes C^{\boxtimes j_3} \boxtimes D \rightarrow  I \rightarrow 0,\\
0\rightarrow I \rightarrow B^{\boxtimes j_2} \boxtimes C^{\boxtimes j_3+1} \boxtimes D \rightarrow J \rightarrow 0,\label{es2}
\end{gather}
such that $J$ has finite exponent.
  But, by assumption, the group $H^d(k,B^{\boxtimes j_2} \boxtimes C^{\boxtimes j_3+1}\boxtimes D)$ has finite exponent. Hence it follows from exact sequences (\ref{es1}) and (\ref{es2}) that the group $H^d(k, B^{\boxtimes j_2+1}\boxtimes C^{\boxtimes j_3}\boxtimes D)$ has finite exponent.
 \end{itemize}
\end{proof}

\subsection{A consequence of the Bloch-Kato conjecture}\label{BK}

\hspace{3ex} When $\ell$ is a prime number and $K$ is a field of characteristic different from $\ell$, the Bloch-Kato conjecture states that the morphism induced by the cup-product:
$$H^1(K,\mathbb{Z}/\ell^r\mathbb{Z}(1))^{\otimes N} \rightarrow H^N(K,\mathbb{Z}/\ell^r\mathbb{Z}(N))$$
is surjective for any positive integers $r$ and $N$. In this section, we prove a statement that allows one to deal with all possible values of $r$ at the same time. For this purpose, we will need to use the operation $\boxtimes$ that has been introduced in the previous section.

\begin{lemma}\label{tors}
Let $\ell$ be a prime number, $N$ a positive integer and $A$ an $\ell$-primary torsion abelian group. For each $s\geq 1$, there is a natural isomorphism:
$${_{\ell^s}}(A^{\boxtimes N}) \cong ({_{\ell^s}}A)^{\boxtimes N}.$$
Moreover, if $t\geq s$, the injection ${_{\ell^s}}(A^{\boxtimes N}) \hookrightarrow {_{\ell^t}}(A^{\boxtimes N})$ is identified to the morphism $({_{\ell^s}}A)^{\boxtimes N} \rightarrow ({_{\ell^t}}A)^{\boxtimes N}$ induced by the injection ${_{\ell^s}}A \hookrightarrow{_{\ell^t}} A$.
\end{lemma}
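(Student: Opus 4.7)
The plan is to transport everything to the Pontryagin-dual side, where $\boxtimes$ becomes $\hat{\otimes}_{\mathbb{Z}_{\ell}}$ between pseudo-compact $\mathbb{Z}_{\ell}$-modules, and then use right-exactness of $\hat{\otimes}_{\mathbb{Z}_{\ell}}$ to commute the quotient by $\ell^s$ past the tensor product. First I would establish two dual identities: (a) for every discrete $\ell$-primary torsion group $A$, Pontryagin-dualizing the four-term exact sequence
$$ 0 \to {_{\ell^s}}A \to A \xrightarrow{\ell^s} A \to A/\ell^s A \to 0 $$
yields a canonical isomorphism $({_{\ell^s}}A)^D \cong A^D/\ell^s A^D$; and (b) for every pseudo-compact $\mathbb{Z}_{\ell}$-module $P$, the Pontryagin adjunction gives ${_{\ell^s}}(P^D) \cong (P/\ell^s P)^D$. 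Applying (b) to the pseudo-compact module $P = (A^D)^{\hat{\otimes} N}$, whose dual is by definition $A^{\boxtimes N}$, reduces the desired isomorphism to the pseudo-compact statement
$$ (A^D)^{\hat{\otimes} N}/\ell^s \cong \bigl( ({_{\ell^s}}A)^D \bigr)^{\hat{\otimes} N}. $$

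Next I would prove this pseudo-compact statement by a short induction on $N$, using right-exactness of $\hat{\otimes}_{\mathbb{Z}_{\ell}}$ in the category of pseudo-compact $\mathbb{Z}_{\ell}$-modules (chapter $\mathrm{VII_B}$ of \cite{sga3}): tensoring the presentation $P \xrightarrow{\ell^s} P \to P/\ell^s P \to 0$ with any pseudo-compact $Q$ gives $(P \,\hat{\otimes}_{\mathbb{Z}_{\ell}}\, Q)/\ell^s \cong (P/\ell^s P) \,\hat{\otimes}_{\mathbb{Z}_{\ell}}\, Q$. Iterating this in each of the $N$ slots and substituting identity (a) yields the desired equality. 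Pontryagin-dualizing back then produces the sought canonical isomorphism ${_{\ell^s}}(A^{\boxtimes N}) \cong ({_{\ell^s}}A)^{\boxtimes N}$.

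For the compatibility with the inclusion ${_{\ell^s}}A \hookrightarrow {_{\ell^t}}A$ when $t \geq s$, the point is pure naturality: after Pontryagin-dualizing, the two maps to be compared become the canonical surjection $A^D/\ell^t \twoheadrightarrow A^D/\ell^s$ on the $A$-side, and the map $((A^D)^{\hat{\otimes} N})/\ell^t \twoheadrightarrow ((A^D)^{\hat{\otimes} N})/\ell^s$ on the $A^{\boxtimes N}$-side; the second is precisely the image of the first under the functor $(-)^{\hat{\otimes} N}$, so the compatibility reduces to the functoriality of the constructions used in the previous step. The only technical subtlety anywhere in the argument is the commutation of $\hat{\otimes}_{\mathbb{Z}_{\ell}}$ with the quotient $(-)/\ell^s$, and this is precisely where working in the pseudo-compact category (rather than with an uncompleted tensor product) is essential; it is a standard property of pseudo-compact modules, so no serious obstacle is expected.
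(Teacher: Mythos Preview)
Your proposal is correct and follows essentially the same route as the paper: the paper also dualizes, uses $({_{\ell^s}}A)^D \cong A^D/\ell^s$ and ${_{\ell^s}}(P^D) \cong (P/\ell^s)^D$, and commutes the quotient by $\ell^s$ past the completed tensor product, obtaining the isomorphism as a single chain of canonical identifications; the compatibility is likewise deduced by tracking the inclusion through these identifications. Your write-up is slightly more explicit about why the commutation step holds (right-exactness of $\hat{\otimes}_{\mathbb{Z}_\ell}$), but the argument is the same.
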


\begin{proof}[Proof]
There are natural isomorphisms:
\begin{align*}
{_{\ell^s}}\left( A^{\boxtimes N}  \right) & \cong {_{\ell^s}}\left[ \left(  ( A^D ) ^{\hat{\otimes}_{\mathbb{Z}_{\ell}} N}  \right)^D \right] \cong \left[ \left(  (A^D ) ^{\hat{\otimes}_{\mathbb{Z}_{\ell}} N}  \right)/\ell^s \right]^D  \\
&\cong \left[ \left(  \left(A^D \right)/\ell^s\right) ^{\hat{\otimes}_{\mathbb{Z}_{\ell}} N} \right]^D  \cong \left[ \left(  \left({_{\ell^s}}A\right)^D\right) ^{\hat{\otimes}_{\mathbb{Z}_{\ell}} N} \right]^D \cong ({_{\ell^s}}A)^{\boxtimes N}.
\end{align*}
The second part of the statement can be easily checked by following the injection  ${_{\ell^s}}(A^{\boxtimes N}) \hookrightarrow {_{\ell^t}}(A^{\boxtimes N})$ through the previous isomorphisms.
\end{proof}

\begin{lemma}\label{torsbis}
Let $\ell$ be a prime number and $N$ a positive integer. Let $A$ be an $\ell$-primary torsion abelian group and $A_s$ the $\ell^s$-torsion subgroup of $A$ for each $s \geq 1$. Denote by $i_{s,t}$ the morphism $(A_s)^{\boxtimes N} \rightarrow (A_t)^{\boxtimes N}$ induced by the injection $A_s \hookrightarrow A_t$ when $t\geq s$. Assume that $A$ is divisible.
\begin{itemize}
\item[(i)] Let $t \geq s \geq 1$ be integers. One can define a morphism:
$$f_{s,t}:A_s^{\otimes N} \rightarrow A_t^{\otimes N}$$
in the following way: for $a_1,...,a_N \in A_s$, one sets: $$f_{s,t}(a_1\otimes ... \otimes a_N)=\ell^{t-s}(b_1\otimes ... \otimes b_N)$$ where each $b_i \in A_t$ satisfies the equality $\ell^{t-s}b_i=a_i$.
\item[(ii)] There are natural isomorphisms:
$$\varphi_s: (A_s)^{\otimes N} \rightarrow (A_s)^{\boxtimes N}$$
such that the following diagram commutes:
\begin{equation}\label{commut}
\xymatrix{
(A_s)^{\otimes N} \ar[r]^{\varphi_s}\ar[d]^{f_{s,t}} & (A_s)^{\boxtimes N} \ar[d]^{i_{s,t}}\\
(A_t)^{\otimes N} \ar[r]^{\varphi_t} & (A_t)^{\boxtimes N}
}
\end{equation}
for all $t\geq s \geq 1$.
\end{itemize}

\end{lemma}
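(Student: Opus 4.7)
For part (i), the plan is to verify well-definedness and multilinearity of $f_{s,t}$ by a direct absorption argument. Existence of a preimage $b_i \in A_t$ of $a_i$ under multiplication by $\ell^{t-s}$ uses divisibility: some $b \in A$ satisfies $\ell^{t-s} b = a_i$, and then $\ell^t b = \ell^s a_i = 0$ forces $b \in A_t$. Independence of the choice of each $b_i$ follows by absorbing $\ell^{t-s}$ into any chosen tensor factor, since $\ell^{t-s}(b_1 \otimes \cdots \otimes b_N) = a_1 \otimes b_2 \otimes \cdots \otimes b_N$ (and symmetrically for any slot); another choice $b_j'$ then differs from $b_j$ by an element killed by $\ell^{t-s}$, which contributes nothing once $\ell^{t-s}$ is moved into the $j$-th factor. $\mathbb{Z}$-multilinearity in the $a_i$ follows by the same manipulation.

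For part (ii), I would construct $\varphi_s$ via the evaluation pairing. Since $A_s$ is killed by $\ell^s$, each $\phi \in A_s^D$ takes values in $\frac{1}{\ell^s}\mathbb{Z}/\mathbb{Z} \subset \mathbb{Q}/\mathbb{Z}$, identified canonically with $\mathbb{Z}/\ell^s$ via $c/\ell^s \leftrightarrow c$. For $a_1, \dots, a_N \in A_s$, I define $\varphi_s(a_1 \otimes \cdots \otimes a_N) \in A_s^{\boxtimes N} = \bigl((A_s^D)^{\hat{\otimes}_{\mathbb{Z}_\ell} N}\bigr)^D$ to be the continuous linear form sending a pure tensor $\phi_1 \hat{\otimes} \cdots \hat{\otimes} \phi_N$ to $\prod_i \phi_i(a_i)$, the product being computed in $\mathbb{Z}/\ell^s$ and reinjected in $\mathbb{Q}/\mathbb{Z}$; $\mathbb{Z}_\ell$-multilinearity and continuity let this factor through $A_s^{\otimes N}$. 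To check $\varphi_s$ is an isomorphism and that diagram (\ref{commut}) commutes, I reduce to the rank-one case. Divisibility gives $A \cong \bigoplus_{i\in I} \mathbb{Q}_\ell/\mathbb{Z}_\ell$, so $A_s \cong \bigoplus_{i\in I} \mathbb{Z}/\ell^s$. Both $M \mapsto M^{\otimes N}$ and $M \mapsto M^{\boxtimes N}$ commute with filtrant direct limits of $\ell$-primary torsion groups (for $\boxtimes$, by lemma \ref{formal}(ii)), so one may assume $I$ finite; multilinearity then reduces everything to $A = \mathbb{Q}_\ell/\mathbb{Z}_\ell$, where $\varphi_s$ is the identity on $\mathbb{Z}/\ell^s$, and both $f_{s,t}$ and $i_{s,t}$ are multiplication by $\ell^{t-s}$.

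The main obstacle I anticipate is keeping the identifications $\mathbb{Z}/\ell^s \leftrightarrow \frac{1}{\ell^s}\mathbb{Z}/\mathbb{Z}$ coherent as $s$ varies: $\varphi_s$ uses a multiplicative structure on $\mathbb{Z}/\ell^s$ with no intrinsic meaning in $\mathbb{Q}/\mathbb{Z}$, while the natural inclusion $\frac{1}{\ell^s}\mathbb{Z}/\mathbb{Z} \hookrightarrow \frac{1}{\ell^t}\mathbb{Z}/\mathbb{Z}$ becomes multiplication by $\ell^{t-s}$ under those identifications. The factor $\ell^{t-s}$ in the definition of $f_{s,t}$ exists precisely to compensate for this, so commutativity of diagram (\ref{commut}) reduces to the following intrinsic check: since $i_{s,t}$ is dual to restriction $\psi \mapsto \psi|_{A_s}$, the value of $i_{s,t} \circ \varphi_s(a_1 \otimes \cdots)$ at $\psi_1 \hat{\otimes} \cdots \hat{\otimes} \psi_N$ (with $\psi_i \in A_t^D$) is $\prod_i \psi_i(a_i)$ in $\frac{1}{\ell^s}\mathbb{Z}/\mathbb{Z}$, while $\varphi_t \circ f_{s,t}(a_1 \otimes \cdots)$ evaluates to $\ell^{t-s}\prod_i \psi_i(b_i)$; writing $\psi_i(b_i) = m_i/\ell^t$ gives $\psi_i(a_i) = m_i/\ell^s$, and both collapse to $\prod_i m_i / \ell^s$, so the two compositions agree.
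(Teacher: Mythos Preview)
Your argument is correct and follows essentially the same route as the paper. Both define $\varphi_s$ via the evaluation pairing $(a_1\otimes\cdots\otimes a_N)\mapsto \bigl(\phi_1\hat\otimes\cdots\hat\otimes\phi_N\mapsto \prod_i\phi_i(a_i)\bigr)$, reduce the isomorphism claim to the rank-one case $\mathbb{Z}/\ell^s\mathbb{Z}$ (you decompose $A\cong\bigoplus_I\mathbb{Q}_\ell/\mathbb{Z}_\ell$ and use compatibility of $\boxtimes$ with filtrant colimits, whereas the paper applies the structure theorem for groups of bounded exponent directly to $A_s$ and uses compatibility of $\boxtimes$ with direct sums; these are equivalent manoeuvres), and both verify the commutativity of diagram~(\ref{commut}) by the same bookkeeping with the identifications $\frac{1}{\ell^s}\mathbb{Z}/\mathbb{Z}\leftrightarrow\mathbb{Z}/\ell^s\mathbb{Z}$ and the factor $\ell^{t-s}$.
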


\begin{proof}[Proof]
\item[(i)] Note first that, given $a_1,...,a_N \in A_s$, one can always find $b_1,...,b_N \in A_t$ such that $\ell^{t-s}b_i=a_i$ for $i\in \{1,...,N\}$ since $A$ is divisible. Moreover, if $b_1',...,b_N'$ are other elements of $A_t$ such that $\ell^{t-s}b_i'=a_i$ for $i\in \{1,...,N\}$, then:
\begin{align*}
\ell^{t-s}(b_1\otimes ... \otimes b_N) &= (\ell^{t-s}b_1)\otimes b_2 \otimes ... \otimes b_N \\& = (\ell^{t-s}b_1')\otimes b_2 \otimes  ... \otimes b_N\\ &= \ell^{t-s}(b_1'\otimes b_2 \otimes  ... \otimes b_N),
\end{align*}
and by repeating this argument, a simple induction shows that $\ell^{t-s}(b_1\otimes ... \otimes b_N) = \ell^{t-s}(b_1'\otimes ... \otimes b_N')$. This proves that $f_{s,t}$ is well-defined.
\item[(ii)] By the universal property of $\hat{\otimes}_{\mathbb{Z}_{\ell}}$, for each $s>0$, there are natural isomorphisms:
\begin{equation}\label{isom}
 (A_s)^{\boxtimes N} \cong \left[ \left(  \left(A_s\right)^D\right) ^{\hat{\otimes}_{\mathbb{Z}_{\ell}} N} \right]^D \cong  \text{Mult}_c((A_s)^D \times ... \times (A_s)^D , \mathbb{Z}/\ell^s\mathbb{Z}),
 \end{equation}
where $\text{Mult}_c$ stands for the set of continuous multilinear maps. Consider the morphism:
$$\varphi_s': (A_s)^{\otimes N} \rightarrow \text{Mult}_c(A_s^D \times ... \times A_s^D , \mathbb{Z}/\ell^s\mathbb{Z})$$
which maps $a_1 \otimes ... \otimes a_N$ to the multilinear map $f \in \text{Mult}_c(A_s^D \times ... \times A_s^D , \mathbb{Z}/\ell^s\mathbb{Z})$ defined as follows: if $g_1,...,g_N \in \text{Hom}_c(A_s,\mathbb{Z}/\ell^s\mathbb{Z})$, then $f(g_1,...,g_N)= g_1(a_1)...g_N(a_N) \in \mathbb{Z}/\ell^s\mathbb{Z}$. \\

Let's check that $\varphi_s'$ is an isomorphism. By the structure theorem of abelian groups with finite exponent, $A_s$ is a direct sum of cyclic groups. Write $A_s \cong \bigoplus_{i\in I} \mathbb{Z}/\ell^{\alpha_i}\mathbb{Z}$ with $\alpha_i >0$ for each $i$ and observe that, since $A$ is divisible, all the $\alpha_i$ have to be equal to $s$. Now define the morphism:
$$\psi_s:  (\mathbb{Z}/\ell^s\mathbb{Z})^{\otimes N} \rightarrow \text{Mult}_c((\mathbb{Z}/\ell^s\mathbb{Z})^D \times ... \times (\mathbb{Z}/\ell^s\mathbb{Z})^D , \mathbb{Z}/\ell^s\mathbb{Z})$$
which maps $b_1 \otimes ... \otimes b_N$ to the multilinear map $f \in \text{Mult}_c((\mathbb{Z}/\ell^s\mathbb{Z})^D \times ... \times (\mathbb{Z}/\ell^s\mathbb{Z})^D , \mathbb{Z}/\ell^s\mathbb{Z})$ defined as follows: if $g_1,...,g_N \in \text{Hom}_c(\mathbb{Z}/\ell^s\mathbb{Z},\mathbb{Z}/\ell^s\mathbb{Z})$, then $f(g_1,...,g_N)= g_1(b_1)...g_N(b_N) \in \mathbb{Z}/\ell^s\mathbb{Z}$. By the compatibility of the operation $\boxtimes$ with direct sums, it is enough to prove that $\psi_s$ is an isomorphism to deduce that $\varphi_s'$ is also an isomorphism. But the groups $(\mathbb{Z}/\ell^s\mathbb{Z})^{\otimes N}$ and $\text{Mult}_c((\mathbb{Z}/\ell^s\mathbb{Z})^D \times ... \times (\mathbb{Z}/\ell^s\mathbb{Z})^D , \mathbb{Z}/\ell^s\mathbb{Z})$ are both cyclic of order $\ell^s$ and $\psi_s$ maps the generator $1\otimes ... \otimes 1$ of $(\mathbb{Z}/\ell^s\mathbb{Z})^{\otimes N}$ to the generator $f_0$ of $(\mathbb{Z}/\ell^s\mathbb{Z})^{\boxtimes N}$ defined by $f_0(\text{id}_{\mathbb{Z}/\ell^s\mathbb{Z}},...,\text{id}_{\mathbb{Z}/\ell^s\mathbb{Z}})= 1 \in \mathbb{Z}/\ell^s\mathbb{Z}$. This proves that both $\psi_s$ and $\varphi_s'$ are isomorphisms. By composing $\varphi_s'$ with the isomorphism (\ref{isom}), we get a natural isomorphism:
$$\varphi_s: (A_s)^{\otimes N} \rightarrow (A_s)^{\boxtimes N}.$$

All that remains to prove is the commutativity of diagram (\ref{commut}). For this purpose, it is enough to prove that the diagram:
\begin{equation}\label{diag2}
\xymatrix{
\text{Mult}_c(A_s^D \times ... \times A_s^D , \mathbb{Z}/\ell^s\mathbb{Z})\ar[d]^{j_{s,t}} & A_s^{\otimes N}  \ar[l]_-{\varphi_s'}^-{\cong} \ar[d]^{f_{s,t}}  \\
\text{Mult}_c(A_t^D \times ... \times A_t^D , \mathbb{Z}/\ell^t\mathbb{Z})  &  A_t^{\otimes N}  \ar[l]_-{\varphi_t'}^-{\cong} 
}
\end{equation}
commutes. Here, the left vertical morphism $j_{s,t}$ is induced by the injection $A_s \hookrightarrow A_t$ and by the injection $\psi_{s,t}:\mathbb{Z}/\ell^s\mathbb{Z} \hookrightarrow \mathbb{Z}/\ell^r\mathbb{Z}$ sending 1 to $\ell^{r-s}$. \\

Take any $a_1,...,a_N \in A_s$ and choose $b_1,...,b_N \in A_t$ such that $\ell^{t-s}b_i=a_i$. We then compute:
\begin{align*}
j_{s,t}(\varphi_s'(a_1\otimes ... \otimes a_N)):&(g_1,...,g_N)\in A_t^D \times ... \times A_t^D \mapsto  \psi_{s,t}(\psi_{s,t}^{-1}(g_1(a_1))...\psi_{s,t}^{-1}(g_N(a_N))),\\
\varphi_t'(f_{s,t}(a_1\otimes ... \otimes a_N)):&(g_1,...,g_N)\in A_t^D \times ... \times A_t^D \mapsto \ell^{t-s}g_1(b_1)...g_N(b_N).
\end{align*}
One easily checks that, if $c_1,...,c_N$ are elements in $\mathbb{Z}/\ell^t\mathbb{Z}$, then:
$$\psi_{s,t}(\psi_{s,t}^{-1}(\ell^{t-s}c_1)...\psi_{s,t}^{-1}(\ell^{t-s}c_N)) = \ell^{t-s}c_1...c_N.$$
Hence $j_{s,t}\circ \varphi_s' = \varphi_t' \circ f_{s,t}$, and the commutativity of (\ref{diag2}) is proved.\\
\end{proof}

\begin{theorem}\label{surj}
Let $\ell$ be a prime number and let $K$ be a field with characteristic different from $\ell$. Let $N$ be a positive integer. Then there exists a surjection of abelian groups:
$$H^1(K,\mathbb{Q}_{\ell}/\mathbb{Z}_{\ell}(1))^{\boxtimes N} \rightarrow H^N(K,\mathbb{Q}_{\ell}/\mathbb{Z}_{\ell}(N)).$$
\end{theorem}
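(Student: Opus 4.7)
The plan is to reduce to the Bloch-Kato conjecture at each finite level $\ell^s$ and to use the lemmas of this section to pass coherently to the limit. The key preliminary observation is that $A := H^1(K, \mathbb{Q}_{\ell}/\mathbb{Z}_{\ell}(1))$ is divisible: by Kummer theory $A \cong K^{\times} \otimes_{\mathbb{Z}} \mathbb{Q}_{\ell}/\mathbb{Z}_{\ell}$, and any tensor product with $\mathbb{Q}_{\ell}/\mathbb{Z}_{\ell}$ is divisible. Setting $A_s := {_{\ell^s}}A$, Lemma \ref{torsbis}(ii) provides natural isomorphisms $\varphi_s : A_s^{\otimes N} \xrightarrow{\sim} A_s^{\boxtimes N}$ which are compatible with the transition maps $f_{s,t}$ on the source and the functorially induced $i_{s,t}$ on the target. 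Combined with Lemma \ref{formal}(ii), this identifies $A^{\boxtimes N}$ with the colimit $\varinjlim_s A_s^{\otimes N}$ (with transitions $f_{s,t}$).

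Next, I would invoke Bloch-Kato at each level: the cup-product yields a surjection $H^1(K,\mathbb{Z}/\ell^s\mathbb{Z}(1))^{\otimes N} \twoheadrightarrow H^N(K, \mathbb{Z}/\ell^s\mathbb{Z}(N))$. The long exact sequence associated to $0 \to \mathbb{Z}/\ell^s\mathbb{Z}(1) \to \mathbb{Q}_{\ell}/\mathbb{Z}_{\ell}(1) \xrightarrow{\ell^s} \mathbb{Q}_{\ell}/\mathbb{Z}_{\ell}(1) \to 0$ identifies $A_s$ with the quotient of $H^1(K,\mathbb{Z}/\ell^s\mathbb{Z}(1))$ by the Bockstein image of $H^0(K, \mathbb{Q}_{\ell}/\mathbb{Z}_{\ell}(1))/\ell^s$, while the analogous sequence in degree $N$ identifies the image of $H^N(K,\mathbb{Z}/\ell^s\mathbb{Z}(N)) \to H^N(K, \mathbb{Q}_{\ell}/\mathbb{Z}_{\ell}(N))$ with the $\ell^s$-torsion subgroup. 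I would then argue that the composition of the Bloch-Kato surjection with this map descends to a surjection
\[ \Psi_s : A_s^{\otimes N} \twoheadrightarrow {_{\ell^s}}H^N(K, \mathbb{Q}_{\ell}/\mathbb{Z}_{\ell}(N)), \]
via the compatibility of cup-products with boundary maps: replacing a lift $\tilde{a}_1 \in H^1(K, \mathbb{Z}/\ell^s\mathbb{Z}(1))$ of $a_1 \in A_s$ by $\tilde{a}_1 + \delta(\zeta)$ changes the resulting cup product by a class of the form $\delta(\zeta \cup \tilde{a}_2 \cup \cdots)$, which dies in $H^N(K, \mathbb{Q}_{\ell}/\mathbb{Z}_{\ell}(N))$ because every Bockstein image from the sequence above does.

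To conclude, I would verify that the family $(\Psi_s)_s$ is compatible with the transition maps $f_{s,t}$ on the source and pass to the colimit to obtain the desired map $\Psi : A^{\boxtimes N} \to H^N(K, \mathbb{Q}_{\ell}/\mathbb{Z}_{\ell}(N))$. Surjectivity then follows from surjectivity of each $\Psi_s$ onto the $\ell^s$-torsion together with the equality $H^N(K, \mathbb{Q}_{\ell}/\mathbb{Z}_{\ell}(N)) = \bigcup_s {_{\ell^s}}H^N(K, \mathbb{Q}_{\ell}/\mathbb{Z}_{\ell}(N))$ (the target is $\ell$-primary torsion).

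The hardest step is expected to be this compatibility check. The somewhat unusual normalization built into the maps $f_{s,t}$ from Lemma \ref{torsbis}(i)---where $a_i \in A_s$ is lifted to $b_i \in A_t$ with $\ell^{t-s}b_i = a_i$ and one takes $\ell^{t-s}(b_1 \otimes \cdots \otimes b_N)$---must match the normalization introduced at the level of cohomology by the coefficient map $\mathbb{Z}/\ell^s\mathbb{Z} \hookrightarrow \mathbb{Z}/\ell^t\mathbb{Z}$ (multiplication by $\ell^{t-s}$). A careful count shows that the $N-1$ ``extra'' factors of $\ell^{t-s}$ produced across the cup-product slots at the finer level $t$ balance exactly against the normalization in $f_{s,t}$, making the diagram commute; this is essentially what was already encoded in the commutative square (\ref{commut}) of Lemma \ref{torsbis}(ii).
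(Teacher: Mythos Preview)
Your proposal is correct and follows essentially the same route as the paper: show $A=H^1(K,\mathbb{Q}_\ell/\mathbb{Z}_\ell(1))$ is divisible, invoke Lemma~\ref{torsbis} to identify $A^{\boxtimes N}$ with $\varinjlim_s A_s^{\otimes N}$, use Bloch--Kato at each finite level to obtain surjections onto $H^N(K,\mathbb{Z}/\ell^s\mathbb{Z}(N))/\mathrm{Im}(d^{N-1}_s)\cong {_{\ell^s}}H^N(K,\mathbb{Q}_\ell/\mathbb{Z}_\ell(N))$, and pass to the limit after checking the transition compatibility.

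One small caveat: your closing sentence misattributes the cup-product compatibility to the square~(\ref{commut}). That square only records the compatibility of the isomorphisms $\varphi_s$ with $f_{s,t}$ and $i_{s,t}$; the remaining (and genuinely new) check is that the \emph{cup-product} maps $A_s^{\otimes N}\to {_{\ell^s}}H^N$ intertwine $f_{s,t}$ with the inclusion of torsion subgroups. The paper isolates this as the square~$(\star)$ in diagram~(\ref{diag3}), lifts it to~(\ref{diag4}), and verifies it by an explicit homogeneous-cocycle computation; your ``careful count'' is the right intuition, but it is this separate verification (not Lemma~\ref{torsbis}) that does the work.
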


\begin{proof}[Proof]
Let $A $ be the abelian group $H^1(K,\mathbb{Q}_{\ell}/\mathbb{Z}_{\ell}(1))$ and set $A_s = {_{\ell^s}}A$ for each $s\geq 1$. One has an exact sequence:
$$H^0(K,\mathbb{Q}_{\ell}/\mathbb{Z}_{\ell}(1))\xrightarrow[]{\delta^0_s}  H^1(K,\mathbb{Z}/\ell^s\mathbb{Z}(1)) \rightarrow  A \xrightarrow[]{\cdot \ell^s}  A \rightarrow H^2(K,\mathbb{Z}/\ell^s\mathbb{Z}(1))\rightarrow  H^2(K,\mathbb{Q}_{\ell}/\mathbb{Z}_{\ell}(1)).$$
In this sequence, the group $ H^2(K,\mathbb{Q}_{\ell}/\mathbb{Z}_{\ell}(1))$ can be identified with the Brauer group of $K$. We deduce that the morphism $H^2(K,\mathbb{Z}/\ell^s\mathbb{Z}(1)) \rightarrow H^2(K,\mathbb{Q}_{\ell}/\mathbb{Z}_{\ell}(1))$ is injective, and we get an exact sequence:
\begin{equation}\label{kum}
H^0(K,\mathbb{Q}_{\ell}/\mathbb{Z}_{\ell}(1))\xrightarrow[]{\delta^0_s}  H^1(K,\mathbb{Z}/\ell^s\mathbb{Z}(1))\rightarrow  A \xrightarrow[]{\cdot \ell^s}  A\rightarrow 0.
\end{equation}
This implies that the group $ A$ is divisible and that there are natural isomorphisms: $$A_s\cong H^1(K,\mathbb{Z}/\ell^s\mathbb{Z}(1))/\mathrm{Im}(\delta_s^0 ).$$ Hence, lemma \ref{torsbis} provides natural isomorphisms:
$$\varphi_s: (A_s)^{\otimes N} \rightarrow (A_s)^{\boxtimes N}$$
for $s\geq 1$ as well as morphisms $i_{s,t}: (A_s)^{\boxtimes N} \rightarrow (A_t)^{\boxtimes N}$ and $f_{s,t}: (A_s)^{\otimes N} \rightarrow (A_t)^{\otimes N}$ for $t\geq s$.\\

Now observe that the Bloch-Kato conjecture shows that the morphism induced by the cup-product:
$$\cup:\left[ H^1(K,\mathbb{Z}/\ell^s\mathbb{Z}(1)) \right] ^{\otimes N} \twoheadrightarrow H^N(K,\mathbb{Z}/\ell^s\mathbb{Z}(N))$$
 is a surjective morphism. Since the cup-product is compatible with the boundary maps
\begin{gather*}
\delta^0_s: H^0(K,\mathbb{Q}_{\ell}/\mathbb{Z}_{\ell}(1)) \rightarrow H^1(K,\mathbb{Z}/\ell^s\mathbb{Z}(1))\\
d^{N-1}_s: H^{N-1}(K,\mathbb{Q}_{\ell}/\mathbb{Z}_{\ell}(N)) \rightarrow H^N(K,\mathbb{Z}/\ell^s\mathbb{Z}(N)),
\end{gather*} 
this induces a surjective morphism:
 $$(A_s) ^{\otimes N} \twoheadrightarrow H^N(K,\mathbb{Z}/\ell^s\mathbb{Z}(N))/\mathrm{Im}(d^{N-1}_s)$$
 which will still be denoted by $\cup$. Hence, for $t\geq s$ we have a diagram:
 \begin{equation}\label{diag3}
\xymatrix{
{_{\ell^s}}(A^{\boxtimes N}) \ar[r]^{\cong} \ar@{^{(}->}[d]& (A_s)^{\boxtimes N}\ar[d]^{i_{s,t}} & (A_s)^{\otimes N}  \ar[l]_-{\varphi_s}^-{\cong} \ar@{->>}[r]^-{\cup}\ar[d]^{f_{s,t}}\commutatif  & H^N(K,\mathbb{Z}/\ell^s\mathbb{Z}(N))/\mathrm{Im}(d^{N-1}_s)\ar[d]^{j_{s,t}}\\
{_{\ell^t}}(A^{\boxtimes N}) \ar[r]^{\cong} &(A_t)^{\boxtimes N}  &  (A_t)^{\otimes N}  \ar[l]_-{\varphi_t}^-{\cong} \ar@{->>}[r]^-{\cup} & H^N(K,\mathbb{Z}/\ell^t\mathbb{Z}(N))/\mathrm{Im}(d^{N-1}_t)
}
\end{equation}
in which: 
\begin{itemize}
\item[$\bullet$] the right vertical morphism $j_{s,t}$ is induced by the morphism of Galois modules $h_{s,t}: \mathbb{Z}/\ell^s\mathbb{Z}(N) \rightarrow \mathbb{Z}/\ell^t\mathbb{Z}(N)$ defined by the formula $h_{s,t}(a_1\otimes ... \otimes a_N)=\ell^{t-s}(b_1\otimes ... \otimes b_N)$ where each $b_i \in \mathbb{Z}/\ell^t\mathbb{Z}(N)$ satisfies $\ell^{t-s}b_i=a_i$;
\item[$\bullet$] the left square is commutative by lemma \ref{tors} and the middle square is commutative by lemma \ref{torsbis}.
\end{itemize}
~\\
Let's now prove the commutativity of the square $(\star)$ of diagram (\ref{diag3}). For this purpose, observe that $(\star)$ can be lifted to a diagram:
 \begin{equation}\label{diag4}
\xymatrix{
 H^1(K,\mathbb{Z}/\ell^s\mathbb{Z}(1))^{\otimes N} \ar@{->>}[r]^-{\cup}\ar[d]^{\tilde{f}_{s,t}}  & H^N(K,\mathbb{Z}/\ell^s\mathbb{Z}(N))\ar[d]^{\tilde{j}_{s,t}}\\
  H^1(K,\mathbb{Z}/\ell^t\mathbb{Z}(1))^{\otimes N}   \ar@{->>}[r]^-{\cup} & H^N(K,\mathbb{Z}/\ell^t\mathbb{Z}(N))
}
\end{equation}
in which:
\begin{itemize}
\item[$\bullet$] $\tilde{j}_{s,t}$ is induced by  the morphism of Galois modules $h_{s,t}: \mathbb{Z}/\ell^s\mathbb{Z}(N) \rightarrow \mathbb{Z}/\ell^t\mathbb{Z}(N)$;
\item[$\bullet$] $\tilde{f}_{s,t}$ is defined as follows: for $a_1, ... , a_N \in H^1(K,\mathbb{Z}/\ell^s\mathbb{Z}(1))=K^{\times}/(K^{\times})^{\ell^s}$, one sets $$\tilde{f}_{s,t}(a_1\otimes ... \otimes a_N) =\ell^{t-s}([\tilde{a}_1]\otimes ... \otimes [\tilde{a}_N])\in H^1(K,\mathbb{Z}/\ell^t\mathbb{Z}(1))^{\otimes N}$$ where each $\tilde{a}_i$ is an element of $K^{\times}$ that lifts $a_i$ and $[\tilde{a}_i]$ denotes the classe of $\tilde{a}_i$ in $H^1(K,\mathbb{Z}/\ell^t\mathbb{Z}(1))=K^{\times}/(K^{\times})^{\ell^t}$.
\end{itemize}
 Hence we only need to prove the commutativity of diagram (\ref{diag4}). \\

To do so, take any $a_1,...,a_N \in H^1(K,\mathbb{Z}/\ell^s\mathbb{Z}(1))$ and let $\tilde{a}_1,...,\tilde{a}_N \in K^{\times}$ be liftings of the $a_i$'s. For each $i\in \{1,...,N\}$, choose $\rho_i \in K^s$ an $\ell^t$-th root of $\tilde{a}_i$ and define the \textit{homogeneous} cocycle:
\begin{align*}
\beta_i: \gal (K^s/K)^2 &\rightarrow \mathbb{Z}/\ell^t\mathbb{Z}(1),\\
(\sigma_0,\sigma_1) & \mapsto \frac{\sigma_0(\rho_i)}{\sigma_1(\rho_i)}.
\end{align*}
 Then the cohomology class $[\tilde{a}_i] \in H^1(K,\mathbb{Z}/\ell^t\mathbb{Z}(1))$ is represented by $\beta_i$ and the cohomology class $a_i$ is represented by the homogeneous cocycle:
\begin{align*}
\alpha_i: \gal (K^s/K)^2 &\rightarrow \mathbb{Z}/\ell^s\mathbb{Z}(1),\\ 
(\sigma_0,\sigma_1) & \mapsto \beta_i(\sigma_0,\sigma_1)^{\ell^{t-s}}.
\end{align*}
We deduce that $\tilde{j}_{s,t}(a_1\cup ... \cup a_N)$ and $\cup (\tilde{f}_{s,t}(a_1\otimes ... \otimes a_N))$ are both represented by the homogeneous cocycle:
$$(\sigma_0,...,\sigma_N) \in  \gal (K^s/K)^{N+1} \rightarrow \ell^{t-s}\beta_1(\sigma_0,\sigma_1)\otimes\beta_2(\sigma_1,\sigma_2)\otimes... \otimes \beta_N(\sigma_{N-1},\sigma_N) \in \mathbb{Z}/\ell^t\mathbb{Z}(1).$$
Hence, $\tilde{j}_{s,t}(a_1\cup ... \cup a_N) = \cup (\tilde{f}_{s,t}(a_1\otimes ... \otimes a_N))$, and diagram (\ref{diag4}) commutes. By passing the surjective morphism:
$${_{\ell^s}}(A^{\boxtimes N}) \xrightarrow[]{\cong} (A_s)^{\boxtimes N} \xrightarrow[]{\varphi_s^{-1}} (A_s)^{\otimes N}  \twoheadrightarrow H^N(K,\mathbb{Z}/\ell^s\mathbb{Z}(N))/\mathrm{Im}(d^{N-1}_s)$$
 to the direct limit on $s$, we get the desired surjection of Galois modules:
$$H^1(K,\mathbb{Q}_{\ell}/\mathbb{Z}_{\ell}(1))^{\boxtimes N} \rightarrow H^N(K,\mathbb{Q}_{\ell}/\mathbb{Z}_{\ell}(N)),$$
because $\varinjlim_s H^{N-1}(K,\mathbb{Q}_{\ell}/\mathbb{Z}_{\ell}(N))/\ell^s = 0$ and hence $\varinjlim_s \mathrm{Im}(d^{N-1}_s) = 0$.
\end{proof}

\section{Vanishing theorems in Galois cohomology}\label{sec1}

\subsection{An abstract vanishing theorem}\label{pf}

\hspace{3ex} In this section we fix a perfect field $k$ and a non-negative integer $M$. We also consider a commutative, local, normal, henselian, excellent, $M$-dimensional ring $R$ with residue field $k$. For instance, $R$ could be the henselization or the completion of the local ring at a closed point of a normal $k$-variety or, more generally, of a normal finite type scheme defined over an excellent ring. Note that the ring $R$ may have mixed characteristic. \\

\hspace{3ex} In the sequel, we let $R^{sh}$ be the strict henselization of $R$ and we set $\mathcal{X} = \text{Spec} \; R$ and $\mathcal{X}^{sh} = \text{Spec} \; R^{sh}$. Note that the schemes $\mathcal{X}$ and $\mathcal{X}^{sh}$ may be singular.

\begin{theorem}\label{abs1}
Let $d\geq 1$ and $r$ be two integers and $\ell$ a prime number different from the characteristic of $k$. Let $f:\mathcal{Y} \rightarrow \mathcal{X}$ be a proper dominant morphism such that the scheme $\mathcal{Y}^{sh}:= \mathcal{Y} \times_{\mathcal{X}} \mathcal{X}^{sh}$ is integral and regular. Let $N$ be the dimension of the generic fiber of $f$ and make the following two assumptions:
\begin{itemize}
\item[(H1)] The field $k$ has $\ell$-cohomological dimension $d$.
\item[(H2)] For each $j\in \{0,...,M+N\}$, each finite extension $k'$ of $k$ and each semi-abelian variety $G$ over $k'$, one has: $$H^d(k',G(k^s)\{\ell\}^{\boxtimes j}(r-M-N))=0.$$
\end{itemize}
If $K$ is the function field of $\mathcal{Y}$, then the group $H^{d+M+N}(K,\mathbb{Q}_{\ell}/\mathbb{Z}_{\ell}(r))$ vanishes. Moreover, if the special fiber $Y$ of $f$ is smooth over $k$, then assumption (H2) can be replaced by:
\begin{itemize}
\item[(H2')] For each $j\in \{0,...,M+N\}$, each finite extension $k'$ of $k$ and each abelian variety $A$ over $k'$, one has: $$H^d(k',A(k^s)\{\ell\}^{\boxtimes j}(r-M-N))=0.$$
\end{itemize}
\end{theorem}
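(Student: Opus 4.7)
The plan is to reduce the computation of $H^{d+M+N}(K,\mathbb{Q}_{\ell}/\mathbb{Z}_{\ell}(r))$ to a Galois cohomology computation controlled directly by (H2), via two main ingredients: the Hochschild--Serre spectral sequence associated to the residue extension $k^s/k$, and the variant of the Bloch--Kato conjecture proved in theorem \ref{surj}.

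Step one, Hochschild--Serre reduction. Let $K^{sh}$ denote the function field of $\mathcal{Y}^{sh}$. I would use the spectral sequence
$$E_2^{p,q} = H^p(k, H^q(K^{sh}, \mathbb{Q}_{\ell}/\mathbb{Z}_{\ell}(r))) \Rightarrow H^{p+q}(K, \mathbb{Q}_{\ell}/\mathbb{Z}_{\ell}(r)).$$
By (H1), $E_2^{p,q} = 0$ for $p > d$. Since $\mathcal{Y}^{sh}$ is an integral, regular, excellent scheme of dimension $M+N$ in which $\ell$ is invertible and whose closed points have separably closed residue field, the field $K^{sh}$ has $\ell$-cohomological dimension at most $M+N$; hence $E_2^{p,q} = 0$ for $q > M+N$. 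Thus the only possibly non-trivial contribution to total degree $d+M+N$ is $E_2^{d,M+N}$, and it suffices to prove that $H^d(k, H^{M+N}(K^{sh}, \mathbb{Q}_{\ell}/\mathbb{Z}_{\ell}(r))) = 0$.

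Step two, using Bloch--Kato to bring in the operation $\boxtimes$. Applying theorem \ref{surj} over $K^{sh}$ with $N$ replaced by $M+N$, then twisting by $\mathbb{Z}_{\ell}(r-M-N)$, yields a Galois-equivariant surjection
$$H^1(K^{sh}, \mathbb{Q}_{\ell}/\mathbb{Z}_{\ell}(1))^{\boxtimes(M+N)}(r-M-N) \twoheadrightarrow H^{M+N}(K^{sh}, \mathbb{Q}_{\ell}/\mathbb{Z}_{\ell}(r)).$$
So it is enough to kill $H^d$ of the left-hand side. By Kummer theory, $H^1(K^{sh}, \mathbb{Q}_{\ell}/\mathbb{Z}_{\ell}(1)) \cong (K^{sh})^{\times} \otimes \mathbb{Q}_{\ell}/\mathbb{Z}_{\ell}$. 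Using the divisor exact sequence on the regular scheme $\mathcal{Y}^{sh}$,
$$0 \to \mathcal{O}(\mathcal{Y}^{sh})^{\times} \to (K^{sh})^{\times} \to \bigoplus_{v \in (\mathcal{Y}^{sh})^{(1)}} \mathbb{Z} \to \mathrm{Pic}(\mathcal{Y}^{sh}) \to 0,$$
tensored with $\mathbb{Q}_{\ell}/\mathbb{Z}_{\ell}$, together with proper base change and henselianness of $R^{sh}$ to compare units and Picard group on $\mathcal{Y}^{sh}$ with those of its special fiber $Y^{sh}$, I would exhibit $H^1(K^{sh}, \mathbb{Q}_{\ell}/\mathbb{Z}_{\ell}(1))$ as the outcome of a short filtration whose graded pieces are: a divisible "Picard" piece, isomorphic up to finite-exponent error to $G(k^s)\{\ell\}$ for a semi-abelian variety $G$ over a finite extension of $k$ (an abelian variety when $Y$ is smooth, giving the (H2') variant); a "divisor" piece which, via Shapiro, is an induced module $I_{k'/k}(\mathbb{Q}_{\ell}/\mathbb{Z}_{\ell})$ over the residue fields of codimension-one points; and an extra finite-exponent piece coming from Néron--Severi and from the comparison morphisms.

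Step three, combinatorial devissage. I would now feed this filtration into the $\boxtimes$-product of $M+N$ copies. Lemma \ref{formal}(ii) turns each $\boxtimes$-factor that is a direct limit into an external direct limit, while lemmas \ref{abstractdiv} and \ref{abstractfini} provide exactly the inductive machine to propagate a vanishing (resp.\ finite-exponent) statement on $H^d(k,-)$ along a three-term exact sequence through all mixed $\boxtimes$-products. Induced "divisor" factors contribute via Shapiro's lemma $H^d$ of a finite extension of $k$ applied to a smaller $\boxtimes$-power, reducing the number of "Picard" factors to handle; iterating, one is finally reduced to $H^d(k', G(k^s)\{\ell\}^{\boxtimes j}(r-M-N))$ for $j \in \{0,\dots,M+N\}$ and semi-abelian varieties $G$, which is precisely what (H2) assumes.

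The main obstacle, and the most delicate part of the argument, is the geometric step in which one extracts the semi-abelian (resp.\ abelian) structure from the $\ell$-primary part of $\mathrm{Pic}(\mathcal{Y}^{sh}) \otimes \mathbb{Q}_{\ell}/\mathbb{Z}_{\ell}$ when the special fiber may be singular: one must control the kernel and cokernel of the comparison between $\mathrm{Pic}(\mathcal{Y}^{sh})$ and the Picard scheme of $Y^{sh}$ with sufficient precision to show that the "non-Picard" error is of finite exponent, so that lemma \ref{abstractfini} still applies. The smoothness hypothesis in the (H2') variant of the statement is exactly what allows one to replace "semi-abelian" by "abelian" at this step.
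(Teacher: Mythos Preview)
Your approach is essentially identical to the paper's: Hochschild--Serre reduction to $H^d(k,H^{M+N}(K^{sh},\mathbb{Q}_{\ell}/\mathbb{Z}_{\ell}(r)))$, the Bloch--Kato surjection from theorem \ref{surj}, the divisor/Picard d\'evissage of $H^1(K^{sh},\mathbb{Q}_{\ell}/\mathbb{Z}_{\ell}(1))$, and the inductive use of lemmas \ref{abstractdiv} and \ref{abstractfini}. The identification of the semi-abelian (resp.\ abelian) piece in the Picard part is exactly what the paper does in lemma \ref{semiab}.

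There is, however, one genuine gap. In Step two you write ``it is enough to kill $H^d$ of the left-hand side,'' and in Step three you conclude that (H2) finishes the job. But the d\'evissage cannot give vanishing of
\[
H^d\bigl(k,\,H^1(K^{sh},\mathbb{Q}_{\ell}/\mathbb{Z}_{\ell}(1))^{\boxtimes (M+N)}(r-M-N)\bigr):
\]
lemmas \ref{abstractdiv} and \ref{abstractfini} only output \emph{finite exponent}, and the finite pieces in the filtration (the N\'eron--Severi torsion $\Phi$, the group $F$ coming from the comparison with $\mathrm{NS}(Y_{k^s})$) genuinely prevent you from getting zero. This is precisely the content of proposition \ref{expfini} in the paper: the conclusion is finite exponent, not vanishing. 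The argument is then closed by a step you omit: since $K$ has $\ell$-cohomological dimension $d+M+N$ (by (H1) and your own observation on $K^{sh}$), the group $H^{d+M+N}(K,\mathbb{Q}_{\ell}/\mathbb{Z}_{\ell}(r))$ is divisible; a divisible group surjected onto by a group of finite exponent is trivial. Add this sentence and your outline is complete.
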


\begin{remarque}~
\begin{itemize}
\item[(i)] In the case $\mathcal{Y}$ is not assumed to be regular but has a desingularization $\tilde{\mathcal{Y}}$, the theorem can be applied to $\tilde{\mathcal{Y}}$ and hence the conclusion still holds. In particular, if $k$ has characteristic 0, then the previous theorem holds when $K$ is the function field of any integral variety defined over the fraction field of $R$.
\item[(ii)] When $M=0$, we have $R=k$ and $Y$ is a smooth $k$-variety. Hence theorem \ref{abs1} covers the case when $K$ is the function field of a smooth projective $k$-variety. Also note that, in this situation, assumption (H2') is always enough.
\item[(iii)] When $N=0$, the schemes $\mathcal{Y}$  and $ \mathcal{X}$ are birational. Hence theorem \ref{abs1} covers the case when $K$ is the fraction field of $R$.
\end{itemize}
\end{remarque}

\hspace{3ex} The first step in the proof of the theorem consists in describing the structure of the Galois module $H^1(k^s(\mathcal{Y}^{sh}),\mathbb{Q}_{\ell}/\mathbb{Z}_{\ell}(1))$, where $k^s(\mathcal{Y}^{sh})$ stands for the function field of $\mathcal{Y}^{sh}$. For this purpose, we first study the Picard group of $ \mathcal{Y}^{sh}$.

\begin{lemma}\label{semiab} ~
\begin{itemize}
\item[(i)] There exist a semi-abelian variety $G$, a finite Galois module $\Phi$ and an exact sequence of Galois modules:
\begin{gather*}
0\rightarrow G(k^s)\{\ell\} \rightarrow (\text{Pic}\; \mathcal{Y}^{sh})\{\ell\} \rightarrow \Phi \rightarrow 0.
\end{gather*}
Moreover, if the special fiber $Y$ of $f:\mathcal{Y}\rightarrow \mathcal{X}$ is smooth, then $G$ is an abelian variety.
\item[(ii)] The morphism $(\text{Pic}\; \mathcal{Y}^{sh})\otimes \mathbb{Q}_{\ell}/\mathbb{Z}_{\ell} \rightarrow (\text{Pic}\; Y_{k^s})\otimes \mathbb{Q}_{\ell}/\mathbb{Z}_{\ell}$ is injective.
\end{itemize}

\end{lemma}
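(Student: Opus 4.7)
I would prove both (i) and (ii) by comparing the Picard groups of $\mathcal{Y}^{sh}$ and of the closed fibre $Y_{k^s}$ via the Kummer exact sequences combined with the proper base change theorem, and then invoking the classical structure theorem of the Picard functor of a proper variety over a perfect field.

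The Kummer sequence $1\to\mu_{\ell^n}\to\mathbb{G}_m\xrightarrow{\ell^n}\mathbb{G}_m\to 1$ produces, on $\mathcal{Y}^{sh}$, short exact sequences
\begin{equation*}
0\to \Gamma(\mathcal{Y}^{sh},\mathcal{O}^{\times})/\ell^n\to H^1(\mathcal{Y}^{sh},\mu_{\ell^n})\to (\text{Pic}\,\mathcal{Y}^{sh})[\ell^n]\to 0,
\end{equation*}
\begin{equation*}
0\to (\text{Pic}\,\mathcal{Y}^{sh})/\ell^n\to H^2(\mathcal{Y}^{sh},\mu_{\ell^n})\to \text{Br}(\mathcal{Y}^{sh})[\ell^n]\to 0,
\end{equation*}
together with the analogous sequences over $Y_{k^s}$. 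Because $\mathcal{X}^{sh}$ is strictly henselian local with residue field $k^s$ and $f$ is proper, proper base change yields natural isomorphisms $H^i(\mathcal{Y}^{sh},\mu_{\ell^n})\cong H^i(Y_{k^s},\mu_{\ell^n})$ for every $i\geq 0$ and every $n\geq 1$.

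For (ii), comparing the two second Kummer sequences gives a commutative diagram whose right-hand column is an isomorphism and whose left-hand horizontal arrows are injective; a direct diagram chase then shows that $(\text{Pic}\,\mathcal{Y}^{sh})/\ell^n\to (\text{Pic}\,Y_{k^s})/\ell^n$ is injective. Passing to the direct limit over $n$ and using the identification $A\otimes\mathbb{Q}_{\ell}/\mathbb{Z}_{\ell}=\varinjlim_n A/\ell^n$ gives (ii). For (i), I would compare the first Kummer sequences, using cohomology and base change for $\mathcal{O}$ together with Stein factorization applied to the integral proper morphism $\mathcal{Y}^{sh}\to\mathcal{X}^{sh}$ to control the unit term and identify $(\text{Pic}\,\mathcal{Y}^{sh})\{\ell\}$ with $(\text{Pic}\,Y_{k^s})\{\ell\}$ up to a controlled finite discrepancy. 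The exact sequence of (i) then follows from the classical structure of the Picard scheme of the proper $k$-variety $Y$: the reduced identity component $(\text{Pic}^{0}_{Y/k})_{\text{red}}$ is, by Chevalley's theorem over the perfect field $k$, an extension of an abelian variety by a linear group $T\times U$; since $\ell$ is invertible in $k$, unipotent groups carry no $\ell$-primary torsion, so the $\ell$-primary part of $(\text{Pic}^{0}_{Y/k})(k^s)$ equals that of a semi-abelian variety $G$, which is in fact an abelian variety when $Y$ is smooth (then $\text{Pic}^{0}_{Y/k}$ is proper). The Néron-Severi group $\text{NS}(Y_{k^s})$ is finitely generated by the theorem of the base, so its $\ell$-primary torsion $\Phi$ is finite.

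The most delicate point will be the last step of (i): reconciling the Chevalley decomposition over a possibly singular $Y$ with a clean $\ell$-primary statement about $\text{Pic}\,Y_{k^s}$, and carefully handling the unit term of the first Kummer sequence to ensure that the asserted short exact sequence appears on the nose for $\mathcal{Y}^{sh}$ itself rather than up to a finitely controlled error. In contrast, (ii) is essentially a formal consequence of proper base change applied to $H^2(\mu_{\ell^n})$.
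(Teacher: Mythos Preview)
Your approach via Kummer sequences, proper base change, and the structure of the Picard scheme is exactly the paper's route. The one place you overcomplicate matters is the handling of the unit term in (i): you propose invoking cohomology and base change for $\mathcal{O}$ together with Stein factorization to control $\mathcal{O}(\mathcal{Y}^{sh})^{\times}/\ell^n$, and you anticipate only an identification ``up to a controlled finite discrepancy''. In fact no analysis of the $\mathcal{Y}^{sh}$ side is needed at all. Since $Y_{k^s}$ is proper over the algebraically closed field $k^s$, the group $\mathcal{O}(Y_{k^s})^{\times}$ is divisible (it is a finite product of copies of $(k^s)^{\times}$), so $\mathcal{O}(Y_{k^s})^{\times}/\ell^n = 0$. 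With the middle vertical arrow an isomorphism by proper base change and the bottom-left term zero, the diagram chase on the first Kummer diagram gives directly that ${_{\ell^n}}\text{Pic}\,\mathcal{Y}^{sh} \to {_{\ell^n}}\text{Pic}\,Y_{k^s}$ is an isomorphism for every $n$, hence $(\text{Pic}\,\mathcal{Y}^{sh})\{\ell\}\cong (\text{Pic}\,Y_{k^s})\{\ell\}$ as Galois modules, on the nose. The short exact sequence of (i) then drops out from the structure of $\text{Pic}\,Y_{k^s}$ exactly as you outline, using that $\text{Pic}^0\,Y_{k^s}$ is $\ell$-divisible to see that the quotient $\Phi$ sits inside $\text{NS}(Y_{k^s})\{\ell\}$ and is therefore finite. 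So what you flag as ``the most delicate point'' is in fact the shortest step once one observes the divisibility of $\mathcal{O}(Y_{k^s})^{\times}$.
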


\begin{proof}[Proof]
For each $m\geq 1$, we have commutative diagrams with exact lines:
\begin{equation}
\xymatrix{
0\ar[r] &  \mathcal{O}(\mathcal{Y}^{sh})^{\times}/{\mathcal{O}(\mathcal{Y}^{sh})^{\times}}^{\ell^m}\ar[d] \ar[r] & H^1(\mathcal{Y}^{sh},\mu_{\ell^m}) \ar[r]\ar[d] & {_{\ell^m}}\text{Pic}\; \mathcal{Y}^{sh} \ar[r]\ar[d] & 0\\
0\ar[r] &  \mathcal{O}(Y_{k^s})^{\times}/{\mathcal{O}(Y_{k^s})^{\times}}^{\ell^m} \ar[r] & H^1(Y_{k^s},\mu_{\ell^m}) \ar[r] & {_{\ell^m}}\text{Pic}\; Y_{k^s} \ar[r] & 0,
}
\end{equation}
\begin{equation}
\xymatrix{
0\ar[r] & (\text{Pic}\; \mathcal{Y}^{sh})/\ell^m \ar[d] \ar[r] & H^2(\mathcal{Y}^{sh},\mu_{\ell^m}) \ar[r]\ar[d] & {_{\ell^m}}\text{Br}\; \mathcal{Y}^{sh} \ar[r]\ar[d] & 0\\
0\ar[r] &  (\text{Pic}\; Y_{k^s})/\ell^m \ar[r] & H^2(Y_{k^s},\mu_{\ell^m}) \ar[r] & {_{\ell^m}}\text{Br}\; Y_{k^s} \ar[r] & 0.
}
\end{equation}
In both diagrams, the middle vertical morphism is an isomorphism by the proper base change theorem. Moreover, the group $\mathcal{O}(Y_{k^s})^{\times}$ is divisible, so that $\mathcal{O}(Y_{k^s})^{\times}/{\mathcal{O}(Y_{k^s})^{\times}}^{\ell^m}=0$. A simple diagram chase then shows that ${_{\ell^m}}\text{Pic}\; \mathcal{Y}^{sh}\cong {_{\ell^m}}\text{Pic}\; Y_{k^s} $ and that the morphism $(\text{Pic}\; \mathcal{Y}^{sh})/\ell^m \rightarrow (\text{Pic}\; Y_{k^s})/\ell^m$ is injective. Hence:
$$(\text{Pic}\; \mathcal{Y}^{sh})\{\ell\}\cong (\text{Pic}\; Y_{k^s})\{\ell\} $$
and the morphism $(\text{Pic}\; \mathcal{Y}^{sh})\otimes \mathbb{Q}_{\ell}/\mathbb{Z}_{\ell} \rightarrow (\text{Pic}\; Y_{k^s})\otimes \mathbb{Q}_{\ell}/\mathbb{Z}_{\ell}$ is injective.\\

Now note that $\text{Pic}^0\; Y_{k^s} = G_0(k^s)$ for some connected commutative algebraic group $G_0$. If $Y$ is smooth, then $G_0$ is an abelian variety, while in general, by the structure theorem of connected commutative algebraic groups, we have an exact sequence:
$$0\rightarrow U \rightarrow G_0 \rightarrow G \rightarrow 0$$
for some semi-abelian variety $G$ and unipotent group $U$. Since the group $U(k^s)$ is uniquely $\ell$-divisible, we get an isomorphism:
$$(\text{Pic}^0\; Y_{k^s})\{\ell\} \cong G(k^s)\{\ell\}$$
and the group $\text{Pic}^0\; Y_{k^s}$ is $\ell$-divisible. The lemma follows by noting that the Néron-Severi group $NS(Y_{k^s}) = \text{Pic}\; Y_{k^s}/\text{Pic}^0\; Y_{k^s}$ is of finite type.
\end{proof}

\begin{lemma}\label{suites0}
There exist a finite subset $S$ of $ (\mathcal{Y}^{sh})^{(1)}$ and Galois modules $Q$, $Q'$, $F$ and $N$ over $k$ such that there are exact sequences:
\begin{gather}\label{picdiv2}
0 \rightarrow \mathrm{Pic}(\mathcal{Y}^{sh})\{\ell\} \rightarrow H^1(k^s(\mathcal{Y}^{sh}),\mathbb{Q}_{\ell}/\mathbb{Z}_{\ell}(1)) \rightarrow Q' \rightarrow 0,\\
\label{exq}
0 \rightarrow Q \otimes \mathbb{Q}_{\ell}/\mathbb{Z}_{\ell} \rightarrow Q' \rightarrow F \rightarrow 0,\\
\label{propq2}
0 \rightarrow N\otimes \mathbb{Q}_{\ell}/\mathbb{Z}_{\ell} \rightarrow Q\otimes \mathbb{Q}_{\ell}/\mathbb{Z}_{\ell} \twoheadrightarrow \bigoplus_{v\in (\mathcal{Y}^{sh})^{(1)} \setminus S} \mathbb{Q}_{\ell}/\mathbb{Z}_{\ell}\rightarrow 0,
\end{gather}
and such that $Q$ is a free abelian group, $F$ is finite, $N$ is a finite type abelian group and $S$ is stable under the action of the absolute Galois group of $k$.
\end{lemma}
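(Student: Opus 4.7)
The strategy is Kummer theory applied to the divisor exact sequence on the regular integral scheme $\mathcal{Y}^{sh}$, followed by a structural decomposition of $Q'$ via Lemma \ref{semiab}. First, passing the Kummer sequences to the direct limit (using $\ell$ different from the residue characteristic) gives
$$H^1(k^s(\mathcal{Y}^{sh}), \mathbb{Q}_{\ell}/\mathbb{Z}_{\ell}(1)) \cong K^\times \otimes_{\mathbb{Z}} \mathbb{Q}_{\ell}/\mathbb{Z}_{\ell},$$
where $K = k^s(\mathcal{Y}^{sh})$. Regularity of $\mathcal{Y}^{sh}$ then yields the divisor exact sequence
$$1 \to \mathcal{O}(\mathcal{Y}^{sh})^\times \to K^\times \to P \to \mathrm{Pic}(\mathcal{Y}^{sh}) \to 0,$$
with $P = \bigoplus_{v \in (\mathcal{Y}^{sh})^{(1)}} \mathbb{Z}$, $\pi : P \to \mathrm{Pic}(\mathcal{Y}^{sh})$ the natural map, and $P_0 = \operatorname{Im}(K^\times \to P)$. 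I would split this into the two short exact sequences $0 \to \mathcal{O}(\mathcal{Y}^{sh})^\times \to K^\times \to P_0 \to 0$ and $0 \to P_0 \to P \to \mathrm{Pic}(\mathcal{Y}^{sh}) \to 0$.

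The crucial technical input is the $\ell$-divisibility of $\mathcal{O}(\mathcal{Y}^{sh})^\times$. Since $f$ is proper and $\mathcal{X}^{sh} = \operatorname{Spec} R^{sh}$ is the spectrum of a strictly henselian local ring, Stein factorization identifies $\mathcal{O}(\mathcal{Y}^{sh})$ with a finite integral extension of $R^{sh}$; the integrality (hence connectedness) of $\mathcal{Y}^{sh}$ forces it to be a local strictly henselian ring with residue field $k^s$. Applying Hensel's lemma to $X^{\ell^n} - u$, which splits over $k^s$ because $\ell$ is prime to the residue characteristic, then gives the divisibility, whence $\mathcal{O}(\mathcal{Y}^{sh})^\times \otimes \mathbb{Q}_{\ell}/\mathbb{Z}_{\ell} = 0$. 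Since $P_0$ is torsion-free (a subgroup of the free abelian group $P$), tensoring the first short exact sequence with $\mathbb{Q}_{\ell}/\mathbb{Z}_{\ell}$ yields $H^1(K, \mathbb{Q}_{\ell}/\mathbb{Z}_{\ell}(1)) \cong P_0 \otimes \mathbb{Q}_{\ell}/\mathbb{Z}_{\ell}$; and tensoring the second, using $\operatorname{Tor}^{\mathbb{Z}}_1(\mathrm{Pic}(\mathcal{Y}^{sh}), \mathbb{Q}_{\ell}/\mathbb{Z}_{\ell}) \cong \mathrm{Pic}(\mathcal{Y}^{sh})\{\ell\}$, produces the four-term exact sequence whose left half is precisely (\ref{picdiv2}), with $Q' := \ker(P \otimes \mathbb{Q}_{\ell}/\mathbb{Z}_{\ell} \to \mathrm{Pic}(\mathcal{Y}^{sh}) \otimes \mathbb{Q}_{\ell}/\mathbb{Z}_{\ell})$.

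For (\ref{exq}) and (\ref{propq2}), I would use Lemma \ref{semiab}(ii): $\mathrm{Pic}(\mathcal{Y}^{sh}) \otimes \mathbb{Q}_{\ell}/\mathbb{Z}_{\ell}$ embeds into $\mathrm{Pic}(Y_{k^s}) \otimes \mathbb{Q}_{\ell}/\mathbb{Z}_{\ell}$, which equals $NS(Y_{k^s}) \otimes \mathbb{Q}_{\ell}/\mathbb{Z}_{\ell}$ (as $\mathrm{Pic}^0(Y_{k^s})$ is $\ell$-divisible, cf.~the proof of Lemma \ref{semiab}), a finite direct sum of copies of $\mathbb{Q}_{\ell}/\mathbb{Z}_{\ell}$ with continuous finite-orbit Galois action. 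So finitely many Galois orbits of codimension-$1$ points $S$ of $\mathcal{Y}^{sh}$ suffice to make $P_S \otimes \mathbb{Q}_{\ell}/\mathbb{Z}_{\ell} \twoheadrightarrow \mathrm{Pic}(\mathcal{Y}^{sh}) \otimes \mathbb{Q}_{\ell}/\mathbb{Z}_{\ell}$. Writing $P_S = \bigoplus_{v \in S}\mathbb{Z}$ and $P^S = \bigoplus_{v \notin S}\mathbb{Z}$, the projection $Q' \twoheadrightarrow P^S \otimes \mathbb{Q}_{\ell}/\mathbb{Z}_{\ell}$ is then surjective with kernel $K_{\ker} := \ker(P_S \otimes \mathbb{Q}_{\ell}/\mathbb{Z}_{\ell} \to \mathrm{Pic}(\mathcal{Y}^{sh}) \otimes \mathbb{Q}_{\ell}/\mathbb{Z}_{\ell})$, a subgroup of the finite-rank divisible $\ell$-primary group $P_S \otimes \mathbb{Q}_{\ell}/\mathbb{Z}_{\ell}$, hence of the shape $(\mathbb{Q}_{\ell}/\mathbb{Z}_{\ell})^r \oplus T_{\mathrm{fin}}$ with $T_{\mathrm{fin}}$ finite. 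Letting $Q$ be a free abelian Galois module with $Q \otimes \mathbb{Q}_{\ell}/\mathbb{Z}_{\ell}$ equal to the maximal divisible Galois submodule of $Q'$, and $F := Q'/(Q \otimes \mathbb{Q}_{\ell}/\mathbb{Z}_{\ell})$, one obtains (\ref{exq}) with $F$ finite (it being the non-divisible part of $K_{\ker}$, as $P^S \otimes \mathbb{Q}_{\ell}/\mathbb{Z}_{\ell}$ is already divisible); taking $N$ to be a finite-type free Galois submodule of $Q$ with $N \otimes \mathbb{Q}_{\ell}/\mathbb{Z}_{\ell}$ the intersection $(Q \otimes \mathbb{Q}_{\ell}/\mathbb{Z}_{\ell}) \cap (P_S \otimes \mathbb{Q}_{\ell}/\mathbb{Z}_{\ell})$, essentially $\ker(P_S \to \mathrm{Pic}(\mathcal{Y}^{sh}))$ corrected by the finite $\ell$-torsion of $\pi(P_S)$, yields (\ref{propq2}).

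The main obstacle is the $\ell$-divisibility of $\mathcal{O}(\mathcal{Y}^{sh})^\times$, which combines properness, strict henselization, and Hensel's lemma; once this is established the rest is essentially formal. Galois-equivariance is preserved automatically at every step, because all the operations involved (maximal divisible subgroup, kernel, intersection, $\operatorname{Tor}$) are \emph{canonical}, and the finite set $S$ is Galois-stable by construction.
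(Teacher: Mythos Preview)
Your treatment of (\ref{picdiv2}) is correct and matches the paper's, with welcome extra detail on the $\ell$-divisibility of $\mathcal{O}(\mathcal{Y}^{sh})^\times$, which the paper uses implicitly when invoking the snake lemma.

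The gap is in your construction of $Q$ and $N$. You write ``letting $Q$ be a free abelian Galois module with $Q \otimes \mathbb{Q}_{\ell}/\mathbb{Z}_{\ell}$ equal to the maximal divisible Galois submodule of $Q'$'' without justifying that such a $Q$ exists. The maximal divisible subgroup $D$ of $Q'$ is indeed canonical and hence Galois-stable, as you observe; but a divisible $\ell$-primary Galois module is not automatically of the form $L \otimes \mathbb{Q}_{\ell}/\mathbb{Z}_{\ell}$ for a $\mathbb{Z}$-lattice $L$ with Galois action, since $\mathrm{Aut}\bigl((\mathbb{Q}_{\ell}/\mathbb{Z}_{\ell})^r\bigr) \cong GL_r(\mathbb{Z}_{\ell})$ whereas actions coming from a lattice factor through $GL_r(\mathbb{Z})$. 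Canonicity guarantees Galois-equivariance, not integrality of the action. The same objection applies verbatim to your $N$, which you again posit as a lattice with prescribed $\otimes\,\mathbb{Q}_{\ell}/\mathbb{Z}_{\ell}$.

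The paper sidesteps this by constructing $Q$ \emph{before} tensoring with $\mathbb{Q}_{\ell}/\mathbb{Z}_{\ell}$: one sets
\[
Q := \ker\bigl(\mathrm{Div}(\mathcal{Y}^{sh}) \to NS(Y_{k^s})/NS(Y_{k^s})_{\mathrm{tors}}\bigr),
\]
a Galois-stable subgroup of the free abelian group $\mathrm{Div}(\mathcal{Y}^{sh})$, hence automatically a free abelian Galois module. Using Lemma \ref{semiab}(ii) and the torsion-freeness of the target, one then checks that $Q \otimes \mathbb{Q}_{\ell}/\mathbb{Z}_{\ell}$ injects into $Q'$ with finite cokernel $F$; so \emph{a posteriori} $Q \otimes \mathbb{Q}_{\ell}/\mathbb{Z}_{\ell}$ coincides with your $D$ and your intuition was correct, but the argument has to run in this direction. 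Likewise $N := \ker\bigl(Q \twoheadrightarrow \bigoplus_{v\notin S}\mathbb{Z}\bigr)$ is a sublattice of $\bigoplus_{v\in S}\mathbb{Z}$, hence finitely generated.
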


\begin{proof}[Proof]
Since $\mathcal{Y}^{sh}$ is regular, we have an exact sequence of Galois modules:

\begin{equation*}
0 \rightarrow \mathcal{O}(\mathcal{Y}^{sh})^{\times} \rightarrow k^s(\mathcal{Y}^{sh})^{\times} \rightarrow \mathrm{Div}(\mathcal{Y}^{sh}) \rightarrow \mathrm{Pic}(\mathcal{Y}^{sh})\rightarrow 0.
\end{equation*}

Hence, by the snake lemma, for each positive integer $m$, we have an exact sequence:
\begin{equation}\label{000}
0 \rightarrow {_{\ell^m}}\text{Pic}(\mathcal{Y}^{sh}) \rightarrow H^1(k^s(\mathcal{Y}^{sh}),\mathbb{Z}/\ell^m\mathbb{Z}(1)) \rightarrow \text{Div}(\mathcal{Y}^{sh})/\ell^m \rightarrow \text{Pic}(\mathcal{Y}^{sh})/\ell^m \rightarrow 0.
\end{equation}

By passing to the direct limit on $m$, we obtain the following exact sequence of Galois modules over $k$:
\begin{equation}\label{picdiv}
0 \rightarrow \mathrm{Pic}(\mathcal{Y}^{sh})\{\ell\} \rightarrow H^1(k^s(\mathcal{Y}^{sh}),\mathbb{Q}_{\ell}/\mathbb{Z}_{\ell}(1)) \rightarrow \mathrm{Div}(\mathcal{Y}^{sh}) \otimes \mathbb{Q}_{\ell}/\mathbb{Z}_{\ell} \rightarrow \mathrm{Pic}(\mathcal{Y}^{sh}) \otimes \mathbb{Q}_{\ell}/\mathbb{Z}_{\ell} \rightarrow 0.
\end{equation}
Consider the following Galois modules:
\begin{align*}
Q&:= \text{Ker}\left( \mathrm{Div}(\mathcal{Y}^{sh}) \rightarrow \text{NS}(Y_{k^s})/ \text{NS}(Y_{k^s})_{\mathrm{tors}} \right);\\
I&:= \text{Im}\left( \mathrm{Div}(\mathcal{Y}^{sh}) \rightarrow \text{NS}(Y_{k^s})/ \text{NS}(Y_{k^s})_{\mathrm{tors}} \right);\\
Q'&:=\text{Ker}\left( \mathrm{Div}(\mathcal{Y}^{sh})\otimes \mathbb{Q}_{\ell}/\mathbb{Z}_{\ell} \rightarrow \mathrm{Pic}(\mathcal{Y}^{sh}) \otimes \mathbb{Q}_{\ell}/\mathbb{Z}_{\ell} \right).
\end{align*}
Exact sequence (\ref{picdiv2}) immediately follows from the definition of $Q'$ and from exact sequence (\ref{picdiv}).\\

Let's now construct exact sequence (\ref{exq}). Note that, by lemma \ref{semiab}(ii), the morphism $$(\text{Pic}\; \mathcal{Y}^{sh})\otimes \mathbb{Q}_{\ell}/\mathbb{Z}_{\ell} \rightarrow (\text{Pic}\; Y_{k^s})\otimes \mathbb{Q}_{\ell}/\mathbb{Z}_{\ell} \cong (\text{NS}\; Y_{k^s})\otimes \mathbb{Q}_{\ell}/\mathbb{Z}_{\ell}$$ is injective, so that we have an exact sequence:
\begin{equation}\label{q'}
0\rightarrow Q' \rightarrow \mathrm{Div}(\mathcal{Y}^{sh})\otimes \mathbb{Q}_{\ell}/\mathbb{Z}_{\ell} \rightarrow (\text{NS}\; Y_{k^s})\otimes \mathbb{Q}_{\ell}/\mathbb{Z}_{\ell}.
\end{equation}
Moreover, by definition of $Q$ and $I$, we have the following exact sequences:
\begin{align}\label{defq}
0 \rightarrow Q \rightarrow \mathrm{Div}(\mathcal{Y}^{sh}) \rightarrow I \rightarrow 0,\\
0 \rightarrow I \rightarrow \text{NS}(Y_{k^s})/ \text{NS}(Y_{k^s})_{\mathrm{tors}}\rightarrow I' \rightarrow 0,
\end{align}
for some Galois module $I'$. Since the abelian group $\text{NS}(Y_{k^s})/ \text{NS}(Y_{k^s})_{\mathrm{tors}}$ is of finite type and has no torsion, we get new exact sequences by tensoring with $\mathbb{Q}_{\ell}/\mathbb{Z}_{\ell}$:
\begin{align}\label{defq2}
0 = \text{Tor}^1(I,\mathbb{Q}_{\ell}/\mathbb{Z}_{\ell}) \rightarrow Q\otimes \mathbb{Q}_{\ell}/\mathbb{Z}_{\ell} \rightarrow \mathrm{Div}(\mathcal{Y}^{sh})\otimes \mathbb{Q}_{\ell}/\mathbb{Z}_{\ell} \rightarrow I\otimes \mathbb{Q}_{\ell}/\mathbb{Z}_{\ell} \rightarrow 0,\\
\label{defq23}
\text{Tor}^1(I',\mathbb{Q}_{\ell}/\mathbb{Z}_{\ell}) \rightarrow I\otimes \mathbb{Q}_{\ell}/\mathbb{Z}_{\ell} \rightarrow \text{NS}(Y_{k^s})\otimes \mathbb{Q}_{\ell}/\mathbb{Z}_{\ell} \rightarrow I'\otimes \mathbb{Q}_{\ell}/\mathbb{Z}_{\ell} \rightarrow 0,
\end{align}
in which the group $\text{Tor}^1(I',\mathbb{Q}_{\ell}/\mathbb{Z}_{\ell})$ is finite. 
One then only has to combine exact sequences (\ref{q'}), (\ref{defq2}) and (\ref{defq23}) to obtain exact sequence (\ref{exq}). \\

Let's finish the proof by constructing exact sequence (\ref{propq2}). By using once again the fact that $\text{NS}(Y_{k^s})$ is a finite type abelian group, one can find a finite subset $S$ of $(\mathcal{Y}^{sh})^{(1)}$ such that $Q$ maps onto $\bigoplus_{v\in (\mathcal{Y}^{sh})^{(1)} \setminus S} \mathbb{Z}$. Up to replacing $S$ by its orbit under the action of the Galois group of $k$, one can assume that $S$ is Galois-equivariant, and one therefore gets a surjection of Galois modules:
$$Q \twoheadrightarrow \bigoplus_{v\in (\mathcal{Y}^{sh})^{(1)} \setminus S} \mathbb{Z}.$$
Its kernel is a Galois module $N$ which, as an abelian group, is finitely generated. Hence one gets an exact sequence of Galois modules:
\begin{equation}\label{propq}
0 \rightarrow N \rightarrow Q \rightarrow \bigoplus_{v\in (\mathcal{Y}^{sh})^{(1)} \setminus S} \mathbb{Z}\rightarrow 0.
\end{equation}
Exact sequence (\ref{propq2}) is then obtained by tensoring with $\mathbb{Q}_{\ell}/\mathbb{Z}_{\ell}$.
\end{proof}

\begin{proposition}\label{expfini}
The group $H^d(k,H^1(k^s(\mathcal{Y}^{sh}),\mathbb{Q}_{\ell}/\mathbb{Z}_{\ell}(1))^{\boxtimes M+N}(r-M-N))$ has finite exponent.
\end{proposition}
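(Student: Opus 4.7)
Let $H := H^1(k^s(\mathcal{Y}^{sh}),\mathbb{Q}_{\ell}/\mathbb{Z}_{\ell}(1))$ and $D := \mathbb{Q}_{\ell}/\mathbb{Z}_{\ell}(r-M-N)$. I want to show $H^d(k, H^{\boxtimes (M+N)}\boxtimes D)$ has finite exponent. The strategy is to decompose $H$ via the short exact sequences of Lemmas \ref{semiab} and \ref{suites0}, reduce the cohomology computation to $\boxtimes$-products of the graded pieces using Lemmas \ref{abstractdiv} and \ref{abstractfini}, and finally invoke hypothesis (H2) combined with Shapiro's lemma.

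By concatenating the exact sequences from Lemmas \ref{semiab} and \ref{suites0}, $H$ admits a finite filtration whose successive quotients are $G(k^s)\{\ell\}$ (divisible, semi-abelian torsion), $\Phi$ (finite), $N \otimes \mathbb{Q}_{\ell}/\mathbb{Z}_{\ell}$ (divisible, with $N$ a finitely generated $\mathbb{Z}$-free Galois submodule of $\bigoplus_{v \in S}\mathbb{Z}$), $\bigoplus_{v \in S'} \mathbb{Q}_{\ell}/\mathbb{Z}_{\ell}$ (divisible, permutation), and $F$ (finite). I peel off these subquotients one at a time: at each step, the relevant short exact sequence has either a divisible subobject (such as $G(k^s)\{\ell\}$, $Q\otimes\mathbb{Q}_{\ell}/\mathbb{Z}_{\ell}$ or $N\otimes\mathbb{Q}_{\ell}/\mathbb{Z}_{\ell}$) or a subobject of finite exponent (such as $\Phi$ or $F$), so Lemma \ref{abstractdiv} or Lemma \ref{abstractfini} applies. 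Iterating, the finite-exponent property for $H^d(k, H^{\boxtimes (M+N)}\boxtimes D)$ reduces to the same property for $H^d(k, A_{i_1} \boxtimes \cdots \boxtimes A_{i_{M+N}} \boxtimes D)$ for every choice of graded pieces $A_{i_j}$ from the five classes above.

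The terms in which some $A_{i_j}$ is $\Phi$ or $F$ are immediate: $A\boxtimes B$ is killed by any integer annihilating $B$ (by Pontryagin duality), so the full $\boxtimes$-product has bounded exponent, and so does its cohomology. In the remaining case every $A_{i_j}$ is divisible, hence equals either $G(k^s)\{\ell\}$, $N\otimes\mathbb{Q}_{\ell}/\mathbb{Z}_{\ell}$, or $\bigoplus_{v \in S'} \mathbb{Q}_{\ell}/\mathbb{Z}_{\ell}$. I refine the $N\otimes\mathbb{Q}_{\ell}/\mathbb{Z}_{\ell}$ factors by tensoring $0 \to N \to \bigoplus_{v \in S}\mathbb{Z} \to C \to 0$ with $\mathbb{Q}_{\ell}/\mathbb{Z}_{\ell}$: this yields a four-term sequence with finite kernel $F_0 = \mathrm{Tor}_1(C,\mathbb{Q}_{\ell}/\mathbb{Z}_{\ell})$ and divisible quotient $C\otimes\mathbb{Q}_{\ell}/\mathbb{Z}_{\ell}$. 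A further application of Lemmas \ref{abstractfini} and \ref{abstractdiv} replaces each $N\otimes\mathbb{Q}_{\ell}/\mathbb{Z}_{\ell}$ factor (modulo finite-exponent contributions) by the permutation module $\bigoplus_{v \in S}\mathbb{Q}_{\ell}/\mathbb{Z}_{\ell}$.

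It then suffices to control $H^d$ of $\boxtimes$-products in which every factor is either $G(k^s)\{\ell\}$ or of the form $\bigoplus_v \mathbb{Q}_{\ell}/\mathbb{Z}_{\ell}$. The permutation factors distribute over $\boxtimes$ by compatibility with direct limits (Lemma \ref{formal}(ii)), so the product splits as a direct sum, indexed by tuples $(v_1, \ldots, v_m)$, of terms of the form $G(k^s)\{\ell\}^{\boxtimes j_1} \boxtimes D$ (after absorbing the trivial $\mathbb{Q}_{\ell}/\mathbb{Z}_{\ell}$ factors via Lemma \ref{formal}(i), which identifies $\mathbb{Q}_{\ell}/\mathbb{Z}_{\ell} \boxtimes X$ with $X$). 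Collecting these tuples into their (finite) Galois orbits and applying Shapiro's lemma, the cohomology over $k$ reduces to the cohomology over the corresponding finite extensions $k'/k$; hypothesis (H2) then gives $H^d(k', G(k^s)\{\ell\}^{\boxtimes j_1}(r-M-N)) = 0$, which a fortiori has finite exponent. The principal technical obstacle is the careful bookkeeping of the nested reductions provided by Lemmas \ref{abstractdiv} and \ref{abstractfini}, together with tracking how the finite-exponent contributions from $\Phi$, $F$ and $F_0$ accumulate through the iteration.
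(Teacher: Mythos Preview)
Your overall strategy coincides with the paper's: peel off the layers of the filtration of $H$ coming from Lemmas~\ref{semiab} and~\ref{suites0} via Lemmas~\ref{abstractdiv} and~\ref{abstractfini}, and reduce to hypothesis (H2) through Shapiro's lemma. Most of the steps are fine, but there is a genuine gap in your treatment of the $N\otimes\mathbb{Q}_{\ell}/\mathbb{Z}_{\ell}$ factors.

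You propose to embed $N$ into the permutation lattice $\bigoplus_{v\in S}\mathbb{Z}$ and then ``replace'' each $N\otimes\mathbb{Q}_{\ell}/\mathbb{Z}_{\ell}$ factor by $\bigoplus_{v\in S}\mathbb{Q}_{\ell}/\mathbb{Z}_{\ell}$ using Lemmas~\ref{abstractdiv} and~\ref{abstractfini}. But both lemmas only let you pass from knowledge of $\boxtimes$-products of the \emph{outer} terms $A,C$ of a short exact sequence $0\to A\to B\to C\to 0$ to the \emph{middle} term $B$. In your sequence $0\to I\to\bigoplus_{v\in S}\mathbb{Q}_{\ell}/\mathbb{Z}_{\ell}\to C\otimes\mathbb{Q}_{\ell}/\mathbb{Z}_{\ell}\to 0$, the object you need to control is $I$ (essentially $N\otimes\mathbb{Q}_{\ell}/\mathbb{Z}_{\ell}$), which is the \emph{subobject}, not the middle term; the lemmas go the wrong way for this.

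The fix is the one the paper uses (implicitly, in the sentence ``By assumption (H2), one can apply lemma~\ref{abstractdiv} to the sequences (\ref{propq3}) and (\ref{autre2})''): since $N$ is a lattice of finite rank, it becomes a trivial Galois module over a \emph{fixed} finite extension $k_0/k$. Over any extension containing $k_0$, the factor $(N\otimes\mathbb{Q}_{\ell}/\mathbb{Z}_{\ell})^{\boxtimes a}$ is just a finite sum of copies of $\mathbb{Q}_{\ell}/\mathbb{Z}_{\ell}$, and the remaining permutation factors then unwind via Shapiro to $H^d(k',G(k^s)\{\ell\}^{\boxtimes j}(r-M-N))=0$ by (H2). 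Restriction--corestriction therefore kills the original group over $k$ by $[k_0:k]$, which gives the required \emph{uniform} bound on the exponent even though the direct sum over tuples is infinite. (A minor side remark: you list $F$ among the ``subobjects of finite exponent'', but in the sequence (\ref{exq}) it is the quotient; the relevant subobject there is the divisible $Q\otimes\mathbb{Q}_{\ell}/\mathbb{Z}_{\ell}$, so it is Lemma~\ref{abstractdiv} that applies.)
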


\begin{proof}[Proof]
By lemmas \ref{semiab} and \ref{suites0}, we have the following exact sequences of Galois modules:
\begin{gather}\label{picdiv3}
0 \rightarrow G(k^s)\{\ell\} \rightarrow H^1(k^s(\mathcal{Y}^{sh}),\mathbb{Q}_{\ell}/\mathbb{Z}_{\ell}(1)) \rightarrow Q'' \rightarrow 0,\\
\label{autre}
0 \rightarrow \Phi \rightarrow Q'' \rightarrow Q'  \rightarrow 0\\
\label{autre2}
0 \rightarrow Q\otimes \mathbb{Q}_{\ell}/\mathbb{Z}_{\ell} \rightarrow Q' \rightarrow F  \rightarrow 0\\
\label{propq3}
0 \rightarrow N\otimes \mathbb{Q}_{\ell}/\mathbb{Z}_{\ell} \rightarrow Q\otimes \mathbb{Q}_{\ell}/\mathbb{Z}_{\ell} \rightarrow \bigoplus_{v\in (\mathcal{Y}^{sh})^{(1)} \setminus S} \mathbb{Q}_{\ell}/\mathbb{Z}_{\ell}\rightarrow 0,
\end{gather}
for some Galois module $Q''$. By assumption (H2), one can apply lemma \ref{abstractdiv} to the sequences (\ref{propq3}) and (\ref{autre2}) and one gets that the group $$H^d(k,(G(k^s)\{\ell\})^{\boxtimes j} \boxtimes Q'^{\boxtimes M+N-j}(r-M-N))$$ has finite exponent for $j=0,1,...,M+N$. One can then apply lemma \ref{abstractfini} to the sequence (\ref{autre}) and one gets that the group
$$H^d(k,(G(k^s)\{\ell\})^{\boxtimes j} \boxtimes (Q'')^{\boxtimes M+N-j}(r-M-N))$$
has finite exponent. One can finally apply a second time lemma \ref{abstractdiv} to the sequence (\ref{picdiv3}) and one gets that the group
$$H^d(k,H^1(k^s(\mathcal{Y}),\mathbb{Q}_{\ell}/\mathbb{Z}_{\ell}(1))^{\boxtimes M+N}(r-M-N))$$
has finite exponent.
\end{proof}

\begin{proof}[Proof of theorem \ref{abs1}]
Write the Hochschild-Serre spectral sequence: $$H^s(k,H^t(k^s(\mathcal{Y}^{sh}),\mathbb{Q}_{\ell}/\mathbb{Z}_{\ell}(r))) \Rightarrow H^{s+t}(K,\mathbb{Q}_{\ell}/\mathbb{Z}_{\ell}(r)).$$
Since $k$ has $\ell$-cohomological dimension $d$ (assumption (H1)), the previous spectral sequence induces an isomorphism: 
\begin{align*}
H^{d+M+N}(K,\mathbb{Q}_{\ell}/\mathbb{Z}_{\ell}(r))&\cong H^d(k,H^{M+N}(k^s(\mathcal{Y}^{sh}),\mathbb{Q}_{\ell}/\mathbb{Z}_{\ell}(r))) \\&\cong H^d(k,H^{M+N}(k^s(\mathcal{Y}^{sh}),\mathbb{Q}_{\ell}/\mathbb{Z}_{\ell}(M+N))(r-M-N)).
\end{align*}
By theorem \ref{surj}, one has a surjective morphism of Galois modules:
$$H^1(k^s(\mathcal{Y}^{sh}),\mathbb{Q}_{\ell}/\mathbb{Z}_{\ell}(1))^{\boxtimes M+N} \twoheadrightarrow H^{M+N}(k^s(\mathcal{Y}^{sh}),\mathbb{Q}_{\ell}/\mathbb{Z}_{\ell}(M+N)).$$
Since the field $k$ has $\ell$-cohomological dimension $d$ (assumption (H1)), one gets a surjective morphism:
$$H^d(k,H^1(k^s(\mathcal{Y}),\mathbb{Q}_{\ell}/\mathbb{Z}_{\ell}(1))^{\boxtimes M+N}(r-M-N)) \twoheadrightarrow H^{d+M+N}(K,\mathbb{Q}_{\ell}/\mathbb{Z}_{\ell}(r)).$$
But the group $H^d(k,H^1(k^s(\mathcal{Y}),\mathbb{Q}_{\ell}/\mathbb{Z}_{\ell}(1))^{\boxtimes M+N}(r-M-N))$ has finite exponent (proposition \ref{expfini}) and the group $H^{d+M+N}(K,\mathbb{Q}_{\ell}/\mathbb{Z}_{\ell}(r))$ is divisible because $K$ has $\ell$-cohomological dimension $d+M+N$ (assumption (H1)). One deduces that $H^{d+M+N}(K,\mathbb{Q}_{\ell}/\mathbb{Z}_{\ell}(r))=0$.

\end{proof}

\begin{corollary}\label{abs1bis}
Let $d\geq 1$ and $r$ be two integers and $\ell$ a prime number different from the characteristic of $k$. Let $f:\mathcal{Y} \rightarrow \mathcal{X}$ be a proper dominant morphism such that $\mathcal{Y}^{sh}:= \mathcal{Y} \times_{\mathcal{X}} \mathcal{X}^{sh}$ is integral and regular. Let $N$ be the dimension of the generic fiber of $f$ and make the following two assumptions:
\begin{itemize}
\item[(H1)] The field $k$ has $\ell$-cohomological dimension $d$.
\item[(H2)] For each $j_1,j_2\in \{0,...,M+N\}$ such that $j_1+j_2 \leq M+N$, each finite extension $k'$ of $k$ and each abelian variety $A$ over $k'$, one has: $$H^d(k',A(k^s)\{\ell\}^{\boxtimes j_1}(r-M-N+j_2))=0.$$
\end{itemize}
If $K$ is the function field of $\mathcal{Y}$, then the group $H^{d+M+N}(K,\mathbb{Q}_{\ell}/\mathbb{Z}_{\ell}(r))$ vanishes. 
\end{corollary}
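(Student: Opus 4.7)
The approach is to deduce Corollary \ref{abs1bis} from Theorem \ref{abs1} by verifying that the theorem's hypothesis (H2) follows from the weaker-looking (H2) of the corollary. Fix a finite extension $k'/k$, a semi-abelian variety $G$ over $k'$, and an integer $j \in \{0,\ldots,M+N\}$; the goal is to show $H^d(k', G(k^s)\{\ell\}^{\boxtimes j}(r-M-N)) = 0$. Start from the Chevalley decomposition $0 \to T \to G \to A \to 0$ with $T$ a torus and $A$ an abelian variety over $k'$. Since $T(k^s)$ is $\ell$-divisible, one obtains a short exact sequence of Galois modules
\[
0 \to T(k^s)\{\ell\} \to G(k^s)\{\ell\} \to A(k^s)\{\ell\} \to 0
\]
with divisible kernel. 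Lemma \ref{abstractdiv} then reduces the problem to showing that the group $H^d(k', T(k^s)\{\ell\}^{\boxtimes i} \boxtimes A(k^s)\{\ell\}^{\boxtimes (j-i)}(r-M-N))$ has finite exponent for each $i \in \{0,\ldots,j\}$; since $k'$ has $\ell$-cohomological dimension $d$ by (H1), the top-degree cohomology group $H^d(k', G(k^s)\{\ell\}^{\boxtimes j}(r-M-N))$ is automatically $\ell$-divisible, so finite exponent upgrades to vanishing.

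The key step is to compute the torus-dominated term via the canonical identification $T(k^s)\{\ell\} \cong X_*(T) \otimes_{\mathbb{Z}} \mathbb{Q}_{\ell}/\mathbb{Z}_{\ell}(1)$ as a $G_{k'}$-module, where $X_*(T)$ is the cocharacter lattice. Since $X_*(T)$ is $\mathbb{Z}$-free of finite rank, a direct computation from the definition of $\boxtimes$ via Pontryagin duality yields a natural Galois-equivariant isomorphism
\[
T(k^s)\{\ell\}^{\boxtimes i} \boxtimes A(k^s)\{\ell\}^{\boxtimes (j-i)}(r-M-N) \cong X_*(T)^{\otimes i} \otimes_{\mathbb{Z}} A(k^s)\{\ell\}^{\boxtimes (j-i)}(r-M-N+i),
\]
so the toric part of $G$ is exchanged for exactly $i$ additional Tate twists on the abelian part.

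Next fix a finite Galois extension $k''/k'$ with group $\Gamma$ that splits $T$, and choose a surjection $P \twoheadrightarrow X_*(T)^{\otimes i}$ of $\Gamma$-lattices, where $P = \bigoplus_{\alpha} \mathbb{Z}[\Gamma/H_{\alpha}]$ is a permutation $\Gamma$-lattice, with kernel a further $\mathbb{Z}$-free $\Gamma$-lattice $K$. Since $P$ and $K$ are $\mathbb{Z}$-flat, tensoring the sequence $0 \to K \to P \to X_*(T)^{\otimes i} \to 0$ over $\mathbb{Z}$ with $M := A(k^s)\{\ell\}^{\boxtimes (j-i)}(r-M-N+i)$ preserves exactness, and in the long exact sequence of Galois cohomology the term $H^{d+1}(k', K \otimes M)$ vanishes by (H1). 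Hence the natural map $H^d(k', P \otimes M) \twoheadrightarrow H^d(k', X_*(T)^{\otimes i} \otimes M)$ is surjective. By Shapiro's lemma, setting $k_{\alpha} := (k'')^{H_{\alpha}}$, this reads
\[
\bigoplus_{\alpha} H^d\bigl(k_{\alpha}, A_{k_{\alpha}}(k^s)\{\ell\}^{\boxtimes (j-i)}(r-M-N+i)\bigr) \twoheadrightarrow H^d(k', X_*(T)^{\otimes i} \otimes M),
\]
and each summand on the left is zero by the corollary's hypothesis applied to the finite extension $k_{\alpha}/k$ with $j_1 = j-i$ and $j_2 = i$ (note $j_1 + j_2 = j \leq M+N$).

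The main technical point is the Pontryagin-duality identification that rewrites $T(k^s)\{\ell\}^{\boxtimes i}$ as a Tate twist of the $i$-fold tensor power of the cocharacter lattice: this is precisely what produces the extra $+j_2$ in the twist and makes the shifted hypothesis of the corollary match exactly what Theorem \ref{abs1} requires. Everything else is a standard dévissage organized by lemma \ref{abstractdiv} and by the divisibility of top-degree cohomology.
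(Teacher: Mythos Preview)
Your proof is correct and follows exactly the paper's approach: apply Lemma~\ref{abstractdiv} to the Chevalley sequence $0\to T\to G\to A\to 0$ to reduce (H2) of Theorem~\ref{abs1} to (H2) of the corollary. You have carefully unpacked what the paper leaves to the reader---the identification $T(k^s)\{\ell\}^{\boxtimes i}\cong X_*(T)^{\otimes i}\otimes\mathbb{Q}_\ell/\mathbb{Z}_\ell(i)$ producing the extra twist, and the passage from finite exponent to vanishing via divisibility of $G(k^s)\{\ell\}^{\boxtimes j}$---and your permutation-lattice/Shapiro argument is a valid (if slightly more elaborate than a restriction--corestriction) way to kill the cross terms.
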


\begin{proof}[Proof]
Take $G$ a semi-abelian variety over a finite extension $k'$ of $k$. By applying lemma \ref{abstractdiv} to an exact sequence of the form:
$$0 \rightarrow T \rightarrow G \rightarrow A \rightarrow 0$$
where $T$ is a torus and $A$ an abelian variety, one easily sees that assumption (H2) of corollary \ref{abs1bis} implies assumption (H2) of theorem \ref{abs1}.
\end{proof}

\subsection{Applications}\label{app1}

\hspace{3ex} In this section, we apply theorem \ref{abs1} to several concrete situations. 

\subsubsection{Finite base field}

\begin{lemma}\label{abfin}
Let $k$ be a finite field with $q$ elements. Let $A$ be an abelian variety over $k$ and let $\ell$ be a prime number different from the characteristic of $k$. Let $j\geq 0$ and $r$ be integers such that $j\neq -2r$. Then $H^1(k,A\{\ell\}^{\boxtimes j} (r))=0$.
\end{lemma}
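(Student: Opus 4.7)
The field $k$ being finite with absolute Galois group $\hat{\mathbb{Z}}$ topologically generated by the Frobenius $F$, for any discrete $\ell$-primary torsion Galois module $M$ one has $H^1(k,M) \cong M/(F-1)M$. The plan is therefore to show that $F-1$ acts surjectively on $M := A\{\ell\}^{\boxtimes j}(r)$.

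The first move is to unwind the definition of $\boxtimes$ on this particular module. Writing $A\{\ell\} \cong T_\ell A \otimes_{\mathbb{Z}_\ell} \mathbb{Q}_\ell/\mathbb{Z}_\ell$ with $T_\ell A$ free of rank $2\dim A$ over $\mathbb{Z}_\ell$, Pontryagin duality identifies $A\{\ell\}^D$ with the $\mathbb{Z}_\ell$-linear dual $(T_\ell A)^\vee$. Completed tensor powers of $(T_\ell A)^\vee$ coincide with ordinary tensor powers since that module is finitely generated free, so dualising back yields a Galois-equivariant isomorphism
$$A\{\ell\}^{\boxtimes j}(r) \;\cong\; L \otimes_{\mathbb{Z}_\ell} \mathbb{Q}_\ell/\mathbb{Z}_\ell, \qquad \text{with } L := (T_\ell A)^{\otimes j} \otimes_{\mathbb{Z}_\ell} \mathbb{Z}_\ell(r).$$
Here $L$ is a free $\mathbb{Z}_\ell$-module of finite rank endowed with a continuous $F$-action.

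Next, I invoke the Weil conjectures: the eigenvalues of $F$ on $T_\ell A$ are algebraic integers of absolute value $q^{1/2}$, and $F$ acts on $\mathbb{Z}_\ell(r)$ as multiplication by $q^r$. Thus every eigenvalue of $F$ on $L \otimes_{\mathbb{Z}_\ell} \overline{\mathbb{Q}_\ell}$ has absolute value $q^{r+j/2}$. The hypothesis $j \neq -2r$ forces $r + j/2 \neq 0$, so no such eigenvalue equals $1$; in particular $F-1$ is invertible on the $\mathbb{Q}_\ell$-vector space $L \otimes_{\mathbb{Z}_\ell} \mathbb{Q}_\ell$.

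To conclude, I tensor the short exact sequence $0 \to \mathbb{Z}_\ell \to \mathbb{Q}_\ell \to \mathbb{Q}_\ell/\mathbb{Z}_\ell \to 0$ with the flat $\mathbb{Z}_\ell$-module $L$, obtaining
$$0 \to L \to L \otimes_{\mathbb{Z}_\ell} \mathbb{Q}_\ell \to M \to 0.$$
Given any $x \in M$, I lift it to some $\tilde x \in L \otimes_{\mathbb{Z}_\ell} \mathbb{Q}_\ell$, apply $(F-1)^{-1}$ there, and project back down: this produces an explicit preimage of $x$ under $F-1$, showing surjectivity and hence $H^1(k,M)=0$. The only genuinely delicate step is the first one, namely confirming that the $\boxtimes$-product, a priori built from profinite completed tensor products and Pontryagin duals, collapses to the naive tensor power on $T_\ell A$ in this setting; after that, everything is driven cleanly by the Weil eigenvalue bounds.
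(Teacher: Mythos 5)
Your proof is correct, but it takes a genuinely different route from the paper's. The paper first writes $A\{\ell\}^{\boxtimes j}(r)$ as $\varinjlim_n ({_{\ell^n}}A)^{\boxtimes j}(r)$, then invokes Milne's duality theorem over finite fields (example I.1.10 of \emph{Arithmetic Duality Theorems}) to convert $H^1$ of each finite layer into the Pontryagin dual of an $H^0$, uses the Weil pairing $({_{\ell^n}}A)^D \cong {_{\ell^n}}A^t(-1)$ to pass to the dual abelian variety, and finally kills $H^0\bigl(k,(T_{\ell}A^t)^{\otimes j}\otimes \mathbb{Z}_{\ell}(-j-r)\bigr)$ by the Weil conjectures. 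You avoid any duality theorem: you identify the coefficient module directly as $(T_{\ell}A)^{\otimes j}\otimes\mathbb{Z}_{\ell}(r)\otimes_{\mathbb{Z}_{\ell}}\mathbb{Q}_{\ell}/\mathbb{Z}_{\ell}$ (your unwinding of $\boxtimes$ via $A\{\ell\}^D\cong (T_{\ell}A)^{\vee}$ is sound, and in this special case it substitutes for what the paper's lemmas \ref{tors} and \ref{torsbis} provide), compute $H^1(k,M)\cong M/(F-1)M$ for discrete $\ell$-primary torsion modules over $\hat{\mathbb{Z}}$, and deduce surjectivity of $F-1$ by lifting through $L\otimes_{\mathbb{Z}_{\ell}}\mathbb{Q}_{\ell}$, where $F-1$ is invertible because no Frobenius eigenvalue has absolute value $1$. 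Both arguments rest on exactly the same Weil-conjecture input (eigenvalues of modulus $q^{r+j/2}\neq 1$); what the paper's approach buys is uniformity, since the identical duality template is reused verbatim for the $p$-adic lemmas \ref{abpadicell} and \ref{abpadicp} via Tate local duality, whereas your approach is more elementary and self-contained — no appeal to $A^t$, the Weil pairing, or Milne — and makes transparent that the statement is purely a matter of Frobenius eigenvalues acting on a divisible module.
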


\begin{remarque}
The vanishing of the lemma does not hold when $j=2r$.
\end{remarque}

\begin{proof}[Proof]
By duality over $k$ (example I.1.10 of \cite{milne}), one has:
\begin{align*}
H^1(k,A\{\ell\}^{\boxtimes j} (r)) & \cong \varinjlim_n  H^1(k,({_{\ell^n}}A) ^{\boxtimes j}(r)) \\
& \cong \left[ \varprojlim_n H^0(k,(({_{\ell^n}}A) ^{\boxtimes j})^D(-r))\right] ^D \\
& \cong \left[ \varprojlim_n H^0(k,(({_{\ell^n}}A)^D) ^{\otimes j}(-r))\right] ^D \\& \cong \left[\varprojlim_n H^0(k,({_{\ell^n}}A^t)^{\otimes j}(-j-r))\right] ^D \\& \cong H^0(k, (T_{\ell}A^t)^{\otimes j} \otimes \mathbb{Z}_{\ell}(-j-r))^D.
\end{align*}
According to the Weil conjectures, the eigenvalues of the geometric Frobenius on $(T_{\ell}A^t)^{\otimes j} \otimes \mathbb{Z}_{\ell}(-j-r)$ have modulus $q^{r+\frac{j}{2}} \neq 1$. Hence:
$$H^1(k,A\{\ell\}^{\boxtimes j} (r))=H^0(k, (T_{\ell}A^t)^{\otimes j} \otimes \mathbb{Z}_{\ell}(-j-r))^D=0.$$ 
\end{proof}

\begin{theorem}\label{thfini1}
Let $k$ be a finite field and let $\ell$ be a prime number different from the characteristic of $k$. Let $M$ be a non-negative integer and consider a proper morphism $f:\mathcal{Y} \rightarrow \mathcal{X}$ such that $\mathcal{X}$ is the spectrum of a commutative, local, normal, henselian, excellent, $M$-dimensional ring with residue field $k$. Let $\mathcal{X}^{sh}$ be the strict henselization of $\mathcal{X}$ and assume that $\mathcal{Y}^{sh}:= \mathcal{Y} \times_{\mathcal{X}} \mathcal{X}^{sh}$ is integral and regular. Let $K$ be the function field of $\mathcal{Y}$ and $N$ the dimension of the generic fiber of $f$.
\begin{itemize}
\item[(i)] The group $H^{M+N+1}(K,\mathbb{Q}_{\ell}/\mathbb{Z}_{\ell}(r))$ vanishes for all $r\not\in \left[ 0,M+N \right] $.
\item[(ii)] If the special fiber of $f$ is smooth, then the group $H^{M+N+1}(K,\mathbb{Q}_{\ell}/\mathbb{Z}_{\ell}(r))$ vanishes for all $r\not\in \left[ \frac{M+N}{2},M+N \right] $.
\end{itemize}
 
\end{theorem}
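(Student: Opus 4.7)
The plan is to deduce Theorem \ref{thfini1} from the abstract vanishing results of the previous subsection, using Lemma \ref{abfin} as the arithmetic input. Since a finite field $k$ has $\ell$-cohomological dimension $d = 1$ for every prime $\ell \neq \mathrm{char}(k)$, hypothesis (H1) of Theorem \ref{abs1} (and of Corollary \ref{abs1bis}) is automatic. The remaining task is therefore to check the appropriate version of (H2) with $d = 1$. For part (i), since the special fiber need not be smooth, we have to allow semi-abelian varieties; here it is convenient to invoke Corollary \ref{abs1bis}, which reduces the verification to abelian varieties (via the exact sequence $0 \to T \to G \to A \to 0$ of any semi-abelian variety). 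For part (ii), we can use Theorem \ref{abs1} directly with the weaker hypothesis (H2').

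For part (i), take a finite extension $k'$ of $k$, an abelian variety $A/k'$ and integers $j_1, j_2 \geq 0$ with $j_1 + j_2 \leq M+N$. We must show that
\[
H^1\bigl(k',\, A(k^s)\{\ell\}^{\boxtimes j_1}\bigl(r - M - N + j_2\bigr)\bigr) = 0.
\]
By Lemma \ref{abfin} this holds as soon as $j_1 \neq -2(r - M - N + j_2)$, i.e.\ as soon as $j_1 + 2 j_2 \neq 2(M + N - r)$. When $r > M+N$ the right-hand side is negative, while $j_1 + 2 j_2 \geq 0$; and when $r < 0$ the right-hand side exceeds $2(M+N)$, while the constraints $j_i \geq 0$, $j_1 + j_2 \leq M+N$ force $j_1 + 2 j_2 \leq 2(M+N)$. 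Hence the inequality is satisfied for every admissible $(j_1, j_2)$, and the vanishing of $H^{M+N+1}(K, \mathbb{Q}_\ell/\mathbb{Z}_\ell(r))$ follows from Corollary \ref{abs1bis}.

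For part (ii), the smoothness of the special fiber lets us use (H2'), so we only need to check that for every finite extension $k'/k$, every abelian variety $A/k'$ and every $j \in \{0, \ldots, M+N\}$, one has
\[
H^1\bigl(k',\, A(k^s)\{\ell\}^{\boxtimes j}(r - M - N)\bigr) = 0.
\]
By Lemma \ref{abfin} this reduces to the condition $j \neq 2(M + N - r)$. The forbidden value $2(M+N - r)$ lies outside $\{0, \ldots, M+N\}$ exactly when $r > M+N$ or $r < (M+N)/2$, which is the range $r \notin [(M+N)/2, M+N]$ of the statement.

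The only real content, beyond bookkeeping, lies in Lemma \ref{abfin}: the Weil conjectures guarantee that the geometric Frobenius acts on $(T_\ell A^t)^{\otimes j} \otimes \mathbb{Z}_\ell(-j-r')$ with eigenvalues of modulus $q^{r' + j/2} \neq 1$ whenever $j \neq -2 r'$. Everything else is a direct application of Theorem \ref{abs1} / Corollary \ref{abs1bis}, and the main thing to be careful about is pairing each twist $r'$ with the correct Tate-twist shift so that the numerical obstruction $j_1 + 2 j_2 = 2(M+N-r)$ is read off correctly; this is the step where the extra freedom in Corollary \ref{abs1bis} (two parameters $j_1, j_2$) is crucial for part (i), since restricting to (H2') would only yield the weaker range of part (ii).
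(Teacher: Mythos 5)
Your proof is correct and takes exactly the route the paper intends: its proof of Theorem \ref{thfini1} is the one-line instruction to combine Theorem \ref{abs1} and Corollary \ref{abs1bis} with Lemma \ref{abfin}, and your write-up is precisely that combination (Corollary \ref{abs1bis} for part (i), Theorem \ref{abs1} with (H2') for part (ii)), with the numerical bookkeeping $j_1+2j_2 \neq 2(M+N-r)$ carried out correctly.
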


\begin{proof}[Proof]
One just has to combine theorem \ref{abs1} and corollary \ref{abs1bis} with lemma \ref{abfin}.
\end{proof}

\begin{remarque}
If $N=0$ and $K$ is therefore the function field of $\mathcal{X}$, then the group $H^{M+1}(K,\mathbb{Q}_{\ell}/\mathbb{Z}_{\ell}(r))$ vanishes for all $r\not\in \left[ 0,M \right] $.
\end{remarque}

\subsubsection{$p$-adic base field}

\hspace{3ex} Let $k$ be a field. Recall that a 1-motive over $k$ is a two-term complex of $k$-group schemes $M=[u: Y\rightarrow G]$ (placed in degrees -1 and 0), where $Y$ is the $k$-group associated to a free abelian group of finite type endowed with a continuous action of $\gal (k^s/k)$ and where $G$ is a semi-abelian variety, i.e. an extension of an abelian variety by a torus. When $M$ is a 1-motive over $k$, one can introduce the Galois module $T_{\mathbb{Z}/n\mathbb{Z}}(M):= H^{-1}(M\otimes^{\mathbf{L}} \mathbb{Z}/n\mathbb{Z})$ for each $n>0$. If $\ell$ is a prime number, one defines the $\ell$-adic Tate module of $M$ as follows:
$$T_{\ell}(M):= \varprojlim_n T_{\mathbb{Z}/\ell^n\mathbb{Z}}(M).$$

\begin{lemma}\label{motif}
Let $k$ be a $p$-adic field with residue field $\kappa$ and let $M=[u: Y\rightarrow G]$ be a 1-motive over $k$. Let $\ell$ be a prime number different from $p$. Let $T$ be a torus and $A$ an abelian variety such that $G$ is an extension of $A$ by $T$ and assume that $A$ has good reduction. Then $T_{\ell}(M)$ is a free $\mathbb{Z}_{\ell }$-module of finite type and the eigenvalues of a lifting $\phi \in \gal (k^s/k)$ of the geometric Frobenius $\text{Fr} \in \gal(\kappa^s/\kappa)$ acting on $T_{\ell}(M)$ have modulus 1, $q^{-1/2}$ or $q^{-1}$.
\end{lemma}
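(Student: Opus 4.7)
The plan is to present $T_{\ell}(M)$ as a Galois-stable iterated extension whose graded pieces are $T_{\ell}(T)$, $T_{\ell}(A)$ and $Y \otimes_{\mathbb{Z}} \mathbb{Z}_{\ell}$, and then to bound the Frobenius eigenvalues on each piece separately. Since the eigenvalues of $\phi$ on an extension are, as a multiset, the union of those on the subquotients, both the freeness of $T_{\ell}(M)$ and the restriction on moduli will follow from the three individual computations.

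To produce the filtration I would unwind the definition $T_{\mathbb{Z}/n\mathbb{Z}}(M) = H^{-1}(M \otimes^{\mathbf{L}} \mathbb{Z}/n\mathbb{Z})$: applying cohomology to the triangle $M \xrightarrow{n} M \to M \otimes^{\mathbf{L}} \mathbb{Z}/n\mathbb{Z}$, together with the facts that $G(k^{s})$ is divisible and $Y$ is torsion free, yields the standard short exact sequence
$$0 \to G(k^{s})[n] \to T_{\mathbb{Z}/n\mathbb{Z}}(M) \to Y/nY \to 0.$$
Passing to the inverse limit over $n = \ell^{m}$ (Mittag--Leffler is automatic since all groups in sight are finite) gives
$$0 \to T_{\ell}(G) \to T_{\ell}(M) \to Y \otimes_{\mathbb{Z}} \mathbb{Z}_{\ell} \to 0,$$
and combining with the Tate module sequence $0 \to T_{\ell}(T) \to T_{\ell}(G) \to T_{\ell}(A) \to 0$ coming from $0 \to T \to G \to A \to 0$ produces the desired two-step filtration. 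Each of $Y \otimes \mathbb{Z}_{\ell}$, $T_{\ell}(T)$ and $T_{\ell}(A)$ is $\mathbb{Z}_{\ell}$-free of finite rank (respectively $\operatorname{rk} Y$, $\dim T$, $2 \dim A$), so $T_{\ell}(M)$ is too.

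It then remains to handle eigenvalue moduli on the three pieces. On $Y \otimes \mathbb{Z}_{\ell}$ the continuous Galois action on a finitely generated free abelian group factors through a finite quotient, so any $\phi$ acts with eigenvalues of finite order, in particular of modulus $1$. On $T_{\ell}(T)$ one has the canonical identification $T_{\ell}(T) \cong X_{*}(T) \otimes_{\mathbb{Z}} \mathbb{Z}_{\ell}(1)$: the action on $X_{*}(T)$ factors through the finite Galois group of a splitting field of $T$, while on $\mathbb{Z}_{\ell}(1)$ the hypothesis $\ell \neq p$ makes the cyclotomic character unramified, so any lifting of geometric Frobenius acts by multiplication by $q^{-1}$; combining, the eigenvalues of $\phi$ have modulus $q^{-1}$. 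On $T_{\ell}(A)$ the good reduction hypothesis together with the N\'eron--Ogg--Shafarevich criterion forces $T_{\ell}(A)$ to be unramified, so $\phi$ acts as geometric Frobenius on the reduction $\mathcal{A}_{s}/\kappa$; the Weil conjectures for $\mathcal{A}_{s}$ give eigenvalues of weight $1$ on $H^{1}_{\text{\'et}}(\mathcal{A}_{s} \otimes_{\kappa} \bar{\kappa}, \mathbb{Q}_{\ell})$, and the duality $T_{\ell}(A) \cong H^{1}_{\text{\'et}}(\mathcal{A}_{s} \otimes \bar{\kappa}, \mathbb{Z}_{\ell})^{\vee}$ translates this into eigenvalues of modulus $q^{-1/2}$ on $T_{\ell}(A)$.

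The only mild subtlety I anticipate is the independence of the moduli on the choice of lifting $\phi$: on the abelian and lattice pieces the action is respectively unramified and finite, and on the torus piece inertia acts trivially on $\mathbb{Z}_{\ell}(1)$ (again since $\ell \neq p$) and through a finite quotient on $X_{*}(T)$, so changing $\phi$ by an inertia element can only rescale eigenvalues by roots of unity. Reading off the eigenvalues from the filtration then yields exactly the set of moduli $\{1,\, q^{-1/2},\, q^{-1}\}$ claimed.
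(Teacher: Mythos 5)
Your proof takes essentially the same route as the paper: the same two-step d\'evissage of $T_{\ell}(M)$ with graded pieces $T_{\ell}(T)$, $T_{\ell}(A)$ and $Y\otimes\mathbb{Z}_{\ell}$, and the same bounds on each piece (finite action on $Y$, cyclotomic twist of a finite action for the torus, good reduction plus the Weil conjectures for the abelian variety); your closing remark on independence of the moduli from the choice of lifting $\phi$ is a point the paper leaves implicit.

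One step needs to be repaired, however: the triangle $M\xrightarrow{n} M\to M\otimes^{\mathbf{L}}\mathbb{Z}/n\mathbb{Z}$ does \emph{not} yield the sequence you write. Its long exact cohomology sequence gives
$$0\to H^{-1}(M)/n\to T_{\mathbb{Z}/n\mathbb{Z}}(M)\to {_{n}}H^{0}(M)\to 0,$$
with $H^{-1}(M)=\ker(u)$ and $H^{0}(M)=\operatorname{coker}(u)$, which is a genuinely different filtration: for the Tate $1$-motive $[\mathbb{Z}\to\mathbb{G}_{m}]$, $1\mapsto q$, it reads $T_{\mathbb{Z}/n\mathbb{Z}}(M)\cong {_{n}}E_{q}$ (the $n$-torsion of the Tate curve, with no subobject singled out), whereas the sequence you want specializes there to $0\to\mu_{n}\to T_{\mathbb{Z}/n\mathbb{Z}}(M)\to\mathbb{Z}/n\mathbb{Z}\to 0$. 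The weight-filtration sequence $0\to {_{n}}G\to T_{\mathbb{Z}/n\mathbb{Z}}(M)\to Y/n\to 0$ comes instead from tensoring the triangle $[0\to G]\to M\to [Y\to 0]$ with $\mathbb{Z}/n\mathbb{Z}$: torsion-freeness of $Y$ kills $H^{-1}(Y\otimes^{\mathbf{L}}\mathbb{Z}/n\mathbb{Z})={_{n}}Y$, giving injectivity on the left, and divisibility of $G(k^{s})$ kills $H^{0}(G\otimes^{\mathbf{L}}\mathbb{Z}/n\mathbb{Z})=G/n$, giving surjectivity on the right --- so the two facts you invoke are exactly the right ones, just attached to the wrong triangle. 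This corrected triangle is precisely the one the paper writes, in rotated form. With that substitution your argument is complete and coincides with the paper's proof.
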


\begin{proof}[Proof]
For each $n\geq 1$, one has a distinguished triangle:
$$ Y/\ell^n \rightarrow {_{\ell^n}}G[1] \rightarrow M\otimes^{\mathbf{L}}\mathbb{Z}/\ell^n\mathbb{Z} \rightarrow Y/\ell^n[1].$$
One gets an exact sequence of finite Galois modules:
\begin{equation} \label{exmot}
0 \rightarrow {_{\ell^n}}G \rightarrow T_{\mathbb{Z}/\ell^n\mathbb{Z}}(M)\rightarrow Y/\ell^n \rightarrow 0.
\end{equation} 
Moreover, since $T(k^s)$ is divisible,, one also has the following exact sequence of finite Galois modules:
\begin{equation}\label{exsemi}
0 \rightarrow  {_{\ell^n}}T \rightarrow {_{\ell^n}}G \rightarrow {_{\ell^n}}A \rightarrow 0.
\end{equation}
Since the groups that are involved in the sequences (\ref{exmot}) and (\ref{exsemi}) are finite, by passing to the inverse limit on $n$, one gets the following exact sequences of Galois modules:
\begin{gather} \label{exmot2}
0 \rightarrow T_{\ell}G \rightarrow T_{\ell}M\rightarrow Y\otimes \mathbb{Z}_{\ell} \rightarrow 0,\\
\label{exsemi2}
0 \rightarrow  T_{\ell}T \rightarrow T_{\ell}G \rightarrow T_{\ell}A \rightarrow 0.
\end{gather}
This shows that $T_{\ell}(M)$ is a free $\mathbb{Z}_{\ell }$-module of finite type, so that it only remains to check the assertion about $\phi$. For this purpose, we study the action of $\phi$ on $Y\otimes \mathbb{Z}_{\ell}$, $T_{\ell}T$  and $T_{\ell}A$:
\begin{itemize}
\item[$\bullet$] since $Y$ becomes constant on a finite extension of $k$, all the eigenvalues of $\phi$ on $Y\otimes \mathbb{Z}_{\ell}$ have modulus 1.
\item[$\bullet$] since $T$ becomes isomorphic to $\mathbb{G}_m^r$ for some $r\geq 0$ on a finite extension of $k$, the eigenvalues of $\phi$ on $T_{\ell}T$ have modulus $q^{-1}$.
\item[$\bullet$] the abelian variety $A$ has good reduction. Let then $\mathcal{A}$ be an abelian scheme that extends $A$ on the ring of integers of $k$ and let $A_0$ be its special fiber. Since $\ell$ is different from $p$, we have $T_{\ell}A = T_{\ell}A_0$. We can therefore deduce from the Weil conjectures that the eigenvalues of $\phi$ on $T_{\ell}A$ all have modulus $q^{-1/2}$.
\end{itemize}
This finishes the proof.
\end{proof}

\begin{lemma}\label{abpadicell}
Let $k$ a $p$-adic field. Let $A$ be an abelian variety over $k$ and $\ell$ a prime number different from $p$. Let $j\geq 0$ and $r$ be integers such that $r\not\in \left[1-j,1 \right] $. Then $H^2(k,A\{\ell\}^{\boxtimes j} (r))=0$.
\end{lemma}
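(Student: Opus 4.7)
The plan is to mimic the argument of lemma~\ref{abfin}, replacing duality over a finite field by Tate's local duality over the $p$-adic field $k$, and then to exploit Grothendieck's semi-stable reduction theorem together with lemma~\ref{motif} to control the Frobenius eigenvalues.

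Fix $n\geq 1$. By lemma~\ref{tors}, ${_{\ell^n}}(A\{\ell\}^{\boxtimes j})\cong ({_{\ell^n}}A)^{\boxtimes j}$, so after passing to the direct limit it suffices to study the finite Galois modules $M_n := ({_{\ell^n}}A)^{\boxtimes j}(r)$. Tate's local duality asserts that $H^2(k,M_n)$ is Pontryagin-dual to $H^0(k,M_n^*)$, where $M_n^* := \text{Hom}(M_n,\mu_{\ell^n})$ is the Cartier dual. Using the Weil pairing ${_{\ell^n}}A\times {_{\ell^n}}A^t\to \mu_{\ell^n}$ together with the compatibility of $\boxtimes$ with duality recalled in lemma~\ref{formal}, one computes $M_n^*\cong ({_{\ell^n}}A^t)^{\otimes j}(1-j-r)$. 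Taking the direct limit in $n$ on the left and the corresponding inverse limit on the right then yields the key identification
$$H^2(k,A\{\ell\}^{\boxtimes j}(r))\;\cong\; H^0\bigl(k,(T_{\ell}A^t)^{\otimes j}(1-j-r)\bigr)^D.$$

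To show that the right-hand side vanishes when $r\not\in [1-j,1]$, I would choose a finite Galois extension $L/k$ over which $A^t$ acquires semi-stable reduction. The Raynaud uniformization of $A^t_L$ then identifies $T_{\ell}A^t$ with the Tate module $T_{\ell}M$ of a 1-motive $M=[Y\to G]$ over $L$ in which the abelian quotient of the semi-abelian $L$-group $G$ has good reduction. By lemma~\ref{motif}, any lift $\phi$ of the geometric Frobenius acts on $T_{\ell}A^t$ with eigenvalues of modulus $1$, $q_L^{-1/2}$ or $q_L^{-1}$, where $q_L$ denotes the cardinality of the residue field of $L$. Consequently its eigenvalues on $(T_{\ell}A^t)^{\otimes j}(1-j-r)$ have modulus $q_L^{j+r-1-s/2}$ for some integer $s\in \{0,1,\dots,2j\}$. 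For such a modulus to equal $1$ one would need $s=2(j+r-1)$, forcing $r\in[1-j,1]$; under our hypothesis this is excluded, so no eigenvalue of $\phi$ equals $1$. Since the inertia of $L$ acts unipotently on $T_{\ell}A^t$ by semi-stability, this implies $H^0(L,(T_{\ell}A^t)^{\otimes j}(1-j-r))=0$, and the injectivity of the restriction $H^0(k,\cdot)\hookrightarrow H^0(L,\cdot)$ gives the desired vanishing.

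The main technical obstacle is not conceptual but lies in the careful passage through semi-stable reduction: one must correctly identify $T_{\ell}A^t$ with the Tate module of a suitable 1-motive over $L$ whose abelian quotient has good reduction, and verify that the quasi-unipotent action of inertia does not obstruct the eigenvalue-based vanishing argument. Apart from this point, the plan is entirely parallel to lemma~\ref{abfin}, with Tate local duality substituting for the finite-field duality used there and with lemma~\ref{motif} playing the role of the Weil conjectures.
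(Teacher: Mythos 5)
Your proposal is correct and follows essentially the same route as the paper: Tate local duality reduces the claim to the vanishing of $H^0(k,(T_{\ell}A^t)^{\otimes j}(1-j-r))$, and Raynaud's 1-motive attached to $A^t$ combined with lemma \ref{motif} bounds the Frobenius eigenvalue moduli, ruling out the eigenvalue $1$ precisely when $r\not\in[1-j,1]$. The only difference is cosmetic: you pass explicitly to a semi-stable reduction extension $L$ and descend via injectivity of restriction on $H^0$, whereas the paper invokes theorem 4.2.2 of Raynaud directly over $k$ (with an abelian quotient of only potentially good reduction); your variant is, if anything, slightly more careful, since lemma \ref{motif} literally requires good reduction of the abelian quotient.
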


\begin{proof}[Proof]
By Tate's local duality theorem, one has:
\begin{align*}
H^2(k,A\{\ell\}^{\boxtimes j} (r)) & \cong \varinjlim_n  H^2(k,({_{\ell^n}}A) ^{\boxtimes j}(r)) \\
& \cong \left[ \varprojlim_n H^0(k,(({_{\ell^n}}A) ^{\boxtimes j})^D(1-r))\right] ^D \\
& \cong \left[ \varprojlim_n H^0(k,(({_{\ell^n}}A)^D) ^{\otimes j}(1-r))\right] ^D \\& \cong \left[\varprojlim_n H^0(k,({_{\ell^n}}A^t)^{\otimes j}(1-j-r))\right] ^D \\& \cong H^0(k, (T_{\ell}A^t)^{\otimes j} \otimes \mathbb{Z}_{\ell}(1-j-r))^D.
\end{align*}
According to theorem 4.2.2 of \cite{raynaud}, there exists a 1-motive $M=[Y \rightarrow G]$ over $k$ satisfying the following conditions:
\begin{itemize}
\item[(i)] there is a natural isomorphism $T_{\ell}A^t\cong T_{\ell} M$,
\item[(ii)] there is an exact sequence:
$$0 \rightarrow T \rightarrow G \rightarrow B \rightarrow 0$$
in which $T$ is a torus and $B$ is an abelian variety which has potentially good reduction.
\end{itemize}  
According to lemma \ref{motif}, one can deduce that the eigenvalues of a lifting $\phi$ of the geometric Frobenius acting on $T_{\ell}A^t$ have modulus $q^{-1}$, $q^{-1/2}$ or 1. Hence the eigenvalues of $\phi$ on $(T_{\ell}A^t)^{\otimes j} \otimes \mathbb{Z}_{\ell}(1-j-r)$ have modulus $q^{\nu} $ with $\nu \in \left( \frac{1}{2}\mathbb{Z}\right)  \cap \left[r-1, j+r-1 \right]$. Since $r\not\in \left[1-j,1 \right] $, none of these eigenvalues has modulus 1, and therefore $$H^0(k, (T_{\ell}A^t)^{\otimes j} \otimes \mathbb{Z}_{\ell}(1-j-r))=0.$$ The lemma follows.
\end{proof}

\begin{lemma}\label{abpadicp}
Let $k$ be a $p$-adic field. Let $A$ be an abelian variety over $k$. Let $j\geq 0$ and $r$ be integers such that $r\not\in \left[1-j,1 \right] $. Then $H^2(k,A\{p\}^{\boxtimes j} (r))=0$.
\end{lemma}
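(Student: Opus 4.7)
The plan is to mirror the proof of Lemma \ref{abpadicell}, using $p$-adic Hodge theory as the substitute for the Weil conjectures since we are now in the case $\ell = p$.

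First, I will apply Tate's local duality level-by-level to the finite Galois modules $({_{p^n}}A)^{\boxtimes j}(r)$ (Tate duality works for $p$-primary coefficients over a $p$-adic field just as for $\ell \neq p$) and pass to the direct limit, running the same telescoping computation as in the proof of Lemma \ref{abpadicell}. This will yield the identification
$$H^2(k, A\{p\}^{\boxtimes j}(r)) \;\cong\; H^0\!\left(k,\, (T_p A^t)^{\otimes j} \otimes_{\mathbb{Z}_p} \mathbb{Z}_p(1-j-r)\right)^{\!D}.$$
Since $T_p A^t$ is $\mathbb{Z}_p$-flat, the $H^0$ on the right is a torsion-free $\mathbb{Z}_p$-module, so it is enough to prove the vanishing of its rational analogue $H^0(k, (V_p A^t)^{\otimes j}(1-j-r))$, where $V_p A^t := T_p A^t \otimes_{\mathbb{Z}_p} \mathbb{Q}_p$.

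Next, I will invoke Fontaine's theorem: the rational Tate module $V_p A^t$ of an abelian variety over a $p$-adic field is a Hodge--Tate representation of $\gal(k^s/k)$, with Hodge--Tate weights contained in $\{0,1\}$. Since tensor products and Tate twists of Hodge--Tate representations are again Hodge--Tate, the representation $(V_p A^t)^{\otimes j}(1-j-r)$ is Hodge--Tate with weights contained in the integer interval $[1-j-r,\,1-r]$. The hypothesis $r\notin [1-j,1]$ translates exactly to $0 \notin [1-j-r,\,1-r]$, so this twisted representation has no Hodge--Tate weight equal to $0$.

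Finally, a Hodge--Tate $\mathbb{Q}_p$-representation $V$ of $\gal(k^s/k)$ with no weight $0$ has $V^{\gal(k^s/k)} = 0$: via the Hodge--Tate decomposition and the Tate--Sen computation $\mathbb{C}_p(n)^{\gal(k^s/k)} = 0$ for $n\neq 0$, one gets the inclusion $V^{\gal(k^s/k)} \otimes_{\mathbb{Q}_p} \mathbb{C}_p \hookrightarrow (V\otimes_{\mathbb{Q}_p}\mathbb{C}_p)^{\gal(k^s/k)} = 0$. Applied to $V = (V_p A^t)^{\otimes j}(1-j-r)$ this yields the desired vanishing. The main obstacle is precisely this appeal to $p$-adic Hodge theory: in the $\ell \neq p$ case the Weil conjectures (applied via the $1$-motive associated to $A^t$ in Lemma \ref{motif}) pinned down the Frobenius eigenvalues, but for $\ell = p$ those eigenvalues are no longer available in the needed form, and one must genuinely enter the Hodge--Tate theory of the $p$-divisible group $A^t[p^\infty]$. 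One might instead mimic Lemma \ref{abpadicell} by writing $A^t$ via Raynaud's $1$-motive construction as built from a torus, a potentially good-reduction abelian variety, and a lattice, and check Hodge--Tate weights piece by piece (weight $1$ for the torus, weights in $\{0,1\}$ for the good-reduction part, weight $0$ for the lattice), but this refinement does not avoid the Hodge--Tate input.
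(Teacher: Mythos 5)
Your proposal is correct and follows essentially the same route as the paper: Tate local duality to reduce to $H^0(k,(T_pA^t)^{\otimes j}\otimes \mathbb{Z}_p(1-j-r))$, then the Hodge--Tate decomposition of the Tate module (the paper cites Faltings) to see that all Hodge--Tate weights $1-r-i$, $0\leq i\leq j$, are nonzero under the hypothesis $r\notin[1-j,1]$, whence the invariants vanish by the Tate--Sen computation. Your write-up merely makes explicit some steps the paper leaves implicit (passage to $V_p$, stability of Hodge--Tate representations under tensor and twist, and the inclusion of invariants into $\mathbb{C}_p$-invariants).
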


\begin{proof}[Proof]
Just as in the proof of the previous lemma, by Tate's local duality theorem, one has:
$$H^2(k,A\{p\}^{\boxtimes j} (r))  \cong H^0(k, (T_{p}A^t)^{\otimes j} \otimes \mathbb{Z}_{p}(1-j-r))^D.$$
We now write the Hodge-Tate decomposition (\cite{faltings}):
$$T_{p}A^t \otimes \mathbb{C}_p \cong H^1(A^t \times k^s,\mathbb{Q}_p(1))\otimes \mathbb{C}_p \cong H^1(A^t,\mathcal{O}_{A^t})\otimes \mathbb{C}_p(1) \oplus H^0(A^t,\Omega^1_{A^t/k})\otimes \mathbb{C}_p.$$
We deduce that there are $k$-vector spaces $V_0,...,V_j$ such that:
$$(T_{p}A^t)^{\otimes j} \otimes \mathbb{Z}_{p}(1-j-r) \otimes \mathbb{C}_p \cong \bigoplus_{i=0}^j \left( V_i \otimes \mathbb{C}_p(1-r-i)\right) .$$
Since $r\not\in \left[1-j,1 \right] $, none of the weights $1-r-i$ of the previous decomposition is 0. Hence $H^0(k, (T_{p}A^t)^{\otimes j} \otimes \mathbb{Z}_{p}(1-j-r))=0$, and this finishes the proof.
\end{proof}

\begin{theorem}\label{thpadique1}
Let $k$ be a $p$-adic field and let $\ell$ be a prime number. Let $M$ and $N$ be non-negative integers and consider a commutative, local, normal, henselian, excellent, $M$-dimensional ring $R$ with residue field $k$. Let $K$ be the function field of an $N$-dimensional integral variety over the fraction field of $R$. Then the group $$H^{M+N+2}(K,\mathbb{Q}_{\ell}/\mathbb{Z}_{\ell}(r))$$ vanishes for all $r\not\in \left[ 1,M+N+1 \right] $. 
\end{theorem}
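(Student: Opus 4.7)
The plan is to apply Corollary~\ref{abs1bis} with the cohomological-dimension parameter $d = 2$. The first task is to realize $K$ as the function field of a suitable regular scheme $\mathcal{Y}$ over $\mathcal{X} = \mathrm{Spec}\,R$. Since $k$ is $p$-adic it has characteristic zero, so $R$ and $\mathrm{Frac}(R)$ are likewise of characteristic zero; Hironaka's theorem will then desingularize any projective model of the given $N$-dimensional integral $\mathrm{Frac}(R)$-variety. As noted in remark~(i) following Theorem~\ref{abs1}, this is enough: after such a desingularization one may assume that $\mathcal{Y}^{sh} = \mathcal{Y}\times_{\mathcal{X}} \mathcal{X}^{sh}$ is integral and regular, so the abstract setting of the corollary applies.

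Hypothesis~(H1) is classical and immediate: every finite extension of a $p$-adic field has $\ell$-cohomological dimension equal to $2$ for every prime $\ell$, so (H1) holds with $d = 2$, giving $d + M + N = M + N + 2$, which matches the target cohomological degree of the theorem.

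The substantive step is to verify hypothesis~(H2) of Corollary~\ref{abs1bis}: for every finite extension $k'/k$, every abelian variety $A/k'$, and every pair $(j_1, j_2)$ with $j_1, j_2 \geq 0$ and $j_1 + j_2 \leq M+N$, one must have
\begin{equation*}
H^2\bigl(k', A(k^s)\{\ell\}^{\boxtimes j_1}(r - M - N + j_2)\bigr) = 0.
\end{equation*}
Here Lemmas~\ref{abpadicell} and~\ref{abpadicp} do exactly this job: via Tate local duality combined with the Weil conjectures and Raynaud's description of the $p$-adic Tate module for $\ell \neq p$, and via the Hodge--Tate decomposition for $\ell = p$, they yield the vanishing of $H^2(k', A\{\ell\}^{\boxtimes j}(s))$ as soon as $s \notin [1-j, 1]$. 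Applying this with $j = j_1$ and $s = r - M - N + j_2$ reduces everything to an interval chase.

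The concluding step is to check that the hypothesis $r \notin [1, M+N+1]$ is equivalent to $r - M - N + j_2 \notin [1 - j_1, 1]$ for every admissible $(j_1, j_2)$. Fixing $j_2$ and letting $j_1$ vary from $0$ to $M+N-j_2$, the corresponding union of forbidden intervals $[M+N+1 - j_1 - j_2,\; M+N+1 - j_2]$ collapses to $[1, M+N+1-j_2]$; letting $j_2$ vary over $\{0, \dots, M+N\}$ then sweeps out $[1, M+N+1]$ exactly. So the range excluded in the statement is the sharp one, and Corollary~\ref{abs1bis} will yield $H^{M+N+2}(K, \mathbb{Q}_\ell/\mathbb{Z}_\ell(r)) = 0$. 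I expect the main obstacle to be this bookkeeping of forbidden twists rather than any genuine new ingredient: the deep content has been packaged into the abstract corollary and into the two preceding lemmas, so the contribution of the present proof is primarily combinatorial.
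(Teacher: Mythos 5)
Your proposal is correct and follows essentially the same route as the paper: realize $K$ via Hironaka's theorem (using characteristic zero) as the function field of a scheme $\mathcal{Y}$ with $\mathcal{Y}^{sh}$ integral and regular admitting a proper dominant morphism to $\mathrm{Spec}\,R$, then apply Corollary \ref{abs1bis} with $d=2$, verifying (H2) by Lemmas \ref{abpadicell} and \ref{abpadicp}. The interval bookkeeping you spell out (the union of the forbidden intervals $[M+N+1-j_1-j_2,\,M+N+1-j_2]$ being exactly $[1,M+N+1]$) is precisely what the paper leaves implicit in "one just has to combine," and it is carried out correctly.
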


\begin{proof}[Proof]
Observe that, by Hironaka's theorem, $K$ is the function field of an integral scheme $\mathcal{Y}$ such that $\mathcal{Y}^{sh}$ is integral and regular and such that there is a proper dominant morphism  $f:\mathcal{Y} \rightarrow \mathrm{Spec} \; R$. One then just has to combine corollary \ref{abs1bis} with lemmas \ref{abpadicell} and \ref{abpadicp}.
\end{proof}

\begin{remarque} If $K$ is the fraction field of $R$, the group $H^{M+2}(K,\mathbb{Q}_{\ell}/\mathbb{Z}_{\ell}(r))$ vanishes for all $r\not\in \left[ 1,M+1 \right] $.
\end{remarque}

\subsubsection{Number fields}

\begin{lemma}\label{abcdn}
Let $k$ be a number field. Let $A$ be an abelian variety over $k$ and let $\ell$ be a prime number which is assumed to be different from 2 if $k$ is not totally imaginary. Let $j\geq 0$ and $r$ be integers such that $r\not\in \left[1-j,1 \right] $. Then $H^2(k,A\{\ell\}^{\boxtimes j} (r))=0$.
\end{lemma}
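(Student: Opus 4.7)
The plan is to follow the template set by the $p$-adic cases (Lemmas \ref{abpadicell} and \ref{abpadicp}), combining local vanishing with a Poitou-Tate type local-global principle.

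\emph{Local vanishing.} First I would establish $H^2(k_v, A\{\ell\}^{\boxtimes j}(r)) = 0$ for every place $v$ of $k$. At each non-archimedean place $v$, the base change $A_{k_v}$ is an abelian variety over the $p$-adic field $k_v$, so Lemma \ref{abpadicell} (when $v \nmid \ell$) or Lemma \ref{abpadicp} (when $v \mid \ell$) applies and gives the vanishing, as the hypothesis $r \notin [1-j, 1]$ is inherited. At each complex place the cohomological dimension is zero, so $H^2(k_v, -)$ vanishes trivially. At each real place $v$, the hypothesis that $\ell \neq 2$ (imposed whenever $k$ admits a real place) forces $\mathrm{cd}_\ell(\mathbb{R}) = 0$ and hence the same conclusion.

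\emph{Globalisation via Poitou-Tate.} Set $M = A\{\ell\}^{\boxtimes j}(r)$ and $M_n = ({_{\ell^n}}A)^{\boxtimes j}(r)$, which by Lemma \ref{tors} coincides with the $\ell^n$-torsion of $M$. Each $M_n$ is a finite Galois module, so the nine-term Poitou-Tate exact sequence gives
$$H^1(k, M_n^{\vee})^D \to H^2(k, M_n) \xrightarrow{\mathrm{loc}_n} \bigoplus_{v \in \Omega_k} H^2(k_v, M_n) \to H^0(k, M_n^{\vee})^D \to 0,$$
with $\ker(\mathrm{loc}_n) = \Sha^2(k, M_n) \cong \Sha^1(k, M_n^{\vee})^D$, where $M_n^{\vee}$ is the Cartier dual. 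Taking the direct limit over $n$ (which commutes with Galois cohomology of discrete torsion modules and with the direct sum over $v$), the local vanishing of the first step yields
$$H^2(k, M) = \varinjlim_n \Sha^2(k, M_n).$$

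\emph{Vanishing of the Shafarevich-Tate colimit.} Using Weil pairings, $M_n^{\vee}$ can be identified with $({_{\ell^n}}A^{t})^{\otimes j}(1-r-j)$. A weight analysis parallel to that of Lemmas \ref{abpadicell} and \ref{abpadicp} (invoking the Weil conjectures at places of good reduction, Raynaud's theorem at places of potentially good reduction, and the Hodge-Tate decomposition at places above $\ell$) shows that the Galois-invariants of every relevant local Tate twist are trivial at each place outside a finite set, which feeds back into bounding the transition maps in the inverse system $\Sha^1(k, M_n^{\vee})$. The hypothesis $r \notin [1-j,1]$ ensures the relevant weight is never zero, so the inverse limit $\varprojlim_n \Sha^1(k, M_n^{\vee})$ vanishes, whence by Pontryagin duality $\varinjlim_n \Sha^2(k, M_n) = 0$ and the lemma follows.

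I expect the final step to be the main obstacle, since it requires a careful control of the Shafarevich-Tate groups in the limit; in practice one may prefer to appeal directly to the local-global principle of Jannsen (\cite{jannsen}) for the module $M_n^{\vee}$, which precisely encodes the information needed to conclude.
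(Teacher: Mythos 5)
Your first two steps are sound, and the first is literally the paper's: local vanishing at the finite places comes from lemmas \ref{abpadicell} and \ref{abpadicp}, and the archimedean places are handled by the hypothesis on $\ell$ and $k$. The Poitou--Tate reduction is also correct: it identifies $H^2(k, A\{\ell\}^{\boxtimes j}(r))$ with $\varinjlim_n \Sha^2(k,M_n) \cong \bigl(\varprojlim_n \Sha^1(k, M_n^\vee)\bigr)^D$, and your computation $M_n^\vee \cong ({_{\ell^n}}A^t)^{\otimes j}(1-r-j)$ is right. The gap is your third step. The vanishing of $\varprojlim_n \Sha^1(k,M_n^\vee) = \Sha^1_{\mathrm{cont}}\bigl(k,(T_\ell A^t)^{\otimes j}(1-r-j)\bigr)$ is not a formal consequence of the vanishing of local invariants at almost all places: there is no Hasse principle for $H^1$ of the finite modules $M_n^\vee$ (these Sha groups can be nonzero at every level), and the local data you invoke gives no control on the transition maps between them. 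Deducing the vanishing of this inverse limit from an almost-everywhere weight condition is precisely the content of Jannsen's local-global theorem in \cite{jannsen}; its proof is a genuinely global argument of some length, so "a weight analysis \ldots feeds back into bounding the transition maps" is, as it stands, an appeal to the very statement being proved.

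Your closing suggestion is the right fix, but it is aimed at the wrong module: Jannsen's Theorem 1.5(a) is not a statement about the finite modules $M_n^\vee$; it applies to divisible modules $T \otimes \mathbb{Q}_\ell/\mathbb{Z}_\ell$, where $T$ is a $\mathbb{Z}_\ell$-lattice whose Frobenius eigenvalues at almost all places have modulus $\neq q_v^{-1}$. This is exactly how the paper argues: for $M = A\{\ell\}^{\boxtimes j}(r) \cong (T_\ell A)^{\otimes j}\otimes \mathbb{Q}_\ell/\mathbb{Z}_\ell(r)$, the Weil conjectures give that at almost all places the Frobenius eigenvalues on $(T_\ell A)^{\otimes j}(r)$ have modulus $q_v^{-j/2-r}$, which differs from $q_v^{-1}$ exactly because $r \notin \left[1-j,1\right]$ (the same inequality as your "dual weight $\neq 0$"). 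Theorem 1.5(a) of \cite{jannsen} then gives injectivity of $H^2(k,M) \to \bigoplus_v H^2(k_v,M)$, and your local vanishing step finishes the proof at once. With this citation the entire Poitou--Tate apparatus becomes unnecessary; if you prefer to keep the dual formulation, you must acknowledge that it is equivalent to, and not easier than, Jannsen's statement.
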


\begin{proof}[Proof]
According to the Weil conjectures, if $v$ is a finite place of $K$ which does not divide $\ell$ and where $A$ has good reduction, all the eigenvalues of a lifting of the geometric Frobenius at $v$ acting on $T_{\ell}(A)$ have modulus $q^{-1/2}$. We deduce that all the eigenvalues of a lifting of the geometric Frobenius at $v$ acting on $T_{\ell}(A)^{\otimes j} \otimes \mathbb{Z}_{\ell}(r)$ have modulus $q^{-j/2-r}$. Since $r\not\in \left[1-j,1 \right] $, we have $q^{-j/2-r}\neq q^{-1}$. Theorem 1.5(a) of \cite{jannsen} hence implies that the morphism:
$$H^2(k,A\{\ell\}^{\boxtimes j} (r)) \rightarrow \bigoplus_{v} H^2(k_v,A\{\ell\}^{\boxtimes j} (r))$$
is injective. But according to lemmas \ref{abpadicell} and \ref{abpadicp}, since $r\not\in \left[1-j,1 \right]$, we have $H^2(k_v,A\{\ell\}^{\boxtimes j} (r))=0$ for each finite place $v$ of $k$. We deduce that the group $H^2(k,A\{\ell\}^{\boxtimes j} (r))$ vanishes.
\end{proof}

\begin{theorem}\label{thcdn1}
Let $k$ be a number field and let $\ell$ be a prime number which is assumed to be different from 2 if $k$ is not totally imaginary. Let $M$ and $N$ be non-negative integers and consider a commutative, local, normal, henselian, excellent, $M$-dimensional ring $R$ with residue field $k$. Let $K$ be the function field of an $N$-dimensional integral variety over the fraction field of $R$. Then the group $$H^{M+N+2}(K,\mathbb{Q}_{\ell}/\mathbb{Z}_{\ell}(r))$$ vanishes for all $r\not\in \left[ 1,M+N+1 \right] $. 
\end{theorem}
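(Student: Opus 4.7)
The plan is to mimic exactly the argument used for Theorem \ref{thpadique1}, replacing the $p$-adic input (lemmas \ref{abpadicell} and \ref{abpadicp}) by its number-field counterpart (lemma \ref{abcdn}). The abstract vanishing machine of corollary \ref{abs1bis} is set up in a way that makes this substitution essentially automatic.

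First, I would invoke Hironaka's theorem to produce a proper, dominant morphism $f:\mathcal{Y}\to\mathrm{Spec}\,R$ whose generic fiber has dimension $N$, with $\mathcal{Y}$ an integral variety over $\mathrm{Spec}\,R$ having function field $K$, and such that $\mathcal{Y}^{sh}:=\mathcal{Y}\times_{\mathrm{Spec}\,R}\mathrm{Spec}\,R^{sh}$ is integral and regular (since desingularization in characteristic zero applies; note that $k$ being a number field forces $R$ to have residue characteristic zero, and henselization does not change this). This places us in the hypotheses of corollary \ref{abs1bis}.

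Next, I need to verify the two hypotheses of that corollary with $d=2$. Assumption (H1) is the fact that a number field which is totally imaginary, or any number field when $\ell$ is odd, has $\ell$-cohomological dimension $2$; this is classical. For assumption (H2), fix a finite extension $k'/k$ (still a number field of the same type), an abelian variety $A$ over $k'$, and integers $j_1, j_2\in\{0,\dots,M+N\}$ with $j_1+j_2\leq M+N$. Setting $j=j_1$ and $r'=r-M-N+j_2$, I must show
\[
H^2\bigl(k',A(k^s)\{\ell\}^{\boxtimes j_1}(r-M-N+j_2)\bigr)=0.
\]
By lemma \ref{abcdn} (applied over $k'$ instead of $k$), this vanishing holds whenever $r'\notin[1-j,1]$, i.e.\ whenever $r-M-N+j_2\notin[1-j_1,1]$. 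The hypothesis $r\notin[1,M+N+1]$ together with $0\le j_1+j_2\le M+N$ gives precisely this: either $r\geq M+N+2$, in which case $r-M-N+j_2\geq 2>1$, or $r\leq 0$, in which case $r-M-N+j_2\leq j_2-M-N\leq -j_1<1-j_1$. In both cases $r'\notin[1-j,1]$, so lemma \ref{abcdn} applies.

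Having verified both hypotheses of corollary \ref{abs1bis}, I conclude that $H^{M+N+2}(K,\mathbb{Q}_\ell/\mathbb{Z}_\ell(r))=0$. I do not foresee any serious obstacle: the only points that require care are the bookkeeping on $r'$ vis-à-vis the interval $[1-j,1]$ (which is a routine inequality check) and the observation that lemma \ref{abcdn} applies uniformly over all finite extensions of $k$ (which is immediate since a finite extension of a number field is a number field of the same totally-imaginary-or-not type, and lemma \ref{abcdn} only requires $\ell\neq 2$ in the non-totally-imaginary case).
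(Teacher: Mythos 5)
Your proposal is correct and follows exactly the paper's own route: Hironaka's theorem to produce a proper dominant $f:\mathcal{Y}\to\mathrm{Spec}\,R$ with $\mathcal{Y}^{sh}$ integral and regular, then corollary \ref{abs1bis} with $d=2$, with hypothesis (H2) supplied by lemma \ref{abcdn}; the paper simply states this in two sentences while you spell out the interval bookkeeping, which checks out. (One harmless imprecision: a finite extension of a non-totally-imaginary number field need not stay non-totally-imaginary, but this does not matter since in that case $\ell\neq 2$ is assumed and lemma \ref{abcdn} applies to $k'$ regardless.)
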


\begin{proof}[Proof]
Observe that, by Hironaka's theorem, $K$ is the function field of an integral scheme $\mathcal{Y}$ such that $\mathcal{Y}^{sh}$ is integral and regular and such that there is a proper dominant morphism  $f:\mathcal{Y} \rightarrow \mathrm{Spec} \; R$. One then just has to combine corollary \ref{abs1bis} with lemma \ref{abcdn}.
\end{proof}

\begin{remarque}
If $K$ is the fraction field of $R$, the group $H^{M+2}(K,\mathbb{Q}_{\ell}/\mathbb{Z}_{\ell}(r))$ vanishes for all $r\not\in \left[ 1,M+1 \right] $.
\end{remarque}

\section{Fraction fields of two-dimensional henselian local rings}\label{(())}

\hspace{3ex} In this section, we are interested in the function fields of two-dimensional henselian local rings. We will first improve the vanishing theorems of the previous section in this particular case, and we will then prove some Brauer-Hasse-Noether exact sequences.

\subsection{An abstract vanishing theorem}\label{ppp}

\hspace{3ex} Fix a perfect field $k$ and let $R$ be a commutative, local, geometrically integral, normal, henselian, excellent, 2-dimensional $k$-algebra with residue field $k$. By geometrically integral, we mean that $R\otimes_k k^s$ is integral. For instance, $R$ could be the completion or the henselization of the local ring at a closed point of a normal $k$-surface. Let $K$ be the fraction field of $R$ and let $\mathfrak{m}$ be the maximal ideal of $R$. Set $\mathcal{X} = \text{Spec} \; R$ and $X = \mathcal{X} \setminus \{\mathfrak{m}\}$. The scheme $\mathcal{X}$ may be singular in general, while $X$ is a (non-affine) Dedekind scheme.

\begin{theorem}\label{abs}
Let $d$ be a non-negative integer, $r$ any integer and $\ell$ a prime number different from the characteristic of $k$. We make the following three assumptions:
\begin{itemize}
\item[(H1)] The field $k$ has $\ell$-cohomological dimension $d$.
\item[(H2)] For each finite extension $k'$ of $k$, the group $H^d(k',\mathbb{Q}_{\ell}/\mathbb{Z}_{\ell}(r-2))$ vanishes.
\item[(H3)] For each finite extension $k'$ of $k$ and for each abelian variety $A$ over $k'$, the group $H^d(k',A(k^s)\{\ell\}(r-2))$ vanishes.
\end{itemize}
Then the group $H^{d+2}(K,\mathbb{Q}_{\ell}/\mathbb{Z}_{\ell}(r))$ vanishes.
\end{theorem}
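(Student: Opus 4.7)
The plan is to invoke the Brauer-Hasse-Noether exact sequence for two-dimensional henselian local rings established in \cite{diego2}, which for our field $K$ furnishes a complex of the form
\begin{equation*}
0 \to U \to H^{d+2}(K,\mathbb{Q}_{\ell}/\mathbb{Z}_{\ell}(r)) \to \bigoplus_{v \in X^{(1)}} H^{d+2}(K_v,\mathbb{Q}_{\ell}/\mathbb{Z}_{\ell}(r)) \to V \to 0,
\end{equation*}
in which the cokernel $V$ is expressed in terms of $H^d(k,\mathbb{Q}_{\ell}/\mathbb{Z}_{\ell}(r-2))$ and the kernel $U$ is governed by the Galois cohomology in degree $d$ of an $\ell$-primary torsion Galois module attached to the Picard scheme of a desingularization of $\mathcal{X}$, twisted by $r-2$. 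The target is then to show that under (H1)--(H3) all three flanking pieces vanish.

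First I would analyse the local terms. Each $K_v$ is the fraction field of a one-dimensional henselian excellent local ring whose residue field $\kappa(v)$ is the function field of a (possibly singular) curve $C_v$ over a finite extension $k'$ of $k$. Via the localization/residue sequence relating $H^{d+2}(K_v,\mathbb{Q}_{\ell}/\mathbb{Z}_{\ell}(r))$ to the cohomology of $\kappa(v)$ at twist $r-1$, together with a Hochschild-Serre argument on a smooth compactification of $C_v$ exactly as in the proof of theorem \ref{abs1}, one expresses $H^{d+2}(K_v,\mathbb{Q}_{\ell}/\mathbb{Z}_{\ell}(r))$ in terms of groups of the form $H^d(k'',\mathbb{Q}_{\ell}/\mathbb{Z}_{\ell}(r-2))$ and $H^d(k'',J(k^s)\{\ell\}(r-2))$ for finite extensions $k''/k'$ and abelian varieties $J$ (Jacobians of irreducible components of the normalized compactification of $C_v$). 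Assumptions (H1), (H2) and (H3) kill all these contributions, so each local term vanishes.

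Next, the cokernel $V$ is controlled by $H^d(k,\mathbb{Q}_{\ell}/\mathbb{Z}_{\ell}(r-2))$, which vanishes by (H2). To kill $U$, I would rely on the description provided in \cite{diego2}: $U$ is built, via dévissage of exact sequences of $\ell$-primary torsion Galois modules of the type handled by lemmas \ref{abstractdiv} and \ref{abstractfini}, from groups $H^d(k',A(k^s)\{\ell\}(r-2))$ (abelian-variety part, vanishing by (H3)) and $H^d(k',\mathbb{Q}_{\ell}/\mathbb{Z}_{\ell}(r-2))$ (character and torus part, vanishing by (H2)); the character/finite-group pieces are handled using assumption (H1) to kill any residual cohomology in degree~$d$ of finite modules after divisibility reductions. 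Assembling the three steps forces $H^{d+2}(K,\mathbb{Q}_{\ell}/\mathbb{Z}_{\ell}(r))=0$.

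The main obstacle will be the handling of the kernel $U$. The local terms are fairly routine once the BHN sequence and the reduction of $\kappa(v)$ to a curve situation are in place, but $U$ encodes the genuinely global/singularity-theoretic obstruction: one must unwind the geometry of a desingularization $\tilde{\mathcal{X}}\to \mathcal{X}$ and its exceptional divisor, identify the Picard variety of the desingularization as the relevant semi-abelian variety, and then pass from its $\ell$-adic Tate module to the Galois module that actually appears in $U$. It is precisely at this step that the "careful study" of the BHN sequence from \cite{diego2}, combined with the $\boxtimes$-formalism of section \ref{par1}, is indispensable in order to feed exactly the data provided by (H1)--(H3) into the vanishing argument.
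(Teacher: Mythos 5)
Your proposal does not follow the paper's route, and as written it has two genuine gaps. The first is your starting point: the Brauer--Hasse--Noether sequence you invoke for $K$ itself, with kernel $U$, cokernel $V$ and local terms $H^{d+2}(K_v,\mathbb{Q}_{\ell}/\mathbb{Z}_{\ell}(r))$, is \emph{not} available in \cite{diego2}. What that reference provides (theorem \ref{BHN} of the paper) is the sequence for $\underline{K}=K\otimes_k k^s$ over the \emph{separably closed} base, which the paper then upgrades to an exact sequence of Galois modules over $k$. The sequence for $K$ itself is corollary \ref{corobhn} of the present paper: it is established later, from the very same machinery that proves theorem \ref{abs} directly (the Hochschild--Serre spectral sequence for $\underline{K}/K$ together with the dévissage of $\mathrm{Br}(\underline{K})\{\ell\}$ in proposition \ref{gr}), and, crucially, it requires an additional hypothesis not among (H1)--(H3), namely that for every finite extension $k'/k$ the corestriction $H^{d-1}(k',\mathbb{Q}_{\ell}/\mathbb{Z}_{\ell}(r-2))\rightarrow H^{d-1}(k,\mathbb{Q}_{\ell}/\mathbb{Z}_{\ell}(r-2))$ is surjective. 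So your argument either rests on a result that does not exist in the cited generality, or proves a strictly weaker theorem. The paper's actual proof never needs a local-global sequence for $K$: hypothesis (H1) and Hochschild--Serre give $H^{d+2}(K,\mathbb{Q}_{\ell}/\mathbb{Z}_{\ell}(r))\cong H^{d}(k,\mathrm{Br}(\underline{K})\{\ell\}(r-1))$, and (H2), (H3) are applied directly to the dévissage (\ref{dev1}), (\ref{dev2}) of this Galois module.

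The second gap is in your treatment of the non-abelian-variety pieces. You claim that (H1) "kills any residual cohomology in degree $d$ of finite modules"; it does not --- $\ell$-cohomological dimension $d$ kills cohomology in degrees $>d$, not in degree $d$. Moreover the graph piece $\mathbb{Q}_{\ell}/\mathbb{Z}_{\ell}(r-2)\otimes H_1(\underline{\Gamma},\mathbb{Z})$ is not a sum of modules induced from finite extensions, so (H2) does not kill its $H^d$ either: under (H1)--(H3) this group, and likewise $H^d\left(k,\mathrm{Ker}\left(\bigoplus_{\underline{v}\in \underline{X}^{(1)}}\mathbb{Q}_{\ell}/\mathbb{Z}_{\ell}(r-2)\rightarrow \mathbb{Q}_{\ell}/\mathbb{Z}_{\ell}(r-2)\right)\right)$, can only be shown to have \emph{finite exponent}, by restriction--corestriction to an extension trivializing the Galois action. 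The proof can therefore not conclude that $U$ (or $H^d(k,\mathrm{Br}(\underline{K})\{\ell\}(r-1))$) vanishes; the paper closes this with a step your proposal omits entirely: since $\mathrm{cd}_{\ell}(K)=d+2$, the top-degree group $H^{d+2}(K,\mathbb{Q}_{\ell}/\mathbb{Z}_{\ell}(r))$ is divisible, and a divisible group of finite exponent is zero. Finally, a smaller point: for $v\in X^{(1)}$ the residue field $k(v)$ is the fraction field of the one-dimensional henselian local domain $R/\mathfrak{p}$, i.e.\ a henselian discretely valued field whose residue field is a finite extension $k'$ of $k$; it is not the function field of a curve over $k'$, so no Jacobians or compactifications enter the local analysis --- two residue isomorphisms identify $H^{d+2}(K_v,\mathbb{Q}_{\ell}/\mathbb{Z}_{\ell}(r))$ with $H^{d}(k',\mathbb{Q}_{\ell}/\mathbb{Z}_{\ell}(r-2))$, which (H2) kills.
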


\subsubsection{Resolution of singularities}\label{desingu}

\hspace{3ex} Consider a morphism of schemes $f: \tilde{\mathcal{X}} \rightarrow \mathcal{X} = \text{Spec} \; R$ satisfying the following assumptions:
\begin{itemize}
\item[$\bullet$] $\tilde{\mathcal{X}}$ is a regular, integral, 2-dimensional scheme and $f$ is projective;
\item[$\bullet$] $f: f^{-1}(X) \rightarrow X$ is an isomorphism;
\item[$\bullet$] $f^{-1}(\mathfrak{m})$ is a normal crossing divisor of $\mathcal{X}$ (in the sense of definition 9.1.6 of \cite{liu}).
\end{itemize}

Such a morphism exists according to \cite{lipman}. We then set $Y = f^{-1}(\mathfrak{m})$ and we endow $Y$ with the reduced structure. Thus $Y$ is a reduced $k$-curve which may not be irreducible, but all of its irreducible components are smooth. For $v \in \tilde{\mathcal{X}}^{(1)} \setminus X^{(1)} = Y^{(0)}$, let $Y_v$ be the smooth projective $k$-curve corresponding to $v$. We denote by $g_v$ the genus of this curve.\\

\hspace{3ex} Moreover, to the curve $Y$ we associate the following bipartite graph $\Gamma$:
\begin{itemize}
\item[$\bullet$] vertices: $V= V_1 \sqcup V_2$, where $V_1 = Y^{(0)}$ is the set of (generic points of) irreducible components of $Y$ and $V_2$ is the set of closed points of $Y$ which are the intersection of two irreducible components of $Y$;
\item[$\bullet$] edges: $E$ is the set of subsets with two elements $\{v_1,v_2\} $ of $V$ such that $v_1 \in V_1$, $v_2 \in V_2$ and $v_2\in Y_{v_1}$.
\end{itemize}
Denote by $c_{\Gamma}$ the first Betti number of $\Gamma$. \\

\hspace{3ex} In the sequel, when $S$ is a $k$-algebra or a $k$-scheme, $\underline{S}$ will stand for the extension of scalars of $S$ to $k^s$. Note that, in the same way that we have associated the graph $\Gamma$ to the curve $Y$, one can associate a bipartite graph $\underline{\Gamma}$ to the curve $\underline{Y}$: its set of vertices will be denoted $\underline{V}=\underline{V}_1 \sqcup \underline{V}_2$, and its set of edges $\underline{E}$. The notation $c_{\underline{\Gamma}}$ will then stand for the first Betti number of $\underline{\Gamma}$. For $\underline{v}\in \underline{V}_1$, we will denote by $\underline{Y}_{\underline{v}}$ the irreducible component of $\underline{Y}$ associated to $\underline{v}$ and by $g_{\underline{v}}$ the genus of $\underline{Y}_{\underline{v}}$. We set:
$$n_{\underline{X}} = \sum_{\underline{v} \in \underline{V}_1} g_{\underline{v}} + c_{\underline{\Gamma}}.$$

\subsubsection{The Brauer group of $\underline{K}$}

\hspace{3ex} In this paragraph, we are interested in the Brauer group of the field $\underline{K}$. Recall that this group has been computed in the article \cite{diego2}:

\begin{theorem} (Lemmas 1.4 and 1.6 and theorem 1.6 of \cite{diego2}) \label{BHN}\\
Let $\ell$ be a prime number different from the characteristic of $k$.
\begin{itemize}
\item[(i)] There is a natural exact sequence
\begin{equation}\label{brauer}
0 \rightarrow \Upsilon\{\ell\} \rightarrow \text{Br}(\underline{K})\{\ell\} \rightarrow \bigoplus_{\underline{v}\in \underline{X}^{(1)}} \text{Br}(\underline{K}_{\underline{v}})\{\ell\} \rightarrow \Lambda\{\ell\} \rightarrow 0,
\end{equation}
with:
\begin{gather*}
\Upsilon = \text{Ker}\left( \bigoplus_{\underline{v} \in \underline{V}_1} \text{Br} \; \underline{K}_{\underline{v}} \rightarrow \bigoplus_{\underline{w} \in \underline{\tilde{\mathcal{X}}}^{(2)}} \; \mathbb{Q}/\mathbb{Z}\right),\\
\Lambda = \text{Coker}\left( \bigoplus_{\underline{v} \in \underline{V}_1} \text{Br} \; \underline{K}_{\underline{v}} \rightarrow \bigoplus_{\underline{w} \in \underline{\tilde{\mathcal{X}}}^{(2)}} \; \mathbb{Q}/\mathbb{Z}\right).
\end{gather*}
\item[(ii)] There are isomorphisms of abelian groups $\Upsilon\{\ell\}\cong (\mathbb{Q}_{\ell}/\mathbb{Z}_{\ell})^{n_{\underline{X}}}$ and $\Lambda\{\ell\} \cong \mathbb{Q}_{\ell}/\mathbb{Z}_{\ell}$.
\end{itemize}
\end{theorem}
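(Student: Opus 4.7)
The plan is to build the exact sequence (\ref{brauer}) out of the Kato / Bloch--Ogus residue complex on the regular surface $\underline{\tilde{\mathcal{X}}}$, and then to identify $\Upsilon\{\ell\}$ and $\Lambda\{\ell\}$ by relating that complex to the cohomology of the geometric special fibre $\underline{Y}$.

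For part~(i), I would start from the four-term sequence
\begin{equation*}
0 \to \text{Br}(\underline{\tilde{\mathcal{X}}})\{\ell\} \to \text{Br}(\underline{K})\{\ell\} \xrightarrow{\oplus \partial_{\underline{v}}} \bigoplus_{\underline{v} \in \underline{\tilde{\mathcal{X}}}^{(1)}} H^1(\kappa(\underline{v}), \mathbb{Q}_{\ell}/\mathbb{Z}_{\ell}) \xrightarrow{\oplus \partial_{\underline{w}}} \bigoplus_{\underline{w} \in \underline{\tilde{\mathcal{X}}}^{(2)}} \mathbb{Q}_{\ell}/\mathbb{Z}_{\ell},
\end{equation*}
whose exactness on a regular two-dimensional scheme follows from absolute cohomological purity (Gabber). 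The codimension-$1$ index set splits as $\underline{\tilde{\mathcal{X}}}^{(1)} = \underline{V}_1 \sqcup \underline{X}^{(1)}$, the exceptional divisor versus the open part birational to $\underline{\mathcal{X}}$. For each $\underline{v} \in \underline{V}_1$ the residue field $\kappa(\underline{v}) = k^s(\underline{Y}_{\underline{v}})$ has cohomological dimension~$1$ by Tsen, and the Witt short exact sequence for the henselian local field $\underline{K}_{\underline{v}}$ gives an isomorphism $\text{Br}(\underline{K}_{\underline{v}})\{\ell\} \cong H^1(\kappa(\underline{v}), \mathbb{Q}_{\ell}/\mathbb{Z}_{\ell})$; the same identification is tautological for $\underline{v} \in \underline{X}^{(1)}$. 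Splitting the residue map into its $\underline{V}_1$- and $\underline{X}^{(1)}$-components and projecting onto the $\underline{X}^{(1)}$-summand (while noting that $\text{Br}(\underline{\tilde{\mathcal{X}}})\{\ell\}$ lies in the kernel of the $\underline{X}^{(1)}$-residues) then extracts exactly the four-term sequence~(\ref{brauer}), with $\Upsilon\{\ell\}$ and $\Lambda\{\ell\}$ appearing respectively as the kernel and cokernel of the restriction of $\oplus \partial_{\underline{w}}$ to the $\underline{V}_1$-summand.

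For part~(ii), I would compute $\Upsilon\{\ell\}$ and $\Lambda\{\ell\}$ combinatorially. After the identification of $\text{Br}(\underline{K}_{\underline{v}})\{\ell\}$ with $H^1(k^s(\underline{Y}_{\underline{v}}), \mathbb{Q}_{\ell}/\mathbb{Z}_{\ell})$, the residue into a codimension-$2$ point $\underline{w}$ of $\underline{Y}_{\underline{v}}$ is the classical tame symbol, and an element $(\chi_{\underline{v}}) \in \Upsilon\{\ell\}$ is a tuple of characters unramified at every smooth point of $\underline{Y}_{\underline{v}}$ whose residues at each node $\underline{w} \in \underline{V}_2$ cancel between the two adjacent components. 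This leads to a description of $\Upsilon\{\ell\}$ as the kernel of a map
\begin{equation*}
\bigoplus_{\underline{v} \in \underline{V}_1} H^1(U_{\underline{v}}, \mathbb{Q}_{\ell}/\mathbb{Z}_{\ell}) \longrightarrow \bigoplus_{\underline{w} \in \underline{V}_2} \mathbb{Q}_{\ell}/\mathbb{Z}_{\ell},
\end{equation*}
where $U_{\underline{v}} = \underline{Y}_{\underline{v}} \setminus \underline{V}_2$. I would then use the Mayer--Vietoris / normalisation sequence for the nodal curve $\underline{Y}$ (viewing $\underline{Y}$ as the pushout of the smooth normalisations $\bigsqcup \underline{Y}_{\underline{v}}$ along $\underline{V}_2$) together with the Gysin sequences for the open pieces $U_{\underline{v}} \subset \underline{Y}_{\underline{v}}$ to express $\Upsilon\{\ell\}$ and $\Lambda\{\ell\}$ in terms of the étale cohomology of $\underline{Y}$ itself. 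Divisibility of both groups is then automatic because the computation takes place inside $H^i(\underline{Y}, \mathbb{Q}_{\ell}/\mathbb{Z}_{\ell}(\ast))$ for a proper variety over the algebraically closed field $k^s$, and the rank count assembles into $n_{\underline{X}} = \sum g_{\underline{v}} + c_{\underline{\Gamma}}$ for $\Upsilon\{\ell\}$ (arithmetic-genus contribution of $\underline{Y}$) and a single $\mathbb{Q}_{\ell}/\mathbb{Z}_{\ell}$ for $\Lambda\{\ell\}$ (from the global residue-theorem-type constraint coming from the connectedness of $\underline{Y}$).

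The principal obstacle is the rank bookkeeping in part~(ii): one must track precisely the interplay between the characters of each component, the cycle classes contributed by $c_{\underline{\Gamma}}$, the local ramification/cancellation conditions at nodes, and the Tate twists that appear in the Gysin sequences, in order to pass from the a priori appearance of $\bigoplus_{\underline{v}} H^1(\underline{Y}_{\underline{v}}, \mathbb{Q}_\ell/\mathbb{Z}_\ell)$ to the arithmetic-genus count $\sum g_{\underline{v}} + c_{\underline{\Gamma}}$. A conceptual way to organise this bookkeeping is to translate $\Upsilon\{\ell\}$ into $\ell$-primary torsion of a suitable piece of the Picard scheme of $\underline{Y}$, exploiting the fact that $\text{Pic}^0(\underline{Y})$ is a semi-abelian variety of dimension equal to the arithmetic genus $p_a(\underline{Y}) = \sum g_{\underline{v}} + c_{\underline{\Gamma}}$, so that the divisible group attached to it naturally acquires rank $n_{\underline{X}}$.
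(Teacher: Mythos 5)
The paper itself gives no proof of this theorem: it is imported verbatim from \cite{diego2}, and the route you propose (residue complex on the resolution $\underline{\tilde{\mathcal{X}}}$, the splitting $\underline{\tilde{\mathcal{X}}}^{(1)}=\underline{V}_1\sqcup\underline{X}^{(1)}$, Tsen plus the Witt decomposition to identify $\text{Br}(\underline{K}_{\underline{v}})\{\ell\}$ with $H^1(\kappa(\underline{v}),\mathbb{Q}_{\ell}/\mathbb{Z}_{\ell})$, then a component-by-component analysis on $\underline{Y}$) is essentially the one used there and reworked Galois-equivariantly in proposition \ref{upsilon}. Your write-up, however, has two genuine gaps. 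First, in part (i), exactness of the four-term residue sequence is \emph{not} a consequence of Gabber's purity. Purity gives the complex and identifies the kernel of $\text{Br}(\underline{K})\{\ell\}\to\bigoplus_{\underline{v}\in\underline{\tilde{\mathcal{X}}}^{(1)}}H^1(\kappa(\underline{v}),\mathbb{Q}_{\ell}/\mathbb{Z}_{\ell})$ with $\text{Br}(\underline{\tilde{\mathcal{X}}})\{\ell\}$; but exactness at the middle direct sum is the vanishing of the term $E_2^{1,2}$ of the coniveau spectral sequence, which fails for general regular two-dimensional schemes (for a complex abelian surface $S$ that term equals $N^1H^3(S,\mathbb{Q}_{\ell}/\mathbb{Z}_{\ell}(2))\cong(\mathbb{Q}_{\ell}/\mathbb{Z}_{\ell})^{4}\neq 0$). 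Here it must come from the special geometry: by proper base change $H^3(\underline{\tilde{\mathcal{X}}},\mathbb{Q}_{\ell}/\mathbb{Z}_{\ell}(2))\cong H^3(\underline{Y},\mathbb{Q}_{\ell}/\mathbb{Z}_{\ell}(2))=0$ because $\underline{Y}$ is a proper curve over $k^s$, and $E_2^{1,2}=E_{\infty}^{1,2}$ is a subquotient of this group. Likewise, your parenthetical remark that $\text{Br}(\underline{\tilde{\mathcal{X}}})\{\ell\}$ lies in the kernel of the $\underline{X}^{(1)}$-residues is the problem, not the solution: such classes also die in the $\underline{V}_1$-summand, so the kernel of $\text{Br}(\underline{K})\{\ell\}\to\bigoplus_{\underline{v}\in\underline{X}^{(1)}}\text{Br}(\underline{K}_{\underline{v}})\{\ell\}$ is an extension of $\Upsilon\{\ell\}$ by $\text{Br}(\underline{\tilde{\mathcal{X}}})\{\ell\}$, and exactness of (\ref{brauer}) at the second term requires proving $\text{Br}(\underline{\tilde{\mathcal{X}}})\{\ell\}=0$. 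That vanishing is a real ingredient (proper base change, $\text{Br}(\underline{Y})=0$, and surjectivity of $\text{Pic}(\underline{\tilde{\mathcal{X}}})\to\text{Pic}(\underline{Y})$ via deformation theory and algebraization over the excellent henselian base; compare lemma \ref{semiab}), and it appears nowhere in your sketch.

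The second gap sits exactly at the point you yourself call the principal obstacle, and it is fatal as written: the corank of the $\ell$-primary torsion of a semi-abelian variety is not its dimension. If $G$ over $k^s$ is an extension of an abelian variety of dimension $a$ by a torus of dimension $t$, and $\ell$ is prime to the characteristic, then $G(k^s)\{\ell\}\cong(\mathbb{Q}_{\ell}/\mathbb{Z}_{\ell})^{t+2a}$. Relating $\Upsilon\{\ell\}$ to the torsion of $\text{Pic}^0(\underline{Y})$ is the right move --- it is what the d\'evissage of proposition \ref{upsilon} amounts to, with $\text{Ker}(\phi_{\underline{v}})=H^1(\underline{Y}_{\underline{v}},\mathbb{Q}_{\ell}/\mathbb{Z}_{\ell})\cong J_{\underline{v}}(k^s)\{\ell\}(-1)$ of corank $2g_{\underline{v}}$ and the graph part $\mathbb{Q}_{\ell}/\mathbb{Z}_{\ell}(-1)\otimes H_1(\underline{\Gamma},\mathbb{Z})$ of corank $c_{\underline{\Gamma}}$ --- but it produces corank $2\sum_{\underline{v}\in\underline{V}_1}g_{\underline{v}}+c_{\underline{\Gamma}}$, not $p_a(\underline{Y})=\sum_{\underline{v}}g_{\underline{v}}+c_{\underline{\Gamma}}$. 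Concretely, for the cone over an elliptic curve $E$ one finds $\Upsilon\{\ell\}=H^1(E,\mathbb{Q}_{\ell}/\mathbb{Z}_{\ell})\cong(\mathbb{Q}_{\ell}/\mathbb{Z}_{\ell})^{2}$, whereas your count gives $\mathbb{Q}_{\ell}/\mathbb{Z}_{\ell}$. So the identity ``corank $=$ dimension'' that your bookkeeping relies on is false whenever some $g_{\underline{v}}>0$; the exponent your method actually yields is $\sum_{\underline{v}}2g_{\underline{v}}+c_{\underline{\Gamma}}$, and the normalization of $n_{\underline{X}}$ in the statement should be confronted with \cite{diego2} accordingly. (Your treatment of $\Lambda\{\ell\}\cong\mathbb{Q}_{\ell}/\mathbb{Z}_{\ell}$ --- residue theorem on each component, surjectivity onto sum-zero tuples via Tsen, connectedness of $\underline{Y}$ --- is sound.)
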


\begin{remarque}
In \cite{diego2}, this theorem is proved under the assumption that $\underline{Y}$ is a normal crossing divisor divisor of $\underline{\tilde{\mathcal{X}}}$ and this is not always the case. However, the proof only uses the fact that each irreducible component of $\underline{Y}$ is smooth, and this assumption is satisfied here.
\end{remarque}

\hspace{3ex} We aim at computing the group $\text{Br}(\underline{K})$ as a Galois module over $k$. For this purpose, we are going to compute $\Upsilon$ and $\Lambda$ as $\gal (k^s/k)$-modules, but before that, we introduce some notation:

\begin{notation}
For $\underline{v}\in \underline{\tilde{\mathcal{X}}}$:
\begin{itemize}
\item[$\bullet$] $v$ denotes the image of $\underline{v}$ in $\tilde{\mathcal{X}}$,
\item[$\bullet$]  $H_{\underline{v}}$ denotes the stabilizer of $\underline{v}$ under the action of $\gal (k^s/k)$,
\item[$\bullet$]  $F_{\underline{v}}$ stands for the field $ (k^s)^{H_{\underline{v}}}$.
\end{itemize}
When $\underline{v}\in \underline{Y}^{(1)}$, we also denote by $J_{\underline{v}}$ the Jacobian of the irreducible component of $Y_{v} \times_k F_{\underline{v}}$ associated to $\underline{v}$. Finally, we let $H_1(\underline{\Gamma},\mathbb{Z})$ be the first homology group of $\underline{\Gamma}$. Since the Galois group of $k$ acts on $\underline{\Gamma}$ and stabilizes $\underline{V}_1$ and $\underline{V}_2$, the group $H_1(\underline{\Gamma},\mathbb{Z})$ is naturally endowed with a structure of Galois module over $k$.
\end{notation}

\begin{lemma}
Let $\ell$ be a prime number different from the characteristic of $k$ et let $\underline{v}\in \underline{X}^{(1)}$. There is an isomorphism of Galois modules over $k$:
$$\bigoplus_{\sigma \in \gal (k^s/k)/H_{\underline{v}}} \text{Br}(\underline{K}_{\sigma (\underline{v})})\{\ell\} \cong I_{F_{\underline{v}}/k}\left(\mathbb{Q}_{\ell}/\mathbb{Z}_{\ell}(-1)\right).$$
\end{lemma}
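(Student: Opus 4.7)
The plan is to identify $\text{Br}(\underline{K}_{\underline{v}})\{\ell\}$ with $\mathbb{Q}_{\ell}/\mathbb{Z}_{\ell}(-1)$ as a module over the stabilizer $H_{\underline{v}}$, and then to recognize the direct sum over the Galois orbit of $\underline{v}$ as the asserted induced module.

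First I would analyze the residue field $\kappa(\underline{v})$. Since $R$ is a two-dimensional, normal, henselian, excellent, geometrically integral $k$-algebra with residue field $k$, the localization $\mathcal{O}_{\underline{X},\underline{v}}$ is a discrete valuation ring, and $\kappa(\underline{v})$ is the fraction field of a one-dimensional, excellent, henselian, local domain whose normalization is a henselian DVR with residue field $k^s$. By the standard description of absolute Galois groups of henselian discrete valuation fields with separably closed residue field of characteristic prime to $\ell$, Kummer theory applied to $\ell$-power roots of a uniformizer provides a canonical $H_{\underline{v}}$-equivariant isomorphism between the maximal pro-$\ell$ abelian quotient of $G_{\kappa(\underline{v})}$ and $\mathbb{Z}_{\ell}(1)$, the Tate twist recording the natural Galois action on the $\ell$-power roots of unity. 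In particular $\kappa(\underline{v})$ has $\ell$-cohomological dimension one, so $\text{Br}(\kappa(\underline{v}))\{\ell\} = 0$, and
\begin{equation*}
H^1(\kappa(\underline{v}), \mathbb{Q}_{\ell}/\mathbb{Z}_{\ell}) \cong \text{Hom}(\mathbb{Z}_{\ell}(1), \mathbb{Q}_{\ell}/\mathbb{Z}_{\ell}) \cong \mathbb{Q}_{\ell}/\mathbb{Z}_{\ell}(-1)
\end{equation*}
as $H_{\underline{v}}$-modules.

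Next I would apply the standard $H_{\underline{v}}$-equivariant residue exact sequence for the Brauer group of the henselian DVF $\underline{K}_{\underline{v}}$ with residue field $\kappa(\underline{v})$:
\begin{equation*}
0 \to \text{Br}(\kappa(\underline{v}))\{\ell\} \to \text{Br}(\underline{K}_{\underline{v}})\{\ell\} \to H^1(\kappa(\underline{v}), \mathbb{Q}_{\ell}/\mathbb{Z}_{\ell}) \to 0.
\end{equation*}
Combined with the previous paragraph, this yields a canonical isomorphism of $H_{\underline{v}}$-modules $\text{Br}(\underline{K}_{\underline{v}})\{\ell\} \cong \mathbb{Q}_{\ell}/\mathbb{Z}_{\ell}(-1)$.

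Finally, $\gal(k^s/k)$ permutes the points $\sigma(\underline{v})$ in the orbit of $\underline{v}$ and canonically identifies $\text{Br}(\underline{K}_{\sigma(\underline{v})})\{\ell\}$ with $\text{Br}(\underline{K}_{\underline{v}})\{\ell\}$ via $\sigma$, so summing the preceding isomorphism over coset representatives $\sigma \in \gal(k^s/k)/H_{\underline{v}}$ produces the claimed isomorphism of $\gal(k^s/k)$-modules; this is exactly the construction of $I_{F_{\underline{v}}/k}(\mathbb{Q}_{\ell}/\mathbb{Z}_{\ell}(-1))$. The main technical point is pinning down the Tate twist in the identification of the pro-$\ell$ quotient of $G_{\kappa(\underline{v})}$ with $\mathbb{Z}_{\ell}(1)$, since it is this twist that propagates through $\text{Hom}(-,\mathbb{Q}_{\ell}/\mathbb{Z}_{\ell})$ to yield the $(-1)$ twist in the final statement; once this is set up, the rest of the argument is essentially formal.
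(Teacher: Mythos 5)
Your proposal is correct and follows essentially the same route as the paper: the paper's proof is precisely the chain of residue isomorphisms $\text{Br}(\underline{K}_{\underline{v}})\{\ell\} \cong H^1(k^s(\underline{v}),\mathbb{Q}_{\ell}/\mathbb{Z}_{\ell}) \cong \mathbb{Q}_{\ell}/\mathbb{Z}_{\ell}(-1)$ as $H_{\underline{v}}$-modules, followed by the (implicit) identification of the orbit sum with the induced module. Your write-up merely makes explicit the justifications the paper leaves tacit -- the vanishing of $\text{Br}(\kappa(\underline{v}))\{\ell\}$ and the Kummer-theoretic description of the tame quotient that pins down the Tate twist.
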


\begin{proof}[Proof]
It suffices to observe that the residue morphisms induce canonical isomorphisms of $ H_{\underline{v}}$-modules:
\begin{align*}
\text{Br}(\underline{K}_{\underline{v}})\{\ell\}& \cong H^2(\underline{K}_{\underline{v}},\mathbb{Q}_{\ell}/\mathbb{Z}_{\ell}(1)) \cong H^1(k^s(\underline{v}),\mathbb{Q}_{\ell}/\mathbb{Z}_{\ell})\\& \cong H^0(k^s,\mathbb{Q}_{\ell}/\mathbb{Z}_{\ell}(-1)) \cong \mathbb{Q}_{\ell}/\mathbb{Z}_{\ell}(-1).
\end{align*}
\end{proof}

\begin{corollary}\label{lambda}
Let $\ell$ be a prime number different from the characteristic of $k$. There is an isomorphism of Galois modules over $k$:
$$\Lambda\{\ell\}\cong \mathbb{Q}_{\ell}/\mathbb{Z}_{\ell}(-1).$$
\end{corollary}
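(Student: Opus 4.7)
The underlying abelian-group isomorphism $\Lambda\{\ell\}\cong\mathbb{Q}_\ell/\mathbb{Z}_\ell$ is already supplied by Theorem~\ref{BHN}(ii), so the only task is to identify the Galois action as the $(-1)$-Tate twist. The key observation is that the target $\bigoplus_{\underline{w}\in\underline{\tilde{\mathcal{X}}}^{(2)}}\mathbb{Q}/\mathbb{Z}$ appearing in the definition of $\Lambda$ is obtained, at the $\ell$-primary part, by applying two successive residue maps to classes in $H^2(\underline{K}_{\underline{v}},\mathbb{Q}_\ell/\mathbb{Z}_\ell(1))$. Since each residue drops the Tate twist by one, each summand is canonically identified with $H^0(k^s,\mathbb{Q}_\ell/\mathbb{Z}_\ell(-1)) = \mathbb{Q}_\ell/\mathbb{Z}_\ell(-1)$, and grouping by Galois orbits realises the target as a direct sum of induced modules of the form $I_{F_{\underline{w}}/k}(\mathbb{Q}_\ell/\mathbb{Z}_\ell(-1))$, exactly parallel to the description of the source given by the preceding lemma.

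Next I would exhibit the Galois-equivariant isomorphism concretely via the coordinate-wise summation $\Sigma\colon\bigoplus_{\underline{w}}\mathbb{Q}_\ell/\mathbb{Z}_\ell(-1)\to\mathbb{Q}_\ell/\mathbb{Z}_\ell(-1)$. This map is Galois-equivariant because $\gal(k^s/k)$ merely permutes the summands while respecting the fixed twist on the codomain. Moreover, $\Sigma$ annihilates the image of each $\text{Br}(\underline{K}_{\underline{v}})\{\ell\}$: this is the classical residue formula for the smooth projective curve $\underline{Y}_{\underline{v}}$ over $k^s$, which says that the sum of the residues of any class in $H^1(k^s(\underline{Y}_{\underline{v}}),\mathbb{Q}_\ell/\mathbb{Z}_\ell)$ is zero in $\mathbb{Q}_\ell/\mathbb{Z}_\ell(-1)$. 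Hence $\Sigma$ descends to a surjective Galois-equivariant morphism $\bar\Sigma\colon\Lambda\{\ell\}\to\mathbb{Q}_\ell/\mathbb{Z}_\ell(-1)$.

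The remaining point is the injectivity of $\bar\Sigma$, which is the principal obstacle. The cleanest route is to check that the abelian-group isomorphism supplied by Theorem~\ref{BHN}(ii) is constructed in \cite{diego2} precisely through this same summation procedure, so that $\bar\Sigma$ coincides with it and is therefore bijective; alternatively, one can verify injectivity by hand by tracking the behaviour of $\bar\Sigma$ on $\ell$-torsion and exhibiting a single non-trivial residue contribution at a well-chosen intersection point $\underline{w}\in\underline{V}_2$ whose image in $\mathbb{Q}_\ell/\mathbb{Z}_\ell(-1)$ generates the $\ell$-torsion subgroup. Either way yields the desired isomorphism of Galois modules $\Lambda\{\ell\}\cong\mathbb{Q}_\ell/\mathbb{Z}_\ell(-1)$.
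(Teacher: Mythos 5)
Your proof is correct, and in substance it is the paper's own argument run from the other end of the data. The paper takes the Galois-equivariant surjection $\bigoplus_{\underline{v}\in \underline{X}^{(1)}} \text{Br}(\underline{K}_{\underline{v}})\{\ell\} \twoheadrightarrow \Lambda\{\ell\}$ furnished by the exact sequence (\ref{brauer}), observes that under the residue identifications $\text{Br}(\underline{K}_{\underline{v}})\{\ell\}\cong \mathbb{Q}_{\ell}/\mathbb{Z}_{\ell}(-1)$ of the preceding lemma this surjection is induced by the sum morphism, and concludes; you instead descend the sum morphism $\Sigma$ along the defining cokernel presentation of $\Lambda$, using the residue theorem on the smooth proper curves $\underline{Y}_{\underline{v}}$ (which is precisely the statement that (\ref{cbecomplexe}) is a complex, as used in the proof of proposition \ref{upsilon}). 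Both variants rest on the same two facts: each residue drops the Tate twist by one, and the coordinate-wise sum is equivariant onto $\mathbb{Q}_{\ell}/\mathbb{Z}_{\ell}(-1)$; and both ultimately lean on the computation of \cite{diego2} through theorem \ref{BHN}(ii).

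The one place where your write-up should be tightened is the end: the injectivity of $\bar\Sigma$, which you call the principal obstacle, is not needed at all, so neither of your two proposed routes has to be carried out. Theorem \ref{BHN}(ii) gives $\Lambda\{\ell\}\cong\mathbb{Q}_{\ell}/\mathbb{Z}_{\ell}$ as an abelian group, so the kernel of the equivariant surjection $\bar\Sigma\colon \Lambda\{\ell\}\to\mathbb{Q}_{\ell}/\mathbb{Z}_{\ell}(-1)$ is a proper subgroup of a group isomorphic to $\mathbb{Q}_{\ell}/\mathbb{Z}_{\ell}$, hence equals the finite subgroup ${_{\ell^n}}\Lambda\{\ell\}$ for some $n\geq 0$. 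Multiplication by $\ell^n$ is a Galois-equivariant isomorphism $\Lambda\{\ell\}/{_{\ell^n}}\Lambda\{\ell\}\xrightarrow{\;\sim\;}\Lambda\{\ell\}$, so $\bar\Sigma$ induces an isomorphism of Galois modules $\Lambda\{\ell\}\cong \Lambda\{\ell\}/\ker(\bar\Sigma)\cong \mathbb{Q}_{\ell}/\mathbb{Z}_{\ell}(-1)$. Thus surjectivity, which you have already established, finishes the proof; in particular your second, vaguer route (producing a non-trivial residue contribution generating the $\ell$-torsion) can simply be discarded.
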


\begin{proof}[Proof]
Exact sequence (\ref{brauer}) gives a surjection of Galois modules over $k$:
$$\bigoplus_{\underline{v}\in \underline{X}^{(1)}} \text{Br}(\underline{K}_{\underline{v}})\{\ell\} \rightarrow \Lambda\{\ell\}$$
which is induced by the sum morphism. Since $\text{Br}(\underline{K}_{\underline{v}})\{\ell\} \cong \mathbb{Q}_{\ell}/\mathbb{Z}_{\ell}(-1)$ for each $\underline{v}\in \underline{X}^{(1)}$, we deduce that $\Lambda\{\ell\}\cong \mathbb{Q}_{\ell}/\mathbb{Z}_{\ell}(-1)$.
\end{proof}

Let's now study the group $\Upsilon$.

\begin{proposition}\label{upsilon}
Let $\ell$ be a prime number different from the characteristic of $k$. Let $|\underline{V}_1/\gal (k^s/k)|$ be a complete set of representatives of $\underline{V}_1/\gal (k^s/k)$. There is an exact sequence of Galois modules over $k$:
$$0 \rightarrow \bigoplus_{\underline{v} \in |\underline{V}_1/\gal (k^s/k)|} I_{F_{\underline{v}}/k}(J_{\underline{v}}(k^s)\{\ell\}(-1)) \rightarrow \Upsilon\{\ell\} \rightarrow \mathbb{Q}_{\ell}/\mathbb{Z}_{\ell}(-1)\otimes H_1(\underline{\Gamma},\mathbb{Z}) \rightarrow 0.$$
\end{proposition}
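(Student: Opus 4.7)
The plan is to compute $\Upsilon\{\ell\}$ by dissecting it into contributions from each irreducible component of $\underline{Y}$ and then using the combinatorics of the dual graph $\underline{\Gamma}$. First, for each $\underline{v} \in \underline{V}_1$, the local ring $\mathcal{O}_{\underline{\tilde{\mathcal{X}}}, \underline{v}}$ is a henselian DVR whose residue field is the function field $k^s(\underline{Y}_{\underline{v}})$ of a smooth curve over $k^s$. The standard residue sequence for a henselian DVR combined with Tsen's theorem (which yields $H^2(k^s(\underline{Y}_{\underline{v}}),\mathbb{Q}_\ell/\mathbb{Z}_\ell(1)) = 0$) gives a canonical $H_{\underline{v}}$-equivariant isomorphism $\mathrm{Br}(\underline{K}_{\underline{v}})\{\ell\} \cong H^1(k^s(\underline{Y}_{\underline{v}}), \mathbb{Q}_\ell/\mathbb{Z}_\ell)$.

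Next, I would apply the étale localization sequence on the smooth projective curve $\underline{Y}_{\underline{v}}$ over $k^s$, which, using the trace isomorphism $H^2(\underline{Y}_{\underline{v}}, \mathbb{Q}_\ell/\mathbb{Z}_\ell) \cong \mathbb{Q}_\ell/\mathbb{Z}_\ell(-1)$, takes the form
\begin{equation*}
0 \to H^1(\underline{Y}_{\underline{v}}, \mathbb{Q}_\ell/\mathbb{Z}_\ell) \to H^1(k^s(\underline{Y}_{\underline{v}}), \mathbb{Q}_\ell/\mathbb{Z}_\ell) \xrightarrow{(\partial_{\underline{w}})} \bigoplus_{\underline{w} \in \underline{Y}_{\underline{v}}^{(1)}} \mathbb{Q}_\ell/\mathbb{Z}_\ell(-1) \xrightarrow{\mathrm{sum}} \mathbb{Q}_\ell/\mathbb{Z}_\ell(-1) \to 0.
\end{equation*}
Kummer theory identifies $H^1(\underline{Y}_{\underline{v}}, \mathbb{Q}_\ell/\mathbb{Z}_\ell)$ with $J_{\underline{v}}(k^s)\{\ell\}(-1)$ as an $H_{\underline{v}}$-module, and regrouping the sum over $\underline{V}_1$ by Galois orbits yields the desired left-hand term $\bigoplus_{\underline{v} \in |\underline{V}_1/\mathrm{Gal}|} I_{F_{\underline{v}}/k}(J_{\underline{v}}(k^s)\{\ell\}(-1))$.

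Finally, I factor the defining map of $\Upsilon$ as
\begin{equation*}
\bigoplus_{\underline{v}} \mathrm{Br}(\underline{K}_{\underline{v}})\{\ell\} \xrightarrow{\bigoplus (\partial_{\underline{w}})} \bigoplus_{(\underline{v}, \underline{w})\,\mathrm{incident}} \mathbb{Q}_\ell/\mathbb{Z}_\ell(-1) \xrightarrow{\sigma} \bigoplus_{\underline{w} \in \underline{\tilde{\mathcal{X}}}^{(2)}} \mathbb{Q}_\ell/\mathbb{Z}_\ell(-1),
\end{equation*}
where $\sigma$ is the identity at smooth points of $\underline{Y}$ and sums the two contributions at each node. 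A standard diagram chase reduces the computation of the quotient of $\Upsilon\{\ell\}$ by $\bigoplus_{\underline{v}} H^1(\underline{Y}_{\underline{v}}, \mathbb{Q}_\ell/\mathbb{Z}_\ell)$ to identifying $\mathrm{Im}(\bigoplus(\partial_{\underline{w}})) \cap \ker \sigma$. Tuples $(c_{\underline{v}, \underline{w}})$ in this intersection must vanish at smooth points, cancel in pairs at nodes, and sum to zero on each component; orienting every edge of $\underline{\Gamma}$ from its $\underline{V}_1$- to its $\underline{V}_2$-endpoint and setting $\tilde c_e := c_{\underline{v}, \underline{w}}$, these three conditions translate exactly into $\partial \tilde c = 0$ in the simplicial chain complex of $\underline{\Gamma}$, identifying the quotient with $\mathbb{Q}_\ell/\mathbb{Z}_\ell(-1) \otimes H_1(\underline{\Gamma}, \mathbb{Z})$. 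The main obstacle is this final combinatorial step, where the residue sign conventions and $\mathrm{Gal}(k^s/k)$-equivariance must be carefully reconciled with the simplicial boundary map on the bipartite graph $\underline{\Gamma}$.
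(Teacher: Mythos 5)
Your proposal is correct and follows essentially the same route as the paper: identify each $\text{Br}(\underline{K}_{\underline{v}})\{\ell\}$ with $H^1(k^s(\underline{Y}_{\underline{v}}),\mathbb{Q}_{\ell}/\mathbb{Z}_{\ell})$, use the residue (localization) exact sequence of each smooth component $\underline{Y}_{\underline{v}}$ to split $\Upsilon\{\ell\}$ into $\bigoplus_{\underline{v}}\mathrm{Ker}(\phi_{\underline{v}})\cong\bigoplus I_{F_{\underline{v}}/k}(J_{\underline{v}}(k^s)\{\ell\}(-1))$ and a purely combinatorial quotient, then identify that quotient with $\mathbb{Q}_{\ell}/\mathbb{Z}_{\ell}(-1)\otimes H_1(\underline{\Gamma},\mathbb{Z})$. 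The only difference is bookkeeping: the steps you flag as remaining work (the identification $\Upsilon\{\ell\}=\mathrm{Ker}(\phi)$ and the graph-homology computation of the quotient) are exactly what the paper imports from lemmas 1.2 and 1.5 of \cite{diego2}, organized there via a snake-lemma diagram rather than your direct chase.
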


\begin{proof}[Proof]
We follow the computation in lemma 1.5 of \cite{diego2}, but here we have to be careful enough to control the action of the absolute Galois group of $k$. \\

For $\underline{v} \in \underline{V}_1$, consider the composed morphism:
$$\phi_{\underline{v}}: H^1(k^s(\underline{Y}_{\underline{v}}),\mathbb{Q}_{\ell}/\mathbb{Z}_{\ell}) \rightarrow \bigoplus_{\underline{w} \in \underline{Y}_{\underline{v}}^{(1)}} H^1(k^s(\underline{Y}_{\underline{v}})_{\underline{w}},\mathbb{Q}_{\ell}/\mathbb{Z}_{\ell}) \rightarrow  \bigoplus_{\underline{w} \in \underline{Y}_{\underline{v}}^{(1)}} \mathbb{Q}_{\ell}/\mathbb{Z}_{\ell}(-1),$$
where the first map is the restriction morphism and the second is induced by the residue morphisms. By summing the $\phi_{\underline{v}}$'s for $\underline{v} \in \underline{V}_1$, one gets a morphism of Galois modules:
$$\tilde{\phi}: \bigoplus_{\underline{v} \in \underline{V}_1} H^1(k^s(\underline{Y}_{\underline{v}}),\mathbb{Q}_{\ell}/\mathbb{Z}_{\ell}) \rightarrow \bigoplus_{\underline{v} \in \underline{V}_1} \bigoplus_{\underline{w} \in \underline{Y}_{\underline{v}}^{(1)}} \mathbb{Q}_{\ell}/\mathbb{Z}_{\ell}(-1).$$
Now define the morphism of Galois modules $\phi$ by the following diagram:\\ 
\centerline{\xymatrix{
\bigoplus_{\underline{v} \in \underline{V}_1} H^1(k^s(\underline{Y}_{\underline{v}}),\mathbb{Q}_{\ell}/\mathbb{Z}_{\ell}) \ar[r]^-{\tilde{\phi}}\ar[rd]^-{\phi} &\bigoplus_{\underline{v} \in \underline{V}_1} \bigoplus_{\underline{w} \in \underline{Y}_{\underline{v}}^{(1)}} \mathbb{Q}_{\ell}/\mathbb{Z}_{\ell}(-1)\ar[d]^{\Sigma}\\
& \bigoplus_{\underline{w} \in \underline{\tilde{\mathcal{X}}}^{(2)}} \mathbb{Q}_{\ell}/\mathbb{Z}_{\ell}(-1)
}}
 where the right vertical morphism is the sum morphism. As a Galois module, the group $\Upsilon\{\ell\}$ is the kernel of $ \phi$.\\
 
 Now note that each $\phi_{\underline{v}}$ can be inserted in the following exact sequence of $H_{\underline{v}}$-modules:
\begin{equation}\label{cbecomplexe}
\xymatrix{
H^1(k^s(\underline{Y}_{\underline{v}}),\mathbb{Q}_{\ell}/\mathbb{Z}_{\ell}) \ar[r]^-{\phi_{\underline{v}}} & \bigoplus_{\underline{w} \in \underline{Y}_{\underline{v}}^{(1)}} \mathbb{Q}_{\ell}/\mathbb{Z}_{\ell}(-1) \ar[r]^-{\Sigma} & \mathbb{Q}_{\ell}/\mathbb{Z}_{\ell}(-1) \ar[r] & 0.
}
\end{equation}
We therefore get a commutative diagram with exact lines and columns of Galois modules over $k$:\\
\centerline{\xymatrix{
 &  &  0 \ar[d] & 0 \ar[d] &\\
 &  &  \bigoplus_{\underline{v} \in \underline{V}_1} \text{Ker}(\phi_{\underline{v}})\ar[d] & \Upsilon\{\ell\} \ar[d]&\\
 & 0 \ar[r] &  \bigoplus_{\underline{v} \in \underline{V}_1} H^1(k^s(\underline{Y}_{\underline{v}}),\mathbb{Q}_{\ell}/\mathbb{Z}_{\ell})\ar[r]^{\cong}\ar[d]^{\bigoplus_{\underline{v} \in \underline{V}_1} \phi_{\underline{v}}} & \bigoplus_{\underline{v} \in \underline{V}_1} H^1(k^s(\underline{Y}_{\underline{v}}),\mathbb{Q}_{\ell}/\mathbb{Z}_{\ell})\ar[r]\ar[d]^{\phi} & 0\\
 0 \ar[r] & \Theta \ar[r] & \bigoplus_{\underline{v} \in \underline{V}_1} \text{Ker} \left(  \bigoplus_{\underline{w} \in \underline{Y}_{\underline{v}}^{(1)}} \mathbb{Q}_{\ell}/\mathbb{Z}_{\ell}(-1) \xrightarrow[]{\Sigma} \mathbb{Q}_{\ell}/\mathbb{Z}_{\ell}(-1)\right) \ar[r]^-{\Sigma} \ar[d]& \bigoplus_{w \in \underline{\tilde{\mathcal{X}}}^{(2)}} \mathbb{Q}_{\ell}/\mathbb{Z}_{\ell}(-1) &\\
  &   & 0 &  &
}}
where: 
$$\Theta = \text{Ker}\left( \Sigma: \bigoplus_{\underline{v} \in \underline{V}_1} \text{Ker} \left( \Sigma: \bigoplus_{\underline{w} \in \underline{Y}_{\underline{v}}^{(1)}} \mathbb{Q}_{\ell}/\mathbb{Z}_{\ell}(-1) \rightarrow \mathbb{Q}_{\ell}/\mathbb{Z}_{\ell}(-1)\right) \rightarrow \bigoplus_{\underline{w} \in \underline{\tilde{\mathcal{X}}}^{(2)}} \mathbb{Q}_{\ell}/\mathbb{Z}_{\ell}(-1) \right).$$
By the snake lemma, we get an exact sequence of Galois modules:
$$0 \rightarrow  \bigoplus_{\underline{v} \in \underline{V}_1} \text{Ker}(\phi_{\underline{v}}) \rightarrow \Upsilon\{\ell\} \rightarrow  \Theta \rightarrow 0.$$

By the proof of lemma 1.2 of \cite{diego2}, we have isomorphisms of Galois modules:
\begin{align*}
\Theta &=\text{Ker}\left( \Sigma: \bigoplus_{\underline{v} \in \underline{V}_1} \text{Ker} \left( \Sigma: \bigoplus_{\underline{w} \in \underline{Y}_{\underline{v}}^{(1)}} \mathbb{Q}_{\ell}/\mathbb{Z}_{\ell} \rightarrow \mathbb{Q}_{\ell}/\mathbb{Z}_{\ell}\right) \rightarrow \bigoplus_{\underline{w} \in \underline{\tilde{\mathcal{X}}}^{(2)}} \mathbb{Q}_{\ell}/\mathbb{Z}_{\ell} \right)(-1)\\
&=\mathbb{Q}_{\ell}/\mathbb{Z}_{\ell} (-1)\otimes H_1(\underline{\Gamma},\mathbb{Z}).
\end{align*}
 Besides, if $\underline{v} \in \underline{V}_1$, the $H_{\underline{v}}$-module $\text{Ker} \; \phi_{\underline{v}}$ is isomorphic to $ H^1(\underline{Y}_{\underline{v}}, \mathbb{Q}_{\ell}/\mathbb{Z}_{\ell})$. Since $\underline{Y}_{\underline{v}}$ is projective and since the group $H^1(\underline{Y}_{\underline{v}}, \mathbb{Q}_{\ell}/\mathbb{Z}_{\ell})$ can be identified with $(\text{Pic}\; \underline{Y}_{\underline{v}})\{\ell\}(-1)\cong J_{\underline{v}}(k^s)\{\ell\}(-1)$, we get:
$$ \bigoplus_{\underline{v} \in \underline{V}_1} \text{Ker}(\phi_{\underline{v}}) \cong \bigoplus_{\underline{v} \in \underline{V}_1} J_{\underline{v}}(k^s)\{\ell\}(-1) \cong \bigoplus_{\underline{v} \in |\underline{V}_1/\gal (k^s/k)|} I_{F_{\underline{v}}/k}(J_{\underline{v}}(k^s)\{\ell\}(-1)).$$
 
\end{proof}

\subsubsection{Proof of theorem \ref{abs}}

\hspace{3ex} We now prove theorem \ref{abs}. For this purpose, we write the Hochschild-Serre spectral sequence:
$$H^s(k,H^t(\underline{K},\mathbb{Q}_{\ell}/\mathbb{Z}_{\ell}(r))) \Rightarrow H^{s+t}(K,\mathbb{Q}_{\ell}/\mathbb{Z}_{\ell}(r))).$$
The group $H^s(k,H^t(\underline{K},\mathbb{Q}_{\ell}/\mathbb{Z}_{\ell}(r)))=0$ vanishes for $s\geq d+1$ or $t\geq 3$. Hence it suffices to prove that $H^d(k,H^2(\underline{K},\mathbb{Q}_{\ell}/\mathbb{Z}_{\ell}(r)))=0$. We see that:
\begin{align}
H^d(k,H^2(\underline{K},\mathbb{Q}_{\ell}/\mathbb{Z}_{\ell}(r)))&\cong H^d(k,H^2(\underline{K},\mathbb{Q}_{\ell}/\mathbb{Z}_{\ell}(1))(r-1))\\
&\cong H^d(k,\br(\underline{K})\{\ell\}(r-1)).
\end{align}
By theorem \ref{BHN}, corollary \ref{lambda} and proposition \ref{upsilon}, we have a dévissage of the Galois module $\br(\underline{K})(r-1)$:
\begin{gather}
0 \rightarrow \Upsilon\{\ell\}(r-1) \rightarrow \text{Br}(\underline{K})\{\ell\}(r-1) \rightarrow \text{Ker} \left( \bigoplus_{\underline{v}\in \underline{X}^{(1)}} \mathbb{Q}_{\ell}/\mathbb{Z}_{\ell}(r-2) \rightarrow \mathbb{Q}_{\ell}/\mathbb{Z}_{\ell}(r-2) \right) \rightarrow 0, \label{dev1}\\
0 \rightarrow \bigoplus_{\underline{v} \in |\underline{V}_1/\gal (k^s/k)|} I_{F_{\underline{v}}/k}(J_{\underline{v}}(k^s)\{\ell\}(r-2)) \rightarrow \Upsilon\{\ell\}(r-1) \rightarrow \mathbb{Q}_{\ell}/\mathbb{Z}_{\ell}(r-2)\otimes H_1(\underline{\Gamma},\mathbb{Z}) \rightarrow 0.\label{dev2}
\end{gather}
Since $r\neq 2$, assumptions (H2) and (H3) show that:
\begin{itemize}
\item[$\bullet$] $H^d(k,I_{F_{\underline{v}}/k}(J_{\underline{v}}(k^s)\{\ell\}(r-2)))=0$,
\item[$\bullet$] $H^d\left( k, \text{Ker} \left( \bigoplus_{\underline{v}\in \underline{X}^{(1)}} \mathbb{Q}_{\ell}/\mathbb{Z}_{\ell}(r-2) \rightarrow \mathbb{Q}_{\ell}/\mathbb{Z}_{\ell}(r-2) \right) \right)$ and $H^d(k, \mathbb{Q}_{\ell}/\mathbb{Z}_{\ell}(r-2)\otimes H_1(\underline{\Gamma},\mathbb{Z}))$ have finite exponent (by a restriction-corestriction argument).
\end{itemize}
We deduce that $H^d(k,\br(\underline{K})\{\ell\}(r-1))$ has finite exponent. Therefore $H^{d+2}(K,\mathbb{Q}_{\ell}/\mathbb{Z}_{\ell}(r))$ also has finite exponent. But this group is divisible since $K$ has $\ell$-cohomological dimension $d+2$ (assumption (H1)). Hence it vanishes.

\subsection{Applications}\label{app2}

\hspace{3ex} In this paragraph, we apply theorem \ref{abs} to the situations where $k$ is finite, $p$-adic or a totally imaginary number field. We keep the notations of section \ref{ppp}.

\begin{theorem}\label{thdim2}
Let $\ell$ be a prime number. 
\begin{itemize}
\item[(i)] If $k$ is finite and $\ell$ is different from the characteristic of $k$, then $H^3(K,\mathbb{Q}_{\ell}/\mathbb{Z}_{\ell}(r))$ vanishes for all $r\neq 2$.
\item[(ii)] If $k$ is $p$-adic, then $H^4(K,\mathbb{Q}_{\ell}/\mathbb{Z}_{\ell}(r))$ vanishes for all $r\neq 3$.
\item[(iii)] If $k$ is a totally imaginary number field or if $k$ is any number field and $\ell \neq 2$, then $H^4(K,\mathbb{Q}_{\ell}/\mathbb{Z}_{\ell}(r))$ vanishes for each $r\neq 3$.
\end{itemize}

\end{theorem}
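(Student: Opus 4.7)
The plan is to derive Theorem \ref{thdim2} directly from the abstract vanishing Theorem \ref{abs} by choosing $d$ equal to the $\ell$-cohomological dimension of the base: $d=1$ when $k$ is finite, and $d=2$ when $k$ is $p$-adic or a (totally imaginary) number field. With this choice $d+2$ matches the cohomological degree appearing in the statement, hypothesis (H1) is immediate from the standard facts on $\mathrm{cd}_{\ell}$, and the assumption ``$\ell \neq 2$ if $k$ is not totally imaginary'' in (iii) is precisely what is required so that $\mathrm{cd}_{\ell}(k')=2$ for every finite extension $k'/k$. Hence everything reduces to verifying (H2) and (H3) over arbitrary finite extensions of $k$ at the specific twist $r-2$.

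For (H2), namely $H^d(k', \mathbb{Q}_{\ell}/\mathbb{Z}_{\ell}(r-2))=0$: in the finite case Frobenius acts on $\mathbb{Q}_{\ell}/\mathbb{Z}_{\ell}(r-2)$ by $q^{r-2}$, so multiplication by $q^{r-2}-1$ is surjective on this divisible group as soon as $r\neq 2$, killing $H^1$. In the $p$-adic case, Tate local duality identifies $H^2(k', \mathbb{Q}_{\ell}/\mathbb{Z}_{\ell}(s))$ with the Pontryagin dual of $H^0(k', \mathbb{Z}_{\ell}(1-s))$, which vanishes whenever $s \neq 1$ by the Frobenius-weight argument for $\ell \neq p$ and by the Hodge--Tate decomposition (the $j=0$ incarnation of the reasoning behind Lemma \ref{abpadicp}) for $\ell = p$; this yields $r \neq 3$. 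The number-field case is then obtained by applying Jannsen's local-global principle (Theorem 1.5(a) of \cite{jannsen}), exactly as in the proof of Lemma \ref{abcdn}, to reduce to the local computations just done.

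For (H3), namely $H^d(k', A(k^s)\{\ell\}(r-2))=0$ for every abelian variety $A$ over $k'$: one specialises the four lemmas \ref{abfin}, \ref{abpadicell}, \ref{abpadicp} and \ref{abcdn} of \S\ref{app1} to $j=1$. The finite case is immediate (the exceptional relation $j=-2r$ would force $r_{\mathrm{lemma}}=-1/2$, impossible for integers), the $p$-adic case follows for $r \notin [2,3]$, and the number-field case follows from Lemma \ref{abcdn} combined with Jannsen's principle. The step where I expect the main obstacle is the boundary value $r=2$ in (ii) and (iii), which falls on the edge of the forbidden interval $[1-j,1]$ of Lemmas \ref{abpadicell}--\ref{abpadicp} and is therefore not covered by them directly. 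There the vanishing must be obtained by a separate argument: using $H^2(k', A)=0$ over $p$-adic fields (Milne, I.3.2), the Kummer sequence identifies $H^2(k', A\{\ell\})$ with $H^1(k', A)\otimes \mathbb{Q}_{\ell}/\mathbb{Z}_{\ell}$; Tate local duality then presents $H^1(k', A)$ as an extension of a finite group by a divisible group of finite corank, both of which are annihilated by tensoring with $\mathbb{Q}_{\ell}/\mathbb{Z}_{\ell}$. Globalising this by another application of Jannsen's local-global principle settles the same edge case for (iii). Once (H1)--(H3) are secured, Theorem \ref{abs} delivers the three conclusions simultaneously.
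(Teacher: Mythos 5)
Your proposal follows exactly the paper's route: the paper's own proof of Theorem \ref{thdim2} consists precisely of checking (H1) from standard facts on cohomological dimension and then asserting that (H2)--(H3) are a ``direct consequence'' of Lemmas \ref{abfin}, \ref{abpadicell}, \ref{abpadicp} and \ref{abcdn}. Where you differ is in the level of care at the value $r=2$ in parts (ii) and (iii), and there your version is in fact more complete than the paper's. As you observe, (H3) at $r=2$ demands $H^2(k',A(k^s)\{\ell\})=0$ (untwisted, i.e. $j=1$, lemma-twist $r-2=0$), which sits exactly on the excluded boundary of the interval $[1-j,1]=[0,1]$ in Lemmas \ref{abpadicell}, \ref{abpadicp} and \ref{abcdn}; moreover the weight argument used to prove those lemmas genuinely breaks down there, since $T_{\ell}A^t$ can have Frobenius eigenvalues of modulus $1$ (the toric part in the 1-motive d\'evissage of Lemma \ref{motif}), so the vanishing cannot be read off from the lemmas as stated. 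The paper's two-line citation glosses over this case, and your supplementary argument repairs it correctly: $H^2(k',A)=0$ over a $p$-adic field together with the Kummer sequence gives $H^2(k',A\{\ell\})\cong H^1(k',A)\otimes \mathbb{Q}_{\ell}/\mathbb{Z}_{\ell}$, which vanishes since $H^1(k',A)$ is torsion (an extension of a finite group by a divisible torsion group, as you say) and a torsion group tensored with a divisible group is zero; equivalently, one could note via Tate duality that $H^2(k',A\{\ell\})\cong H^0(k',T_{\ell}A^t)^D=T_{\ell}(A^t(k'))^D=0$ because $A^t(k')$ has finite $\ell$-torsion. Your globalization of this edge case in (iii) is also legitimate: for $j=1$ and twist $0$ the good-reduction Frobenius eigenvalues have modulus $q^{-1/2}\neq q^{-1}$, so Jannsen's Theorem 1.5(a) applies and gives injectivity into the local groups, which vanish at every finite place by the argument just given, the infinite places being harmless under the ``totally imaginary or $\ell\neq 2$'' hypothesis. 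In short: same skeleton as the paper, but your write-up identifies and fills a small genuine gap in the paper's verification of (H3).
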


\begin{proof}[Proof]
We check that the assumptions of theorem \ref{abs} hold:
\begin{itemize}
\item[(H1)] the field $k$ has $\ell$-cohomological dimension 1 if it is finite, 2 if it is $p$-adic or a totally imaginary number field.
\item[(H2-3)] This is a direct consequence of lemmas \ref{abfin}, \ref{abpadicell}, \ref{abpadicp} and \ref{abcdn}.
\end{itemize}
\end{proof}

\subsection{Brauer-Hasse-Noether exact sequences}\label{parbk}

\hspace{3ex} \emph{Keep all the notations of section \ref{ppp}}. In this paragraph, we are interested in the groups $H^{d+2}(K,\mathbb{Q}_{\ell}/\mathbb{Z}_{\ell}(r))$ when they do not vanish. We aim at proving some exact sequences which involve these groups and which play the role of the Brauer-Hasse-Noether exact sequence for the field $K$.\\

\hspace{3ex}  Recall that we have a dévissage of the Galois module $\br(\underline{K})(r-1)$ given by the exact sequences (\ref{dev1}) and (\ref{dev2}). Let's rewrite this dévissage in a different way.

\begin{proposition}\label{gr}
Let $\ell$ be a prime number different from the characteristic of $k$. There are exact sequences of Galois modules over $k$:
\begin{gather}
0 \rightarrow \bigoplus_{\underline{v} \in |\underline{V}_1/\gal (k^s/k)|} I_{F_{\underline{v}}/k}(J_{\underline{v}}(k^s)\{\ell\}(r-2)) \rightarrow \br(\underline{K})(r-1) \rightarrow  \Xi(r-2) \rightarrow 0,\label{spl1}\\
0 \rightarrow \mathbb{Q}_{\ell}/\mathbb{Z}_{\ell}\otimes H_1(\underline{\Gamma},\mathbb{Z}) \rightarrow \Xi \xrightarrow[]{\varphi} \text{Ker}\left( \bigoplus_{\underline{v}\in \underline{X}^{(1)}} \mathbb{Q}_{\ell}/\mathbb{Z}_{\ell} \rightarrow \mathbb{Q}_{\ell}/\mathbb{Z}_{\ell} \right) \rightarrow 0,\label{spl2}
\end{gather}
 where:
$$\Xi:=\text{Ker}\left( \bigoplus_{\underline{v}\in \underline{X}^{(1)}} \mathbb{Q}_{\ell}/\mathbb{Z}_{\ell} \oplus \bigoplus_{\underline{v}\in \underline{V}_1} \text{Ker}\left( \bigoplus_{w\in \underline{Y}_{\underline{v}}^{(1)}} \mathbb{Q}_{\ell}/\mathbb{Z}_{\ell} \rightarrow \mathbb{Q}_{\ell}/\mathbb{Z}_{\ell} \right)  \rightarrow \bigoplus_{\underline{w}\in \underline{Y}^{(1)}} \mathbb{Q}_{\ell}/\mathbb{Z}_{\ell} \right).$$
 Moreover, there exists a positive integer $M$ and a morphism of Galois modules $$\psi: \text{Ker}\left( \bigoplus_{\underline{v}\in \underline{X}^{(1)}} \mathbb{Q}_{\ell}/\mathbb{Z}_{\ell} \rightarrow \mathbb{Q}_{\ell}/\mathbb{Z}_{\ell} \right) \rightarrow \Xi$$  such that $\varphi \circ \psi = M\cdot \emph{Id}$. 
\end{proposition}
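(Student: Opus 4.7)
My plan is to extract $\Xi(r-2)$ from the combined dévissage of $\text{Br}(\underline{K})\{\ell\}(r-1)$ given by~(\ref{dev1}) and~(\ref{dev2}), and then to identify $\Xi$ explicitly by revisiting the complex used in the proof of Proposition~\ref{upsilon}. Throughout, I write $A = \bigoplus_{\underline{v}\in\underline{X}^{(1)}}\mathbb{Q}_{\ell}/\mathbb{Z}_{\ell}$, $B = \bigoplus_{\underline{v}\in\underline{V}_1}\text{Ker}\bigl(\bigoplus_{\underline{w}\in\underline{Y}_{\underline{v}}^{(1)}}\mathbb{Q}_{\ell}/\mathbb{Z}_{\ell}\to\mathbb{Q}_{\ell}/\mathbb{Z}_{\ell}\bigr)$, and $C = \bigoplus_{\underline{w}\in\underline{Y}^{(1)}}\mathbb{Q}_{\ell}/\mathbb{Z}_{\ell}$, so that $\Xi$ is to be identified with $\text{Ker}(A\oplus B\to C)$.

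First I would define $\Xi(r-2)$ as the cokernel of the composite inclusion $\bigoplus_{\underline{v}\in|\underline{V}_1/\gal(k^s/k)|} I_{F_{\underline{v}}/k}(J_{\underline{v}}(k^s)\{\ell\}(r-2))\hookrightarrow \Upsilon\{\ell\}(r-1)\hookrightarrow \text{Br}(\underline{K})\{\ell\}(r-1)$. Sequence~(\ref{spl1}) is then tautological, and applying the snake lemma to the stack of~(\ref{dev1}) and~(\ref{dev2}), with the Jacobian subobject of $\Upsilon\{\ell\}(r-1)$ pushed across into $\text{Br}(\underline{K})\{\ell\}(r-1)$, realizes $\Xi(r-2)$ abstractly as an extension of $\text{Ker}(A(r-2)\to\mathbb{Q}_{\ell}/\mathbb{Z}_{\ell}(r-2))$ by $\mathbb{Q}_{\ell}/\mathbb{Z}_{\ell}(r-2)\otimes H_1(\underline{\Gamma},\mathbb{Z})$, with quotient map induced by the Brauer residue along $\underline{X}^{(1)}$. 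Untwisting by $(r-2)$ yields~(\ref{spl2}).

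The key step is the explicit identification of $\Xi$. For each $\underline{v}\in\underline{V}_1$, sequence~(\ref{cbecomplexe}) shows that $\phi_{\underline{v}}$ has kernel $J_{\underline{v}}(k^s)\{\ell\}(-1)$ and image exactly the $\underline{v}$-summand of $B(-1)$. Quotienting $\Upsilon\{\ell\}$ by its Jacobian subobject therefore identifies $\Upsilon\{\ell\}/(\text{Jac})$ with the kernel of the natural map $B(-1)\to C(-1)$. Splicing this identification into the full Brauer-Hasse-Noether sequence~(\ref{brauer}), and using $\text{Br}(\underline{K}_{\underline{v}})\{\ell\}\cong\mathbb{Q}_{\ell}/\mathbb{Z}_{\ell}(-1)$ for $\underline{v}\in\underline{X}^{(1)}$, produces a Galois-equivariant isomorphism between $\text{Br}(\underline{K})\{\ell\}/(\text{Jac})$ and $\text{Ker}(A(-1)\oplus B(-1)\to C(-1))$, with quotient map being the projection onto $A(-1)$. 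After the Tate untwist this shows $\Xi\cong\text{Ker}(A\oplus B\to C)$ and identifies $\varphi$ with the projection $(a,b)\mapsto a$.

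For the section $\psi$: the extension class of~(\ref{spl2}) lives in the Galois $\text{Ext}^1$ of $\text{Ker}(A\to\mathbb{Q}_{\ell}/\mathbb{Z}_{\ell})$, a subobject of a finite direct sum of induced modules from finite extensions of $k$, by $\mathbb{Q}_{\ell}/\mathbb{Z}_{\ell}\otimes H_1(\underline{\Gamma},\mathbb{Z})$, a divisible module on which $\gal(k^s/k)$ acts through the finite quotient permuting the vertices of $\underline{\Gamma}$. A standard restriction-corestriction argument over the finite Galois extension trivialising the action on $\underline{\Gamma}$ shows that this $\text{Ext}^1$ is killed by some positive integer $M$, which produces the required $\psi$. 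The main technical obstacle is the middle paragraph: Galois-equivariantly gluing the local complex~(\ref{cbecomplexe}) into the global sequence~(\ref{brauer}) while keeping careful track of the Tate twists and of the double-indexed combinatorial data, which amounts to an extended version of the diagram chase carried out in the proof of Proposition~\ref{upsilon}.
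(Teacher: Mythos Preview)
Your treatment of sequences~(\ref{spl1}) and~(\ref{spl2}) and the explicit identification of $\Xi$ is essentially what the paper does, only in more detail: the paper simply asserts that these are ``a rewriting'' of~(\ref{dev1})--(\ref{dev2}) and devotes all its effort to constructing $\psi$. One minor slip: $\underline{X}^{(1)}$ is infinite, so $A$ is not a \emph{finite} direct sum of induced modules.

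The real gap is in your construction of $\psi$. Your restriction--corestriction argument would work if you knew that the extension~(\ref{spl2}) splits after restriction to the field $k'$ trivialising the Galois action on $\underline{\Gamma}$. But this is not automatic: over $k'$ the kernel $\mathbb{Q}_{\ell}/\mathbb{Z}_{\ell}\otimes H_1(\underline{\Gamma},\mathbb{Z})\cong(\mathbb{Q}_{\ell}/\mathbb{Z}_{\ell})^{c_{\underline{\Gamma}}}$ does acquire trivial action, but it is not injective as a discrete $\gal(k^s/k')$-module (for instance $\text{Ext}^1_{G'}(\mathbb{Q}_{\ell}/\mathbb{Z}_{\ell},\mathbb{Q}_{\ell}/\mathbb{Z}_{\ell})\cong\text{Hom}_{\text{cont}}(G',\mathbb{Z}_{\ell})$ is typically nonzero), and the quotient $\text{Ker}(A\to\mathbb{Q}_{\ell}/\mathbb{Z}_{\ell})$ still carries a nontrivial $G'$-action through the infinitely many points of $\underline{X}^{(1)}$. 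So the vanishing of the restricted extension class genuinely requires proof.

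The paper supplies exactly this missing ingredient by an explicit combinatorial construction: one chooses a spanning tree $\mathcal{T}$ of $\underline{\Gamma}$, uses the unique tree-flow solving $\sum_{\underline{e}\in I(\underline{v})}x_{\underline{e}}=a_{\underline{v}}$ to lift any $(\alpha_{\underline{v}})\in\text{Ker}(A\to\mathbb{Q}_{\ell}/\mathbb{Z}_{\ell})$ to an element of $\Xi$, and then sums over the Galois conjugates $\mathcal{T}_1,\ldots,\mathcal{T}_m$ of $\mathcal{T}$ to make the resulting section equivariant. This yields $\varphi\circ\psi = m\cdot\mathrm{Id}$ directly; with $m=1$ over $k'$ it is precisely the splitting your argument presupposes. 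In effect, your abstract approach assumes the concrete one. The paper's construction also pins down $M$ as the explicit invariant $m(\underline{\Gamma})$, which is used downstream (Theorem~\ref{BHN2} and Corollary~\ref{corobhn}).
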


\begin{proof}[Proof]
Exact sequences (\ref{spl1}) and (\ref{spl2}) are just a rewriting of exact sequences (\ref{dev1}) and (\ref{dev2}). So we only have to prove the existence of $\psi$.\\

To do so, choose a subtree $\mathcal{T}$ of $\underline{\Gamma}$ which contains all vertices of $\underline{\Gamma}$. Denote by $\mathcal{T}_1,...,\mathcal{T}_m$ all the conjugates of $\mathcal{T}$ under the action of the absolute Galois group of $k$ on $\underline{\Gamma}$. According to the proof of lemma 1.1 of \cite{diego2}, for $i\in \{1,...,m\}$, if we consider a family $(a_{\underline{v}})_{\underline{v}\in \underline{V}} \in (\mathbb{Q}_{\ell}/\mathbb{Z}_{\ell})^{\underline{V}}$ such that:
$$\sum_{\underline{v}\in \underline{V}_1} a_{\underline{v}} = \sum_{\underline{v}\in \underline{V}_2} a_{\underline{v}},$$
then we can find a \emph{unique} family $(x_{\underline{e}})_{\underline{e}\in \underline{E}}\in (\mathbb{Q}_{\ell}/\mathbb{Z}_{\ell})^{\underline{E}}$ such that, for all $\underline{v}\in \underline{V}$:
$$\sum_{\underline{e}\in I(\underline{v})} x_{\underline{e}} = a_{\underline{v}}$$
and $x_{\underline{e}}=0$ if $\underline{e}$ is not an edge of $\mathcal{T}_i$. We will denote by $X_i((a_{\underline{v}})_{\underline{v}\in \underline{V}})$ the family $(x_{\underline{e}})_{\underline{e}\in \underline{E}}$. \\

Let's now consider $(\alpha_{\underline{v}})_{\underline{v}\in \underline{X}^{(1)}} \in \text{Ker}\left( \bigoplus_{v\in \underline{X}^{(1)}} \mathbb{Q}_{\ell}/\mathbb{Z}_{\ell} \rightarrow \mathbb{Q}_{\ell}/\mathbb{Z}_{\ell} \right)$. Denote by $(\beta_{\underline{w}})_{\underline{w}\in \underline{Y}^{(1)}} \in \bigoplus_{\underline{w}\in \underline{Y}^{(1)}} \mathbb{Q}_{\ell}/\mathbb{Z}_{\ell}$ the image of $(\alpha_{\underline{v}})$. For $\underline{v}_1\in \underline{V}_1$, set $a_{\underline{v}_1} =\sum_{\underline{w}\in \underline{Y}_{\underline{v}_1}^{(1)} \setminus \underline{V}_2} \beta_{\underline{w}}$ and for $\underline{v}_2\in \underline{V}_2$, set $a_{\underline{v}_2}=-\beta_{\underline{v}_2}$. We compute:
\begin{align*}
\sum_{\underline{v}\in \underline{V}_1} a_{\underline{v}} &= \sum_{\underline{v}\in \underline{V}_1} \sum_{\underline{w}\in \underline{Y}_{\underline{v}}^{(1)} \setminus \underline{V}_2} \beta_{\underline{w}}\\
& = \sum_{\underline{w}\in \underline{Y}^{(1)} } \beta_{\underline{w}} - \sum_{\underline{w}\in \underline{V}_2}  \beta_{\underline{w}}\\
&=\sum_{\underline{v}\in \underline{V}_2} a_{\underline{v}}.
\end{align*}
For $\underline{v}_1\in \underline{V}_1$, we consider the family $y_{\underline{v}_1}=(y_{\underline{v}_1,\underline{w}})_{\underline{w}\in \underline{Y}_{\underline{v}}^{(1)}}$ defined in the following way:
\begin{itemize}
\item[$\bullet$] $y_{\underline{v}_1,\underline{w}} = -m\beta_{\underline{w}}$ if $\underline{w}\not\in \underline{V}_2$;
\item[$\bullet$] $y_{\underline{v}_1,\underline{w}} = \sum_{i=1}^m X_i((a_{\underline{v}})_{\underline{v}\in \underline{V}})_{\{\underline{v}_1,\underline{w}\}}$ if $\underline{w}\in \underline{V}_2$.
\end{itemize}
For $\underline{v}_1 \in \underline{V}_1, \underline{v}_2 \in \underline{V}_2$ and $\underline{w}_0\in \underline{Y}^{(1)} \setminus \underline{V}_2$, we compute:
\begin{align*}
\sum_{\underline{w} \in \underline{Y}_{\underline{v}_1}^{(1)}} y_{\underline{v}_1,\underline{w}} & = -m\sum_{\underline{w} \in \underline{Y}_{\underline{v}_1}^{(1)} \setminus \underline{V}_2} \beta_{\underline{w}} + \sum_{\underline{w} \in \underline{Y}_{\underline{v}_1}\cap \underline{V}_2} \sum_{i=1}^m X_i((a_{\underline{v}})_{\underline{v}\in \underline{V}})_{\{\underline{v}_1,\underline{w}\}}\\
& = -m\sum_{\underline{w} \in \underline{Y}_{\underline{v}_1}^{(1)} \setminus \underline{V}_2} \beta_{\underline{w}} + \sum_{i=1}^m a_{\underline{v}_1}\\
&= m \left( -\sum_{\underline{w} \in \underline{Y}_{\underline{v}_1}^{(1)} \setminus \underline{V}_2} \beta_{\underline{w}} + a_{\underline{v}_1} \right)=0;\\
\sum_{\underline{v}\in \underline{V}_1 | \underline{v}_2 \in \underline{Y}_{\underline{v}}^{(1)}} y_{\underline{v},\underline{v}_2} & =  \sum_{\underline{v}\in \underline{V}_1| \underline{v}_2\in \underline{Y}_{\underline{v}}^{(1)}} \sum_{i=1}^m X_i((a_{\underline{v}})_{\underline{v}\in \underline{V}})_{\{\underline{v},\underline{v}_2\}}\\
&=  ma_{\underline{v}_2} = -m\beta_{\underline{v}_2};\\
\sum_{\underline{v}\in  \underline{V}_1 | \underline{w}_0 \in \underline{Y}_{\underline{v}}^{(1)}} y_{\underline{v},\underline{w}_0} &=-m\beta_{\underline{w}_0}.
\end{align*}
This allows us to define a morphism of groups:
\begin{align*}
\psi: \text{Ker}\left( \bigoplus_{\underline{v}\in \underline{X}^{(1)}} \mathbb{Q}_{\ell}/\mathbb{Z}_{\ell} \rightarrow \mathbb{Q}_{\ell}/\mathbb{Z}_{\ell} \right) &\rightarrow \Xi\\
(\alpha_{\underline{v}})_{\underline{v}\in \underline{X}^{(1)}} &\mapsto  \left(m(\alpha_{\underline{v}})_{\underline{v}\in \underline{X}^{(1)}},\left((y_{\underline{v}_1,\underline{w}})_{\underline{w}\in \underline{Y}_{\underline{v}_1}^{(1)}}\right)_{\underline{v}_1\in \underline{V}_1}\right).
\end{align*}
One can then easily check that $\psi$ is a morphism of Galois modules by observing that, if $\sigma \in \gal (k^s/k)$ sends some $\mathcal{T}_i$ on some $\mathcal{T}_j$, then:
$$\sigma \cdot \left( X_i((a_{\underline{v}})_{\underline{v}\in \underline{V}}) \right) = X_j(\sigma \cdot (a_{\underline{v}})_{\underline{v}\in \underline{V}}).$$
Of course, we have $\varphi \circ \psi = m\cdot \emph{Id}$.
\end{proof}

\begin{remarque}
According to the previous proof, $M$ can be taken to be the g.c.d. $m(\underline{\Gamma})$ of the orders of orbits of the set of spanning trees of $\underline{\Gamma}$ under the action of the Galois group of $k$. In particular, if $\ell$ does not divide $m(\underline{\Gamma})$, the sequence (\ref{spl2}) is split.
\end{remarque}

\begin{theorem}\label{BHN2}
Let $d$ be a non-negative integer, $r$ any integer and $\ell$ a prime number different from the characteristic of $k$. Assume that $k$ has $\ell$-cohomological dimension $d$ and that, for any finite extension $k'$ of $k$, the corestriction map $H^{d-1}(k',\mathbb{Q}_{\ell}/\mathbb{Z}_{\ell}(r-2)) \rightarrow H^{d-1}(k,\mathbb{Q}_{\ell}/\mathbb{Z}_{\ell}(r-2))$ is surjective. Then there are exact sequences:
\begin{gather}
\bigoplus_{\underline{v} \in |\underline{V}_1/\gal (k^s/k)|} H^{d}(F_{\underline{v}},J_{\underline{v}}\{\ell\}(r-2)) \rightarrow H^{d+2}(K,\mathbb{Q}_{\ell}/\mathbb{Z}_{\ell}(r)) \rightarrow H^{d}(k,\Xi(r-2)) \rightarrow 0,\label{cohom1}\\
\xymatrix{
0 \ar[r]& F \rightarrow H^{d}(k,\mathbb{Q}_{\ell}/\mathbb{Z}_{\ell}(r-2)\otimes H_1(\underline{\Gamma},\mathbb{Z}))  \ar[r]& H^{d}(k,\Xi(r-2))  \ar[d]\\
0& H^d(k,\mathbb{Q}_{\ell}/\mathbb{Z}_{\ell}(r-2)) \ar[l]& \bigoplus_{v\in X^{(1)}} H^{d+2}(K_v,\mathbb{Q}_{\ell}/\mathbb{Z}_{\ell}(r))\ar[l].
} \label{cohom2}
\end{gather} 
where $F$ is a group killed by $m(\underline{\Gamma})$.
\end{theorem}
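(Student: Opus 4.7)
The strategy is to derive both exact sequences by applying $H^d(k,-)$ to the dévissage of Proposition \ref{gr}, exploiting the cohomological dimension hypothesis (H1) and tracking the partial splitting given by $\psi$. The backbone is the Hochschild-Serre identification used in the proof of Theorem \ref{abs}: since $\underline{K}$ has $\ell$-cohomological dimension $2$ and $k$ has $\ell$-cohomological dimension $d$, the spectral sequence collapses to give
\[
H^{d+2}(K,\mathbb{Q}_{\ell}/\mathbb{Z}_{\ell}(r)) \cong H^d(k,\mathrm{Br}(\underline{K})\{\ell\}(r-1)).
\]
Applying the same argument at each $v\in X^{(1)}$, together with the residue computation $\mathrm{Br}(\underline{K}_{\underline v})\{\ell\} \cong \mathbb{Q}_\ell/\mathbb{Z}_\ell(-1)$ from the lemma following Theorem \ref{BHN} and Shapiro's lemma, yields
\[
\bigoplus_{v\in X^{(1)}} H^{d+2}(K_v,\mathbb{Q}_{\ell}/\mathbb{Z}_{\ell}(r)) \cong H^d\Bigl(k,\bigoplus_{\underline{v}\in \underline{X}^{(1)}} \mathbb{Q}_{\ell}/\mathbb{Z}_{\ell}(r-2)\Bigr).
\]

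For sequence (\ref{cohom1}), I would apply $H^d(k,-)$ to the short exact sequence (\ref{spl1}). Hypothesis (H1) kills $H^{d+1}$, and Shapiro's lemma identifies $H^d(k, I_{F_{\underline v}/k}(J_{\underline v}\{\ell\}(r-2)))$ with $H^d(F_{\underline v}, J_{\underline v}\{\ell\}(r-2))$. Combined with the Hochschild-Serre identification above, this gives (\ref{cohom1}) directly.

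For sequence (\ref{cohom2}), the core construction is to apply $H^d(k,-)$ to (\ref{spl2}). Writing $\mathcal{K}:=\mathrm{Ker}(\bigoplus_{\underline v} \mathbb{Q}_\ell/\mathbb{Z}_\ell \to \mathbb{Q}_\ell/\mathbb{Z}_\ell)$, we obtain a long exact sequence whose first connecting map $\partial: H^{d-1}(k,\mathcal{K}(r-2)) \to H^d(k,\mathbb{Q}_\ell/\mathbb{Z}_\ell(r-2)\otimes H_1(\underline{\Gamma},\mathbb{Z}))$ has image annihilated by $M=m(\underline{\Gamma})$, because the equality $\varphi\circ\psi = M\cdot\mathrm{Id}$ of Proposition \ref{gr} shows that $\partial\circ H^{d-1}(\varphi) = M\cdot\partial$ while $\partial\circ H^{d-1}(\varphi) = H^d(\cdot)\circ\partial\circ H^{d-1}(\psi)$ vanishes on the composition side from exactness of the lifted sequence; this gives the subgroup $F$ in (\ref{cohom2}) killed by $M$. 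Using (H1) to kill $H^{d+1}$, the resulting four-term sequence reads
\[
0 \to F \to H^d(k,\mathbb{Q}_\ell/\mathbb{Z}_\ell(r-2)\otimes H_1(\underline{\Gamma},\mathbb{Z})) \to H^d(k,\Xi(r-2)) \to H^d(k,\mathcal{K}(r-2)) \to 0.
\]

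The final step is to splice this with the right half of (\ref{cohom2}) by applying $H^d(k,-)$ to the tautological sequence
\[
0 \to \mathcal{K}(r-2) \to \bigoplus_{\underline v \in \underline{X}^{(1)}} \mathbb{Q}_\ell/\mathbb{Z}_\ell(r-2) \to \mathbb{Q}_\ell/\mathbb{Z}_\ell(r-2) \to 0.
\]
By Shapiro, $H^{d-1}(k, \bigoplus_{\underline v} \mathbb{Q}_\ell/\mathbb{Z}_\ell(r-2)) = \bigoplus_{v\in X^{(1)}} H^{d-1}(F_{\underline v}, \mathbb{Q}_\ell/\mathbb{Z}_\ell(r-2))$, and the hypothesis that the corestrictions $H^{d-1}(F_{\underline v},-)\to H^{d-1}(k,-)$ are surjective makes the connecting map $H^{d-1}(k,\mathbb{Q}_\ell/\mathbb{Z}_\ell(r-2)) \to H^d(k,\mathcal{K}(r-2))$ vanish; combined with the local identification from the first paragraph, this yields a short exact sequence $0\to H^d(k,\mathcal{K}(r-2)) \to \bigoplus_v H^{d+2}(K_v,\mathbb{Q}_\ell/\mathbb{Z}_\ell(r)) \to H^d(k,\mathbb{Q}_\ell/\mathbb{Z}_\ell(r-2)) \to 0$ that glues onto the preceding four-term sequence to give (\ref{cohom2}). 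The main obstacle is the careful bookkeeping of the partial splitting $\psi$ to certify that $F$ is killed by $m(\underline{\Gamma})$; everything else is a routine diagram chase once the Hochschild-Serre identifications over $K$ and over each $K_v$ are set up in parallel.
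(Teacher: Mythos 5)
Your proposal is correct and takes essentially the same route as the paper: the paper likewise obtains (\ref{cohom1}) and (\ref{cohom2}) by taking the long exact cohomology sequences of (\ref{spl1}) and (\ref{spl2}), identifying $H^{d+2}(K,\mathbb{Q}_{\ell}/\mathbb{Z}_{\ell}(r))\cong H^d(k,\text{Br}(\underline{K})\{\ell\}(r-1))$ via Hochschild--Serre, using Shapiro's lemma together with the corestriction hypothesis to produce the short exact sequence $0\to H^d\bigl(k,\text{Ker}(\bigoplus_{\underline{v}\in\underline{X}^{(1)}}\mathbb{Q}_{\ell}/\mathbb{Z}_{\ell}(r-2)\to\mathbb{Q}_{\ell}/\mathbb{Z}_{\ell}(r-2))\bigr)\to\bigoplus_{v\in X^{(1)}}H^{d+2}(K_v,\mathbb{Q}_{\ell}/\mathbb{Z}_{\ell}(r))\to H^d(k,\mathbb{Q}_{\ell}/\mathbb{Z}_{\ell}(r-2))\to 0$ that you splice in, and killing $F$ by the same retraction argument with $\psi$. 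The only blemish is notational: the identity you need is $\partial\circ H^{d-1}(\varphi)\circ H^{d-1}(\psi)=M\cdot\partial$ rather than $\partial\circ H^{d-1}(\varphi)=M\cdot\partial$, and it forces $M\cdot\partial=0$ because $\partial\circ H^{d-1}(\varphi)=0$ by exactness --- which is precisely the paper's diagram chase.
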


\begin{proof}[Proof]
First observe that we have an exact sequence:
\small
\begin{equation}
\xymatrix{
\bigoplus_{\underline{v}\in |\underline{X}^{(1)}/\text{Gal}(k^s/k)|} H^{d-1}\left(F_{\underline{v}}, \mathbb{Q}_{\ell}/\mathbb{Z}_{\ell}(r-2) \right) \ar[rr] & &  H^{d-1}\left(k, \mathbb{Q}_{\ell}/\mathbb{Z}_{\ell}(r-2) \right)\ar[ld]\\
& \mathclap{H^d\left(k,\text{Ker}\left( \bigoplus_{\underline{v}\in \underline{X}^{(1)}} \mathbb{Q}_{\ell}/\mathbb{Z}_{\ell}(r-2) \rightarrow \mathbb{Q}_{\ell}/\mathbb{Z}_{\ell}(r-2) \right)\right)} \phantom{AAAAAAA} \ar[ld] & \\
\bigoplus_{\underline{v}\in |\underline{X}^{(1)}/\text{Gal}(k^s/k)|} H^d\left(F_{\underline{v}}, \mathbb{Q}_{\ell}/\mathbb{Z}_{\ell}(r-2) \right) \ar[r] & H^d\left(k, \mathbb{Q}_{\ell}/\mathbb{Z}_{\ell}(r-2) \right)\ar[r] & 0
}
\end{equation}
\normalsize
By assumption, for $\underline{v}\in |\underline{X}^{(1)}/\text{Gal}(k^s/k)|$, the corestriction morphism $H^{d-1}\left(F_{\underline{v}}, \mathbb{Q}_{\ell}/\mathbb{Z}_{\ell}(r-2) \right) \rightarrow H^{d-1}\left(k, \mathbb{Q}_{\ell}/\mathbb{Z}_{\ell}(r-2) \right)$ is surjective, so we get the following exact sequence:
\begin{equation}
\xymatrix{
& 0 \ar[r] & H^d\left(k,\text{Ker}\left( \bigoplus_{\underline{v}\in \underline{X}^{(1)}} \mathbb{Q}_{\ell}/\mathbb{Z}_{\ell}(r-2) \rightarrow \mathbb{Q}_{\ell}/\mathbb{Z}_{\ell}(r-2) \right)\right)  \ar[d]  \\
0 & H^d\left(k, \mathbb{Q}_{\ell}/\mathbb{Z}_{\ell}(r-2) \right)\ar[l] & \bigoplus_{\underline{v}\in |\underline{X}^{(1)}/\text{Gal}(k^s/k)|} H^d\left(F_{\underline{v}}, \mathbb{Q}_{\ell}/\mathbb{Z}_{\ell}(r-2) \right) \ar[l]
}
\end{equation}
Now note that, for $\underline{v}\in |\underline{X}^{(1)}/\text{Gal}(k^s/k)|$, the residue morphisms induce isomorphisms:
$$H^d\left(F_{\underline{v}}, \mathbb{Q}_{\ell}/\mathbb{Z}_{\ell}(r-2) \right) \cong H^{d+1}\left(k(v), \mathbb{Q}_{\ell}/\mathbb{Z}_{\ell}(r-1) \right)\cong H^{d+2}\left(K_v, \mathbb{Q}_{\ell}/\mathbb{Z}_{\ell}(r) \right).$$
Hence we have an exact sequence:
\begin{equation}
\xymatrix{
& 0 \ar[r] & H^d\left(k,\text{Ker}\left( \bigoplus_{\underline{v}\in \underline{X}^{(1)}} \mathbb{Q}_{\ell}/\mathbb{Z}_{\ell}(r-2) \rightarrow \mathbb{Q}_{\ell}/\mathbb{Z}_{\ell}(r-2) \right)\right)  \ar[d]  \\
0 & H^d\left(k, \mathbb{Q}_{\ell}/\mathbb{Z}_{\ell}(r-2) \right)\ar[l] & \bigoplus_{v \in X^{(1)}} H^{d+2}\left(K_v, \mathbb{Q}_{\ell}/\mathbb{Z}_{\ell}(r) \right) \ar[l]
}
\end{equation}
Exact sequences (\ref{cohom1}) and (\ref{cohom2}) are then obtained by writing the long exact cohomology sequences associated to the short exact sequences (\ref{spl1}) and (\ref{spl2}), by using the fact that $k$ has $\ell$-cohomological dimension $d$ and by observing that one has the following isomorphisms:
\begin{align*}
H^{d+2}(K,\mathbb{Q}_{\ell}/\mathbb{Z}_{\ell}(r)) & \cong H^d(k,H^2(\underline{K},\mathbb{Q}_{\ell}/\mathbb{Z}_{\ell}(r)))\\
&\cong H^d(k,H^2(\underline{K},\mathbb{Q}_{\ell}/\mathbb{Z}_{\ell}(1))(r-1))\\
&\cong H^d(k,\br(\underline{K})\{\ell\}(r-1)).
\end{align*}

Let's now prove that $F$ is killed by $m(\underline{\Gamma})$. Take the following notations:
\begin{gather*}
A:= \mathbb{Q}_{\ell}/\mathbb{Z}_{\ell}(r-2)\otimes H_1(\underline{\Gamma},\mathbb{Z});\\
B:=\Xi(r-2);\\
C:=\text{Ker}\left( \bigoplus_{\underline{v}\in \underline{X}^{(1)}} \mathbb{Q}_{\ell}/\mathbb{Z}_{\ell}(r-2) \rightarrow \mathbb{Q}_{\ell}/\mathbb{Z}_{\ell}(r-2) \right).
\end{gather*}
According to proposition \ref{gr}, we have a commutative diagram with an exact line:
\begin{equation}
\xymatrix{
H^{d-1}(k,B)  \ar[r] & H^{d-1}(k,C)  \ar[r] & H^{d}(k,A)  \ar[r] & H^{d}(k,B)  \ar[r] & H^{d}(k,C)  \ar[r] & 0\\
H^{d-1}(k,C)\ar[u] \ar[ur]_{\cdot m(\underline{\Gamma})} &&&&&
}
\end{equation}
A simple diagram chase shows that  $F=\text{Ker}\left(H^{d}(k,A)  \rightarrow H^{d}(k,B) \right)$ is killed $m(\underline{\Gamma})$.
\end{proof}

\begin{corollary}\label{corobhn}
Let $d$ be a non-negative integer, $r$ any integer and $\ell$ a prime number different from the characteristic of $k$. Assume that $k$ has $\ell$-cohomological dimension $d$ and that, for any finite extension $k'$ of $k$, the corestriction map $H^{d-1}(k',\mathbb{Q}_{\ell}/\mathbb{Z}_{\ell}(r-2)) \rightarrow H^{d-1}(k,\mathbb{Q}_{\ell}/\mathbb{Z}_{\ell}(r-2))$ is surjective. There is an exact sequence:
$$H^{d+2}(K,\mathbb{Q}_{\ell}/\mathbb{Z}_{\ell}(r)) \rightarrow \bigoplus_{v\in X^{(1)}} H^{d+2}(K_v,\mathbb{Q}_{\ell}/\mathbb{Z}_{\ell}(r)) \rightarrow H^d(k,\mathbb{Q}_{\ell}/\mathbb{Z}_{\ell}(r-2)) \rightarrow 0.$$
Moreover, if $A$ is the kernel of the map $H^{d+2}(K,\mathbb{Q}_{\ell}/\mathbb{Z}_{\ell}(r)) \rightarrow \bigoplus_{v\in X^{(1)}} H^{d+2}(K_v,\mathbb{Q}_{\ell}/\mathbb{Z}_{\ell}(r))$, then  we have an exact sequence:
$$\bigoplus_{\underline{v} \in |\underline{V}_1/\gal (k^s/k)|} H^{d}(F_{\underline{v}},J_{\underline{v}}\{\ell\}(r-2)) \rightarrow A \rightarrow H^{d}(k,\mathbb{Q}_{\ell}/\mathbb{Z}_{\ell}(r-2)\otimes H_1(\underline{\Gamma},\mathbb{Z}))/F \rightarrow 0,$$
where $F$ is a subgroup of $ H^{d}(k,\mathbb{Q}_{\ell}/\mathbb{Z}_{\ell}(r-2)\otimes H_1(\underline{\Gamma},\mathbb{Z}))$ which is killed by $m(\underline{\Gamma})$. In particular, $A$ is divisible.
\end{corollary}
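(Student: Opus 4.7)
The plan is to derive both exact sequences of the corollary as formal consequences of Theorem \ref{BHN2}, by threading together its two exact sequences \eqref{cohom1} and \eqref{cohom2} through a short diagram chase. Denote by $\alpha \colon H^{d+2}(K,\mathbb{Q}_{\ell}/\mathbb{Z}_{\ell}(r)) \twoheadrightarrow H^d(k,\Xi(r-2))$ the surjection coming from \eqref{cohom1}, and by $\beta \colon H^d(k,\Xi(r-2)) \to \bigoplus_{v\in X^{(1)}} H^{d+2}(K_v,\mathbb{Q}_{\ell}/\mathbb{Z}_{\ell}(r))$ the middle map of \eqref{cohom2}. The composition $\beta \circ \alpha$ is precisely the localization map appearing in the statement. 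Since $\alpha$ is surjective, the cokernel of $\beta \circ \alpha$ equals the cokernel of $\beta$, which by the tail of \eqref{cohom2} is identified with $H^d(k,\mathbb{Q}_{\ell}/\mathbb{Z}_{\ell}(r-2))$; this yields the first asserted exact sequence.

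For the second exact sequence, set $A := \ker(\beta \circ \alpha)$. Surjectivity of $\alpha$ gives $\alpha(A) = \ker \beta$, and by the head of \eqref{cohom2} this kernel coincides with the image of $H^d(k,\mathbb{Q}_{\ell}/\mathbb{Z}_{\ell}(r-2) \otimes H_1(\underline{\Gamma},\mathbb{Z}))$ in $H^d(k,\Xi(r-2))$, which is isomorphic to the quotient $H^d(k,\mathbb{Q}_{\ell}/\mathbb{Z}_{\ell}(r-2) \otimes H_1(\underline{\Gamma},\mathbb{Z}))/F$ since $F$ is by definition the kernel of that map. As $\ker\alpha \subseteq A$, the induced surjection $A \twoheadrightarrow \alpha(A)$ has kernel exactly $\ker\alpha$, and the latter is the image of $\bigoplus_{\underline{v}} H^d(F_{\underline{v}}, J_{\underline{v}}\{\ell\}(r-2))$ in $H^{d+2}(K,\mathbb{Q}_{\ell}/\mathbb{Z}_{\ell}(r))$ by \eqref{cohom1}. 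Assembling these gives exactly the claimed sequence.

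It remains to establish divisibility of $A$. The key observation is that both end terms of the short exact sequence
\[
0 \longrightarrow \operatorname{Im}\Bigl(\bigoplus_{\underline{v}} H^d(F_{\underline{v}}, J_{\underline{v}}\{\ell\}(r-2)) \to A\Bigr) \longrightarrow A \longrightarrow H^d(k,\mathbb{Q}_{\ell}/\mathbb{Z}_{\ell}(r-2) \otimes H_1(\underline{\Gamma},\mathbb{Z}))/F \longrightarrow 0
\]
are divisible $\ell$-primary groups. Indeed, $H_1(\underline{\Gamma},\mathbb{Z})$ is a finitely generated free abelian group, so $\mathbb{Q}_{\ell}/\mathbb{Z}_{\ell}(r-2) \otimes H_1(\underline{\Gamma},\mathbb{Z})$ is divisible; and since $k$ has $\ell$-cohomological dimension $d$, the functor $H^d(k,-)$ is right-exact on $\ell$-primary torsion modules, hence transforms divisible modules into divisible modules. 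Similarly, multiplication by $\ell$ is surjective on $J_{\underline{v}}(k^s)\{\ell\}$, and each $F_{\underline{v}}$ has $\ell$-cohomological dimension $\leq d$, so $H^d(F_{\underline{v}}, J_{\underline{v}}\{\ell\}(r-2))$ is also divisible. Taking quotients preserves divisibility, so both end terms are divisible; and an extension of divisibles by divisibles is divisible, giving $A$ divisible.

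The main obstacle I anticipate is the careful bookkeeping needed to see that the kernel of $A \twoheadrightarrow \alpha(A)$ is genuinely equal to $\ker\alpha$ (not merely contained in it) — this is automatic here only because $\ker\alpha \subseteq A$, which itself follows from the fact that $\beta \circ \alpha$ kills $\ker\alpha$. The rest is a routine diagram chase and an inheritance-of-divisibility argument.
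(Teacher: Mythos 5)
Your proof is correct and follows essentially the same route as the paper, which simply observes that the corollary is a rewriting of theorem \ref{BHN2} once one notes that $\bigoplus_{\underline{v}} H^{d}(F_{\underline{v}},J_{\underline{v}}\{\ell\}(r-2))$ and $H^{d}(k,\mathbb{Q}_{\ell}/\mathbb{Z}_{\ell}(r-2)\otimes H_1(\underline{\Gamma},\mathbb{Z}))$ are divisible because $k$ has $\ell$-cohomological dimension $d$. Your explicit diagram chase with $\alpha$ and $\beta$, and the extension-of-divisibles argument, just spell out the details the paper leaves implicit.
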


\begin{remarque}~
\begin{itemize}
\item[(i)] This corollary is to be understood as a Brauer-Hasse-Noether exact sequence for the field $K$.
\item[(ii)] The assumption that, for any finite extension $k'$ of $k$, the corestriction map $$H^{d-1}(k',\mathbb{Q}_{\ell}/\mathbb{Z}_{\ell}(r-2)) \rightarrow H^{d-1}(k,\mathbb{Q}_{\ell}/\mathbb{Z}_{\ell}(r-2))$$ is surjective is automatically satisfied when $r=d+1$.
\end{itemize}
\end{remarque}

\begin{proof}[Proof]
This is just a rewriting of theorem \ref{BHN2}, provided that one notes that the groups $\bigoplus_{\underline{v} \in \underline{V}_1/\gal (k^s/k)} H^{d}(F_{\underline{v}},J_{\underline{v}}\{\ell\}(r-2))$ and $H^{d}(k,\mathbb{Q}_{\ell}/\mathbb{Z}_{\ell}(r-2)\otimes H_1(\underline{\Gamma},\mathbb{Z}))$ are divisible (because $k$ has cohomological dimension $d$).
\end{proof}

We are now going to apply the previous corollary to the cases where $k$ is a finite field, a $p$-adic field or a totally imaginary number field.

\subsection*{a) Finite base field}

\hspace{3ex} In the case where $k$ is finite, we recover a result of Saito (see theorem 5.2 of \cite{saito}):

\begin{corollary}\label{bhnfin}
Assume that $k$ is finite. Then we have an exact sequence:
\begin{equation*}
\xymatrix{
0\ar[r] & F \ar[r] & H^{1}(k,\mathbb{Q}_{\ell}/\mathbb{Z}_{\ell}\otimes H_1(\underline{\Gamma},\mathbb{Z})) \ar[r] & H^{3}(K,\mathbb{Q}_{\ell}/\mathbb{Z}_{\ell}(2)) \ar[d]\\
&0 & H^1(k,\mathbb{Q}_{\ell}/\mathbb{Z}_{\ell})\ar[l] &\bigoplus_{v\in X^{(1)}} H^{3}(K_v,\mathbb{Q}_{\ell}/\mathbb{Z}_{\ell}(2)),\ar[l]
}
\end{equation*}
where $F$ is a group killed by $m(\underline{\Gamma})$. The kernel of the map $$H^{3}(K,\mathbb{Q}_{\ell}/\mathbb{Z}_{\ell}(2))\rightarrow \bigoplus_{v\in X^{(1)}} H^{3}(K_v,\mathbb{Q}_{\ell}/\mathbb{Z}_{\ell}(2))$$ is isomorphic to $(\mathbb{Q}_{\ell}/\mathbb{Z}_{\ell})^{\rho}$ with:
\begin{equation}\label{rang}
\rho=\mathrm{rank}\left( H_1(\underline{\Gamma},\mathbb{Z})^{\gal(k^s/k)} \right) .
\end{equation}
\end{corollary}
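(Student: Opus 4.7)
The plan is to deduce this from Theorem \ref{BHN2} and Corollary \ref{corobhn} specialized to $d=1$, $r=2$. Since $k$ is finite, it has $\ell$-cohomological dimension $1$, so hypothesis (H1) holds; moreover, by the second part of the remark following Corollary \ref{corobhn}, the corestriction assumption is automatically satisfied as $r=d+1$. The first thing to verify is that the left-most term of the first exact sequence of Theorem \ref{BHN2}, namely $\bigoplus_{\underline{v}\in|\underline{V}_1/\mathrm{Gal}(k^s/k)|}H^1(F_{\underline{v}},J_{\underline{v}}\{\ell\})$, vanishes. Each $F_{\underline{v}}$ is a finite field, and Lemma \ref{abfin} applied with $A=J_{\underline{v}}$, $j=1$, $r=0$ (so that $j\neq -2r$) gives $H^1(F_{\underline{v}},J_{\underline{v}}\{\ell\})=0$.

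Consequently the first sequence of Theorem \ref{BHN2} collapses to an isomorphism $H^3(K,\mathbb{Q}_{\ell}/\mathbb{Z}_{\ell}(2))\cong H^1(k,\Xi)$. Splicing this with the second exact sequence of the same theorem (in which $r-2=0$) immediately yields the six-term sequence claimed in the corollary, with the same group $F$ killed by $m(\underline{\Gamma})$.

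For the description of the kernel $A$: by Corollary \ref{corobhn} together with the vanishing just established, there is an isomorphism $A\cong H^1(k,\mathbb{Q}_{\ell}/\mathbb{Z}_{\ell}\otimes H_1(\underline{\Gamma},\mathbb{Z}))/F$, and the corollary also guarantees that $A$ is divisible. To compute the source, I will use that $H_1(\underline{\Gamma},\mathbb{Z})$ is a free abelian group of finite rank on which $\mathrm{Gal}(k^s/k)$ acts through a finite permutation quotient. Writing $\phi$ for the Frobenius and applying the standard presentation $H^1(k,M)=M/(\phi-1)M$ for a discrete continuous $\hat{\mathbb{Z}}$-module $M$, together with right exactness of $\mathbb{Q}_{\ell}/\mathbb{Z}_{\ell}\otimes-$, gives
$$H^1(k,\mathbb{Q}_{\ell}/\mathbb{Z}_{\ell}\otimes H_1(\underline{\Gamma},\mathbb{Z}))\cong \mathbb{Q}_{\ell}/\mathbb{Z}_{\ell}\otimes \mathrm{coker}(\phi-1).$$
Since $\mathrm{coker}(\phi-1)$ is a finitely generated abelian group of rank $\dim_{\mathbb{Q}}(H_1(\underline{\Gamma},\mathbb{Z})\otimes\mathbb{Q})^{\phi}=\rho$ (the action being through a finite quotient), this tensor product is isomorphic to $(\mathbb{Q}_{\ell}/\mathbb{Z}_{\ell})^{\rho}$.

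The last step is to observe that a divisible quotient of $(\mathbb{Q}_{\ell}/\mathbb{Z}_{\ell})^{\rho}$ by a finite subgroup is again isomorphic to $(\mathbb{Q}_{\ell}/\mathbb{Z}_{\ell})^{\rho}$: lifting such a finite subgroup through the identification $(\mathbb{Q}_{\ell}/\mathbb{Z}_{\ell})^{\rho}=\mathbb{Q}_{\ell}^{\rho}/\mathbb{Z}_{\ell}^{\rho}$ yields a $\mathbb{Z}_{\ell}$-lattice of full rank inside $\mathbb{Q}_{\ell}^{\rho}$, which by the structure theorem is free of rank $\rho$. This is the only point where some care is required, but it is a routine structural fact; everything else is a direct bookkeeping from Theorem \ref{BHN2} and Corollary \ref{corobhn} combined with the Weil-conjecture input of Lemma \ref{abfin}.
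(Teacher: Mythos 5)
Your proof is correct, and the two halves of it deserve separate comments. The first half --- checking the corestriction hypothesis via $r=d+1$, killing the Jacobian contribution with Lemma \ref{abfin} (applied with $j=1$, $r=0$), and splicing the two sequences of Theorem \ref{BHN2} / Corollary \ref{corobhn} --- is exactly the paper's argument. Where you genuinely diverge is the computation of $H^{1}(k,\mathbb{Q}_{\ell}/\mathbb{Z}_{\ell}\otimes H_1(\underline{\Gamma},\mathbb{Z}))$. The paper dualizes: by duality over the finite field $k$ (example I.1.10 of \cite{milne}) this group is $\left( H^0(k, T_{\ell}(H_1(\underline{\Gamma},\mathbb{Z})^D)) \right)^D$, and the rank of that $H^0$ is then evaluated by a separate statement, Lemma \ref{Ono}, whose proof resolves $H_1(\underline{\Gamma},\mathbb{Z})$ by permutation modules via Ono's lemma. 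You instead exploit that $\gal(k^s/k)\cong\hat{\mathbb{Z}}$: for a torsion discrete module, $H^1$ is the group of Frobenius coinvariants, so right-exactness of $\mathbb{Q}_{\ell}/\mathbb{Z}_{\ell}\otimes-$ reduces everything to $\mathbb{Q}_{\ell}/\mathbb{Z}_{\ell}\otimes \mathrm{coker}(\phi-1)$, and a rank-nullity count on the finitely generated free group $H_1(\underline{\Gamma},\mathbb{Z})$ gives $\mathrm{rank}\,\mathrm{coker}(\phi-1)=\mathrm{rank}\,\ker(\phi-1)=\rho$. Your route is more elementary and self-contained (no duality, no Ono's lemma), but it is genuinely specific to finite base fields; the paper's detour through duality and Lemma \ref{Ono} is precisely what it reuses later, with Tate's local duality in place of Milne's, to prove the analogous rank formula (\ref{cond2}) in Corollary \ref{padicbhn} over a $p$-adic field, where no coinvariants description of $H^1$ is available. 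Three small remarks on your write-up: the formula $H^1(\hat{\mathbb{Z}},M)=M/(\phi-1)M$ requires $M$ to be torsion (true here, but worth saying); the Galois action on $H_1(\underline{\Gamma},\mathbb{Z})$ factors through a finite quotient but is not a permutation action, so ``finite permutation quotient'' is loose wording (harmless, since only finiteness of the quotient is used); and your explicit check that dividing $(\mathbb{Q}_{\ell}/\mathbb{Z}_{\ell})^{\rho}$ by a subgroup killed by $m(\underline{\Gamma})$ (hence finite) does not change the isomorphism type fills in a step that the paper's own proof leaves implicit.
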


\begin{proof}[Proof]
Observe that for any finite extension $k'$ of $k$, the corestriction map $H^{0}(k',\mathbb{Q}_{\ell}/\mathbb{Z}_{\ell}) \rightarrow H^{0}(k,\mathbb{Q}_{\ell}/\mathbb{Z}_{\ell})$ is surjective. Hence the first part of the statement follows from corollary \ref{corobhn} and from the vanishing of the group $H^{1}(F_{\underline{v}},J_{\underline{v}}\{\ell\})$ for each $\underline{v} \in |\underline{V}_1/\gal (k^s/k)|$ (lemma \ref{abfin}). For the second part of the statement, we have by duality over $k$ (example I.1.10 of \cite{milne}):
$$H^{1}(k,\mathbb{Q}_{\ell}/\mathbb{Z}_{\ell}\otimes H_1(\underline{\Gamma},\mathbb{Z})) \cong \left( H^0(k, T_{\ell}(H_1(\underline{\Gamma},\mathbb{Z})^D)) \right)^D.$$
This group is isomorphic to $(\mathbb{Q}_{\ell}/\mathbb{Z}_{\ell})^{\rho}$ with:
$$ \rho = \mathrm{rank}\left( H^0(k, T_{\ell}(H_1(\underline{\Gamma},\mathbb{Z})^D)) \right).$$
Hence the corollary follows from the next lemma.
\end{proof}

\begin{lemma}\label{Ono}
Let $E$ be a field and let $M$ be a $\text{Gal}(E^s/E)$-module which is free of finite type as an abelian group. Then:
$$\mathrm{rank}\left( H^0(E,T_{\ell}(M^D)) \right)  = \mathrm{rank} \left( M^{\gal(E^s/E)} \right) .$$
\end{lemma}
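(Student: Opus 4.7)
The approach is a short unwinding of the definitions followed by a standard representation-theoretic duality argument for finite groups. Set $G_E := \gal(E^s/E)$, write $n := \mathrm{rank}_{\mathbb{Z}} M$, and let $M^\vee := \Hom_{\mathbb{Z}}(M,\mathbb{Z})$.

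First I would identify the Galois module $T_\ell(M^D)$ with $M^\vee \otimes_{\mathbb{Z}} \mathbb{Z}_\ell$. Since $M$ is discrete, $M^D = \Hom(M,\mathbb{Q}/\mathbb{Z})$ and ${_{\ell^s}}(M^D) = \Hom(M,\mathbb{Z}/\ell^s\mathbb{Z})$, so passing to the limit yields
$$T_\ell(M^D) \;=\; \varprojlim_s \Hom(M,\mathbb{Z}/\ell^s\mathbb{Z}) \;=\; \Hom(M,\mathbb{Z}_\ell) \;\cong\; M^\vee \otimes_{\mathbb{Z}} \mathbb{Z}_\ell,$$
with the contragredient Galois action. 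In particular $H^0(E,T_\ell(M^D))$ is a free $\mathbb{Z}_\ell$-module of finite rank, and that rank equals $\dim_{\mathbb{Q}_\ell}\bigl( (M^\vee \otimes_{\mathbb{Z}} \mathbb{Q}_\ell)^{G_E} \bigr)$.

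Next, because $\mathrm{Aut}(M) \subseteq GL_n(\mathbb{Z})$ is discrete and the Galois action on $M$ is continuous, it factors through the quotient $G = \gal(L/E)$ for some finite Galois extension $L/E$. Since $G$ is finite and we work in characteristic zero, taking $G$-invariants commutes with extension of scalars from $\mathbb{Z}$ to $\mathbb{Q}$ and from $\mathbb{Q}$ to $\mathbb{Q}_\ell$, so
$$\mathrm{rank}(M^{G_E}) \;=\; \mathrm{rank}(M^G) \;=\; \dim_{\mathbb{Q}_\ell}\bigl( (M\otimes_{\mathbb{Z}}\mathbb{Q}_\ell)^G \bigr).$$
It therefore suffices to prove, for the finite group $G$ acting on the finite-dimensional $\mathbb{Q}_\ell$-vector space $V := M\otimes_{\mathbb{Z}} \mathbb{Q}_\ell$, the equality $\dim V^G = \dim (V^\vee)^G$.

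The final step is a one-line character computation: by the averaging formula for the projector onto invariants,
$$\dim V^G \;=\; \tfrac{1}{|G|}\sum_{g\in G} \mathrm{tr}(g\,|\,V) \;=\; \tfrac{1}{|G|}\sum_{g\in G} \mathrm{tr}(g^{-1}\,|\,V) \;=\; \tfrac{1}{|G|}\sum_{g\in G} \mathrm{tr}(g\,|\,V^\vee) \;=\; \dim (V^\vee)^G,$$
where the middle equality is just the reindexing $g\mapsto g^{-1}$. Combining the three steps gives the desired equality
$$\mathrm{rank}\bigl(H^0(E,T_\ell(M^D))\bigr) \;=\; \mathrm{rank}(M^{G_E}).$$
There is no real obstacle here; the only care needed is in the first step, to verify that the isomorphism $T_\ell(M^D)\cong M^\vee\otimes\mathbb{Z}_\ell$ is an isomorphism of Galois modules (the contragredient actions match on both sides).
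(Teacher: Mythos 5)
Your proof is correct, but it follows a genuinely different route from the paper's. The paper reduces the statement to the case of permutation modules (where both sides count Galois orbits) via Ono's lemma: it chooses an exact sequence $0 \rightarrow M^m \times R_0 \rightarrow R_1 \rightarrow F \rightarrow 0$ with $R_0$, $R_1$ permutation modules and $F$ finite, checks that it induces an exact sequence $0 \rightarrow T_{\ell}(R_1^D) \rightarrow T_{\ell}(M^D)^m \times T_{\ell}(R_0^D) \rightarrow F' \rightarrow 0$ with $F'$ finite, and concludes by additivity of ranks. You instead identify $T_{\ell}(M^D)$ outright with $\mathrm{Hom}(M,\mathbb{Z}_{\ell})$ carrying the contragredient action, reduce to a finite quotient $G$ acting on $V = M\otimes \mathbb{Q}_{\ell}$, and prove $\dim V^G = \dim (V^{\vee})^G$ by averaging characters and reindexing $g \mapsto g^{-1}$. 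Your argument is more elementary and self-contained: it avoids Ono's lemma (a nontrivial input) entirely, and it isolates the clean linear-algebra fact that a $\mathbb{Q}_{\ell}$-representation of a finite group and its dual have invariant subspaces of the same dimension. What the paper's approach buys is uniformity: the same Ono's-lemma dévissage is reused almost verbatim in lemma \ref{qsd} and lemma \ref{lemfin2}, where the relevant cohomological functors are no longer plain invariants and a character-theoretic shortcut is not available; your method, by contrast, exploits the special feature that $H^0$ of a $\mathbb{Q}_{\ell}$-vector space representation is computed by a projector. The only points requiring care in your write-up — that the transition maps in $\varprojlim_s \mathrm{Hom}(M,\mathbb{Z}/\ell^s\mathbb{Z})$ match the multiplication-by-$\ell$ maps defining the Tate module, that the two contragredient actions agree, and that invariants commute with the flat extensions $\mathbb{Z} \rightarrow \mathbb{Q} \rightarrow \mathbb{Q}_{\ell}$ and $\mathbb{Z}_{\ell} \rightarrow \mathbb{Q}_{\ell}$ once the action factors through a finite quotient — all check out.
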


\begin{proof}[Proof]
The result is immediate for $M$ a permutation module. By Ono's lemma, one can find a positive integer $m$ and an exact sequence of Galois modules:
$$0 \rightarrow M^m \times R_0 \rightarrow R_1 \rightarrow F \rightarrow 0$$
in which $R_0$ and $R_1$ are permutation modules and $F$ is finite. One then easily checks that there is an exact sequence of Galois modules:
$$0 \rightarrow T_{\ell}(R_1^D) \rightarrow T_{\ell}(M^D)^m \times T_{\ell}(R_0^D) \rightarrow F' \rightarrow 0$$
with $F'$ finite. It follows that:
\begin{align*}
\text{rank}\left( H^0(E,T_{\ell}(M^D)) \right)  &= \frac{\text{rank}\left( H^0(E,T_{\ell}(R_1^D)) \right)-\text{rank}\left( H^0(E,T_{\ell}(R_0^D)) \right)}{m}\\
&= \frac{\text{rank}\left( R_1^{\gal(E^s/E)} \right)-\text{rank}\left( R_0^{\gal(E^s/E)} \right)}{m}\\
&= \text{rank} \left( M^{\gal(E^s/E)} \right).
\end{align*}
\end{proof}

\subsection*{b) $p$-adic base field}

\hspace{3ex} When $k$ is a $p$-adic field, corollary \ref{corobhn} implies the following result:

\begin{corollary}\label{padicbhn}
\begin{itemize}
\item[(i)] Assume that $k$ is a $p$-adic field. Then we have an exact sequence:
$$H^{4}(K,\mathbb{Q}_{\ell}/\mathbb{Z}_{\ell}(3)) \rightarrow \bigoplus_{v\in X^{(1)}} H^{4}(K_v,\mathbb{Q}_{\ell}/\mathbb{Z}_{\ell}(3)) \rightarrow (\text{Br}\; k)\{\ell\} \rightarrow 0.$$
\item[(ii)] Assume further that  $\ell \neq p$.
Then the group $ \text{Ker}\left( H^{4}(K,\mathbb{Q}_{\ell}/\mathbb{Z}_{\ell}(3)) \rightarrow \bigoplus_{v\in X^{(1)}} H^{4}(K_v,\mathbb{Q}_{\ell}/\mathbb{Z}_{\ell}(3)) \right)$ is isomorphic to $(\mathbb{Q}_{\ell}/\mathbb{Z}_{\ell})^{\rho}$ with:
$$ 0 \leq \rho-\mathrm{rank}\left( H_1(\underline{\Gamma},\mathbb{Z})^{\gal(k^s/k)} \right) \leq \sum_{\underline{v} \in |\underline{V}_1/\gal (k^s/k)|} \rho_{\overline{v}}.$$
where $\rho_{\overline{v}}$ is the rank of the maximal torus of the special fiber of the Néron model of $J_{\overline{v}}$. Moreover, if $\rho_{\overline{v}}=0$ for each $\underline{v} \in |\underline{V}_1/\gal (k^s/k)|$, then we have an exact sequence:
\begin{equation*}
\xymatrix{
0\ar[r] & F \ar[r] & H^{2}(k,\mathbb{Q}_{\ell}/\mathbb{Z}_{\ell}(1)\otimes H_1(\underline{\Gamma},\mathbb{Z})) \ar[r] & H^{4}(K,\mathbb{Q}_{\ell}/\mathbb{Z}_{\ell}(3)) \ar[d]\\
&0 & (\text{Br}\; k)\{\ell\}\ar[l] &\bigoplus_{v\in X^{(1)}} H^{4}(K_v,\mathbb{Q}_{\ell}/\mathbb{Z}_{\ell}(3)),\ar[l]
}
\end{equation*}
where $F$ is a group killed by $m(\underline{\Gamma})$.
\item[(iii)] Assume that  $\ell = p$. Then the group $ \text{Ker}\left( H^{4}(K,\mathbb{Q}_p/\mathbb{Z}_p(3)) \rightarrow \bigoplus_{v\in X^{(1)}} H^{4}(K_v,\mathbb{Q}_p/\mathbb{Z}_p(3)) \right)$ is isomorphic to $(\mathbb{Q}_p/\mathbb{Z}_p)^{\rho}$ with:
$$ 0 \leq \rho-\mathrm{rank}\left( H_1(\underline{\Gamma},\mathbb{Z})^{\gal(k^s/k)} \right) \leq \sum_{\underline{v} \in |\underline{V}_1/\gal (k^s/k)|} g_{\underline{v}}.$$
Moreover, if $g_{\underline{v}}=0$ for each $\underline{v} \in |\underline{V}_1/\gal (k^s/k)|$, then we have an exact sequence:
\begin{equation*}
\xymatrix{
0\ar[r] & F \ar[r] & H^{2}(k,\mathbb{Q}_p/\mathbb{Z}_p(1)\otimes H_1(\underline{\Gamma},\mathbb{Z})) \ar[r] & H^{4}(K,\mathbb{Q}_p/\mathbb{Z}_p(3)) \ar[d]\\
&0 &(\text{Br}\; k)\{p\}\ar[l] &\bigoplus_{v\in X^{(1)}} H^{4}(K_v,\mathbb{Q}_p/\mathbb{Z}_p(3)),\ar[l]
}
\end{equation*}
where $F$ is a group killed by $m(\underline{\Gamma})$.
\end{itemize}

\end{corollary}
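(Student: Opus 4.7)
The plan is to deduce all three parts from Corollary \ref{corobhn} specialized to $d=2$ and $r=3$. The hypotheses are satisfied: any $p$-adic field $k$ has $\ell$-cohomological dimension $2$ for every prime $\ell$, and the surjectivity of the corestriction maps on $H^{d-1}(-, \mathbb{Q}_{\ell}/\mathbb{Z}_{\ell}(r-2))$ is automatic since $r = d+1$ (as noted in the remark following Corollary \ref{corobhn}). Identifying $H^2(k, \mathbb{Q}_{\ell}/\mathbb{Z}_{\ell}(1))$ with $(\text{Br}\, k)\{\ell\}$, the first exact sequence of Corollary \ref{corobhn} gives part (i).

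For (ii) and (iii), I would exploit the second exact sequence of Corollary \ref{corobhn}, together with the divisibility of $A$ also stated there, to compute $\rho := \text{corank}_{\ell}(A)$. Since the finite group $F$ contributes nothing to $\ell$-primary coranks, the exact sequence produces a lower bound $\rho \geq \text{corank}_{\ell} H^2(k, \mathbb{Q}_{\ell}/\mathbb{Z}_{\ell}(1) \otimes H_1(\underline{\Gamma}, \mathbb{Z}))$ and an upper bound obtained by adding $\sum_{\underline{v}} \text{corank}_{\ell} H^2(F_{\underline{v}}, J_{\underline{v}}\{\ell\}(1))$. Tate local duality rewrites the former corank as the $\mathbb{Z}_{\ell}$-rank of $(\text{Hom}(H_1(\underline{\Gamma},\mathbb{Z}), \mathbb{Z})\otimes \mathbb{Z}_{\ell})^{\gal(k^s/k)}$, and Lemma \ref{Ono} identifies this with $\text{rank}(H_1(\underline{\Gamma},\mathbb{Z})^{\gal(k^s/k)})$, the baseline appearing in both (ii) and (iii). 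The same duality, applied to each Jacobian $J_{\underline{v}}$ (self-dual), writes the other corank as $\text{rank}_{\mathbb{Z}_{\ell}} H^0(F_{\underline{v}}, T_{\ell} J_{\underline{v}}(-1))$.

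To finish (ii) for $\ell\neq p$, I would invoke Grothendieck's structure theorem on the Tate module of an abelian variety over a $p$-adic field: via the monodromy filtration attached to the Néron model, the Galois-invariant part of $T_{\ell}J_{\underline{v}}(-1)$ has $\mathbb{Z}_{\ell}$-rank equal to $\rho_{\overline{v}}$, the rank of the toric part of the identity component of the special fiber of the Néron model of $J_{\underline{v}}$. For (iii) with $\ell = p$, I instead mimic the proof of Lemma \ref{abpadicp} by invoking Faltings' Hodge-Tate decomposition: $T_p J_{\underline{v}} \otimes \mathbb{C}_p \cong H^1(J_{\underline{v}}, \mathcal{O}) \otimes \mathbb{C}_p(1) \oplus H^0(J_{\underline{v}}, \Omega^1) \otimes \mathbb{C}_p$. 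Twisting by $(-1)$ shifts the Hodge-Tate weights so that only the $\mathcal{O}$-summand contributes $G_{F_{\underline{v}}}$-invariants (by Tate's theorem $\mathbb{C}_p^{G_{F_{\underline{v}}}} = F_{\underline{v}}$), and the standard dimension count $\dim_{\mathbb{Q}_p}(V_p J_{\underline{v}}(-1))^{G_{F_{\underline{v}}}}\cdot [F_{\underline{v}}:\mathbb{Q}_p] \leq \dim_{\mathbb{Q}_p} H^1(J_{\underline{v}}, \mathcal{O}) = [F_{\underline{v}}:\mathbb{Q}_p]\cdot g_{\underline{v}}$ yields the upper bound $g_{\underline{v}}$.

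For the extreme cases (all $\rho_{\overline{v}}=0$ in (ii), respectively all $g_{\underline{v}}=0$ in (iii)), the corank estimates force each $H^2(F_{\underline{v}}, J_{\underline{v}}\{\ell\}(1))$ to have corank $0$; being $\ell$-primary divisible (since $\text{cd}_{\ell}(F_{\underline{v}}) = 2$ applied to a divisible coefficient module forces divisibility of the cohomology), these groups vanish outright. The second exact sequence then degenerates to an isomorphism $A \cong H^2(k, \mathbb{Q}_{\ell}/\mathbb{Z}_{\ell}(1) \otimes H_1(\underline{\Gamma}, \mathbb{Z}))/F$, which splices onto the first exact sequence of Corollary \ref{corobhn} to yield the long exact sequence in the statement. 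The main obstacle is the careful control of Galois actions and Hodge-Tate weights in (iii), specifically ensuring that the genus $g_{\underline{v}}$ (rather than $2g_{\underline{v}}$ or $[F_{\underline{v}}:\mathbb{Q}_p]\cdot g_{\underline{v}}$) is what survives as the bound on $\mathbb{Z}_p$-rank.
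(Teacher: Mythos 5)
Your proposal is correct and follows essentially the same route as the paper: both parts reduce to Corollary \ref{corobhn} (with $d=2$, $r=3$), and the corank computations you carry out inline—Tate local duality plus Lemma \ref{Ono} for the $H_1(\underline{\Gamma},\mathbb{Z})$ term, the N\'eron-model special fiber for $\ell\neq p$, and Faltings' Hodge--Tate decomposition for $\ell=p$—are exactly the content of the paper's Lemma \ref{qsd} and of equation (\ref{cond2}), with your divisibility-plus-corank-zero argument in the degenerate cases matching the paper's use of the isomorphism $H^2(F_{\underline{v}},J_{\underline{v}}\{\ell\}(1))\cong(\mathbb{Q}_{\ell}/\mathbb{Z}_{\ell})^{\tau_{\underline{v}}}$. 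The only cosmetic difference is that you invoke the monodromy filtration for $\ell\neq p$ where the paper performs an explicit d\'evissage of the special fiber (unipotent, toric and abelian parts, Weil conjectures, Ono's lemma), but the underlying computation is the same.
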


\begin{proof}[Proof]
Observe that, for any finite extension $k'$ of $k$, the corestriction morphism $$H^{1}(k',\mathbb{Q}_{\ell}/\mathbb{Z}_{\ell}(1)) \rightarrow H^{1}(k,\mathbb{Q}_{\ell}/\mathbb{Z}_{\ell}(1))$$ is surjective. Hence, corollary \ref{padicbhn} follows from corollary \ref{corobhn}, provided that we check that, for each $\underline{v} \in |\underline{V}_1/\gal (k^s/k)|$, 
\begin{equation} \label{cond1}
H^{2}(F_{\underline{v}},J_{\underline{v}}\{\ell\}(1)) \cong (\mathbb{Q}_{\ell}/\mathbb{Z}_{\ell})^{\tau_{\overline{v}}}
\end{equation}
 for some $\tau_{\overline{v}}$ satisfying the inequality
$$\tau_{\underline{v}}\leq \begin{cases} \rho_{\underline{v}}, & \mbox{if } \ell\neq p \\ g_{\underline{v}}, & \mbox{if } \ell=p \end{cases}$$  and that 
\begin{equation}\label{cond2}
\mathrm{rank} \left(H^{2}(k,\mathbb{Q}_{\ell}/\mathbb{Z}_{\ell}(1)\otimes H_1(\underline{\Gamma},\mathbb{Z}))\right) = \mathrm{rank} \left(H_1(\underline{\Gamma},\mathbb{Z})^{\gal(k^s/k)} \right).
\end{equation}
Equality (\ref{cond2}) can be proved in the same way as equality (\ref{rang}) if one uses Tate's local duality over the $p$-adic field $k$. As for the isomorphism (\ref{cond1}), one can use the following lemma.
\end{proof}

\begin{lemma}\label{qsd}
Let $A$ be an abelian variety over a $p$-adic field $k$ with residue field $\kappa$. Let $\ell$ be a prime number. Let $\mathcal{A}$ be the Néron model of $A$ and let $T$ be the maximal torus of the special fiber of $\mathcal{A}$. Let $\rho_T$ be the rank of $T$.
\begin{itemize}
\item[(i)] If $\ell \neq p$, then $H^2(k,A\{\ell\}(1))\cong (\mathbb{Q}_{\ell}/\mathbb{Z}_{\ell})^{\rho}$ with $\rho = \rho_T$.
\item[(ii)] If $\ell = p$, then $H^2(k,A\{\ell\}(1))\cong (\mathbb{Q}_{\ell}/\mathbb{Z}_{\ell})^{\rho}$ with $\rho \leq \dim \; A$.
\end{itemize}
\end{lemma}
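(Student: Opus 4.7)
The plan is to apply Tate's local duality to reduce the computation to one about Galois invariants on the rational Tate module of the dual abelian variety, and then to bound or compute this using the monodromy/weight filtration (for $\ell \neq p$) or the Hodge-Tate decomposition (for $\ell = p$).

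First I would apply Tate's local duality to the finite Galois module ${_{\ell^n}}A(1)$: this yields $H^2(k,{_{\ell^n}}A(1))^D \cong H^0(k,({_{\ell^n}}A(1))^*)$, and the Weil pairing identifies $({_{\ell^n}}A(1))^*$ with ${_{\ell^n}}A^t(-1)$. Passing to the inverse limit on the right (using that Pontryagin duality interchanges direct and inverse limits) gives
\[
H^2(k,A\{\ell\}(1))^D \cong (T_\ell A^t(-1))^{G_k},
\]
a finitely generated $\mathbb{Z}_\ell$-module. Since $k$ has $\ell$-cohomological dimension $2$ and $A\{\ell\}(1)$ is $\ell$-divisible as an abelian group, $H^2(k,A\{\ell\}(1))$ is itself divisible, hence isomorphic to $(\mathbb{Q}_\ell/\mathbb{Z}_\ell)^\rho$ with $\rho = \dim_{\mathbb{Q}_\ell}(V_\ell A^t(-1))^{G_k}$. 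This reduces both parts of the lemma to computing or bounding this dimension.

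For part (i), with $\ell \neq p$, I would invoke Grothendieck's monodromy theorem: $V_\ell A^t$ carries a three-step weight filtration $0 \subset W_{-2} \subset W_{-1} \subset V_\ell A^t$ whose graded pieces have Frobenius weights $-2, -1, 0$, and the piece $W_{-2}$ is canonically isomorphic to $X_*(T_{A^t}) \otimes \mathbb{Q}_\ell(1)$ (where $X_*$ denotes the cocharacter group and $T_{A^t}$ is the maximal torus of the special fiber of the Néron model of $A^t$), with inertia acting trivially. After Tate twist by $-1$, the weights become $0,1,2$; only the weight-$0$ piece can contribute to Galois invariants, since on the other graded pieces Frobenius acts with eigenvalues of absolute value $q^{w/2} \neq 1$. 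Hence
\[
(V_\ell A^t(-1))^{G_k} \cong (X_*(T_{A^t}) \otimes \mathbb{Q}_\ell)^{G_\kappa},
\]
whose $\mathbb{Q}_\ell$-dimension is the split rank of $T_{A^t}$ over $\kappa$. By the standard duality between the tori $T_A$ and $T_{A^t}$ in the Néron models of $A$ and its dual, this split rank coincides with the rank $\rho_T$ of $T$, giving (i).

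For part (ii), with $\ell = p$, I would use Faltings' Hodge-Tate decomposition
\[
V_p A^t \otimes_{\mathbb{Q}_p} \mathbb{C}_p \cong \mathbb{C}_p^{\,g} \oplus \mathbb{C}_p(1)^{\,g}, \qquad g = \dim A.
\]
After tensoring by $\mathbb{C}_p(-1)$, Tate's theorem (namely $(\mathbb{C}_p(j))^{G_k} = k$ for $j=0$ and $0$ otherwise) gives $\dim_k (V_p A^t(-1) \otimes_{\mathbb{Q}_p} \mathbb{C}_p)^{G_k} = g$. The natural injection $(V_p A^t(-1))^{G_k} \otimes_{\mathbb{Q}_p} k \hookrightarrow (V_p A^t(-1) \otimes_{\mathbb{Q}_p} \mathbb{C}_p)^{G_k}$, coming from the flatness of $\mathbb{C}_p$ over $k$, then yields $\dim_{\mathbb{Q}_p}(V_p A^t(-1))^{G_k} \leq g$, proving (ii).

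The main obstacle is the identification in part (i): one must invoke both Grothendieck's monodromy theorem (to obtain the weight filtration together with a concrete description of its weight-$-2$ piece in terms of Néron-model data) and the duality between $T_A$ and $T_{A^t}$ (to rewrite the resulting split rank of $T_{A^t}$ as the stated invariant $\rho_T$ attached to $T$). Part (ii) only requires an upper bound, which is why Hodge-Tate theory suffices.
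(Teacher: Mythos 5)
Your opening reduction (Tate local duality, divisibility of $H^2(k,A\{\ell\}(1))$, hence reduction to computing $\dim_{\mathbb{Q}_\ell}(V_\ell A^t(-1))^{G_k}$) and your part (ii) are correct and essentially identical to the paper's proof, which likewise combines Tate duality with Faltings' Hodge--Tate decomposition when $\ell=p$.

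Part (i), however, has a genuine gap: the weight filtration you invoke does not have the properties you ascribe to it unless $A^t$ has \emph{semistable} reduction over $k$. In general, the weight-$(-2)$ step of the monodromy filtration is $X_*(T')\otimes\mathbb{Q}_\ell(1)$, where $T'$ is the toric part of the N\'eron special fiber \emph{after} passing to a finite extension $k'/k$ realizing semistable reduction; the inertia group of $k$ can act nontrivially on it, and it is in general strictly larger than $X_*(T_{A^t})\otimes\mathbb{Q}_\ell(1)$. Concretely, let $E$ be an elliptic curve with additive reduction that becomes a Tate curve over a ramified quadratic extension $k'$, and let $\chi$ be the quadratic character of $k'/k$. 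Then $T_E=0$, so your claimed $W_{-2}$ vanishes and your claimed filtration would have graded weights $-1,0$ only; but $V_\ell E$ is an extension of $\mathbb{Q}_\ell\otimes\chi$ by $\mathbb{Q}_\ell(1)\otimes\chi$, which has a nonzero weight-$(-2)$ piece carrying a nontrivial inertia action. Your final formula $(V_\ell A^t(-1))^{G_k}\cong (X_*(T_{A^t})\otimes\mathbb{Q}_\ell)^{G_\kappa}$ is nevertheless true, but establishing it along your lines requires the additional fact $X_*(T_{A^t})\otimes\mathbb{Q}=\left(X_*(T')\otimes\mathbb{Q}\right)^{I_k}$, i.e.\ control of the N\'eron model under ramified base change, which you never address; similarly, the ``standard duality'' between $T_A$ and $T_{A^t}$ is a further nontrivial input from the theory of degenerations. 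The paper sidesteps both issues: it first replaces $A^t$ by $A$ via an isogeny $A\sim A^t$ (so no toric duality is needed), and then, instead of semistable reduction theory, uses the identification $(T_\ell A)^{I_k}\cong T_\ell(A_0)$ with $A_0$ the special fiber of the N\'eron model over $\mathcal{O}_k$ itself, followed by the Chevalley decomposition of $A_0^0$ over the finite residue field, the Weil conjectures to kill the abelian part, and Ono's lemma to compute the rank of the invariants of the toric part. To repair your argument, either carry out the filtration argument over a semistable extension $k'$ and descend using the base-change statement above, or argue directly on $(V_\ell A^t)^{I_k}$ as the paper does.
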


\begin{proof}[Proof]
First observe that, by Tate's duality theorem:
$$H^2(k,A\{\ell\}(1))\cong H^0(k,T_{\ell}A^t \otimes \mathbb{Z}_{\ell}(-1))^D.$$
Since $A$ and $A^t$ are isogenous, there is a (non-canonical) isomorphism:
$$H^0(k,T_{\ell}A^t \otimes \mathbb{Z}_{\ell}(-1))\cong H^0(k,T_{\ell}A \otimes \mathbb{Z}_{\ell}(-1)).$$
Hence we have to prove that $H^0(k,T_{\ell}A \otimes \mathbb{Z}_{\ell}(-1))$ is a free $\mathbb{Z}_{\ell}$-module of rank $\rho_T$ if $\ell\neq p$ and of rank $\leq \dim \; A$ if $\ell=p$.
\begin{itemize}
\item[(i)] We first assume that $\ell \neq p$. We then have an isomorphism of $\gal (k^{\mathrm{unr}}/k)$-modules:
$$(T_{\ell}A)^{\gal (k^s/k^{\mathrm{unr}})} \cong \varprojlim_n {_{\ell^n}}A(k^{\mathrm{unr}}) \cong \varprojlim_n {_{\ell^n}}\mathcal{A}(\mathcal{O}_{k^{\mathrm{unr}}}) \cong T_{\ell}A_0$$
where $A_0$ is the special fiber of $\mathcal{A}$. Hence:
$$H^0(k,T_{\ell}A \otimes \mathbb{Z}_{\ell}(-1)) \cong H^0(\kappa,T_{\ell}A_0 \otimes \mathbb{Z}_{\ell}(-1)).$$
 Let now $A_0^0$ be the connected component of the unit in $A_0$. We then have exact sequences:
\begin{gather}\label{aze1}
0 \rightarrow A_0^0 \rightarrow A_0 \rightarrow F \rightarrow 0\\
\label{aze2}
0\rightarrow U\times T \rightarrow A_0^0 \rightarrow B \rightarrow 0,
\end{gather}
for some finite étale group scheme $F$, some unipotent group $U$ and some abelian variety $B$. By exact sequence (\ref{aze1}), we get an isomorphism of Galois modules:
$$T_{\ell}A_0 \cong T_{\ell}A_0^0,$$
so that:
$$H^0(\kappa,T_{\ell}A_0 \otimes \mathbb{Z}_{\ell}(-1)) \cong H^0(\kappa,T_{\ell}A_0^0 \otimes \mathbb{Z}_{\ell}(-1)).$$
Moreover, by exact sequence (\ref{aze2}), we have an exact sequence of Galois modules:
$$0 \rightarrow T_{\ell}T \rightarrow T_{\ell}A_0^0 \rightarrow T_{\ell}B.$$
We therefore have an exact sequence:
$$0 \rightarrow H^0(\kappa,T_{\ell}T \otimes \mathbb{Z}_{\ell}(-1)) \rightarrow H^0(\kappa,T_{\ell}A_0^0 \otimes \mathbb{Z}_{\ell}(-1)) \rightarrow H^0(\kappa,T_{\ell}B \otimes \mathbb{Z}_{\ell}(-1)).$$
But $H^0(\kappa,T_{\ell}B \otimes \mathbb{Z}_{\ell}(-1))=0$ by the Weil conjectures. So we get:
$$H^0(\kappa,T_{\ell}T \otimes \mathbb{Z}_{\ell}(-1)) \cong H^0(\kappa,T_{\ell}A_0^0 \otimes \mathbb{Z}_{\ell}(-1)).$$
Hence, to finish the proof, we have to show that $H^0(\kappa,T_{\ell}T \otimes \mathbb{Z}_{\ell}(-1))$ is a free $\mathbb{Z}_{\ell}$-module of rank $\rho_T$. This is obvious when $T$ is a quasi-trivial torus. Now, by Ono's lemma, we have an exact sequence:
$$0 \rightarrow F_0 \rightarrow R_0 \rightarrow T^n \times R_1 \rightarrow 0$$
where $F_0$ is a finite étale group scheme, $R_0$ and $R_1$ are quasitrivial tori and $n$ is some positive integer. Hence we have an exact sequence:
\begin{equation}\label{ono}
0 \rightarrow H^0(\kappa,T_{\ell}R_0 \otimes \mathbb{Z}_{\ell}(-1)) \rightarrow H^0(\kappa,T_{\ell}T \otimes \mathbb{Z}_{\ell}(-1))^n \times H^0(\kappa,T_{\ell}R_1 \otimes \mathbb{Z}_{\ell}(-1)) \rightarrow F_1 \rightarrow 0
\end{equation}
for some finite group $F_1$. Since $H^0(\kappa,T_{\ell}R_0 \otimes \mathbb{Z}_{\ell}(-1))$ and $H^0(\kappa,T_{\ell}R_1 \otimes \mathbb{Z}_{\ell}(-1))$ are free $\mathbb{Z}_{\ell}$-modules, so is $H^0(\kappa,T_{\ell}T \otimes \mathbb{Z}_{\ell}(-1))$. Moreover, by studying the ranks of the $\mathbb{Z}_{\ell}$-modules that appear in exact sequence (\ref{ono}), we see that:
\begin{align*}
\text{rank} \left( H^0(\kappa,T_{\ell}T \otimes \mathbb{Z}_{\ell}(-1)) \right) &= \frac{\text{rank} \left( H^0(\kappa,T_{\ell}R_0 \otimes \mathbb{Z}_{\ell}(-1)) \right) - \text{rank} \left( H^0(\kappa,T_{\ell}R_1 \otimes \mathbb{Z}_{\ell}(-1)) \right)}{n}\\& = \frac{\rho_{R_0}-\rho_{R_1}}{n} = \rho_T,
\end{align*}
as wished.
\item[(ii)] We now assume that $\ell=p$. According to the Hodge-Tate decomposition (\cite{faltings}), we have an isomorphism of Galois modules:
$$T_{p}A \otimes \mathbb{C}_{p}(-1) \cong H^1(A,\mathcal{O}_{A})\otimes \mathbb{C}_p \oplus H^0(A,\Omega^1_{A/k})\otimes \mathbb{C}_p(-1).$$
Since $\dim \; H^1(A,\mathcal{O}_{A}) = \dim \; A$, we deduce that $H^0(k,T_{p}A \otimes \mathbb{Z}_{p}(-1))$ is a $\mathbb{Z}_{\ell}$-module of rank at most $\dim \; A$.
\end{itemize}
\end{proof}

\subsection*{c) Number fields}

\hspace{3ex} The case when $k$ is a number field is summarized in the following corollary:

\begin{corollary}\label{bhncdn}
Assume that $k$ is a totally imaginary number field or that $k$ is any number field and that $\ell \neq 2$. We have an exact sequence: 
$$H^{4}(K,\mathbb{Q}_{\ell}/\mathbb{Z}_{\ell}(3)) \rightarrow \bigoplus_{v\in X^{(1)}} H^{4}(K_v,\mathbb{Q}_{\ell}/\mathbb{Z}_{\ell}(3)) \rightarrow (\text{Br}\; k)\{\ell\} \rightarrow 0.$$
Moreover, the group $\mathrm{Ker}\left( H^{4}(K,\mathbb{Q}_{\ell}/\mathbb{Z}_{\ell}(3)) \rightarrow \bigoplus_{v\in X^{(1)}} H^{4}(K_v,\mathbb{Q}_{\ell}/\mathbb{Z}_{\ell}(3)) \right)$ is divisible.
\end{corollary}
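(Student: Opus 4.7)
The plan is to deduce this from Corollary \ref{corobhn} applied with $d=2$ and $r=3$, so the work reduces to checking the two hypotheses of that corollary and to identifying the terms that appear in its conclusion.

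First I would verify that, under the assumption on $(k,\ell)$ in the statement, the field $k$ has $\ell$-cohomological dimension exactly $d = 2$: a totally imaginary number field has $\mathrm{cd}(k) = 2$, while for a general number field $k$ one still has $\mathrm{cd}_{\ell}(k) = 2$ for every odd prime $\ell$, so hypothesis (H1) of Corollary \ref{corobhn} holds. Next I would observe that the corestriction condition required by Corollary \ref{corobhn} is on the map $H^{d-1}(k',\mathbb{Q}_{\ell}/\mathbb{Z}_{\ell}(r-2)) \to H^{d-1}(k,\mathbb{Q}_{\ell}/\mathbb{Z}_{\ell}(r-2))$, which for $d=2$, $r=3$ becomes $H^1(k',\mathbb{Q}_{\ell}/\mathbb{Z}_{\ell}(1)) \to H^1(k,\mathbb{Q}_{\ell}/\mathbb{Z}_{\ell}(1))$; as the remark following Corollary \ref{corobhn} already notes, this surjectivity is automatic whenever $r = d+1$, which is precisely our case.

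With both hypotheses in place, Corollary \ref{corobhn} directly produces the exact sequence
$$H^{4}(K,\mathbb{Q}_{\ell}/\mathbb{Z}_{\ell}(3)) \longrightarrow \bigoplus_{v\in X^{(1)}} H^{4}(K_v,\mathbb{Q}_{\ell}/\mathbb{Z}_{\ell}(3)) \longrightarrow H^{2}(k,\mathbb{Q}_{\ell}/\mathbb{Z}_{\ell}(1)) \longrightarrow 0.$$
To finish the first assertion I would rewrite the rightmost term: the Kummer exact sequence shows that $H^{2}(k,\mathbb{Z}/\ell^n\mathbb{Z}(1)) \cong {}_{\ell^n}\mathrm{Br}(k)$, and passing to the colimit in $n$ gives the canonical identification $H^{2}(k,\mathbb{Q}_{\ell}/\mathbb{Z}_{\ell}(1)) \cong (\mathrm{Br}\,k)\{\ell\}$, yielding the stated exact sequence.

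For the second assertion, about divisibility of the kernel, I would simply invoke the last sentence of Corollary \ref{corobhn}, which asserts that this kernel $A$ is divisible. Concretely, the corollary describes $A$ via an exact sequence whose first term is $\bigoplus_{\underline{v}} H^{2}(F_{\underline{v}}, J_{\underline{v}}\{\ell\}(1))$ and whose last term is a quotient of $H^{2}(k,\mathbb{Q}_{\ell}/\mathbb{Z}_{\ell}(1)\otimes H_{1}(\underline{\Gamma},\mathbb{Z}))$ by a group of finite exponent; since $k$ (and every finite extension $F_{\underline{v}}$) has $\ell$-cohomological dimension $2$, both the top-degree cohomology groups in question are divisible, and a quotient of a divisible group by any subgroup is again divisible, so both outer terms of the sequence defining $A$ are divisible and hence so is $A$. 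There is no real obstacle here; the entire argument is a specialization of results already established in the section.
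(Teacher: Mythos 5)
Your proposal is correct and follows essentially the same route as the paper: the paper's proof likewise consists of observing that the corestriction map $H^{1}(k',\mathbb{Q}_{\ell}/\mathbb{Z}_{\ell}(1)) \rightarrow H^{1}(k,\mathbb{Q}_{\ell}/\mathbb{Z}_{\ell}(1))$ is surjective (automatic since $r=d+1$) and then invoking Corollary \ref{corobhn} with $d=2$, $r=3$, the identification $H^{2}(k,\mathbb{Q}_{\ell}/\mathbb{Z}_{\ell}(1)) \cong (\mathrm{Br}\,k)\{\ell\}$ and the divisibility of the kernel being exactly what that corollary provides. Your extra verifications (the value of $\mathrm{cd}_{\ell}(k)$ and the re-derivation of the divisibility from the two divisible outer terms) are correct and merely make explicit what the paper leaves implicit.
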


\begin{proof}[Proof]
Observe that, for any finite extension $k'$ of $k$, the corestriction morphism $$H^{1}(k',\mathbb{Q}_{\ell}/\mathbb{Z}_{\ell}(1)) \rightarrow H^{1}(k,\mathbb{Q}_{\ell}/\mathbb{Z}_{\ell}(1))$$ is surjective. Hence everything follows immediately from corollary \ref{corobhn}.
\end{proof}

We finish this article with a theorem which generalizes Jannsen's exact sequence (\ref{sejan}) to the field $K$. To do so, we first need to introduce some notation:

\begin{notation}
Assume that $k$ is a number field. Let $\Omega_k$ be the set of places of $k$ and let $S$ be a finite subset of $\Omega_k$. For every Galois module $M$ over $k$, we define the groups:
\begin{gather*}
\ch^1_S(k,M):= \text{Coker}\left( H^1(k,M) \rightarrow \bigoplus_{\pi\in S} H^1(k_{\pi},M) \right), \\
\ch^2(k,M):= \text{Coker}\left( H^2(k,M) \rightarrow \bigoplus_{\pi\in \Omega_k} H^2(k_{\pi},M) \right), \\
\Sha^2(k,M):= \text{Ker}\left( H^2(k,M) \rightarrow \prod_{\pi\in \Omega_k} H^2(k_{\pi},M) \right) . 
\end{gather*}
\end{notation}

\begin{theorem}\label{bhncdnbis}
Assume that $k$ is a number field and let $\Omega_k$ be the set of places of $k$. For $\pi \in \Omega_k$, denote by $k_{\pi}^h$ the henselization of $k$ at $\pi$ and by $K_{\pi}^h$ the field $K\otimes_k k_{\pi}^h$. Let $A$ be the $\text{Gal}(k^s/k)$-module $H_1(\underline{\Gamma},\mathbb{Z})\otimes \mathbb{Q}_{\ell}/\mathbb{Z}_{\ell}(1)$. Then there exists a natural complex $\mathcal{C}_K$:
\small
\begin{equation}
\xymatrix@R=2mm@C=5mm{
& \text{(degree 1)} & \text{(degree 2)} & \text{(degree 3)} & \text{(degree 4)} & \\
0 \ar[r] &  H^4(K,\mathbb{Q}_{\ell}/\mathbb{Z}_{\ell}(3)) \ar[r] & \bigoplus\limits_{\pi\in \Omega_k} H^4(K_{\pi}^h,\mathbb{Q}_{\ell}/\mathbb{Z}_{\ell}(3)) \ar[r] & \bigoplus\limits_{v\in X^{(1)}} \mathbb{Q}_{\ell}/\mathbb{Z}_{\ell} \ar[r] & \mathbb{Q}_{\ell}/\mathbb{Z}_{\ell} \ar[r] & 0
}
\end{equation}
\normalsize
such that:
\begin{itemize}
\item[(i)] $H^3(\mathcal{C}_K) = H^4(\mathcal{C}_K)=0$,
\item[(ii)] the group $H^2(\mathcal{C}_K)$ is the quotient of $\ch^2(k,A)$ by a subgroup killed by $m(\underline{\Gamma})$ and it is (non-canonically) isomorphic to $(\mathbb{Q}_{\ell}/\mathbb{Z}_{\ell})^{\rho}$ where $\rho = \text{rank}\left(H_1(\underline{\Gamma},\mathbb{Z})^{\text{Gal}(k^s/k)}\right)$,
\item[(iii)] the group $H^1(\mathcal{C}_K)$ has finite exponent.
\end{itemize}
If moreover $\ell$ does not divide $m(\underline{\Gamma})$, then there exist a finite subset $S$ of $\Omega_k$ and a natural exact sequence:
$$\ch^1_S(k,A) \rightarrow H^1(\mathcal{C}_K) \rightarrow \Sha^2(k,A) \rightarrow 0.$$
\end{theorem}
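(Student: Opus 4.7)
The plan is to obtain $\mathcal{C}_K$ by applying the machinery of paragraph \ref{parbk} in parallel over $k$ and over each henselization $k_\pi^h$, and then splicing the results together. Since $R \otimes_k k_\pi^h$ is again a two-dimensional, geometrically integral, henselian, excellent local $k_\pi^h$-algebra whose desingularization carries the same graph $\underline{\Gamma}$ and the same collection $\{J_{\underline{v}}\}$ of Jacobians (now viewed over $k_\pi^h$), the devissages (\ref{spl1}) and (\ref{spl2}) of proposition \ref{gr} are available simultaneously for the Galois modules $\br(\underline{K})\{\ell\}(1)$ and $\br(\underline{K_\pi^h})\{\ell\}(1)$, and their formation is compatible with the base change $k \to k_\pi^h$.

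The differentials of $\mathcal{C}_K$ are assembled from three pieces. The map from degree $1$ to degree $2$ is the canonical restriction. The map from degree $2$ to degree $3$ is, for each $v \in X^{(1)}$ and each $\pi \in \Omega_k$, the composite of the residue $H^4(K_\pi^h,\mathbb{Q}_\ell/\mathbb{Z}_\ell(3)) \to H^3(\kappa(v) \otimes_k k_\pi^h,\mathbb{Q}_\ell/\mathbb{Z}_\ell(2))$ with the local invariant of the one-dimensional henselian residue field, landing in $\mathbb{Q}_\ell/\mathbb{Z}_\ell$. The last differential is the sum map. The fact that $\mathcal{C}_K$ is indeed a complex follows from the reciprocity law on the Brauer group of the smooth projective curve obtained by desingularizing $X \otimes_k k_\pi^h$ over $k_\pi^h$, together with the global reciprocity $\sum_\pi \mathrm{inv}_\pi = 0$ on $\br(k)$.

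To compute the homology, I would first observe that $H^4(\mathcal{C}_K) = 0$ because the sum map is surjective, and $H^3(\mathcal{C}_K) = 0$ by combining corollary \ref{corobhn} applied over each $k_\pi^h$ (which gives surjectivity of $\bigoplus_v H^4(K_{\pi,v},\mathbb{Q}_\ell/\mathbb{Z}_\ell(3)) \twoheadrightarrow \br(k_\pi^h)\{\ell\}$) with the classical Brauer-Hasse-Noether exact sequence for the number field $k$. Next, taking the long exact cohomology sequences associated to the devissages (\ref{spl1}) and (\ref{spl2}) both globally and over each $k_\pi^h$, one obtains a commutative ladder of Poitou-Tate-type sequences in which the vertical maps are restrictions. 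The abelian-variety summands $\bigoplus_{\underline{v}} H^2(F_{\underline{v}}, J_{\underline{v}}\{\ell\}(1))$ appearing in (\ref{cohom1}) are the most delicate to control: I would invoke Jannsen's local-global theorem (theorem 1.5 of \cite{jannsen}), already used in lemma \ref{abcdn}, together with the Weil conjectures (for $\ell \neq p$) and $p$-adic Hodge theory (for $\ell = p$) to conclude that these contributions inject into their product over all places and therefore disappear from the global kernel after quotienting by the local ones.

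A diagram chase then identifies $H^2(\mathcal{C}_K)$ with the quotient of $\ch^2(k, A)$ by the finite subgroup $F$ of theorem \ref{BHN2}, which is killed by $m(\underline{\Gamma})$. Poitou-Tate duality gives $\ch^2(k, A) \cong H^0(k, T_\ell(A^D))^D$, and applying lemma \ref{Ono} to the finite-type Galois module $H_1(\underline{\Gamma},\mathbb{Z})$ yields $(\mathbb{Q}_\ell/\mathbb{Z}_\ell)^\rho$ with $\rho = \mathrm{rank}(H_1(\underline{\Gamma},\mathbb{Z})^{\gal(k^s/k)})$. A parallel analysis shows that $H^1(\mathcal{C}_K)$ sits between a finite-exponent piece coming from the abelian-variety summands and a subquotient of $\Sha^2(k, A)$, itself of finite exponent; when $\ell \nmid m(\underline{\Gamma})$, the remark following proposition \ref{gr} says that sequence (\ref{spl2}) is split, so the $F$-contribution vanishes and a straight application of the Poitou-Tate nine-term exact sequence for $A$ gives the claimed $\ch^1_S(k, A) \to H^1(\mathcal{C}_K) \to \Sha^2(k, A) \to 0$ (with $S$ the archimedean places together with the finitely many places where the geometric data is bad). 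The main obstacle, and the most tedious step, is the rigorous compatibility check between the global devissage and its local counterparts, in particular verifying that the residue-invariant maps intertwine the Brauer, abelian-variety, and graph-theoretic components of (\ref{spl1})-(\ref{spl2}) as expected; once this is pinned down, the diagram chase and Poitou-Tate inputs are routine.
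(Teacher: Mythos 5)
Your overall architecture is the same as the paper's: read the d\'evissage of proposition \ref{gr} simultaneously over $k$ and over the decomposition groups at all places, kill the Jacobian contributions by Jannsen's local-global theorem, handle the remaining pieces by Poitou--Tate, and identify $\ch^2(k,A)$ with $(\mathbb{Q}_{\ell}/\mathbb{Z}_{\ell})^{\rho}$ by an Ono-lemma argument. But several steps do not work as written. First, for the Jacobian terms you claim only that they \emph{inject} into the sum of their local counterparts; the diagram chase needs the restriction map $\bigoplus_{\underline{v}} H^{2}(F_{\underline{v}},J_{\underline{v}}\{\ell\}(1)) \rightarrow \bigoplus_{\underline{v}}\bigoplus_{\pi'\in\Omega_{F_{\underline{v}}}} H^{2}((F_{\underline{v}})_{\pi'},J_{\underline{v}}\{\ell\}(1))$ to be \emph{bijective}: surjectivity is what makes $\mathrm{Ker}\bigl(H^4(K,\mathbb{Q}_{\ell}/\mathbb{Z}_{\ell}(3))\rightarrow\bigoplus_{\pi} H^4(K_{\pi}^h,\mathbb{Q}_{\ell}/\mathbb{Z}_{\ell}(3))\bigr)\rightarrow\mathrm{Ker}\bigl(H^2(k,\Xi(1))\rightarrow\bigoplus_{\pi} H^2(k_{\pi}^h,\Xi(1))\bigr)$ surjective, and injectivity is what lets $\ch^1_S(k,\Xi(1))$ cover its kernel; without both, neither (ii) nor (iii) nor the final exact sequence follows. (Theorem 1.5(a) of \cite{jannsen} does give bijectivity, because each $J_{\underline{v}}$ has good reduction at almost all places -- which also makes the local sum finite -- but that is what must be invoked, not mere injectivity.) Second, corollary \ref{corobhn} cannot be applied over every $k_{\pi}^h$: at a real place with $\ell=2$, hypothesis (H1) fails since $\mathrm{cd}_2(k_{\pi}^h)=\infty$, whereas theorem \ref{bhncdnbis} makes no assumption on $\ell$ or on the real places of $k$. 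This is precisely why the paper never base-changes the geometry to $k_{\pi}^h$, but only restricts the fixed Galois modules $\br(\underline{K})(2)$, $\Xi(1)$, etc.\ to decomposition subgroups, so that the only local-global inputs are Poitou--Tate and Jannsen's theorems, which do take real places into account.

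The most serious gap is part (iii). In general, i.e.\ without assuming $\ell\nmid m(\underline{\Gamma})$, the chase sandwiches $H^1(\mathcal{C}_K)$ between a quotient of $\ch^1_S(k,\Xi(1))$ and $\Sha^2(k,\Xi(1))$: the module that appears is $B=\Xi(1)$, not $A$, and replacing $B$ by $A$ costs the retraction $\psi$ of proposition \ref{gr} together with the vanishing $\Sha^2(k,C)=\ch^1_S(k,C)=0$ (theorem 2.5 of \cite{jannsen}); this is exactly where the hypothesis $\ell\nmid m(\underline{\Gamma})$ enters. Your kernel piece is also misattributed: once the Jannsen map is bijective, the abelian-variety summands contribute nothing to $H^1(\mathcal{C}_K)$; what survives is the graph/places part. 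Above all, you assert without proof that $\Sha^2(k,A)$ has finite exponent -- this is not a formal fact for a divisible module such as $A$, and it is the crux of (iii). The missing idea is the paper's restriction--corestriction argument: over a finite extension $k'/k$ trivializing the Galois action on $\underline{\Gamma}$ one has $m(\underline{\Gamma})=1$ and $A\cong\mathbb{Q}_{\ell}/\mathbb{Z}_{\ell}(1)^{c}$, so $\ch^1_S(k',A)$ and $\Sha^2(k',A)$ vanish by weak approximation and Brauer--Hasse--Noether, whence $H^1(\mathcal{C}_{K\cdot k'})=0$ and $H^1(\mathcal{C}_K)$ is killed by $[k':k]$. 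Finally, a small but real slip: $T_{\ell}(A^D)=0$ because $A^D$ is torsion-free, so your duality formula should read $\ch^2(k,A)\cong H^0\bigl(k,T_{\ell}(H_1(\underline{\Gamma},\mathbb{Z})^D)\bigr)^D$, to which lemma \ref{Ono} then applies.
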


\begin{lemma} \label{lemfin}
Keep the notations of theorem \ref{bhncdnbis} and take the following notations:
\begin{gather*}
B:=\Xi(1);\\
C:=\text{Ker}\left( \bigoplus_{\underline{v}\in \underline{X}^{(1)}} \mathbb{Q}_{\ell}/\mathbb{Z}_{\ell}(1) \rightarrow \mathbb{Q}_{\ell}/\mathbb{Z}_{\ell}(1) \right).
\end{gather*} 
\begin{itemize}
\item[(i)] There is a natural exact sequence:
$$0 \rightarrow \ch^2(k,C) \rightarrow \bigoplus_{v\in X^{(1)}} \mathbb{Q}_{\ell}/\mathbb{Z}_{\ell} \rightarrow \mathbb{Q}_{\ell}/\mathbb{Z}_{\ell} \rightarrow 0.$$
\item[(ii)] There is a natural exact sequence:
$$0 \rightarrow F \rightarrow \ch^2(k,A) \rightarrow \ch^2(k,B) \rightarrow \ch^2(k,C) \rightarrow 0$$
for some group $F$ which is killed by $m(\underline{\Gamma})$.
\item[(iii)] Let $S$ be a finite set of places de $k$. Assume that $\ell$ does not divide $m(\underline{\Gamma})$. Then there are natural isomorphisms:
\begin{gather*}
\ch^1_S(k,A) \cong \ch^1_S(k,B),\\
\Sha^2(k,A) \cong \Sha^2(k,B).
\end{gather*}
\end{itemize}

\end{lemma}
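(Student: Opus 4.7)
The plan is to handle the three parts separately, with all three building on two short exact sequences: $0 \to A \to B \to C \to 0$ from Proposition \ref{gr}, and the defining sequence
\[
0 \to C \to M \to \mathbb{Q}_\ell/\mathbb{Z}_\ell(1) \to 0, \qquad M := \bigoplus_{v\in X^{(1)}} I_{F_v/k}(\mathbb{Q}_\ell/\mathbb{Z}_\ell(1)),
\]
coming directly from the definition of $C$.

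For part (i), my first step is to compute $\ch^2(k,M) \cong \bigoplus_v \mathbb{Q}_\ell/\mathbb{Z}_\ell$ and $\ch^2(k,\mathbb{Q}_\ell/\mathbb{Z}_\ell(1)) \cong \mathbb{Q}_\ell/\mathbb{Z}_\ell$ via Shapiro's lemma combined with the classical Brauer--Hasse--Noether exact sequence over $k$ and over each $F_v$, the induced map being the sum map. The snake lemma applied to the pair of cohomology long exact sequences (global and summed local) attached to the defining sequence then yields a right-exact sequence $\ch^2(k,C) \to \ch^2(k,M) \to \ch^2(k,\mathbb{Q}_\ell/\mathbb{Z}_\ell(1)) \to 0$. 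To upgrade this to a short exact sequence, I will invoke the Poitou--Tate identification $\ch^2(k,C) \cong H^0(k,C^\vee)^D$, where $C^\vee$ denotes the Tate dual. Dualizing the defining sequence gives $0 \to \mathbb{Z}_\ell \xrightarrow{\mathrm{diag}} \prod_v I_{F_v/k}(\mathbb{Z}_\ell) \to C^\vee \to 0$, and the long exact sequence of $G_k$-invariants identifies $H^0(k,C^\vee)$ with $\prod_v \mathbb{Z}_\ell / \mathbb{Z}_\ell$ provided the restriction $H^1(k,\mathbb{Z}_\ell) \to \prod_v H^1(F_v,\mathbb{Z}_\ell)$ is injective. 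Since $\mathbb{Z}_\ell$ is torsion-free and each $F_v/k$ is a finite extension, any continuous character of $G_k$ trivial on some $G_{F_v}$ factors through a finite quotient of $G_k$ and is therefore zero, so this restriction is already injective on a single factor. Pontryagin duality then yields the stated exact sequence.

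For part (ii), I take the long exact cohomology sequences globally and summed locally attached to $0 \to A \to B \to C \to 0$, using the standing hypothesis $\mathrm{cd}_\ell(k) = 2$ to truncate. Right-exactness of cokernels gives $\ch^2(k,A) \to \ch^2(k,B) \to \ch^2(k,C) \to 0$, and a snake-lemma chase identifies $F := \ker(\ch^2(k,A) \to \ch^2(k,B))$ with a quotient of the image of the connecting map $\ch^1(k,C) \to \ch^2(k,A)$. The key observation is that the extension class $[B] \in \mathrm{Ext}^1_k(C,A)$ is killed by $m(\underline{\Gamma})$, thanks to the partial section $\psi$ of Proposition \ref{gr} satisfying $\varphi\circ\psi = m(\underline{\Gamma})\cdot\mathrm{Id}$; hence the connecting map, and therefore $F$, is killed by $m(\underline{\Gamma})$.

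For part (iii), when $\ell \nmid m(\underline{\Gamma})$ the scalar $m(\underline{\Gamma})$ acts invertibly on any $\ell$-primary torsion module, so $\psi$ becomes a genuine section of $\varphi$ and we obtain a splitting $B \cong A \oplus C$ of Galois modules on $\ell$-primary parts. This decomposes $\ch^1_S(k,B) \cong \ch^1_S(k,A) \oplus \ch^1_S(k,C)$ and $\Sha^2(k,B) \cong \Sha^2(k,A) \oplus \Sha^2(k,C)$, so the claim reduces to showing $\Sha^2(k,C) = 0$ and $\ch^1_S(k,C) = 0$. The first vanishing follows from the $\Sha$-version of the defining sequence: $\Sha^2(k,M) = \bigoplus_v \Sha^2(F_v,\mathbb{Q}_\ell/\mathbb{Z}_\ell(1)) = 0$ by classical Brauer--Hasse--Noether, and $\Sha^1(k,\mathbb{Q}_\ell/\mathbb{Z}_\ell(1)) = 0$ by Grunwald--Wang (valid under the hypothesis on $\ell$). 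For the second, I will enlarge $S$ until Chebotarev density, applied to the finite Galois extension over which the combinatorial structure of $C$ is defined, forces $H^1(k,C) \to \bigoplus_{\pi\in S} H^1(k_\pi,C)$ to be surjective. The main obstacle is the Poitou--Tate duality identification in part (i); once the boundary $H^0(k,C^\vee) \to H^1(k,\mathbb{Z}_\ell)$ is seen to vanish, parts (ii) and (iii) reduce to careful bookkeeping with the splitting $\psi$ and standard vanishing results for $\Sha$ and $\ch$.
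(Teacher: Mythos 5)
Your part (i) is essentially correct and takes a genuinely different route from the paper: the paper obtains the injectivity of $\ch^2(k,C)\rightarrow \bigoplus_{v}\mathbb{Q}_{\ell}/\mathbb{Z}_{\ell}$ by a diagram chase that combines the classical Hasse principle for $\text{Br}\,k$ with the fact that the degree-3 localization maps are isomorphisms (Poitou--Tate), whereas you compute $\ch^2(k,C)\cong H^0(k,C^\vee)^D$ outright by global duality (legitimate, after the standard limit argument over finite submodules of $C$); note only that your preliminary claim that ``the snake lemma yields'' the right-exact sequence is not formal either --- it needs those same degree-3 isomorphisms --- though your duality computation makes it superfluous. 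Part (ii) is where real gaps appear. First, $\mathrm{cd}_{\ell}(k)=2$ is \emph{not} a standing hypothesis: lemma \ref{lemfin} inherits the hypotheses of theorem \ref{bhncdnbis}, where $k$ is an arbitrary number field and $\ell$ an arbitrary prime, so for $\ell=2$ and $k$ with a real place your truncation of the long exact sequences is unavailable ($H^3(k,A)\neq 0$ in general); the paper instead uses that $H^3(k,-)\rightarrow\bigoplus_{\pi}H^3(k_{\pi},-)$ is an isomorphism, which holds for every $\ell$. Second, identifying $F=\mathrm{Ker}\left(\ch^2(k,A)\rightarrow\ch^2(k,B)\right)$ with a quotient of the image of $\ch^1(k,C)\rightarrow\ch^2(k,A)$ is not a snake-lemma fact: the chase requires $H^2(k,C)\rightarrow\bigoplus_{\pi}H^2(k_{\pi},C)$ to be injective, i.e. $\Sha^2(k,C)=0$, which is exactly Jannsen's Hasse principle (theorem 2.5 of \cite{jannsen}, the paper's map $\beta_3$) and which you never invoke; without it, $F$ may a priori contain classes coming from $\Sha^2(k,C)$ that the connecting map does not see. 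Your own observation that $m(\underline{\Gamma})\cdot[B]=0$ in $\mathrm{Ext}^1(C,A)$ would have repaired this at no cost: it produces a retraction $\theta:B\rightarrow A$ with $\theta\circ\iota=m(\underline{\Gamma})\cdot\mathrm{id}_A$, and applying $\theta$ locally to the relation $\iota_*(a)=\beta_2(b)$ kills \emph{all} of $F$ directly, with no appeal to Jannsen --- this is precisely the paper's diagram (\ref{di2}).

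Part (iii) contains the most serious gaps, because the two vanishings you reduce to, $\Sha^2(k,C)=0$ and $\ch^1_S(k,C)=0$, are the actual content (the paper cites theorem 2.5 of \cite{jannsen} for both). Your argument for $\Sha^2(k,C)=0$ invokes an exact ``$\Sha$-version'' of the long exact sequence, which does not exist: an element of $\Sha^2(k,C)$ lifts, via the connecting map, to a class $x\in H^1(k,\mathbb{Q}_{\ell}/\mathbb{Z}_{\ell}(1))$ whose local restrictions are merely \emph{liftable} to $H^1(k_{\pi},M)$, not locally trivial, so $x$ need not lie in $\Sha^1$. Moreover Grunwald--Wang is misquoted: the hypothesis of (iii) is $\ell\nmid m(\underline{\Gamma})$, which does not exclude $\ell=2$, and $\Sha^1(k,\mathbb{Q}_2/\mathbb{Z}_2(1))$ is genuinely nonzero in the special case (e.g. $16\in\mathbb{Q}(\sqrt{7})^{\times}$ is an $8$th power in every completion but not globally). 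The conclusion itself is salvageable without any of this: surjectivity of corestrictions on $H^1(\cdot,\mathbb{Q}_{\ell}/\mathbb{Z}_{\ell}(1))$ makes $H^2(k,C)\rightarrow H^2(k,M)$ injective, whence $\Sha^2(k,C)\hookrightarrow\Sha^2(k,M)=0$; but as written the step fails. Finally, for $\ch^1_S(k,C)=0$ (where $S$ is given, not at your disposal, though proving the vanishing for a larger $S'$ would indeed suffice), ``enlarge $S$ until Chebotarev forces surjectivity'' is not a proof: surjectivity of $H^1(k,C)\rightarrow\bigoplus_{\pi\in S}H^1(k_{\pi},C)$ for this infinite twisted-permutation-type module is a genuine Hasse-principle statement, and it is precisely here that the paper needs Jannsen's theorem. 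You must either cite that theorem, as the paper does, or supply a complete argument; Chebotarev plus weak approximation does not obviously yield it.
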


\begin{proof}[Proof]
\begin{itemize}
\item[(i)] We have an exact sequence:
$$0\rightarrow C \rightarrow  \bigoplus_{\underline{v}\in \underline{X}^{(1)}} \mathbb{Q}_{\ell}/\mathbb{Z}_{\ell}(1) \rightarrow \mathbb{Q}_{\ell}/\mathbb{Z}_{\ell}(1) \rightarrow 0.$$
Since for any finite field extension $E'/E$ the corestriction morphism $H^{1}(E',\mathbb{Q}_{\ell}/\mathbb{Z}_{\ell}(1)) \rightarrow H^{1}(E,\mathbb{Q}_{\ell}/\mathbb{Z}_{\ell}(1))$ is surjective, we get a commutative diagram with exact lines:
\begin{equation*}
\xymatrix{
0\ar[r] &H^2(k,C) \ar[r]\ar[d]^{\alpha_1} & \bigoplus_{v\in X^{(1)}} \text{Br}(k(v))\{\ell\} \ar[r]\ar[d]^{\alpha_2} & \text{Br}(k)\{\ell\} \ar[d]^{\alpha_3}  \\
0\ar[r] &\bigoplus_{\pi \in \Omega_k} H^2(k_{\pi},C) \ar[r] & \bigoplus_{v\in X^{(1)}} \bigoplus_{\pi \in \Omega_{k(v)}} \text{Br}(k(v)_{\pi})\{\ell\} \ar[r] & \bigoplus_{\pi \in \Omega_k} \text{Br}(k_{\pi})\{\ell\} 
}
\end{equation*}
\begin{equation*}
\xymatrix{
 \ar[r]& H^3(k,C) \ar[r]\ar[d]^{\alpha_4} & \bigoplus_{v\in X^{(1)}} H^3(k(v),\mathbb{Q}_{\ell}/\mathbb{Z}_{\ell}(1))\ar[d]^{\alpha_5}\\
 \ar[r] & \bigoplus_{\pi \in \Omega_k} H^3(k_{\pi},C) \ar[r] & \bigoplus_{v\in X^{(1)}} \bigoplus_{\pi \in \Omega_{k(v)}} H^3(k(v),\mathbb{Q}_{\ell}/\mathbb{Z}_{\ell}(1)).
}
\end{equation*}
In this diagram, $\alpha_4$ and $\alpha_5$ are isomorphisms by Poitou-Tate theorem. Moreover, $\alpha_3$ is injective by the Brauer-Hasse-Noether exact sequence. Hence we get an exact sequence:
$$0 \rightarrow \text{Coker}(\alpha_1) \rightarrow \text{Coker}(\alpha_2) \rightarrow  \text{Coker}(\alpha_3) \rightarrow  0.$$
But $\text{Coker}(\alpha_2) \cong \bigoplus_{v\in X^{(1)}} \mathbb{Q}_{\ell}/\mathbb{Z}_{\ell}$ and $\text{Coker}(\alpha_3) \cong \mathbb{Q}_{\ell}/\mathbb{Z}_{\ell}$ according to the Brauer-Hasse-Noether exact sequence. We have there fore obtained the required exact sequence:
$$0 \rightarrow \ch^2(k,C) \rightarrow \bigoplus_{v\in X^{(1)}} \mathbb{Q}_{\ell}/\mathbb{Z}_{\ell} \rightarrow \mathbb{Q}_{\ell}/\mathbb{Z}_{\ell} \rightarrow 0.$$
 \item[(ii)] By using exact sequence (\ref{spl2}), we get a commutative diagram with exact lines:
 \small
\begin{equation}\label{di1}
\xymatrix{
H^2(k,A) \ar[r]\ar[d]^{\beta_1} & H^2(k,B) \ar[r]\ar[d]^{\beta_2} & H^2(k,C) \ar[r]\ar[d]^{\beta_3} & H^3(k,A) \ar[r]\ar[d]^{\beta_4} & H^3(k,B) \ar[d]^{\beta_5} \\
\bigoplus\limits_{\pi\in \Omega_k} H^2(k_{\pi},A) \ar[r] & \bigoplus\limits_{\pi\in \Omega_k} H^2(k_{\pi},B) \ar[r] & \bigoplus\limits_{\pi\in \Omega_k} H^2(k_{\pi},C) \ar[r] & \bigoplus\limits_{\pi\in \Omega_k} H^3(k_{\pi},A) \ar[r] & \bigoplus\limits_{\pi\in \Omega_k} H^3(k_{\pi},B).  \\
}
\end{equation}
\normalsize
Moreover, by using the existence of the morphism $\psi$ of proposition \ref{gr}, we also have a commutative diagram:
\begin{equation}\label{di2}
\xymatrix{
H^2(k,A) \ar[rd]^{\cdot m(\underline{\Gamma})}\ar[rr]\ar[ddd]^{\beta_1} & & H^2(k,B) \ar[ld]^{\psi^*}\ar[ddd]^{\beta_2} \\
& H^2(k,A)\ar[d] & \\
& \bigoplus\limits_{\pi\in \Omega_k} H^2(k_{\pi},A) & \\
\bigoplus\limits_{\pi\in \Omega_k} H^2(k_{\pi},A) \ar[rr]\ar[ur]^{\cdot m(\underline{\Gamma})} & & \bigoplus\limits_{\pi\in \Omega_k} H^2(k_{\pi},B)\ar[ul]^{\psi^*}   \\
}
\end{equation}
Since $\beta_4$ and $\beta_5$ are isomorphisms by Poitou-Tate theorem and since $\beta_3$ is injective by theorem 2.5 of \cite{jannsen}, we deduce from diagrams (\ref{di1}) and (\ref{di2}) that there is an exact sequence:
\begin{equation}
0\rightarrow F \rightarrow \text{Coker}(\beta_1) \rightarrow \text{Coker}(\beta_2) \rightarrow \text{Coker}(\beta_3) \rightarrow 0
\end{equation}
in which $F$ is a group killed by $m(\underline{\Gamma})$. This finishes the proof.
\item[(iii)] Since $\ell$ does not divide $m(\underline{\Gamma})$, proposition \ref{gr} implies that $B \cong A \oplus C$. Hence $\ch^1_S(k,B)\cong \ch^1_S(k,A) \oplus \ch^1_S(k,C)$ and $\Sha^2(k,B) \cong\Sha^2(k,A)\oplus\Sha^2(k,C)$. But $\ch^1_S(k,C)=\Sha^2(k,C)=0$ by theorem 2.5 of \cite{jannsen}. Hence $\ch^1_S(k,B)\cong \ch^1_S(k,A)$ and $\Sha^2(k,B) \cong\Sha^2(k,A)$.
\end{itemize}

\end{proof}

\begin{lemma}\label{lemfin2}
Assume that $k$ is a number field and let $M$ be a Galois module over $k$ which is free of finite type as an abelian group. Set $\tilde{M} = M\otimes \mathbb{Q}_{\ell}/\mathbb{Z}_{\ell}(1)$. The group $\ch^2(k,\tilde{M})$ is isomorphic to $(\mathbb{Q}_{\ell}/\mathbb{Z}_{\ell})^{\rho}$ where $\rho = \text{rank}\left(M^{\text{Gal}(k^s/k)}\right)$.
\end{lemma}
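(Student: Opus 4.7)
The plan is to apply Poitou--Tate duality at each finite layer $\tilde{M}_n:=M\otimes\mu_{\ell^n}$ of the torsion module $\tilde{M}=\varinjlim_n \tilde{M}_n$, and then pass to the direct limit in $n$.

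First I would identify the Cartier dual. Since $M$ is free of finite type, the Galois module $\text{Hom}(\tilde{M}_n,\mu_{\ell^n})$ is canonically isomorphic to $M^{\vee}/\ell^n$, where $M^{\vee}:=\text{Hom}(M,\mathbb{Z})$. The nine-term Poitou--Tate exact sequence applied to the finite Galois module $\tilde{M}_n$ gives the exactness of
$$H^2(k,\tilde{M}_n)\;\longrightarrow\;\bigoplus_{\pi\in\Omega_k}H^2(k_{\pi},\tilde{M}_n)\;\longrightarrow\;H^0(k,M^{\vee}/\ell^n)^D\;\longrightarrow\;0,$$
where the local $P^2$ coincides with the usual direct sum because $H^2$ vanishes at almost every place.

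Next I would pass to $\varinjlim_n$. Galois cohomology and direct sums commute with filtered colimits, so the leftmost two terms become $H^2(k,\tilde{M})$ and $\bigoplus_{\pi\in\Omega_k}H^2(k_{\pi},\tilde{M})$ respectively. Since each $H^0(k,M^{\vee}/\ell^n)$ is a finite discrete abelian group, Pontryagin duality interchanges the colimit with an inverse limit, yielding
$$\ch^2(k,\tilde{M})\;\cong\;\Bigl(\varprojlim_n (M^{\vee}/\ell^n)^{G_k}\Bigr)^{D}.$$
Because $M^{\vee}$ is a finitely generated free $\mathbb{Z}$-module, $\varprojlim_n M^{\vee}/\ell^n=M^{\vee}\otimes\mathbb{Z}_{\ell}$, and taking $G_k$-invariants commutes with this inverse limit. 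Hence
$$\varprojlim_n (M^{\vee}/\ell^n)^{G_k}\;=\;(M^{\vee}\otimes\mathbb{Z}_{\ell})^{G_k},$$
which is a free $\mathbb{Z}_{\ell}$-module whose rank equals $\dim_{\mathbb{Q}_{\ell}}(M^{\vee}\otimes\mathbb{Q}_{\ell})^{G_k}=\mathrm{rank}\,(M^{\vee})^{G_k}$.

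The last step is to match this rank with $\rho=\mathrm{rank}\,M^{G_k}$. The continuous action of $G_k$ on $M\otimes\mathbb{Q}$ factors through a finite quotient $G$, and the representations $M\otimes\mathbb{Q}$ and $M^{\vee}\otimes\mathbb{Q}$ of $G$ are contragredient; by complete reducibility in characteristic zero their invariant subspaces have equal dimension, so $\mathrm{rank}\,(M^{\vee})^{G_k}=\mathrm{rank}\,M^{G_k}=\rho$. Applying Pontryagin duality to the free $\mathbb{Z}_{\ell}$-module of rank $\rho$ produces $(\mathbb{Q}_{\ell}/\mathbb{Z}_{\ell})^{\rho}$, giving the claimed isomorphism. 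The only subtle point is the interchange of the colimit with Pontryagin duality in the Poitou--Tate sequence, which is legitimate here because every term $H^0(k,\tilde{M}_n')$ is finite; everything else is formal.
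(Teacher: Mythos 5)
Your proof is correct, but it takes a genuinely different route from the paper. The paper never computes $\ch^2(k,\tilde{M})$ by duality directly: it first shows $\ch^2(k,\tilde{M})$ is divisible (divisibility of $H^2(k_\pi,\tilde{M})$ at finite places, plus surjectivity onto $\bigoplus_{\pi\in\Omega_k^\infty}H^2(k_\pi,\tilde{M})$ extracted from Poitou--Tate), then proves the statement for permutation modules via the classical Brauer--Hasse--Noether sequence, and finally reduces the general case to that one through Ono's lemma, which yields an exact sequence $0 \rightarrow F_1 \rightarrow \ch^2(k,\tilde{M})^m\times \ch^2(k,\tilde{R}_0) \rightarrow \ch^2(k,\tilde{R}_1) \rightarrow F_2 \rightarrow 0$ with $F_1,F_2$ of finite exponent; divisibility then kills the error terms. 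You instead apply the full nine-term Poitou--Tate sequence at each finite level $\tilde{M}_n=M\otimes\mu_{\ell^n}$, pass to the colimit, and obtain the canonical description $\ch^2(k,\tilde{M})\cong \bigl((M^\vee\otimes\mathbb{Z}_\ell)^{G_k}\bigr)^D$, after which the rank identification is pure character theory (your contragredient argument replaces the role that Ono's lemma plays in the paper's companion lemma \ref{Ono}). Your approach buys a canonical, finer statement -- the isomorphism is the Pontryagin dual of the $\ell$-adic invariants of the contragredient, and divisibility comes for free -- while the paper's approach stays within the dévissage toolkit it uses repeatedly elsewhere (lemmas \ref{Ono} and \ref{qsd}) and only needs a small piece of Poitou--Tate. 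One imprecision worth fixing: your justification that $P^2$ equals the direct sum ``because $H^2$ vanishes at almost every place'' is not literally true -- $H^2(k_\pi,\tilde{M}_n)$ is dual to $H^0(k_\pi,M^\vee/\ell^n)$ and is typically nonzero at every finite place. What is true, and what gives $P^2(k,\tilde{M}_n)=\bigoplus_\pi H^2(k_\pi,\tilde{M}_n)$, is that the restricted product is taken with respect to the \emph{unramified} subgroups $H^2_{\mathrm{nr}}$, which vanish at almost all places since $\mathrm{Gal}(k_\pi^{\mathrm{nr}}/k_\pi)\cong\hat{\mathbb{Z}}$ has cohomological dimension $1$ for finite modules. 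This is a standard fact and the rest of your argument goes through unchanged.
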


\begin{proof}[Proof]
First observe that, for any finite place $\pi$ of $k$, the group $H^2(k_{\pi},\tilde{M})$ is divisible. Moreover, if $\Omega^{\infty}_k$ stands for the set of infinite places of $k$, Poitou-Tate's exact sequence implies that the restriction morphism:
$$H^2(k,\tilde{M}) \rightarrow \bigoplus_{\pi \in \Omega^{\infty}_k} H^2(k_{\pi},\tilde{M})$$
is surjective. Hence $\ch^2(k,\tilde{M})$ is divisible.\\

Let's now prove the lemma. If $M$ is a permutation module, the result follows immediately from the Brauer-Hasse-Noether exact sequence. In general, by Ono's lemma, there are a positive integer $m$ and an exact sequence of Galois modules:
$$0 \rightarrow M \times R_0 \rightarrow R_1 \rightarrow F \rightarrow 0$$
in which $R_0$ and $R_1$ are permutation modules and $F$ is finite. Hence one gets an exact sequence:
$$0 \rightarrow F(1) \rightarrow \tilde{M}^m \times \tilde{R}_0 \rightarrow \tilde{R}_1 \rightarrow 0.$$
We therefore obtain a commutative diagram with exact lines:
\small
\begin{equation}
\xymatrix{
H^2(k,F(1)) \ar[d]\ar[r] & H^2(k,\tilde{M})^m \times H^2(k,\tilde{R}_0) \ar[d]\ar[r] & H^2(k,\tilde{R}_1) \ar[d]\ar[r] & H^3(k,F(1))\ar[d]\\
\bigoplus\limits_{\pi \in \Omega_k} H^2(k_{\pi},F(1)) \ar[r] & \bigoplus\limits_{\pi \in \Omega_k}  H^2(k_{\pi},\tilde{M})^m \times H^2(k_{\pi},\tilde{R}_0) \ar[r] & \bigoplus\limits_{\pi \in \Omega_k} H^2(k_{\pi},\tilde{R}_1) \ar[r] & \bigoplus\limits_{\pi \in \Omega_k} H^3(k_{\pi},F(1))\\
}
\end{equation}
\normalsize
Hence there is an exact sequence:
$$0 \rightarrow F_1 \rightarrow \ch^2(k,\tilde{M})^m\times \ch^2(k,\tilde{R}_0) \rightarrow \ch^2(k,\tilde{R}_1) \rightarrow F_2 \rightarrow 0$$
in which $F_1$ and $F_2$ have finite exponent. Since the lemma holds for permutation modules and since the group $\ch^2(k,\tilde{M})$ is divisible, we immediately deduce that the lemma holds for $M$.
\end{proof}

\begin{proof}[Proof of theorem \ref{bhncdnbis}]
 Let  $\mathcal{R}$ be a complete set of representatives of $\underline{V}_1/\text{Gal}(k^s/k)$. By lemma \ref{qsd},  there is a finite subset $S$ of $\Omega_{k}$ such that the group $$\bigoplus_{\underline{v} \in \mathcal{R}} H^2(k^h_{\pi}\otimes F_{\underline{v}},J_{\underline{v}}(k^s)\{\ell\}(1))$$ vanishes for all $\pi \not\in S$. Hence, by proposition \ref{gr}, we get a commutative diagram with exact lines:
\small
\begin{equation*}
\xymatrix{
 H^1(k,\Xi(1)) \ar[r]\ar[d]^{\alpha_1} & \bigoplus_{\underline{v} \in \mathcal{R}} H^2(F_{\underline{v}},J_{\underline{v}}(k^s)\{\ell\}(1)) \ar[r]\ar[d]^{\alpha_2} & H^2(k,\br(\underline{K})(2)) \ar[d]^{\alpha_3} \\
\bigoplus_{\pi\in S} H^1(k^h_{\pi},\Xi(1)) \ar[r] & \bigoplus_{\pi\in \Omega_{k}} \bigoplus_{\underline{v} \in \mathcal{R}} H^2(k^h_{\pi}\otimes F_{\underline{v}},J_{\underline{v}}(k^s)\{\ell\}(1)) \ar[r] & \bigoplus_{\pi\in \Omega_{k}}H^2(k^h_{\pi},\br(\underline{K})(2)) 
}
\end{equation*}
\begin{equation*}
\xymatrix{
 \ar[r] & H^2(k,\Xi(1)) \ar[r]\ar[d]^{\alpha_4} & \bigoplus_{\underline{v} \in \mathcal{R}} H^3(F_{\underline{v}},J_{\underline{v}}(k^s)\{\ell\}(1))\ar[d]^{\alpha_5}\ar[r] & H^3(k,\br(\underline{K})(2)) \ar[d]^{\alpha_6}\\
  \ar[r] & \bigoplus_{\pi\in \Omega_{k}}H^2(k^h_{\pi},\Xi(1)) \ar[r] & \bigoplus_{\pi\in \Omega_{k}}\bigoplus_{\underline{v} \in \mathcal{R}} H^3(k^h_{\pi}\otimes F_{\underline{v}},J_{\underline{v}}(k^s)\{\ell\}(1))\ar[r]& \bigoplus_{\pi\in \Omega_{k}}H^3(k^h_{\pi},\br(\underline{K})(2)).
}
\end{equation*}

\normalsize

In this diagram, $\alpha_5$ and $\alpha_6$ are isomorphisms by Poitou-Tate theorem and $\alpha_2$ is an isomorphism by theorem 1.5(a) of \cite{jannsen}, since each $J_{\underline{v}}$ has good reduction at almost each place of $F_{ \underline{v}}$. A diagram chase shows that there is a natural exact sequence:
\begin{equation}\label{kernel}
\text{Coker}(\alpha_1) \rightarrow \text{Ker}(\alpha_3) \rightarrow \text{Ker}(\alpha_4) \rightarrow 0
\end{equation} 
and an isomorphism:
\begin{equation} \label{isomor}
\text{Coker}(\alpha_3) \cong \text{Coker}(\alpha_4).
\end{equation}
Lemmas \ref{lemfin} and \ref{lemfin2}, exact sequence (\ref{kernel}), isomorphism (\ref{isomor}) and proposition 1.2 of \cite{jannsen} imply the existence of a natural complex 
$\mathcal{C}_K$:
\small
\begin{equation}
\xymatrix@R=2mm@C=5mm{
& \text{(degree 1)} & \text{(degree 2)} & \text{(degree 3)} & \text{(degree 4)} & \\
0 \ar[r] &  H^4(K,\mathbb{Q}_{\ell}/\mathbb{Z}_{\ell}(3)) \ar[r] & \bigoplus\limits_{\pi\in \Omega_k} H^4(K_{\pi}^h,\mathbb{Q}_{\ell}/\mathbb{Z}_{\ell}(3)) \ar[r] & \bigoplus\limits_{v\in X^{(1)}} \mathbb{Q}_{\ell}/\mathbb{Z}_{\ell} \ar[r] & \mathbb{Q}_{\ell}/\mathbb{Z}_{\ell} \ar[r] & 0
}
\end{equation}
\normalsize
such that:
\begin{itemize}
\item[$\bullet$] $H^3(\mathcal{C}_K) = H^4(\mathcal{C}_K)=0$,
\item[$\bullet$] the group $H^2(\mathcal{C}_K)$ is the quotient of $\ch^2(k,A)$ by a subgroup killed by $m(\underline{\Gamma})$ and it is non-canonically isomorphic to $(\mathbb{Q}_{\ell}/\mathbb{Z}_{\ell})^{\rho}$ where $\rho = \text{rank}\left(H_1(\underline{\Gamma},\mathbb{Z})^{\text{Gal}(k^s/k)}\right)$,
\item[$\bullet$] if $\ell$ does not divide $m(\underline{\Gamma})$, then there exist a finite subset $S$ of $\Omega_k$ and a natural exact sequence:
\begin{equation} \label{hom1} 
\ch^1_S(k,A) \rightarrow H^1(\mathcal{C}_K) \rightarrow \Sha^2(k,A) \rightarrow 0.
\end{equation}
\end{itemize}

It only remains to prove that $H^1(\mathcal{C}_K)$ has finite exponent. Let $k'$ be a finite extension of $k$ such that $\text{Gal}(k'/k)$ acts trivially on $\underline{\Gamma}$. Then we have an exact sequence:
$$\ch^1_S(k',A) \rightarrow H^1(\mathcal{C}_{K \cdot k'}) \rightarrow \Sha^2(k',A) \rightarrow 0$$
and the groups $\ch^1_S(k',A)$ and $\Sha^2(k',A)$ are trivial. We deduce that $H^1(\mathcal{C}_{K \cdot k'})=0$, and a restriction-corestriction argument shows that  $H^1(\mathcal{C}_K)$ has finite exponent.
\end{proof}

\textit{Acknowledgements.} The author would like to specially thank David Harari and Jean-Louis Colliot-Thélène for interesting discussions about this article.

\end{document}